\date{7 May 2015. Revised: a few corrections, no major changes.}
\title{A Course on Derived Categories}
\author{Amnon Yekutieli}
\address{Yekutieli: Department of  Mathematics
Ben Gurion University,
Be'er Sheva 84105,
Israel}
\email{amyekut@math.bgu.ac.il}
\newtheorem{thm}[equation]{Theorem}
\newtheorem{cor}[equation]{Corollary}
\newtheorem{prop}[equation]{Proposition}
\newtheorem{lem}[equation]{Lemma}
\theoremstyle{definition}
\newtheorem{dfn}[equation]{Definition}
\newtheorem{rem}[equation]{Remark}
\newtheorem{exa}[equation]{Example}
\newtheorem{exer}[equation]{Exercise}
\numberwithin{equation}{subsection}
\newcommand{\iso}{\xrightarrow{\simeq}}
\newcommand{\inj}{\hookrightarrow}
\newcommand{\surj}{\twoheadrightarrow}
\newcommand{\xar}{\xrightarrow}
\newcommand{\opn}{\operatorname}
\newcommand{\ol}{\overline}
\newcommand{\rmitem}[1]{\item[\text{\textup{(#1)}}]}
\newcommand{\mfrak}[1]{\mathfrak{#1}}
\newcommand{\mcal}[1]{\mathcal{#1}}
\newcommand{\mc}[1]{\mathcal{#1}}
\newcommand{\mbf}[1]{\mathbf{#1}}
\newcommand{\mrm}[1]{\mathrm{#1}}
\newcommand{\mbb}[1]{\mathbb{#1}}
\newcommand{\tup}[1]{\textup{#1}}
\newcommand{\bsym}[1]{\boldsymbol{#1}}
\newcommand{\ot}{\otimes}
\newcommand{\til}[1]{\tilde{#1}}
\newcommand{\what}[1]{\widehat{#1}}
\newcommand{\K}{\mbb{K}}
\newcommand{\N}{\mbb{N}}
\newcommand{\Z}{\mbb{Z}}
\renewcommand{\a}{\mfrak{a}}
\newcommand{\p}{\mfrak{p}}
\newcommand{\q}{\mfrak{q}}
\newcommand{\al}{\alpha}
\newcommand{\be}{\beta}
\newcommand{\ga}{\gamma}
\newcommand{\la}{\lambda}
\newcommand{\de}{\delta}
\newcommand{\ze}{\zeta}
\newcommand{\ep}{\epsilon}
\renewcommand{\th}{\theta}
\renewcommand{\d}{\mrm{d}}
\newcommand{\sbmat}[1]{\left[ \begin{smallmatrix} #1
\end{smallmatrix} \right]}
\newcommand{\bmat}[1]{\begin{bmatrix} #1 \end{bmatrix}}
\newcommand{\cat}[1]{\mathsf{#1}}
\newcommand{\dcat}[1]{\boldsymbol{\mathsf{#1}}}
\newcommand{\lb}{\linebreak}
\renewcommand{\o}{\circ}
\newcommand{\OX}{\mcal{O}_X}
\begin{document}

\begin{center}
\includegraphics[scale=0.38]{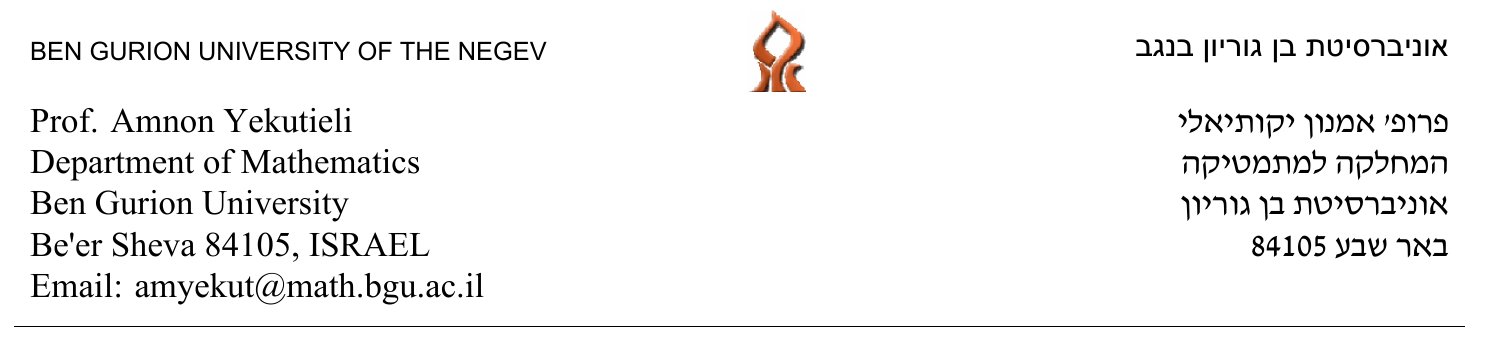}
\end{center}

\vspace{2em}
\maketitle

\tableofcontents

\setcounter{section}{-1}
\section{Introduction}

These are notes for an advanced course given at Ben Gurion University in Spring
2012. In this course I am following various sources, mostly \cite{RD},
\cite{Sc}, \cite{KS2} and \cite{Wei}, but going in a sufficiently different
route to make written notes desirable.  More resources are available on the
course web page \cite{CWP}.
\footnote{For future version: (1) Improve discussion 
of K-injectives and 
K-projectives. Talk about semi-free complexes.
(2) Include DG rings and their derived categories.}

I want to thank the participants of the course for correcting many of my
mistakes, both in real time during the lectures, and in writing.  
Thanks also to J. Lipman, P. Schapira, A. Neeman and C. Weibel for helpful
discussions on the material.

\subsection{A motivating discussion: duality}
By way of introduction to the subject, let us consider {\em duality}. 
Take a field $K$. Given a $K$-module $M$ (i.e.\  a vector space), let
\[ D(M) := \opn{Hom}_K(M, K) , \]
be the dual module. There is a canonical homomorphism 
\[ \eta_M : M \to D(D(M)) ,   \]
$\eta_M(m)(\phi) := \phi(m)$ for $m \in M$ and $\phi \in D(M)$. 
If $M$ is finitely generated then $\eta_M$ is an isomorphism (actually this is
``if and only if''). 

To formalize this situation, let $\cat{Mod}\, K$ denote the category of
$K$-modules. Then 
\[ D : \cat{Mod}\, K \to \cat{Mod}\, K \]
is a contravariant functor, and 
\[ \eta : \bsym{1} \to D \circ D \]
is a natural transformation. Here $\bsym{1}$ is the identity functor of
$\cat{Mod}\, K$. 

Now let us replace $K$ by any (nonzero) commutative ring $A$. Again we can
define a contravariant functor 
\[ D : \cat{Mod}\, A \to \cat{Mod}\, A , \quad
D(M) := \opn{Hom}_A(M, A) , \]
and a natural transformation $\eta : \bsym{1} \to D \circ D$.
It is easy to see that $\eta_M : M \to D(D(M))$ is an isomorphism if $M$
is a finitely generated free module. 
Of course we can't expect reflexivity (i.e.\ $\eta_M$ being an isomorphism) if
$M$ is not finitely generated; but what about a finitely generated module that
is not free?

In order to understand this better, let us concentrate on the ring $A = \Z$.
A finitely generated $\Z$-module $M$, namely a finitely generated abelian
group, is of the form 
$M \cong G \oplus H$, with $G$ free and $H$ finite. It is important to note
that this is not a canonical isomorphism: there is a canonical short exact
sequence 
\[ 0 \to H \to M \to G \to 0 , \]
and the decomposition $M \cong G \oplus H$ comes from choosing a splitting of
this sequence. 

We know that for the free abelian group $G$ there is reflexivity. But for the
finite abelian group $H$ we have
\[ D(H) = \opn{Hom}_{\Z}(H, \Z) = 0 . \]
Thus, whenever $H \neq 0$, reflexivity fails: $\eta_M : M \to D(D(M))$ is not
an isomorphism.

On the other hand, for an abelian group $M$ we can define another sort of dual:
\[ D'(M) := \opn{Hom}_{\Z}(M, \mbb{Q} / \Z) . \]
(We may view the abelian group $\mbb{Q} / \Z$ as the group of roots of $1$ in
$\mbb{C}$, via the exponential map.) 
There is a natural transformation 
$\eta' : \bsym{1} \to D' \circ D'$, 
and if $H$ is a finite abelian group then $\eta'_H$ is an isomorphism.
So $D'$ is a duality for finite abelian groups. Yet for a finitely
generated free abelian group $G$ we get
$D'(D'(G)) = \what{G}$, the profinite completion of $G$. So once more this is
not a good duality for all finitely generated abelian groups.

We could try to be more clever and ``patch'' the two dualities $D$ and $D'$,
into something that we will call $D \oplus D'$. 
This looks pleasing at first -- but then we recall that the decomposition 
$M \cong G \oplus H$ of a finitely generated group is not functorial, so that 
 $D \oplus D'$ can't be a functor.

Later in the course we will introduce the {\em derived category} 
$\dcat{D}(\cat{Mod}\, \Z)$.
The objects of $\dcat{D}(\cat{Mod}\, \Z)$ are the complexes of $\Z$-modules.
There is a contravariant {\em triangulated functor} 
\[ \mrm{R} D : \dcat{D}(\cat{Mod}\, \Z) \to \dcat{D}(\cat{Mod}\, \Z) , \]
\[ \mrm{R} D(M) := \opn{RHom}_{\Z}(M, \Z) . \]
This is the {\em right derived Hom functor}. 
And there is a natural transformation of triangulated functors
\[ \eta : \bsym{1} \to \mrm{R} D \circ \mrm{R} D . \]
If $M$ is a {\em bounded complex} with {\em finitely generated cohomology
modules} then 
$\eta_M : M \to \mrm{R} D(\mrm{R} D(M))$
is an isomorphism in $\dcat{D}(\cat{Mod}\, \Z)$. 
 
We can take a  $\Z$-module $M$ and view it as a
complex as follows:
\begin{equation} \label{eqn:1}
\cdots \to 0 \to M \to 0 \to \cdots 
\end{equation}
where $M$ is in degree $0$. 
This is a fully faithful embedding of $\cat{Mod}\, \Z$ in 
$\dcat{D}(\cat{Mod}\, \Z)$.
If $M$ is a finitely generated module then $\eta_M$ is an isomorphism. 
Thus we have a duality $\mrm{R} D$ that holds for all finitely generated
$\Z$-modules!

Here is the connection between $\mrm{R} D$ and the ``classical'' dualities $D$
and $D'$. Take a finitely generated free abelian group $G$. There is a
functorial isomorphism
\[ \mrm{H}^0 (\mrm{R} D(G)) \cong \opn{Hom}_{\Z}(G, \Z) = D(G) , \]
and 
$\mrm{H}^i (\mrm{R} D(G)) = 0$ for $i \neq 0$. 
For a finite abelian group $H$ there is a functorial isomorphism
\[ \mrm{H}^1 (\mrm{R} D(H)) \cong 
\opn{Ext}^1_{\Z}(H, \Z) \cong D'(H) , \]
and $\mrm{H}^i (\mrm{R} D(H)) = 0$ for $i \neq 1$. 
Therefore, if $M$ is a finitely generated abelian group, and we choose a
decomposition $M \cong G \oplus H$ where $G$ is free and $H$ is finite,
there are (noncanonical) isomorphisms 
\[ \mrm{H}^0 (\mrm{R} D(M)) \cong  D(G) \]
and 
\[ \mrm{H}^1 (\mrm{R} D(M)) \cong  D'(H) . \]
We see that if $M$ is neither free nor finite, then both 
$\mrm{H}^0 (\mrm{R} D(M))$ and $\mrm{H}^1 (\mrm{R} D(M))$ are nonzero.

This sort of duality holds for many noetherian commutative rings $A$. 
But the formula for the duality functor 
\[ \mrm{R} D : \dcat{D}(\cat{Mod}\, A) \to \dcat{D}(\cat{Mod}\, A) \]
is somewhat different -- it is
\[ \mrm{R} D(M) := \opn{RHom}_{A}(M, R) , \]
where $R \in \dcat{D}(\cat{Mod}\, A)$ is a {\em dualizing complex}. 
Such a dualizing complex is unique (up to shift and tensoring with an
invertible module). 

Interestingly, the structure of the dualizing complex $R$ depends on the
geometry of the ring $A$ (i.e.\ of the scheme $\opn{Spec} A$). 
If $A$ is a regular ring (like $\Z$) then $R = A$ is dualizing. 
If $A$ is Cohen-Macaulay then $R$ is a single $A$-module. 
But if $A$ is a more complicated ring then $R$ must live in several degrees. 
For example, consider an affine algebraic variety 
$X \subset \mbf{A}^3_{\mbb{R}}$
which is the union of a plane and a line, say with coordinate ring
\[ A = \mbb{R}[t_1, t_2, t_3] / (t_3 t_1, t_3 t_2) . \]
See figure \ref{fig:1}. 
The dualizing complex $R$ must live in two adjacent degrees;
namely there is some $i$ s.t.\ 
$\mrm{H}^i(R)$ and $\mrm{H}^{i+1}(R)$ are nonzero. 

\begin{figure}
\includegraphics[scale=0.5]{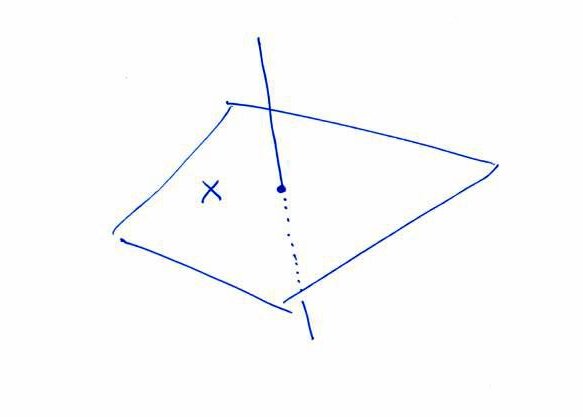}
\caption{} 
\label{fig:1}
\end{figure}

One can also talk about dualizing complexes over {\em noncommutative rings}. 
I am not sure if we will have time to do that in the course. (But this is a
favorite topic for me!)

\cleardoublepage
\section{Basics Facts on Categories}

\subsection{Set Theory}
In this course we will not try to be precise about issues of set theory. The
blanket assumption is that we are given a {\em Grothendieck universe}
$\cat{U}$. This is an infinite set, closed under most set theoretical
operations. A {\em small set}  (or a $\cat{U}$-small set)
is a set $S \in \cat{U}$. A category $\cat{C}$ is a {\em $\cat{U}$-category} if
the set of objects $\opn{Ob}(\cat{C})$ is a subset of $\cat{U}$, 
and for every $C, D \in \opn{Ob}(\cat{C})$ the set of morphisms 
$\opn{Hom}_{\cat{C}}(C, D)$ is small. 
See \cite[Section 1.1]{KS2}; or see \cite{Ne} for another approach. 

We denote by $\cat{Set}$ the category of all small sets. So 
$\opn{Ob}(\cat{Set}) = \cat{U}$, and $\cat{Set}$ is a $\cat{U}$-category.
An abelian group (or a ring, etc.) is called small if its underlying set is
small. For a small ring $A$ we denote by $\cat{Mod}\, A$ the category of all
small left $A$-modules. 

By default we work with $\cat{U}$-categories, and from now on $\cat{U}$ will
remain implicit. The one exception is when we deal with localization of
categories, where we shall briefly encounter a set theoretical issue; but for
most interesting cases this issue has an easy solution.

\subsection{Zero objects}
Let $\cat{C}$ be a category. A morphism $f : C \to D$ in $\cat{C}$ is called an
{\em epimorphism} if it has the right cancellation property: for any
 $g, g' : D \to E$, $g \circ f = g' \circ f$ implies $g = g'$. 
The morphism $f : C \to D$ is called a {\em monomorphism} if it has the left
cancellation property: for any
 $g, g' : E \to C$, $f \circ g = f \circ g'$ implies $g = g'$. 

\begin{exa}
In $\cat{Set}$ the monomorphisms are the injections, and the epimorphisms are
the surjections. A morphism $f : C \to D$ in $\cat{Set}$ that is both a
monomorphism and an epimorphism is an isomorphism. The same holds in 
$\cat{Mod}\, A$. 
\end{exa}

\begin{rem}
The property of being a monomorphism or an epimorphism is sensitive to the
category in question. For instance, consider the category of rings
$\cat{Ring}$. The forgetful functor 
$\cat{Ring} \to \cat{Set}$ respects monomorphisms, but it does not respect
epimorphisms. The easiest example is inclusion $\Z \to \mbb{Q}$, which is an 
epimorphism in $\cat{Ring}$.  
\footnote{In a previous version we claimed that this happens also for the 
category of groups $\cat{Grp}$; but this is false. We thank Vincent Beck for 
this correction.}
\end{rem}

By a {\em subobject} $C'$ of an object $C$ we mean that there is given a
monomorphism $f : C' \to C$. We sometimes write $C' \subset C$ in this
situation, but this is only notational (and does not mean inclusion of sets). 
Likewise, by a {\em quotient} $\bar{C}$ of $C$ we
mean that there is given an epimorphism $g : C \to \bar{C}$. 

An {\em initial object} in a category $\cat{C}$ is an object $C_0 \in \cat{C}$,
such that for every object $C \in \cat{C}$ there is exactly one morphism 
$C_0 \to C$. Thus the set $\opn{Hom}_{\cat{C}}(C_0, C)$ is a singleton.
An {\em terminal object} in $\cat{C}$ is an object $C_{\infty} \in \cat{C}$,
such that for every object $C \in \cat{C}$ there is exactly one morphism 
$C \to C_{\infty}$.

\begin{dfn}
A {\em zero object} in a category $\cat{C}$ is an object which is both initial
and terminal. 
\end{dfn}

Initial, terminal and zero objects are unique up to unique isomorphisms (but
they need not exist).

\begin{exa}
In $\cat{Set}$, $\emptyset$ is an initial object, and any singleton is a
terminal object. There is no zero object.
\end{exa}

\begin{exa}
In $\cat{Mod}\, A$, any trivial module (with only the zero element) is a 
zero object, and we denote this module by $0$. This is allowed, since any other
zero module is uniquely isomorphic to it.  
\end{exa}

\subsection{Products and Coproducts}
Let $\cat{C}$ be a category. For a collection $\{ C_i \}_{i \in I}$ of objects
of $\cat{C}$, indexed by a set $I$, their {\em product}
is a pair $(C, \{ p_i \}_{i \in I})$
consisting of an object $C$ and morphisms $p_i : C \to C_i$. 
The morphisms $p_i : C \to C_i$ are called projections. 
The pair $(C, \{ p_i \}_{i \in I})$ 
must have this universal property: given any object $D$ and morphisms 
$f_i : D \to C_i$, there is a unique morphism $f : D \to C$  s.t.\ 
$f_i = p_i \circ f$. Of course if a product
$(C, \{ p_i \}_{i \in I})$ exists then it is unique up to a
unique isomorphism; and we write
$\prod_{i \in I} C_i := C$.

\begin{exa}
In $\cat{Set}$ and $\cat{Mod}\, A$ all products (indexed by small sets) exist,
and they are the usual cartesian products. 
\end{exa}

For a collection $\{ C_i \}_{i \in I}$ of objects
of $\cat{C}$, their {\em coproduct}
is a pair $(C, \{ e_i \}_{i \in I})$
consisting of an object $C$ and morphisms $e_i : C_i \to C$. 
The morphisms $e_i : C_i \to C$ are called embeddings. 
The pair $(C, \{ e_i \}_{i \in I})$ 
must have this universal property: given any object $D$ and morphisms 
$f_i : C_i \to D$, there is a unique morphism $f : C \to D$  s.t.\ 
$f_i = f \circ e_i$. If a product
$(C, \{ e_i \}_{i \in I})$ exists then it is unique up to a
unique isomorphism; and we write
$\coprod_{i \in I} C_i := C$.

\begin{exa} \label{exa:103}
In $\cat{Set}$ the coproduct is the disjoint union. 
In $\cat{Mod}\, A$ the coproduct is the direct sum. 
\end{exa}

\subsection{Equivalence}
Recall that a functor $F : \cat{C} \to \cat{D}$ is an {\em equivalence} if
there exists a functor $G : \cat{D} \to \cat{C}$, and natural isomorphisms 
$G \circ F \cong \bsym{1}_{\cat{C}}$ and 
$F \circ G \cong \bsym{1}_{\cat{D}}$. Such a functor $G$ is called a {\em
quasi-inverse} of $F$. 

We know that $F : \cat{C} \to \cat{D}$ is an  equivalence iff these two
conditions hold:
\begin{enumerate}
\rmitem{i} $F$ is essentially surjective on objects. This means that for every
$D \in \cat{D}$  there is some $C \in \cat{C}$ and an isomorphism 
$F(C) \iso D$. 

\rmitem{ii} $F$ is fully faithful. This means that for every 
$C_0, C_1 \in \cat{C}$ the function 
\[ F : \opn{Hom}_{\cat{C}}(C_0, C_1) \to 
 \opn{Hom}_{\cat{D}}(F(C_0), F(C_1)) \]
is bijective. 
\end{enumerate}

\subsection{Bifunctors} \label{subsec:bifunc}

Let $\cat{C}$ and $\cat{D}$ be categories. Their product is the category 
$\cat{C} \times \cat{D}$ defined as follows: the set of objects is 
\[ \opn{Ob}(\cat{C} \times \cat{D}) := 
\opn{Ob}(\cat{C}) \times \opn{Ob}(\cat{D}) . \] 
The sets of morphisms are 
\[ \opn{Hom}_{\cat{C} \times \cat{D}} \big( (C_0, D_0), (C_1, D_1) \big) :=
\opn{Hom}_{\cat{C} } (C_0, C_1) \times 
\opn{Hom}_{\cat{D} } (D_0, D_1)  . \]
The composition is 
\[ (\phi_1, \psi_1) \circ (\phi_0, \psi_0) := 
(\phi_1 \circ \phi_0, \psi_1 \circ \psi_0) , \]
and the identity morphisms are $(1_C, 1_D)$. 

A {\em bifunctor} 
\[ F : \cat{C} \times \cat{D} \to \cat{E} \]
is by definition a functor from the product category $\cat{C} \times \cat{D}$
to $\cat{E}$. 
We say ``bifunctor'' because it is a functor of two arguments:
$F(C, D) \in \cat{E}$.

\cleardoublepage
\section{Abelian Categories}

\subsection{Linear categories} \label{subsec:1}

\begin{dfn}
Let $\K$ be a commutative ring. 
A  {\em $\K$-linear category} is a category $\cat{A}$, endowed with 
a $\K$-module structure on each of the morphism sets 
$\opn{Hom}_{\cat{A}}(M_0, M_1)$
for all $M_0, M_1 \in \cat{A}$. The condition is this:
\begin{itemize}
\item For all $M_0, M_1, M_2 \in \cat{A}$ the composition function 
\[ \opn{Hom}_{\cat{A}}(M_0, M_1) \times \opn{Hom}_{\cat{A}}(M_1, M_2)
\xar{\circ} \opn{Hom}_{\cat{A}}(M_0, M_2) \]
is $\K$-bilinear.
\end{itemize}

If $\K = \Z$ we say that $\cat{A}$ is a {\em linear category}.
\end{dfn}

Observe that for any object $M$ of a $\K$-linear category $\cat{A}$, the set 
\[ \opn{End}_{\cat{A}}(M) := \opn{Hom}_{\cat{A}}(M, M) \]
is a $\K$-algebra. In these notes a $\K$-algebra $A$ is (by default) unital
and associative; so in fact $A$ is a ring, together with a ring homomorphism
from $\K$ to the center of $A$.

This observation can be reversed:

\begin{exa} \label{exa:104}
Let $A$ be a $\K$-algebra.  
Define a category $\cat{A}$ like this: there is a single object $M$, and its
set of morphisms is
\[ \opn{Hom}_{\cat{A}}(M, M) := A . \]
Composition in $\cat{A}$ is the multiplication of $A$. Then $\cat{A}$ is a
$\K$-linear category.
\end{exa}

\subsection{Additive categories}

\begin{dfn} 
An  {\em additive category} is a linear category $\cat{M}$
satisfying these conditions:
\begin{itemize}
\rmitem{i} $\cat{M}$ has a zero object $0$.

\rmitem{ii} $\cat{M}$ has finite coproducts.
\end{itemize}
\end{dfn}

Observe that $\opn{Hom}_{\cat{M}}(M, N) \neq \emptyset$, since this is an
abelian group. Also 
\[ \opn{Hom}_{\cat{M}}(M, 0) = \opn{Hom}_{\cat{M}}(0, M) = 0 , \] 
the zero abelian group.  We denote the unique arrows $0 \to M$ and 
$M \to 0$ also by $0$. So the numeral $0$ has a lot of meanings; but they are
clear from the contexts. 
The coproduct in the additive category $\cat{M}$ is denoted by $\bigoplus$;
cf.\ Example \ref{exa:103}.

\begin{exa}
Let $A$ be a ring. The category $\cat{Mod}\, A$ is additive. 
The full subcategory $\cat{M} \subset \cat{Mod}\, A$ on the free modules is
also additive. 
\end{exa}

for an object $M$ we denote by $1_M : M \to M$ the identity morphism.

\begin{prop} \label{prop:1}
Let $\cat{M}$ be an additive category. Let $\{ M_i \}_{i \in I}$ be a  
finite collection of objects of $\cat{M}$, and let 
$M := \bigoplus_{i \in I} M_i$ be the coproduct, with embeddings 
$e_i : M_i \to M$. 

\begin{enumerate}
\item For any $i$ let $p_i : M \to M_i$ be the unique morphism 
s.t.\ $p_i \circ e_i = 1_{M_i}$, and 
 $p_i \circ e_j = 0$ for $j \neq i$. 
Then $(M, \{ p_i \}_{i \in I})$ is a product of the collection 
$\{ M_i \}_{i \in I}$.

\item $\sum_{i \in I} e_i \circ p_i = 1_M$.
\end{enumerate}
\end{prop}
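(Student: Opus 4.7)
First I would verify that the morphisms $p_i$ are well-defined. Each $p_i : M \to M_i$ is produced by the universal property of the coproduct $M = \bigoplus_{j \in I} M_j$ applied to the family $\{q_{i,j}\}_{j \in I}$ with $q_{i,i} := 1_{M_i}$ and $q_{i,j} := 0$ for $j \neq i$; here the existence of the zero morphisms requires condition~(i) of the definition of an additive category, while the sum and uniqueness use $\K$-linearity of composition.

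The two parts are linked, so I would prove part~(2) first and then use it to finish part~(1). For part~(2), by the coproduct universal property it suffices to check that $\sum_{i} e_i \circ p_i$ and $1_M$ agree after precomposition with each embedding $e_j$. On the left one computes, using $\K$-bilinearity of composition and the defining relations of the $p_i$,
\[
 \Bigl( \sum_{i \in I} e_i \circ p_i \Bigr) \circ e_j
= \sum_{i \in I} e_i \circ (p_i \circ e_j)
= e_j \circ 1_{M_j} = e_j = 1_M \circ e_j ,
\]
and uniqueness in the coproduct property forces the equality $\sum_i e_i \circ p_i = 1_M$.

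For part~(1), given an object $D$ and morphisms $f_i : D \to M_i$, I would define the candidate $f := \sum_{i \in I} e_i \circ f_i : D \to M$. Bilinearity of composition together with the defining relations of the $p_j$ give
\[
 p_j \circ f = \sum_{i \in I} p_j \circ e_i \circ f_i = f_j ,
\]
which settles existence. For uniqueness, suppose $f' : D \to M$ also satisfies $p_j \circ f' = f_j$ for all $j$. Setting $h := f - f'$, which makes sense because the Hom-sets are abelian groups, one has $p_j \circ h = 0$ for every $j$, and then part~(2) yields
\[
 h = 1_M \circ h = \Bigl( \sum_{i \in I} e_i \circ p_i \Bigr) \circ h = \sum_{i \in I} e_i \circ (p_i \circ h) = 0 ,
\]
hence $f' = f$. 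The main subtlety is recognising that the uniqueness clause of the product is \emph{not} automatic from the coproduct structure alone; it genuinely uses both the additive (abelian-group) enrichment and the identity $\sum_i e_i \circ p_i = 1_M$, which is why establishing part~(2) before the uniqueness in part~(1) is the efficient route.
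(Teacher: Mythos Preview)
Your argument is correct and is the standard proof of this fact; the paper itself leaves the proposition as an exercise, so there is no alternative approach to compare against. One cosmetic remark: the zero morphisms $q_{i,j}=0$ for $j\neq i$ already exist because each $\opn{Hom}_{\cat{M}}(M_j,M_i)$ is an abelian group (this is part of the linear structure), so you do not strictly need the zero object for that step.
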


\begin{proof}
Exercise.
\end{proof}

\begin{exa}
One could ask if the linear category $\cat{A}$ from Example \ref{exa:104},
built from a ring $A$, is
additive, i.e.\ does it have finite direct sums? It appears that this depends
on whether or not $A \cong A \oplus A$ as left $A$-modules. Thus if $A$ is
nonzero and commutative, or nonzero and noetherian, then this is false. 
On the other hand if we take a field $\K$, and a countable rank $\K$-module
$M$, then $A := \opn{End}_{\K}(M)$ will satisfy $A \cong A \oplus A$.
\end{exa}

\subsection{Abelian categories}

\begin{dfn}
Let $\cat{M}$ be an additive category, and let $f : M \to N$ be a morphism in
$\cat{M}$. A {\em kernel} of $f$ is a pair $(K, k)$, consisting of an object
$K \in \cat{M}$ and a morphism $k : K \to M$, with these properties:
\begin{enumerate}
\rmitem{i}  $f \circ k = 0$. 
\rmitem{ii} If $k' : K' \to M$ is a morphism in $\cat{M}$ such that 
$f \circ k' = 0$, then there is a unique morphism $g : K' \to K$ 
such that $k' = k \circ g$.
\end{enumerate}
\end{dfn}

In other words, the object $K$ represents the functor 
$\cat{M}^{\mrm{op}} \to \cat{Ab}$, 
\[ K' \mapsto \{ k' \in \opn{Hom}_{\cat{M}}(K', M) \mid f \circ k' = 0 \} . \]
The kernel of $f$ is of course unique up to a unique isomorphism (if it
exists), and we denote if by $\opn{Ker}(f)$. Sometimes $\opn{Ker}(f)$ refers
only to the object $K$, and other times it refers only to the morphism
$k$.

\begin{dfn}
Let $\cat{M}$ be an additive category, and let $f : M \to N$ be a morphism in
$\cat{M}$. A {\em cokernel} of $f$ is a pair $(C, c)$, consisting of an object
$C \in \cat{M}$ and a morphism $c : N \to C$, with these properties:
\begin{enumerate}
\rmitem{i}  $c \circ f = 0$. 
\rmitem{ii} If $c' : N \to C'$ is a morphism in $\cat{M}$ such that 
$c' \circ f = 0$, then there is a unique morphism $g : C \to C'$ 
such that $c' = g \circ c$.
\end{enumerate}
\end{dfn}

The cokernel $\opn{Coker}(f)$ is unique up to a unique isomorphism.

\begin{exa}
In $\cat{Mod}\, A$ all kernels and cokernels exist. 
Given $f : M \to N$, the kernel is 
$k : K \to M$, where 
\[ K := \{ m \in M \mid f(m) = 0 \} , \]
and the $k$ is the inclusion. The cokernel is $c : N \to C$, where 
$C := N / f(M)$, and $c$ is the canonical projection.
\end{exa}

\begin{prop}
Let $f : M \to N$ be a morphism, let $k : K \to M$ be a kernel of $f$,
and let $c : N \to C$ be a cokernel of $f$. Then $k$  is a monomorphism, and
$c$ is an epimorphism.
\end{prop}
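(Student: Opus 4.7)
The plan is to deduce both claims directly from the uniqueness clauses in the universal properties of kernel and cokernel, noting that the two statements are formally dual, so once I handle one, the other follows by reversing arrows. I will first argue that $k : K \to M$ is a monomorphism, and then indicate the dual argument for $c$.

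For the monomorphism claim, suppose we are given morphisms $g, g' : E \to K$ with $k \circ g = k \circ g'$. Set $h := k \circ g = k \circ g' : E \to M$. Using additivity and the identity $f \circ k = 0$ from the definition of kernel, I compute $f \circ h = f \circ k \circ g = 0 \circ g = 0$. Thus $h$ is a morphism $E \to M$ with $f \circ h = 0$, so condition (ii) of the kernel applies with $K' := E$ and $k' := h$: there is a \emph{unique} morphism $E \to K$ whose composition with $k$ is $h$. Both $g$ and $g'$ satisfy this, so uniqueness forces $g = g'$. This is exactly the left-cancellation property defining a monomorphism.

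The cokernel case is strictly dual: given $g, g' : C \to E$ with $g \circ c = g' \circ c$, put $h := g \circ c = g' \circ c$, verify $h \circ f = g \circ c \circ f = 0$ using $c \circ f = 0$, and apply the uniqueness clause (ii) of the cokernel definition (with $C' := E$ and $c' := h$) to conclude $g = g'$.

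There is no real obstacle here: the argument is a clean application of the universal property, and the only ingredient beyond the definitions is the ability to form the composites $0 \circ g$ and $g \circ 0$ and identify them with the zero morphism, which is available in any linear (in particular additive) category. The one small thing to be careful about is that condition (ii) is invoked with the \emph{same} test object $E$ as the source, with $h$ itself playing the role of the candidate morphism to be factored; once this is set up, the uniqueness half of the universal property does all the work.
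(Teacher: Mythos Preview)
Your proof is correct and is exactly the standard argument the paper has in mind; the paper's own proof reads simply ``Exercise.'' One tiny remark: you do not even need the linear structure to conclude $0 \circ g = 0$ --- in any category with a zero object, zero morphisms factor through $0$ and hence compose to zero on either side.
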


\begin{proof}
Exercise.
\end{proof}

\begin{dfn}
Assume the additive category $\cat{M}$ has kernels and cokernels. 
Let $f : M \to N$ be a morphism in $\cat{M}$.
\begin{enumerate}
\item Define the {\em image} of $f$ to be 
\[ \opn{Im}(f) := \opn{Ker}(\opn{Coker}(f)) . \]

\item Define the {\em coimage} of $f$ to be 
\[ \opn{Coim}(f) := \opn{Coker}(\opn{Ker}(f)) . \]
\end{enumerate}
\end{dfn}

Consider the following commutative diagram (solid arrows):
\[ \UseTips  \xymatrix @C=6ex @R=6ex {
K
\ar[r]^{k}
\ar[dr]_{0}
& 
M
\ar[r]^{f}
\ar[d]_{\al}
\ar@{-->}[dr]^{\ga}
&
N
\ar[r]^{c}
&
C
\\
&
M'
\ar@{-->}[r]_{f'}
&
N'
\ar[ur]_{0}
\ar[u]_{\be}
} \]
where $\al = \opn{Coker}(k) = \opn{Coim}(f)$ and 
$\be = \opn{Ker}(c) = \opn{Im}(f)$.
Since $c \circ f = 0$ there is a unique morphism $\ga$ 
making the diagram commutative. Now 
$\be \circ \ga \circ k = f \circ k = 0$; and $\be$ is a monomorphism; so 
$\ga \circ k = 0$. Hence there is a unique morphism $f' : M' \to N'$ 
making the diagram commutative.
We conclude that $f : M \to N$ induces a morphism 
\begin{equation}  \label{eqn:2}
f' : \opn{Coim}(f) \to \opn{Im}(f) . 
\end{equation}

\begin{dfn} \label{dfn:1}
An {\em abelian category} is an additive category $\cat{M}$ with these extra
properties:
\begin{enumerate}
\rmitem{i} All morphisms in $\cat{M}$ admit kernels and cokernels.
\rmitem{ii} For any $f : M \to N$ in $\cat{M}$ the induced morphism $f'$ of
equation (\ref{eqn:2}) is an isomorphism.
\end{enumerate}
\end{dfn}

A less precise but (maybe) easier to remember way to state property (ii) is:
\[ \opn{Ker}(\opn{Coker}(f)) = \opn{Coker}(\opn{Ker}(f)) . \]
{}From now on we forget all about the coimage. 

\begin{exa}
The category $\cat{Mod}\, A$ is abelian. 
\end{exa}

\begin{dfn}
Let $\cat{M}$ be an abelian category, and let $\cat{N}$ be a full subcategory
of $\cat{M}$. We say that $\cat{N}$ is a {\em full abelian subcategory} of
$\cat{M}$ if $\cat{N}$ is closed under direct sums, kernels and cokernels.
\end{dfn}

\begin{exa}
Let $\cat{M}_1$ be the category of finitely generated abelian groups,
and let $\cat{M}_0$ be the category of finite abelian groups. 
Then $\cat{M}_0$ is a full abelian subcategory of $\cat{M}_1$, and 
 $\cat{M}_1$ is a full
abelian subcategory of $\cat{Ab}$. 
\end{exa}

\begin{exa}
Let $\cat{N}$ be the full subcategory of $\cat{Ab}$ whose objects are the
finitely generated free abelian groups. It is an additive subcategory of
$\cat{Ab}$  (since it is closed under direct sums), but clearly it is not a full
abelian subcategory,
since it is not closed under cokernels. 

What is more interesting is that the additive category $\cat{N}$ does have its
own intrinsic cokernels, but still it fails to be an abelian category.
\end{exa}

\begin{exa}
A ring $A$ is {\em left noetherian} iff the category $\cat{Mod}_{\mrm{f}}\, A$
of  finitely generated modules is a full abelian subcategory of 
$\cat{Mod}\, A$. Here the issue is kernels.
\end{exa}

\begin{exa} \label{exa:1}
Let $(X, \mcal{A})$ be a ringed space; namely $X$ is a topological space and
$\mcal{A}$ is a sheaf of rings on $X$. 
We denote by $\cat{PMod}\, \mcal{A}$ the category of presheaves of left 
$\mcal{A}$-modules on $X$. This is an abelian category. 
Given a morphism $f : \mcal{M} \to \mcal{N}$ in $\cat{PMod}\, \mcal{A}$, its
kernel is the presheaf $\mcal{K}$ defined by 
\[ \Gamma(U, \mcal{K}) := 
\opn{Ker} \big( f : \Gamma(U, \mcal{M}) \to \Gamma(U, \mcal{N}) \big) . \]
The cokernel is the  presheaf $\mcal{C}$ defined by 
\[ \Gamma(U, \mcal{C}) := 
\opn{Coker} \big( f : \Gamma(U, \mcal{M}) \to \Gamma(U, \mcal{N}) \big) . \]

Now let $\cat{Mod}\, \mcal{A}$ be the full subcategory of 
$\cat{PMod}\, \mcal{A}$ consisting of sheaves. 
We know that $\cat{Mod}\, \mcal{A}$ is not closed under cokernels inside 
$\cat{PMod}\, \mcal{A}$, and hence it is not a full abelian subcategory.

However $\cat{Mod}\, \mcal{A}$ is itself an abelian category, but
with different cokernels. 
Indeed, for a morphism $f : \mcal{M} \to \mcal{N}$ in $\cat{Mod}\, \mcal{A}$,
its cokernel $\opn{Coker}_{\cat{Mod}\, \mcal{A}}(f)$ is the 
sheafification of the presheaf 
$\opn{Coker}_{\cat{PMod}\, \mcal{A}}(f)$. 
\end{exa}

For educational purposes we state:

\begin{thm}[Freyd \& Mitchell]
Let $\cat{M}$ be a small abelian category. Then $\cat{M}$ is equivalent to a
full abelian subcategory of $\cat{Mod}\, A$, for a suitable ring $A$. 
\end{thm}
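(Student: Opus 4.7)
The plan is to factor the desired embedding through a Grothendieck abelian category constructed from $\cat{M}$. Let $\cat{G} := \opn{Lex}(\cat{M}^{\mrm{op}}, \cat{Ab})$ denote the category of additive contravariant left exact functors $\cat{M} \to \cat{Ab}$. The Yoneda functor $h: \cat{M} \to \cat{G}$, $M \mapsto \opn{Hom}_{\cat{M}}(-, M)$, is well defined (Hom is left exact in its second variable, using that $\cat{M}$ is abelian), fully faithful by the Yoneda lemma, and exact: preservation of kernels is automatic (Yoneda preserves all limits), while preservation of cokernels follows from identifying, for any short exact sequence $0 \to M' \to M \to M'' \to 0$ in $\cat{M}$ and any $F \in \cat{G}$, the group $F(M'')$ with $\opn{Ker}\bigl(F(M) \to F(M')\bigr)$; this shows $h(M'') \cong \opn{Coker}_{\cat{G}}\bigl(h(M') \to h(M)\bigr)$. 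Because $\cat{M}$ is small, the family $\{h(M)\}_{M \in \cat{M}}$ is a small generating set, so $\cat{G}$ is a Grothendieck abelian category.

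The second step embeds $\cat{G}$ into a module category. By the Gabriel--Popescu theorem, picking the generator $U := \bigoplus_{M \in \cat{M}} h(M)$ and setting $A := \opn{End}_{\cat{G}}(U)^{\mrm{op}}$, the functor $T := \opn{Hom}_{\cat{G}}(U, -) : \cat{G} \to \cat{Mod}\, A$ is fully faithful and admits an exact left adjoint; in particular $T$ is left exact. The composite $\Phi := T \circ h : \cat{M} \to \cat{Mod}\, A$ is then fully faithful and left exact.

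The main obstacle is upgrading $\Phi$ from left exact to exact, since $T$ is not right exact on all of $\cat{G}$. Concretely, one must verify that for every epimorphism $M \twoheadrightarrow M''$ in $\cat{M}$, the cokernel of $T(h(M)) \to T(h(M''))$ computed in $\cat{Mod}\, A$ vanishes. Although this cokernel is killed by the exact left adjoint of $T$ (since $h(M) \to h(M'')$ is an epi in $\cat{G}$), it could in principle be a nonzero ``$T$-torsion'' $A$-module, and ruling this out is the technically delicate heart of the theorem. This can be handled by replacing the naive Gabriel--Popescu ring with a larger one that also incorporates an injective cogenerator of $\cat{G}$ (following Mitchell's original argument), or by a direct structural analysis exploiting the explicit form of the representables $h(M)$ and of $U$.

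Once an exact fully faithful functor $\Phi : \cat{M} \to \cat{Mod}\, A$ is constructed, its essential image is automatically closed under kernels, cokernels and direct sums computed in $\cat{Mod}\, A$: these agree, via $\Phi$, with the corresponding constructions in $\cat{M}$. Therefore the essential image is a full abelian subcategory of $\cat{Mod}\, A$ equivalent to $\cat{M}$, which is precisely the statement of the theorem.
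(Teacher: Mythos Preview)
The paper does not give a proof of this theorem; it is quoted ``for educational purposes'' and immediately followed by the remark that it lets one pretend $\cat{M} \subset \cat{Mod}\, A$ as a heuristic. So there is no approach in the paper to compare against, and your proposal must be judged on its own.

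Your outline follows the standard route (Yoneda into the ind-completion $\cat{G} = \opn{Lex}(\cat{M}^{\mrm{op}}, \cat{Ab})$, then Gabriel--Popescu). The first two paragraphs are fine. The problem is the third: you correctly identify that $T = \opn{Hom}_{\cat{G}}(U,-)$ is only left exact, so $\Phi = T \circ h$ need not be exact, and you correctly observe that this is exactly the content of the theorem --- but then you do not resolve it. Saying it ``can be handled by replacing the naive Gabriel--Popescu ring with a larger one that also incorporates an injective cogenerator'' or ``by a direct structural analysis'' is a pointer to the literature, not a proof. As written, your argument establishes only a fully faithful left exact embedding, which is much weaker (and much easier) than Freyd--Mitchell.

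If you want to complete the argument along the lines you sketch, the concrete missing step is this: choose an injective cogenerator $I$ of $\cat{G}$ and replace $U$ by $U' := U \oplus I$, setting $A := \opn{End}_{\cat{G}}(U')^{\mrm{op}}$. Then $T' := \opn{Hom}_{\cat{G}}(U',-)$ is still fully faithful (Gabriel--Popescu), and on the essential image of $h$ it is exact because every short exact sequence $0 \to h(M') \to h(M) \to h(M'') \to 0$ in $\cat{G}$ stays exact after applying $\opn{Hom}_{\cat{G}}(I,-)$ (injectivity of $I$ gives surjectivity on the $I$-summand; projectivity of the representables $h(M)$ in $\cat{G}$ gives it on the $U$-summand). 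You would need to actually verify these claims, in particular that the $h(M)$ are projective in $\cat{G}$, which is where the left-exactness condition defining $\cat{G}$ is used. Without carrying this out, the proposal remains a sketch with its central difficulty unresolved.
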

 
This means that most of the time we can pretend that 
$\cat{M} \subset \cat{Mod}\, A$; this could be a helpful heuristic. 

\begin{prop}
\begin{enumerate}
\item Let $\cat{M}$ be an additive category. Then the opposite category 
 $\cat{M}^{\mrm{op}}$ is also additive.

\item Let $\cat{M}$ be an abelian category. Then the opposite category 
 $\cat{M}^{\mrm{op}}$ is also abelian. 
\end{enumerate}
\end{prop}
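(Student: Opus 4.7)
The plan is to transport all the structure through the contravariant identification $\opn{Hom}_{\cat{M}^{\mrm{op}}}(M,N) = \opn{Hom}_{\cat{M}}(N,M)$. For part (1), I would first inherit the $\Z$-module structure on each Hom-set directly from $\cat{M}$, and check that the opposite composition $(f,g) \mapsto g \o f$ (where $\o$ denotes composition in $\cat{M}$) is $\Z$-bilinear because $\o$ is. Any zero object of $\cat{M}$ is simultaneously initial and terminal, so it remains a zero object in $\cat{M}^{\mrm{op}}$. Finally, finite coproducts in $\cat{M}^{\mrm{op}}$ are the same data as finite products in $\cat{M}$, and the latter exist by Proposition~\ref{prop:1} applied to $\cat{M}$ (there the finite coproduct automatically doubles as a finite product). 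This gives both axioms of an additive category for $\cat{M}^{\mrm{op}}$.

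For part (2), the key observation is that passage to the opposite swaps kernels with cokernels. Given a morphism $f : M \to N$ in $\cat{M}$, write $f^{\mrm{op}} : N \to M$ for the same arrow viewed in $\cat{M}^{\mrm{op}}$. The universal property defining $\opn{Ker}(f)$ in $\cat{M}$ becomes, when all arrows are reversed, precisely the universal property defining $\opn{Coker}(f^{\mrm{op}})$ in $\cat{M}^{\mrm{op}}$; and symmetrically, cokernels in $\cat{M}$ correspond to kernels in $\cat{M}^{\mrm{op}}$. Hence $\cat{M}^{\mrm{op}}$ admits all kernels and cokernels as soon as $\cat{M}$ does, which handles condition (i) of Definition~\ref{dfn:1}.

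For condition (ii), these exchanges yield canonical identifications
\[ \opn{Im}(f^{\mrm{op}}) = \opn{Coim}(f), \qquad \opn{Coim}(f^{\mrm{op}}) = \opn{Im}(f) \]
as objects of $\cat{M}$ (with the structural morphisms reversed). Unwinding the diagram preceding equation~(\ref{eqn:2}) with every arrow flipped, the induced morphism $(f^{\mrm{op}})' : \opn{Coim}(f^{\mrm{op}}) \to \opn{Im}(f^{\mrm{op}})$ in $\cat{M}^{\mrm{op}}$ is exactly the arrow in $\cat{M}^{\mrm{op}}$ opposite to $f' : \opn{Coim}(f) \to \opn{Im}(f)$ from $\cat{M}$. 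Since isomorphisms in $\cat{M}$ correspond bijectively to isomorphisms in $\cat{M}^{\mrm{op}}$ under arrow reversal, and $f'$ is an isomorphism by hypothesis, so is $(f^{\mrm{op}})'$. The only step requiring real care is this last piece of bookkeeping: checking that the morphism produced by the universal construction in $\cat{M}^{\mrm{op}}$ really does coincide, under the duality $\opn{Coim} \leftrightarrow \opn{Im}$, with the reverse of $f'$. This is a direct diagram-chase in the flipped version of the diagram before (\ref{eqn:2}), after which both claims follow.
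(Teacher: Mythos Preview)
Your proof is correct and follows the same approach as the paper's: inherit the linear structure via $\opn{Hom}_{\cat{M}^{\mrm{op}}}(M,N) = \opn{Hom}_{\cat{M}}(N,M)$, use Proposition~\ref{prop:1} to get finite coproducts in the opposite from finite products in $\cat{M}$, and for the abelian case observe that kernels and cokernels swap under passage to the opposite. The paper dispatches condition~(ii) of Definition~\ref{dfn:1} in a single clause (``the symmetric condition (ii) holds''), whereas you spell out the $\opn{Im}(f^{\mrm{op}}) = \opn{Coim}(f)$ identification and the fact that $(f^{\mrm{op}})'$ is the opposite of $f'$; this is exactly the bookkeeping the paper leaves implicit.
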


\begin{proof}
(1) First note that
\[ \opn{Hom}_{\cat{M}^{\mrm{op}}}(M, N) = 
\opn{Hom}_{\cat{M}}(N, M) , \]
so this is an abelian group. 
The bilinearity of the composition in $\cat{M}^{\mrm{op}}$ is clear, and the
zero objects are the same. Existence of finite coproducts in 
$\cat{M}^{\mrm{op}}$ is because of existence of finite products in $\cat{M}$;
see Proposition \ref{prop:1}(1). 

\medskip \noindent 
(2) $\cat{M}^{\mrm{op}}$ has kernels and cokernels, since 
$\opn{Ker}_{\cat{M}^{\mrm{op}}}(f) = \opn{Coker}_{\cat{M}}(f)$ and vice versa. 
Also the symmetric condition (ii) of Definition \ref{dfn:1} holds. 
\end{proof}

\begin{prop}
Let $f : M \to N$ be a morphism in an abelian category $\cat{M}$. 
\begin{enumerate}
\item $f$ is a monomorphism iff $\opn{Ker}(f) = 0$. 
\item $f$ is an epimorphism iff $\opn{Coker}(f) = 0$. 
\item $f$ is an isomorphism iff it is both a monomorphism and an epimorphism.
\end{enumerate}
\end{prop}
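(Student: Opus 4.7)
The plan is to prove parts (1) and (2) independently, noting that they are formally dual (exchanging $\cat{M}$ with $\cat{M}^{\mathrm{op}}$ interchanges monomorphisms with epimorphisms and kernels with cokernels), so I would only write out (1) in detail. Part (3) will then follow by feeding (1) and (2) into the abelian axiom (ii) of Definition \ref{dfn:1}.

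For (1), forward direction: let $k : K \to M$ be a kernel of $f$. Since $f \circ k = 0 = f \circ 0_{K,M}$, the monomorphism property of $f$ immediately gives $k = 0$. To conclude $K \cong 0$, I would exploit the universal property of the kernel applied to the zero map $0 : K \to M$: both $1_K$ and $0_{K,K}$ factor it through $k$, so by uniqueness $1_K = 0_{K,K}$; a standard argument then shows that any object with $1_K = 0$ is a zero object (every endomorphism, and indeed every morphism in or out, must be zero, so $K$ is both initial and terminal). For the reverse direction, given $g, g' : E \to M$ with $f \circ g = f \circ g'$, I use the additive structure to form $g - g'$, observe that $f \circ (g - g') = 0$, and use the universal property of $\opn{Ker}(f) = 0$ to conclude $g - g' = 0$, whence $g = g'$.

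Part (2) I would dispatch in one sentence by passing to $\cat{M}^{\mathrm{op}}$, which is abelian by the preceding proposition.

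For (3), one direction is a general fact about isomorphisms in any category and needs no abelian structure. For the converse, suppose $f$ is both monic and epic. By (1) and (2), $\opn{Ker}(f) = 0$ and $\opn{Coker}(f) = 0$. Then $\opn{Coim}(f) = \opn{Coker}(0 \to M)$, which by a short universal-property check is $M$ itself with $\al = 1_M$; dually, $\opn{Im}(f) = \opn{Ker}(N \to 0) = N$ with $\be = 1_N$. Tracing the construction of the canonical morphism in diagram (\ref{eqn:2}), the induced map $f' : \opn{Coim}(f) \to \opn{Im}(f)$ is precisely $f : M \to N$. By the abelian category axiom (Definition \ref{dfn:1}(ii)), $f' = f$ is an isomorphism. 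The only nontrivial step is identifying $\opn{Coim}(f)$ and $\opn{Im}(f)$ with $M$ and $N$ when the relevant kernel or cokernel is zero; this is where I expect to need to be careful, but it is a direct verification of the universal properties.
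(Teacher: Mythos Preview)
Your proposal is correct. The paper leaves this proposition entirely as an exercise, so your argument---using the universal property of the kernel together with additivity for (1), duality for (2), and the coimage-image axiom of Definition \ref{dfn:1} for (3)---is exactly the intended standard route, carried out in more detail than the paper provides.
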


\begin{proof}
Exercise.
\end{proof}

\subsection{Additive Functors}

\begin{dfn}
Let $\cat{M}$ and $\cat{N}$ be $\K$-linear categories. A functor 
$F : \cat{M} \to \cat{N}$ is called a {\em $\K$-linear functor} if 
for every $M_0, M_1 \in \cat{M}$ the function 
\[ F : \opn{Hom}_{\cat{M}}(M_0, M_1) \to
\opn{Hom}_{\cat{N}}(F(M_0), F(M_1)) \]
is a $\K$-linear homomorphism. 

A $\Z$-linear functor is also called an {\em additive functor}. 
\end{dfn}

Additive functors commute with finite direct sums. More precisely:

\begin{prop} \label{prop:2}
Let $F : \cat{M} \to \cat{N}$ be an additive functor between linear categories,
let $\{ M_i \}_{i \in I}$ be a finite collection of objects of $\cat{M}$, and
assume that  the direct sum $(M, \{ e_i \}_{i \in I})$ of the collection 
$\{ M_i \}_{i \in I}$ exists in $\cat{M}$. Then 
$\big( F(M), \{ F(e_i) \}_{i \in I} \big)$ is a direct sum of the collection 
$\{ F(M_i) \}_{i \in I}$ in $\cat{N}$.  
\end{prop}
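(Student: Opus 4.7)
The plan is to reduce the universal property of the direct sum to the algebraic identities provided by Proposition \ref{prop:1}, which $F$ preserves because it is additive.

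First, I would invoke Proposition \ref{prop:1} on the direct sum $(M, \{e_i\}_{i \in I})$ in $\cat{M}$ to extract morphisms $p_i : M \to M_i$ satisfying $p_i \circ e_i = 1_{M_i}$, $p_i \circ e_j = 0$ for $j \neq i$, and the completeness relation $\sum_{i \in I} e_i \circ p_i = 1_M$. These three identities fully encode the direct sum structure inside the additive category. (Implicit here is that the linear categories in the statement have enough structure, namely zero morphisms and finite coproducts; these are supplied by the hypothesis that the direct sum exists together with the abelian group structure on the Hom-sets.)

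Next I would apply $F$ to these identities. Since $F$ is additive, it sends identities to identities, zero morphisms to zero morphisms, preserves composition, and respects the addition in Hom-groups. Therefore in $\cat{N}$ the morphisms $F(e_i) : F(M_i) \to F(M)$ and $F(p_i) : F(M) \to F(M_i)$ satisfy
\[
F(p_i) \circ F(e_i) = 1_{F(M_i)}, \qquad F(p_i) \circ F(e_j) = 0 \text{ for } j \neq i,
\]
and $\sum_{i \in I} F(e_i) \circ F(p_i) = 1_{F(M)}$.

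Finally I would verify directly that these three relations force $\bigl(F(M), \{F(e_i)\}_{i \in I}\bigr)$ to be a coproduct of $\{F(M_i)\}_{i \in I}$ in $\cat{N}$. Given any object $D \in \cat{N}$ together with morphisms $g_i : F(M_i) \to D$, I define $g := \sum_{i \in I} g_i \circ F(p_i) : F(M) \to D$, and a one-line computation using the first two relations gives $g \circ F(e_j) = g_j$ for each $j$. For uniqueness, if $g' : F(M) \to D$ also satisfies $g' \circ F(e_j) = g_j$ for all $j$, then using the completeness relation
\[
g' = g' \circ 1_{F(M)} = \sum_{i \in I} g' \circ F(e_i) \circ F(p_i) = \sum_{i \in I} g_i \circ F(p_i) = g.
\]
No step looks technically delicate; the only conceptual point worth emphasizing is that Proposition \ref{prop:1} gives an intrinsic, equational characterization of the coproduct (as a biproduct), and it is precisely such equational data that any additive functor automatically transports.
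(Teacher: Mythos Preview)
Your proof is correct and follows exactly the approach the paper intends: the paper leaves this as an exercise with the hint ``use Proposition \ref{prop:1}'', and you have carried out precisely that, transporting the biproduct equations $p_i \circ e_j = \delta_{ij}$ and $\sum e_i \circ p_i = 1_M$ through $F$ and then verifying the universal property directly.
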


\begin{proof}
Exercise. (Hint: use Proposition \ref{prop:1}.)
\end{proof}

\begin{exa}
Let $A \to B$ be a ring homomorphism. The corresponding forgetful functor 
\[ F : \cat{Mod}\, B \to  \cat{Mod}\, A \]
(also called restriction of scalars) is additive. 
The functor 
\[ G : \cat{Mod}\, A \to  \cat{Mod}\, B \]
defined by 
$G(M) := B \ot_A M$, called extension of scalars, is also additive. 
\end{exa}

\begin{prop} \label{prop:109}
Let $F : \cat{M} \to \cat{N}$ be an additive functor between additive
categories. Then $F(0_{\cat{M}}) = 0_{\cat{N}}$.
\end{prop}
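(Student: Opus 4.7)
The plan is to exploit the fact that in an additive category a zero object is characterized by the vanishing of its identity morphism. More precisely, I will show that $F(0_{\cat{M}})$ has the property $1_{F(0_{\cat{M}})} = 0$, and then deduce from this that $F(0_{\cat{M}})$ is a zero object of $\cat{N}$ (hence canonically isomorphic to $0_{\cat{N}}$).

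First I would observe that $\opn{End}_{\cat{M}}(0_{\cat{M}})$ is simultaneously an abelian group (by the linear structure) and a singleton (since $0_{\cat{M}}$ is both initial and terminal, so there is only one endomorphism). Therefore it is the trivial group, and in particular $1_{0_{\cat{M}}} = 0$ in $\opn{End}_{\cat{M}}(0_{\cat{M}})$. Applying $F$, which is additive (so sends the zero element of a Hom group to the zero element) and a functor (so sends identities to identities), I obtain
\[ 1_{F(0_{\cat{M}})} \;=\; F(1_{0_{\cat{M}}}) \;=\; F(0) \;=\; 0 \]
as an element of $\opn{End}_{\cat{N}}(F(0_{\cat{M}}))$.

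Next I would show that any object $N$ of an additive category with $1_N = 0$ must be a zero object. For any $P \in \cat{N}$ and any $f : N \to P$ one has $f = f \circ 1_N = f \circ 0 = 0$, using $\K$-bilinearity of composition; likewise for any $g : P \to N$, $g = 1_N \circ g = 0$. Hence $\opn{Hom}_{\cat{N}}(N, P)$ and $\opn{Hom}_{\cat{N}}(P, N)$ each reduce to the single element $0$, so $N$ is both initial and terminal. Applying this with $N = F(0_{\cat{M}})$ completes the proof.

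There is no real obstacle here; the only subtlety is keeping straight the two meanings of the symbol $0$ (the zero object versus the additive identity in a Hom group) and remembering that additivity of $F$ is precisely what guarantees $F(0) = 0$ on morphisms. The equality $F(0_{\cat{M}}) = 0_{\cat{N}}$ should of course be read up to the canonical isomorphism of zero objects, as noted earlier in the text.
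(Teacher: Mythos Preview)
Your proof is correct and follows essentially the same approach as the paper: both characterize a zero object by the condition $1 = 0$ in its endomorphism ring, and then use that $F$ preserves this condition (the paper phrases it as ``$F$ is a ring homomorphism on endomorphism rings, so it sends the zero ring to the zero ring''). You spell out in more detail why $1_N = 0$ forces $N$ to be a zero object, which the paper leaves implicit.
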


\begin{proof}
For any object $M \in \cat{M}$ we have a ring $\opn{End}_{\cat{M}}(M)$; and 
$\opn{Hom}_{\cat{A}}(M_0, M_1)$ is an 
$\opn{End}_{\cat{M}}(M_1)$-$\opn{End}_{\cat{M}}(M_0)$-bimodule.
An object $M \in \cat{M}$ is a zero object iff
$\opn{End}_{\cat{M}}(M)$ is the zero ring, i.e.\ 
$1 = 0$ in $\opn{End}_{\cat{M}}(M)$.

Now $F : \opn{End}_{\cat{M}}(M) \to \opn{End}_{\cat{N}}(F(M))$ is a ring
homomorphism, so it sends the zero ring to the zero ring. 
\end{proof}

\begin{dfn}
Let $F : \cat{M} \to \cat{N}$ be an additive functor between abelian
categories. 
\begin{enumerate}
\item $F$ is called {\em left exact} if it commutes with kernels. 
Namely for any morphism $\phi : M_0 \to M_1$ in $\cat{M}$, with 
kernel $k : K \to M_0$, the morphism $F(k) : F(K) \to F(M_0)$ is 
a kernel of $F(\phi) : F(M_0) \to F(M_1)$.

\item $F$ is called {\em right exact} if it commutes with cokernels. 
Namely for any morphism $\phi : M_0 \to M_1$ in $\cat{M}$, with 
cokernel $c : M_1 \to C$, the morphism $F(c) : F(M_1) \to F(C)$ is 
a cokernel of $F(\phi) : F(M_0) \to F(M_1)$.

\item $F$ is called {\em exact} if it both left exact and right exact.
\end{enumerate}
\end{dfn}

This is illustrated in the following diagrams. Suppose $\phi : M_0 \to M_1$
is a morphism in $\cat{M}$, with  kernel $K$ and cokernel $C$. Applying $F$ to
the diagram 
\[ \UseTips  \xymatrix @C=6ex @R=6ex {
K
\ar[r]^{k}
& 
M_0
\ar[r]^{\phi}
&
M_1
\ar[r]^{c}
&
C
} \]
we get the solid arrows in
\[ \UseTips  \xymatrix @C=6ex @R=6ex {
F(K)
\ar[r]^{F(k)}
\ar@{-->}[dr]_{\psi}
& 
F(M_0)
\ar[r]^{F(\phi)}
&
F(M_1)
\ar@{-->}[d]
\ar[r]^{F(c)}
&
F(C)
\\
&
\opn{Ker}_{\cat{N}}(F(\phi))
\ar@{-->}[u]
&
\opn{Coker}_{\cat{N}}(F(\phi))
\ar@{-->}[ur]_{\chi}
} \]
The dashed arrows are from the structure of $\cat{N}$. 
Left exactness requires $\psi$ to be an isomorphism, and 
right exactness requires $\chi$ to be an isomorphism.

\begin{dfn}
Let $\cat{M}$ be an abelian category. An {\em exact sequence} in $\cat{M}$ is a
diagram 
\[ \cdots M_0 \xar{\phi_0}  M_1 \xar{\phi_1}  M_2 \cdots \]
  (finite or infinite on either side) s.t.\ 
$\opn{Ker}(\phi_i) = \opn{Im}(\phi_{i-1})$ for all $i$ (for which 
$\phi_{i}$ and $\phi_{i-1}$ are defined). 
\end{dfn}

As usual, a {\em short exact sequence} is one of the form 
\begin{equation} \label{eqn:3}
0 \to M_0 \to M_1 \to M_2 \to 0 .  
\end{equation}

\begin{prop}
Let $F : \cat{M} \to \cat{N}$ be an additive functor between abelian
categories. 
\begin{enumerate}
\item The functor $F$ is left exact  iff
for every short exact sequence \tup{(\ref{eqn:3})} in $\cat{M}$, the sequence 
\[ 0 \to F(M_0) \to F(M_1) \to F(M_2)  \]
is exact in $\cat{N}$.

\item The functor $F$ is right exact  iff
for every short exact sequence \tup{(\ref{eqn:3})} in $\cat{M}$, the sequence 
\[ F(M_0) \to F(M_1) \to F(M_2) \to 0 \]
is exact in $\cat{N}$.
\end{enumerate}
\end{prop}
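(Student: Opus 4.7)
I plan to prove (1) directly and then deduce (2) by passing to opposite categories.

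For the forward direction of (1), suppose $F$ is left exact, and let $0 \to M_0 \xrightarrow{\alpha} M_1 \xrightarrow{\beta} M_2 \to 0$ be a short exact sequence. The key observation is that $\alpha$ is itself a kernel of $\beta$ in $\cat{M}$: since $\alpha$ is a monomorphism, the axiom $\opn{Ker}(\opn{Coker}(\alpha))=\opn{Coker}(\opn{Ker}(\alpha))$ from Definition \ref{dfn:1} identifies $\opn{Im}(\alpha)$ with $M_0$ via $\alpha$, and exactness at $M_1$ says $\opn{Im}(\alpha)=\opn{Ker}(\beta)$. Applying left exactness, $F(\alpha):F(M_0)\to F(M_1)$ is a kernel of $F(\beta)$; since every kernel is a monomorphism, this gives simultaneously that $F(\alpha)$ is mono and that $\opn{Im}(F(\alpha))=\opn{Ker}(F(\beta))$, i.e.\ the sequence $0\to F(M_0)\to F(M_1)\to F(M_2)$ is exact.

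For the reverse direction, assume the exactness condition on short exact sequences, and let $\phi:M_0\to M_1$ be arbitrary with kernel $k:K\to M_0$. The plan is to factor $\phi$ through its image and combine two short exact sequences. Set $I:=\opn{Im}(\phi)$; the abelian category axiom yields a factorization $\phi=\iota\circ\pi$ with $\pi:M_0\twoheadrightarrow I$ epi and $\iota:I\hookrightarrow M_1$ mono, together with short exact sequences
\[ 0 \to K \xrightarrow{k} M_0 \xrightarrow{\pi} I \to 0 \quad\text{and}\quad 0 \to I \xrightarrow{\iota} M_1 \to \opn{Coker}(\phi) \to 0. \]
Applying the hypothesis to each, the first tells us that $F(k)$ is a kernel of $F(\pi)$, and the second tells us that $F(\iota)$ is a monomorphism. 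Since $F(\phi)=F(\iota)\circ F(\pi)$ and $F(\iota)$ is mono, the kernel of $F(\phi)$ coincides with the kernel of $F(\pi)$, which is $F(k):F(K)\to F(M_0)$. Hence $F$ commutes with kernels, as required.

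For (2), rather than repeating the dual argument, I would observe that an additive functor $F:\cat{M}\to\cat{N}$ is right exact if and only if the induced additive functor $F^{\opn{op}}:\cat{M}^{\opn{op}}\to\cat{N}^{\opn{op}}$ is left exact, because kernels and cokernels are interchanged by passage to the opposite abelian category. A short exact sequence in $\cat{M}$ corresponds to a short exact sequence in $\cat{M}^{\opn{op}}$ with the arrows reversed, so part (2) follows from part (1) applied to $F^{\opn{op}}$.

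The main subtlety is the factorization step in the reverse direction of (1): one must genuinely use the abelian category axiom that $\opn{Coim}(\phi)\iso\opn{Im}(\phi)$ to write $\phi=\iota\circ\pi$ with $\iota$ mono, and then exploit that a preserved-monomorphism $F(\iota)$ does not change the kernel when precomposed. Everything else is bookkeeping with universal properties.
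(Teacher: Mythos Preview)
Your proof is correct. The paper leaves this proposition as an exercise with only the hint ``$M_0 \cong \opn{Ker}(M_1 \to M_2)$ etc.'', and your argument fleshes out exactly that idea: the forward direction uses that $\alpha$ \emph{is} a kernel of $\beta$, and your reverse direction via the epi--mono factorization of $\phi$ through its image is the standard way to reduce an arbitrary morphism to the short-exact-sequence hypothesis. Your reduction of (2) to (1) via opposite categories is also in the spirit of the paper, which has already recorded that $\cat{M}^{\mrm{op}}$ is abelian with kernels and cokernels interchanged.
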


\begin{proof}
Exercise. (Hint: $M_0 \cong \opn{Ker}(M_1 \to M_2)$ etc.)
\end{proof}

\begin{exa}
Let $A$ be a commutative ring, and let $M$ be a fixed $A$-module. 
Define functors 
$F, G : \cat{Mod}\, A \to \cat{Mod}\, A$
and 
$H : (\cat{Mod}\, A)^{\mrm{op}} \to \cat{Mod}\, A$
like this:
$F(N) := M \ot_A N$, $G(N) := \opn{Hom}_A(M, N)$ and 
$H(N) := \opn{Hom}_A(N, M)$. Then $F$ is right exact, and $G$ and $H$ are left
exact.
\end{exa}

\begin{prop}
Let $F : \cat{M} \to \cat{N}$ be an additive functor between abelian
categories. If $F$ is an equivalence then it is exact.
\end{prop}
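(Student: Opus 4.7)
The plan is to show directly that $F$ preserves kernels and cokernels; the right-exactness case is entirely dual to the left-exactness case (or can be obtained by applying the left-exact case to $F^{\mrm{op}} : \cat{M}^{\mrm{op}} \to \cat{N}^{\mrm{op}}$, which is also an equivalence), so I will focus on left exactness. The underlying idea is that kernels are characterized by a universal property phrased entirely in terms of hom-sets and composition, and an equivalence preserves such structure because it is fully faithful and essentially surjective.

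Fix a morphism $\phi : M_0 \to M_1$ in $\cat{M}$ with kernel $k : K \to M_0$. I want to check the two kernel axioms for $F(k) : F(K) \to F(M_0)$ with respect to $F(\phi)$. For axiom (i), note $F(\phi) \circ F(k) = F(\phi \circ k) = F(0) = 0$, where the last equality uses Proposition \ref{prop:109}. For axiom (ii), suppose $k' : K' \to F(M_0)$ is a morphism in $\cat{N}$ satisfying $F(\phi) \circ k' = 0$. Using essential surjectivity of $F$, choose $L \in \cat{M}$ together with an isomorphism $\alpha : F(L) \iso K'$ in $\cat{N}$. By full faithfulness, there is a unique morphism $\tilde{k} : L \to M_0$ in $\cat{M}$ with $F(\tilde{k}) = k' \circ \alpha$. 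Then $F(\phi \circ \tilde{k}) = F(\phi) \circ k' \circ \alpha = 0 = F(0)$, and by faithfulness $\phi \circ \tilde{k} = 0$. By the kernel property of $k$ in $\cat{M}$, there is a unique $\tilde{g} : L \to K$ with $k \circ \tilde{g} = \tilde{k}$. Define $g := F(\tilde{g}) \circ \alpha^{-1} : K' \to F(K)$; a direct check gives $F(k) \circ g = F(k \circ \tilde{g}) \circ \alpha^{-1} = F(\tilde{k}) \circ \alpha^{-1} = k'$.

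For uniqueness of $g$: if $g_1, g_2 : K' \to F(K)$ both satisfy $F(k) \circ g_i = k'$, set $h_i := g_i \circ \alpha : F(L) \to F(K)$. By full faithfulness there are unique $\tilde{g}_i : L \to K$ with $F(\tilde{g}_i) = h_i$, and then $F(k \circ \tilde{g}_i) = F(k) \circ h_i = k' \circ \alpha = F(\tilde{k})$, so by faithfulness $k \circ \tilde{g}_i = \tilde{k}$. Uniqueness in the kernel property of $k$ forces $\tilde{g}_1 = \tilde{g}_2$, hence $h_1 = h_2$ and $g_1 = g_2$. This proves $F$ is left exact. The right-exact case is proved by the same argument applied to cokernels, with all arrows reversed.

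The only real subtlety is the bookkeeping around transporting the test morphism $k'$ back into $\cat{M}$: one must use essential surjectivity to get a preimage $L$ of $K'$, then use full faithfulness twice (once to lift $k'\circ\alpha$ to $\tilde{k}$, and once to lift candidate factorizations $g_i$ to $\tilde{g}_i$) and faithfulness to kill morphisms that become zero after applying $F$. Once this dictionary between $\cat{M}$ and $\cat{N}$ is in place, both existence and uniqueness reduce immediately to the corresponding properties of the kernel $k$ inside $\cat{M}$, and there is no further obstacle.
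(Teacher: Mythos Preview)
Your proof is correct and follows essentially the same route as the paper's: transport the test morphism back to $\cat{M}$ via essential surjectivity, then use full faithfulness to lift, solve the kernel problem there, and push forward. You are slightly more explicit than the paper about the uniqueness half of axiom (ii), which is a plus. One tiny quibble: the justification $F(0)=0$ for the zero \emph{morphism} follows immediately from additivity (the map on hom-sets is a group homomorphism), so the citation of Proposition~\ref{prop:109} (which concerns zero \emph{objects}) is not quite the right reference, though the conclusion is of course correct.
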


\begin{proof}
We will prove that $F$ respects kernels; the proof for cokernels is similar.
Take a morphism $\phi : M_0 \to M_1$ in $\cat{M}$, with kernel $K$. 
We have this diagram (solid arrows):
\[ \UseTips  \xymatrix @C=6ex @R=6ex {
M 
\ar@{-->}[d]_{\psi}
\ar@{-->}[dr]^{\theta}
\\
K
\ar[r]^{k}
& 
M_0
\ar[r]^{\phi}
&
M_1
} \]
Applying $F$ we obtain 
 this diagram (solid arrows):
\[ \UseTips  \xymatrix @C=6ex @R=6ex {
N = F(M)
\ar@{-->}[d]_{F(\psi)}
\ar@{-->}[dr]^{\bar{\theta}}
\\
F(K)
\ar[r]^{F(k)}
& 
F(M_0)
\ar[r]^{F(\phi)}
&
F(M_1)
} \]
in $\cat{N}$. 
Suppose $\bar{\theta} : N \to F(M_0)$ is a morphism in $\cat{N}$ s.t.\ 
$F(\phi) \circ \bar{\theta} = 0$.
Since $F$ is essentially surjective on objects, there is some $M \in \cat{M}$
with an isomorphism $\al : F(M) \iso N$. After replacing $N$ with $F(M)$ and 
$\bar{\theta}$ with $\bar{\theta} \circ \al$, we can assume that  
$N = F(M)$. 

Now since $F$ is fully faithful, there is a unique $\theta : M \to M_0$ s.t.\ 
$F(\theta) = \bar{\theta}$; and $\phi \circ \theta = 0$. 
So there is a unique $\psi : M \to K$ s.t.\  
$\theta = k \circ \psi$. It follows that 
$F(\psi) : F(M) \to F(M_0)$ is the unique morphism s.t.\ 
$\bar{\theta} = F(k) \circ F\psi)$.
\end{proof}

Here is a result that could afford another proof of the previous proposition. 

\begin{prop}
Let $F : \cat{M} \to \cat{N}$ be an additive functor between linear
categories. The following conditions are equivalent:
\begin{enumerate}
\rmitem{i} The functor $F$ has a quasi-inverse.
\rmitem{ii} The functor $F$ has an additive quasi-inverse.
\end{enumerate}
\end{prop}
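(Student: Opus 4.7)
The implication (ii) $\Rightarrow$ (i) is immediate. For the converse, I claim that any quasi-inverse $G : \cat{N} \to \cat{M}$ of $F$ is automatically additive, so there is nothing to construct. The plan is to transport the additive structure through the natural isomorphisms, using only the fact that $F$ is additive and fully faithful.

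Fix a quasi-inverse $G$ together with a natural isomorphism $\eta : \bsym{1}_{\cat{N}} \iso F \circ G$. For any morphism $\alpha : N_0 \to N_1$ in $\cat{N}$, naturality of $\eta$ gives the identity
\[ F(G(\alpha)) = \eta_{N_1} \circ \alpha \circ \eta_{N_0}^{-1} . \]
First, I would take two morphisms $\alpha, \beta : N_0 \to N_1$ and compute, using the bilinearity of composition in $\cat{N}$,
\[ F(G(\alpha)) + F(G(\beta)) = \eta_{N_1} \circ (\alpha + \beta) \circ \eta_{N_0}^{-1} = F(G(\alpha + \beta)) . \]
Since $F$ is additive, the left hand side equals $F(G(\alpha) + G(\beta))$, so
\[ F(G(\alpha) + G(\beta)) = F(G(\alpha + \beta)) . \]
The equivalence $F$ is fully faithful (condition (ii) of Subsection 1.4), so the map $F$ on $\opn{Hom}_{\cat{M}}(G(N_0), G(N_1))$ is injective; therefore $G(\alpha + \beta) = G(\alpha) + G(\beta)$. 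A similar (even easier) argument with the single morphism $0 : N_0 \to N_1$, or the identity $G(0) = G(0) + G(0)$, shows that $G$ sends zero morphisms to zero morphisms.

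The main potential pitfall is to worry that $G$ might need to be redefined on objects or morphisms in order to be additive; the point of the argument is precisely that this worry is unfounded — the fully faithful, additive functor $F$ rigidifies the morphism sets enough to force $G$ to respect addition automatically. (If one wished to prove an analogous statement for $\K$-linear categories, the same argument works verbatim, replacing ``additive'' by ``$\K$-linear'' and using bilinearity of composition over $\K$.)
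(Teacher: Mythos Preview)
Your argument is correct: any quasi-inverse of an additive equivalence is automatically additive, and your proof via the naturality square $F(G(\alpha)) = \eta_{N_1} \circ \alpha \circ \eta_{N_0}^{-1}$, bilinearity of composition, additivity of $F$, and full faithfulness of $F$ is the standard and cleanest way to see this. The paper leaves this proposition as an exercise, so there is no proof to compare against; your solution is exactly what one would expect.
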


\begin{proof}
Exercise. 
\end{proof}

\cleardoublepage
\section{Projective and Injective Objects}

Here $\cat{M}$ is an abelian category.

\subsection{Projectives}
A {\em splitting} of an epimorphism 
$\psi : M \to M''$ in $\cat{M}$ is a morphism
$\al : M'' \to M$ s.t.\ $\psi \circ \al = 1_{M''}$.
A splitting of a monomorphism 
$\phi : M' \to M$ is  a morphism $\be : M \to M'$ s.t.\ 
$\be \circ \phi = 1_{M'}$.
A splitting of a short exact sequence 
\[ 0 \to M' \xar{\phi} M \xar{\psi} M'' \to 0  \]
is a splitting of the epimorphism $\psi$, or equivalently a splitting of the
monomorphism $\phi$. The short exact sequence is said to be {\em split} if it
has some splitting. 

\begin{dfn}
An object $P \in \cat{M}$ is called a {\em projective object} if any diagram 
(solid arrows)
\[ \UseTips  \xymatrix @C=6ex @R=6ex {
&
P
\ar[d]^{\ga}
\ar@{-->}[dl]_{\til{\ga}}
\\
M 
\ar[r]_{\psi}
&
N
} \]
in which $\psi$ is an epimorphism, can be completed (dashed arrow). 
\end{dfn}

\begin{prop}
The following conditions are equivalent for $P \in \cat{M}$\tup{:}
\begin{enumerate}
\rmitem{i} $P$ is projective.
\rmitem{ii} The additive functor 
\[ \opn{Hom}_{\cat{M}}(P, -) : \cat{M} \to \cat{Ab} \]
is exact.
\end{enumerate}
\end{prop}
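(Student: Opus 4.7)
The plan is to use the characterization of exactness of an additive functor in terms of short exact sequences (the proposition just above the statement), and to exploit the fact that $\opn{Hom}_{\cat{M}}(P, -)$ is automatically left exact for every object $P$. So the real content of the equivalence lives in the right-exactness condition, which is essentially a restatement of the lifting property.

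First I would verify the automatic left exactness. Given a short exact sequence $0 \to M' \xar{\phi} M \xar{\psi} M'' \to 0$ in $\cat{M}$, I would check by hand that
\[ 0 \to \opn{Hom}_{\cat{M}}(P, M') \xar{\phi \o -} \opn{Hom}_{\cat{M}}(P, M) \xar{\psi \o -} \opn{Hom}_{\cat{M}}(P, M'') \]
is exact in $\cat{Ab}$. Injectivity of $\phi \o -$ follows from $\phi$ being a monomorphism. Exactness in the middle follows from the universal property of the kernel: since $\phi$ realizes $M'$ as $\opn{Ker}(\psi)$, any $g : P \to M$ with $\psi \o g = 0$ factors uniquely through $\phi$.

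Next, for (i)$\Rightarrow$(ii), I would show right exactness. Given the same short exact sequence and any $\ga : P \to M''$, the fact that $\psi$ is an epimorphism together with the projective lifting property produces $\til{\ga} : P \to M$ with $\psi \o \til{\ga} = \ga$, which is exactly surjectivity of $\psi \o -$. Combined with the left exactness already proved and the preceding proposition, this gives exactness of $\opn{Hom}_{\cat{M}}(P, -)$.

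For (ii)$\Rightarrow$(i), I would start with an arbitrary epimorphism $\psi : M \to N$ and a morphism $\ga : P \to N$. To invoke the hypothesis I need to embed $\psi$ into a short exact sequence, which is immediate by taking its kernel: $0 \to \opn{Ker}(\psi) \to M \xar{\psi} N \to 0$ is exact (here I use that in an abelian category every epimorphism is the cokernel of its kernel, by Definition \ref{dfn:1}(ii)). Exactness of $\opn{Hom}_{\cat{M}}(P, -)$ then gives surjectivity of $\psi \o -$, so $\ga$ lifts to the desired $\til{\ga}$. The only mildly subtle point is this use of Definition \ref{dfn:1}(ii) to guarantee that an arbitrary epimorphism fits into a short exact sequence, but everything else is a formal unwinding of definitions.
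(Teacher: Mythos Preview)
Your argument is correct and complete. The paper does not actually give a proof of this proposition---it simply writes ``Exercise.''---so there is nothing to compare against; your write-up would serve perfectly well as the missing solution.
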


\begin{proof}
Exercise.
\end{proof}

\begin{dfn}
We say $\cat{M}$ {\em has enough projectives} if every $M \in \cat{M}$ admits
an epimorphism $P \to M$ with $P$ a projective object. 
\end{dfn}

\begin{exa}
Let $A$ be a ring. An $A$-module $P$ is projective iff it is a direct summand of
a free module; i.e.\ $P \oplus P' \cong Q$ for some module $P'$ and free module
$Q$. The category $\cat{Mod}\, A$ has enough projectives.
\end{exa}

\begin{exa}
Let $\cat{M}$ be the category of finite abelian groups. The only projective
object in $\cat{M}$ is $0$. So $\cat{M}$ does not have enough projectives. 
\end{exa}

\begin{exa}
Consider the scheme $X := \mbf{P}^1_{\K}$, the projective line over a field
$\K$ (we can assume $\K$ is algebraically closed, so this is a classical
algebraic variety). The structure sheaf (sheaf of functions) is $\mcal{O}_X$. 
The category $\cat{Coh}\, \mcal{O}_X$ of coherent $\mcal{O}_X$-modules is
abelian (it is a full abelian subcategory of $\cat{Mod}\, \mcal{O}_X$, cf.\ 
Example \ref{exa:1}). One can show that the only projective object of 
$\cat{Coh}\, \mcal{O}_X$ is $0$, but this is quite involved. 

Let us only indicate why  $\mcal{O}_X$ is not projective. 
Denote by $t_0, t_1$ the homogenous coordinates of $X$. These belong to 
$\Gamma(X, \mcal{O}_X(1))$, so each determines a homomorphism of sheaves
$t_j : \mcal{O}_X(i) \to \mcal{O}_X(i+1)$. We get a sequence 
\[ 0 \to \mcal{O}_X(-2) \xar{\sbmat{t_0 & -t_1}} \mcal{O}_X(-1)^2 
\xar{\sbmat{t_0 \\ t_1}} \mcal{O}_X \to 0  \]
in $\cat{Coh}\, \mcal{O}_X$, which is known to be exact, and also not split. 
\end{exa}

\subsection{Injectives}
\begin{dfn}
An object $I \in \cat{M}$ is called an {\em injective object} if any diagram 
(solid arrows)
\[ \UseTips  \xymatrix @C=6ex @R=6ex {
I
\\
M 
\ar[u]^{\ga}
\ar[r]_{\psi}
&
N
\ar@{-->}[ul]_{\til{\ga}}
} \]
in which $\psi$ is a monomorphism, can be completed (dashed arrow). 
\end{dfn}

\begin{prop}
The following conditions are equivalent for $I \in \cat{M}$\tup{:}
\begin{enumerate}
\rmitem{i} $I$ is injective.
\rmitem{ii} The additive functor 
\[ \opn{Hom}_{\cat{M}}(-, I) : \cat{M}^{\mrm{op}} \to \cat{Ab} \]
is exact.
\end{enumerate}
\end{prop}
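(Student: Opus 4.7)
The plan is to dualize the proof for projectives, observing that everything goes through by reversing arrows.

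First I would establish the half that holds automatically: for any object $I \in \cat{M}$, the contravariant functor $\opn{Hom}_{\cat{M}}(-, I)$ is \emph{always} left exact, meaning that applying it to a short exact sequence
\[ 0 \to M' \xar{\phi} M \xar{\psi} M'' \to 0 \]
in $\cat{M}$ yields an exact sequence
\[ 0 \to \opn{Hom}_{\cat{M}}(M'', I) \xar{\psi^*} \opn{Hom}_{\cat{M}}(M, I)
\xar{\phi^*} \opn{Hom}_{\cat{M}}(M', I) . \]
This is a formal consequence of the universal properties: injectivity of $\psi^*$ follows from $\psi$ being an epimorphism, and exactness in the middle follows from $M'' = \opn{Coker}(\phi)$ together with $\phi = \opn{Ker}(\psi)$. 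So the functor is exact if and only if in addition $\phi^*$ is surjective for every such short exact sequence.

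Next I would translate the injectivity property into this language. The axiom of injectivity asks that for any monomorphism $\psi : M \to N$ and any morphism $\ga : M \to I$, there is a lift $\til{\ga} : N \to I$ with $\til{\ga} \circ \psi = \ga$; equivalently, $\psi^* : \opn{Hom}_{\cat{M}}(N, I) \to \opn{Hom}_{\cat{M}}(M, I)$ is surjective for every monomorphism $\psi$. Since $\cat{M}$ is abelian, every monomorphism $\psi : M \to N$ fits into a short exact sequence $0 \to M \xar{\psi} N \to \opn{Coker}(\psi) \to 0$, and conversely the leftmost map $\phi$ in every short exact sequence is a monomorphism. Hence the surjectivity of $\phi^*$ for all short exact sequences is exactly the injectivity condition.

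Combining these two observations, (i) and (ii) are equivalent. The only slightly technical point --- and the only place where anything needs to be checked rather than invoked --- is the automatic left exactness of $\opn{Hom}_{\cat{M}}(-, I)$, which I would verify directly from the universal properties of kernels and cokernels; everything else is formal. I would remark that this is literally the dual of the proposition for projectives, obtained by passing to $\cat{M}^{\mrm{op}}$, so one could alternatively deduce it from the previous proposition together with the fact that $\cat{M}^{\mrm{op}}$ is abelian and that an object is injective in $\cat{M}$ iff it is projective in $\cat{M}^{\mrm{op}}$.
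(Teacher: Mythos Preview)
Your proof is correct and complete; the paper itself leaves this proposition as an exercise, so there is no approach to compare against. Your argument via automatic left exactness of $\opn{Hom}_{\cat{M}}(-, I)$ plus the equivalence of surjectivity of $\phi^*$ with the lifting property is the standard one, and your remark about dualizing from the projective case is also valid.
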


\begin{proof}
Exercise.
\end{proof}

\begin{exa}
Let $A$ be a ring. Unlike projectives, the structure of injective objects in
$\cat{Mod}\, A$ is very complicated, and not much is known (except that they
exist). However if $A$ is a commutative noetherian ring then we know this:
every injective module $I$ is a direct sum of indecomposable injective modules. 
And these indecomposables are parametrized by $\opn{Spec} A$, the set of prime
ideals of $A$. These facts are due to Matlis; see \cite[pages 120-122]{RD} for
details.
\end{exa}

\begin{dfn}
We say $\cat{M}$ {\em has enough injectives} if every $M \in \cat{M}$ admits
a monomorphism $M \to I$ with $I$ an  injective object. 
\end{dfn}

Here are a few results about injective objects. 

\begin{prop}
Let $f : A \to B$ be a ring homomorphism, and let $I$ be an injective left
$A$-module. Then 
$J := \opn{Hom}_A(B, I)$ is an  injective left $B$-module.
\end{prop}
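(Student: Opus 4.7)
The plan is to use the characterization of injectivity from the previous proposition: $J$ is an injective left $B$-module if and only if the functor $\operatorname{Hom}_B(-, J) : (\cat{Mod}\, B)^{\mrm{op}} \to \cat{Ab}$ is exact. So I will exhibit a natural isomorphism of functors on $\cat{Mod}\, B$ that expresses $\operatorname{Hom}_B(-, J)$ as a composition of two exact functors.

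First I would make the $B$-module structure on $J$ explicit. The ring homomorphism $f$ turns $B$ into an $(A, B)$-bimodule: the left $A$-action is $a \cdot b := f(a) b$, and the right $B$-action is the usual multiplication. With this, $J = \operatorname{Hom}_A(B, I)$ becomes a left $B$-module via $(b \cdot \phi)(b') := \phi(b' b)$ for $b, b' \in B$ and $\phi \in J$. Next, for any left $B$-module $M$, I would write down the adjunction isomorphism
\[
\Phi_M : \operatorname{Hom}_B(M, J) \iso \operatorname{Hom}_A(M, I) ,
\]
where on the right $M$ is viewed as an $A$-module by restriction of scalars along $f$. Explicitly, $\Phi_M(\psi)(m) := \psi(m)(1_B)$, with inverse $\Phi_M^{-1}(\th)(m)(b) := \th(b \cdot m)$. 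A short check shows these are mutually inverse and natural in $M$; this is the only slightly fiddly step, but it is a routine verification.

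Then I would assemble the two exactness ingredients. The restriction of scalars functor $F : \cat{Mod}\, B \to \cat{Mod}\, A$ is exact, since the underlying abelian group and the kernel/cokernel of any morphism are unchanged. By hypothesis $I$ is injective in $\cat{Mod}\, A$, so by the proposition $\operatorname{Hom}_A(-, I) : (\cat{Mod}\, A)^{\mrm{op}} \to \cat{Ab}$ is exact. Hence the composition $\operatorname{Hom}_A(F(-), I)$ is exact on $(\cat{Mod}\, B)^{\mrm{op}}$, and the natural isomorphism $\Phi$ transports this exactness to $\operatorname{Hom}_B(-, J)$. Applying the characterization of injectivity again yields that $J$ is an injective left $B$-module.

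The only real obstacle is bookkeeping: being consistent about the two sides of $B$'s bimodule structure so that the formulas defining $\Phi_M$ and $\Phi_M^{-1}$ are actually $B$-linear (not merely additive) and that $\Phi_M$ is indeed natural in $M$. Once those checks are done, the rest is a formal consequence of the exactness of restriction of scalars together with the injectivity of $I$.
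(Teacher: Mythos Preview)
Your proposal is correct and follows essentially the same approach as the paper: both use the adjunction isomorphism $\opn{Hom}_B(M, \opn{Hom}_A(B, I)) \cong \opn{Hom}_A(M, I)$ (natural in $M$) to identify $\opn{Hom}_B(-, J)$ with an exact functor, and then invoke the characterization of injectivity via exactness of $\opn{Hom}$. You simply spell out more of the details (the explicit formulas for $\Phi_M$ and its inverse, and the exactness of restriction of scalars) that the paper leaves implicit.
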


\begin{proof}
Note that $B$ is a left $A$-module via $f$, and a right $B$-module. This makes
$J$ into a left $B$-module. In a formula: for $\phi \in J$ and $b, b' \in B$ we
have
$(b \phi)(b') = \phi(b' b)$. 

Now given any $N \in \cat{Mod}\, B$ there is an isomorphism 
\begin{equation} \label{eqn:4}
\opn{Hom}_B(N, J) =  \opn{Hom}_B(N, \opn{Hom}_A(B, I)) \cong 
\opn{Hom}_A(N, I) .
\end{equation}
 This is a natural isomorphism (of functors in $N$). So the functor 
$\opn{Hom}_B(-, J)$ is exact, and hence $J$ is injective. 
\end{proof}

We quote the following result:

\begin{thm}[Baer Criterion] \label{thm:151}
Let $A$ be a ring and $I$ a left $A$-module. $I$ is injective iff for every
left ideal $\a \subset A$, every homomorphism $\ga : \a \to I$ extends to a
homomorphism $\til{\ga} : A \to I$.
\end{thm}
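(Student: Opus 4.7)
The ``only if'' direction is immediate: if $I$ is injective, apply the defining extension property to the monomorphism $\mathfrak{a} \hookrightarrow A$ and the map $\gamma : \mathfrak{a} \to I$. So the content is the ``if'' direction, and the plan is the classical Zorn's lemma argument.

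Given a monomorphism $\psi : M \hookrightarrow N$ and a homomorphism $\gamma : M \to I$, I may as well treat $M$ as a submodule of $N$. Consider the set $S$ whose elements are pairs $(M', \gamma')$ where $M \subset M' \subset N$ is a submodule and $\gamma' : M' \to I$ restricts to $\gamma$ on $M$. Order $S$ by $(M_1, \gamma_1) \leq (M_2, \gamma_2)$ iff $M_1 \subset M_2$ and $\gamma_2|_{M_1} = \gamma_1$. Every chain has an upper bound given by the union of the submodules with the obvious compatible map, so by Zorn's lemma $S$ has a maximal element $(M_0, \gamma_0)$.

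The crux is to show $M_0 = N$, for which I argue by contradiction. Suppose $n \in N \setminus M_0$. Define
\[ \mathfrak{a} := \{\, a \in A \mid a n \in M_0 \,\} , \]
which is a left ideal of $A$, and define $\delta : \mathfrak{a} \to I$ by $\delta(a) := \gamma_0(a n)$; this is a well-defined left $A$-module homomorphism. By hypothesis, $\delta$ extends to a homomorphism $\tilde{\delta} : A \to I$. Set $M_1 := M_0 + A n \subset N$, and attempt to define $\gamma_1 : M_1 \to I$ by
\[ \gamma_1(m + a n) := \gamma_0(m) + \tilde{\delta}(a) , \quad m \in M_0,\ a \in A . \]
The one thing to check is that $\gamma_1$ is well-defined: if $m + a n = m' + a' n$ then $(a - a') n = m' - m \in M_0$, so $a - a' \in \mathfrak{a}$, and consequently $\tilde{\delta}(a - a') = \delta(a - a') = \gamma_0((a - a') n) = \gamma_0(m' - m)$, which gives $\gamma_0(m) + \tilde{\delta}(a) = \gamma_0(m') + \tilde{\delta}(a')$. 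Thus $\gamma_1$ is a well-defined $A$-linear extension of $\gamma_0$ to $M_1 \supsetneq M_0$, contradicting the maximality of $(M_0, \gamma_0)$. Therefore $M_0 = N$ and $\gamma_0 : N \to I$ is the required extension.

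The only subtle point in the plan is the well-definedness of $\gamma_1$, and that step is precisely where the Baer hypothesis is used (to make $\tilde{\delta}(a)$ depend only on $a$, and to force $\tilde{\delta}|_{\mathfrak{a}} = \delta$). Everything else is formal: Zorn's lemma and linearity checks.
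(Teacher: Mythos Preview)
Your proof is correct and is the standard Zorn's lemma argument for the Baer criterion. Note, however, that the paper does not actually prove this theorem: it is introduced with ``We quote the following result'' and no proof is given. So there is nothing to compare against; you have supplied a complete and correct proof where the paper chose to cite the result without proof.
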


\begin{lem}
The $\Z$-module $\mbb{Q} / \Z$ is injective. 
\end{lem}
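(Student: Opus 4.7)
The plan is to apply the Baer Criterion (Theorem \ref{thm:151}) with $A = \Z$ and $I = \mbb{Q}/\Z$. Since every left ideal of $\Z$ is of the form $\a = n \Z$ for some integer $n \geq 0$, it suffices to show that for each such $n$ and each homomorphism $\ga : n \Z \to \mbb{Q}/\Z$, there is an extension $\til{\ga} : \Z \to \mbb{Q}/\Z$.

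For $n = 0$ there is nothing to check (the zero homomorphism extends by the zero homomorphism), so assume $n \geq 1$. Here I would exploit the fact that $n \Z$ is free of rank $1$ on the generator $n$, so $\ga$ is completely determined by the single element $\ga(n) \in \mbb{Q}/\Z$. Lift this element to a representative $q \in \mbb{Q}$, i.e.\ write $\ga(n) = q + \Z$. The core observation is then the \emph{divisibility} of $\mbb{Q}/\Z$: the rational number $r := q/n$ lies in $\mbb{Q}$ and satisfies $n \cdot r = q$, hence $n \cdot (r + \Z) = q + \Z = \ga(n)$ in $\mbb{Q}/\Z$.

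Using this $r$, define $\til{\ga} : \Z \to \mbb{Q}/\Z$ by $\til{\ga}(k) := k r + \Z$. This is clearly a $\Z$-linear homomorphism, and by construction $\til{\ga}(n) = n r + \Z = q + \Z = \ga(n)$, so $\til{\ga}$ restricts to $\ga$ on $n \Z$. By the Baer Criterion, $\mbb{Q}/\Z$ is injective.

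No step here seems difficult; the only conceptual content is the divisibility of $\mbb{Q}/\Z$, which is what makes the extension available. (In fact the argument shows more generally that any divisible abelian group is injective as a $\Z$-module, since for a PID the ideals are all principal and the same lifting argument applies verbatim.)
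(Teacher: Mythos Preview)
Your proof is correct and follows essentially the same approach as the paper: both apply the Baer Criterion, reduce to ideals $n\Z$ with $n \neq 0$, lift $\ga(n)$ to a rational representative, and divide by $n$ to define the extension on $1 \in \Z$. Your version is slightly more verbose and adds the (correct) remark about divisible abelian groups, but the argument is the same.
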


\begin{proof}
By the Baer criterion, it is enough to consider a homomorphism
$\ga : \a \to \mbb{Q} / \Z$ for $\a = n \Z \subset \Z$. We may assume that 
$n \neq 0$. Say $\ga(n) = r + \Z$ with $r \in \mbb{Q}$. 
Then we can extend $\ga$ to $\til{\ga} : \Z \to \mbb{Q} / \Z$ with 
$\til{\ga}(1) := r / n + \Z$. 
\end{proof}

\begin{exer}
Try proving this lemma directly, without using the Baer criterion.
\end{exer}

\begin{lem}
Let $\{ I_x \}_{x \in X}$ be a collection of injective objects of $\cat{M}$. If
the product $I := \prod_{x \in X} I_x$ exists, then it is an injective object.
\end{lem}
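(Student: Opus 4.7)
The plan is to prove injectivity of $I := \prod_{x \in X} I_x$ directly via the lifting criterion from the definition of an injective object, using the universal property of the product together with the injectivity of each factor $I_x$.

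First I would set up the data: let $\psi : M \to N$ be a monomorphism in $\cat{M}$ and let $\ga : M \to I$ be any morphism; the goal is to produce $\til{\ga} : N \to I$ with $\til{\ga} \circ \psi = \ga$. Let $p_x : I \to I_x$ denote the projections coming from the product structure. For each $x \in X$, compose to obtain $p_x \circ \ga : M \to I_x$. Since $I_x$ is injective and $\psi$ is a monomorphism, there exists a morphism $\til{\ga}_x : N \to I_x$ such that $\til{\ga}_x \circ \psi = p_x \circ \ga$.

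Next, invoke the universal property of the product: the family $\{\til{\ga}_x\}_{x \in X}$ of morphisms $N \to I_x$ assembles into a unique morphism $\til{\ga} : N \to I$ satisfying $p_x \circ \til{\ga} = \til{\ga}_x$ for every $x$. It remains to verify $\til{\ga} \circ \psi = \ga$. For each $x$ we have
\[ p_x \circ (\til{\ga} \circ \psi) = \til{\ga}_x \circ \psi = p_x \circ \ga , \]
and since the $p_x$ jointly distinguish morphisms into $I$ (this is precisely the uniqueness clause in the universal property, applied to the two candidate morphisms $\til{\ga} \circ \psi$ and $\ga$ from $M$ to $I = \prod_x I_x$), we conclude $\til{\ga} \circ \psi = \ga$.

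No step here is really an obstacle; the only mild subtlety is to remember that the proof of equality $\til{\ga} \circ \psi = \ga$ relies on the uniqueness part of the product's universal property, not merely the existence part. An alternative (essentially equivalent) route would use the characterization $I$ injective $\Leftrightarrow \opn{Hom}_{\cat{M}}(-, I)$ exact, together with the natural isomorphism $\opn{Hom}_{\cat{M}}(-, \prod_x I_x) \cong \prod_x \opn{Hom}_{\cat{M}}(-, I_x)$ and the fact that products are exact in $\cat{Ab}$, but the direct diagram-chase above is shorter and fits the style of the surrounding material.
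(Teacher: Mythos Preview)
Your proof is correct. The paper leaves this lemma as an exercise, and your direct verification via the lifting criterion together with the universal property of the product is precisely the intended argument; the alternative route through $\opn{Hom}_{\cat{M}}(-,\prod_x I_x) \cong \prod_x \opn{Hom}_{\cat{M}}(-,I_x)$ that you mention is equally valid.
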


\begin{proof}
Exercise.
\end{proof}

\begin{thm}
Let $A$ be any ring. The category $\cat{Mod}\, A$ has enough injectives.
\end{thm}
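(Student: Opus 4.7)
The plan is the standard two-step reduction: first prove it for $A = \Z$, then bootstrap to arbitrary $A$ using the change-of-rings proposition already established in the excerpt.

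Step 1 (reduction to $\Z$). Assume we have shown that $\cat{Ab} = \cat{Mod}\, \Z$ has enough injectives. Given any left $A$-module $M$, view $M$ as an abelian group and pick an injective abelian group $I$ together with a $\Z$-linear monomorphism $\iota : M \inj I$. By the preceding proposition, the left $B$-module construction with $B := A$ and the ring map $\Z \to A$ shows that $J := \opn{Hom}_{\Z}(A, I)$ is an injective left $A$-module. Define
\[ \mu : M \to J , \qquad \mu(m)(a) := \iota(a m) . \]
This map is $A$-linear: $\mu(bm)(a) = \iota(abm) = \mu(m)(ab) = (b \cdot \mu(m))(a)$. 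It is injective because $\mu(m) = 0$ forces $\iota(m) = \mu(m)(1) = 0$, hence $m = 0$. So $M$ embeds into an injective $A$-module.

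Step 2 (enough injectives over $\Z$). Given an abelian group $M$, the goal is to construct a monomorphism from $M$ into a product of copies of $\mbb{Q} / \Z$; since $\mbb{Q}/\Z$ is injective (lemma) and products of injectives are injective (lemma), this will finish the proof. For each nonzero $m \in M$, consider the cyclic subgroup $\Z m \subset M$ and define a homomorphism $\ga_m : \Z m \to \mbb{Q} / \Z$ sending $m$ to a nonzero element of $\mbb{Q}/\Z$: if $m$ has infinite order take $\ga_m(m) := \tfrac{1}{2} + \Z$, and if $m$ has finite order $n$ take $\ga_m(m) := \tfrac{1}{n} + \Z$. By the injectivity of $\mbb{Q}/\Z$, the map $\ga_m$ extends to some $\phi_m : M \to \mbb{Q}/\Z$ with $\phi_m(m) \neq 0$. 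The assembled map
\[ \Phi : M \to \boprod_{0 \neq m \in M} \mbb{Q}/\Z , \qquad \Phi(x) := (\phi_m(x))_{m} , \]
is a monomorphism, since for any $0 \neq x \in M$ the $x$-coordinate $\phi_x(x)$ is nonzero. The codomain is an injective abelian group, so we are done.

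The main obstacle is Step 2, which rests on the Baer criterion (used to get that $\mbb{Q}/\Z$ is injective in the first place) and on the extension property being applied to each cyclic subgroup. Given the lemmas already in the excerpt (injectivity of $\mbb{Q}/\Z$; products of injectives are injective; and the $\opn{Hom}_{\Z}(A, -)$ preservation of injectivity), the argument proceeds without further technical input.
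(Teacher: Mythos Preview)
Your proof is correct and follows essentially the same approach as the paper's: embed into a product of copies of $\mbb{Q}/\Z$ over $\Z$, then use $\opn{Hom}_{\Z}(A,-)$ with the adjunction to reduce the general case to $\Z$. The only difference is that you present the reduction step first and the $\Z$ case second, whereas the paper does the opposite; the content is the same.
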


\begin{proof}
Step 1. Here $A = \Z$. Take any nonzero $\Z$-module $M$ and any nonzero 
$m \in M$. Consider the cyclic submodule $M' := \Z m \subset M$. 
There is a homomorphism $\ga : M' \to \mbb{Q} / \Z$ s.t.\ 
$\ga(m) \neq 0$. Indeed, if $M' \cong \Z$ then we can take any 
$r \in \mbb{Q} - \Z$ and define $\ga(m) := r + \Z \in \mbb{Q} / \Z$.
If $M' \cong \Z / (n)$ for some $n \neq 0$, then we take 
$r := 1 / n$. Since $\mbb{Q} / \Z$ is an injective $\Z$-module, $\ga$ extends to
a homomorphism $\til{\ga} : M \to \mbb{Q} / \Z$.

\medskip \noindent 
Step 2. Again $A = \Z$. Let $M$ be a nonzero $\Z$-module. By step 1, for any
nonzero  $m \in M$ there is a homomorphism $\phi_m : M \to \mbb{Q} / \Z$ s.t.\ 
$\phi_m(m) \neq 0$.  
Define the $\Z$-module $I := \prod_{m \in M - \{ 0 \}} (\mbb{Q} / \Z)$,
and the homomorphism $\phi := \prod_m \phi_m : M \to I$. 
Then $\phi$ is a monomorphism, and $I$ is an injective $\Z$-module. 

\medskip \noindent 
Step 3. Now $A$ is any ring, and $M$ is any (left) $A$-module. We may assume 
$M \neq 0$. Viewing $M$ as a $\Z$-module, choose any embedding  
$\phi : M \to I$ into an injective $\Z$-module $I$. 
Let $J := \opn{Hom}_{\Z}(A, I)$, which is an injective $A$-module. 
Let $\tau : J \to I$ be the $\Z$-module homomorphism that
sends an element $\chi \in J$ to $\chi(1) \in I$.
The adjunction formula (\ref{eqn:4}) gives a unique $A$-module homomorphism 
$\psi : M \to J$ s.t.\ $\tau \circ \psi = \phi$. 
This $\psi$ is a monomorphism.
\end{proof}

\begin{exa}
Let $\cat{N}$ be the category of torsion abelian groups, and $\cat{M}$ the
category  of finite abelian groups. Then 
$\cat{M} \subset \cat{N}$ and $\cat{N} \subset \cat{Ab} = \cat{Mod}\, \Z$ are
full abelian subcategories. $\cat{M}$ has no projectives nor injectives except
$0$. The only projective in $\cat{N}$ is $0$. But $\cat{N}$ has enough
injectives: this is because $\mbb{Q} / \Z \in \cat{N}$, $\cat{N}$ is closed
under infinite direct sums in $\cat{Ab}$, and the next proposition.
\end{exa}

\begin{prop}
If $A$ is a left noetherian ring, then any direct sum of injective $A$-modules
is an injective module. 
\end{prop}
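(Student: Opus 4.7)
The plan is to verify injectivity of $I := \bigoplus_{x \in X} I_x$ via the Baer criterion (Theorem \ref{thm:151}). So I would fix a left ideal $\mathfrak{a} \subset A$ and a homomorphism $\gamma : \mathfrak{a} \to I$, and aim to extend $\gamma$ to all of $A$.

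The key idea is that the noetherian hypothesis forces $\gamma$ to factor through a \emph{finite} sub-coproduct, where the direct sum coincides with a finite product and injectivity is already known. In detail: since $A$ is left noetherian, $\mathfrak{a}$ is finitely generated, say $\mathfrak{a} = A a_1 + \cdots + A a_n$. Each element $\gamma(a_i) \in \bigoplus_{x \in X} I_x$ has only finitely many nonzero components, so there is a finite subset $X_0 \subset X$ containing the supports of all the $\gamma(a_i)$. Since $\gamma$ is $A$-linear and the $a_i$ generate $\mathfrak{a}$, the image $\gamma(\mathfrak{a})$ is contained in the submodule $I_0 := \bigoplus_{x \in X_0} I_x \subset I$.

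Now $X_0$ is finite, so $I_0 = \prod_{x \in X_0} I_x$, and by the earlier lemma (products of injectives are injective) $I_0$ is an injective $A$-module. Regarding $\gamma$ as a homomorphism $\mathfrak{a} \to I_0$, injectivity of $I_0$ (applied to the monomorphism $\mathfrak{a} \hookrightarrow A$) yields an extension $\tilde{\gamma} : A \to I_0$. Composing with the inclusion $I_0 \hookrightarrow I$ gives the desired extension $\tilde{\gamma} : A \to I$, so Baer's criterion is satisfied and $I$ is injective.

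The only mildly nontrivial point is the reduction from the arbitrary coproduct to a finite sub-coproduct, which is precisely where the noetherian hypothesis enters: without finite generation of $\mathfrak{a}$ there would be no finite $X_0$ absorbing all the images, and the proof would collapse. Everything else is a direct appeal to previously stated results.
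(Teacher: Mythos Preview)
Your proof is correct and follows exactly the approach indicated by the paper, which leaves this as an exercise with the hint to use the Baer criterion. The reduction to a finite sub-coproduct via finite generation of $\mathfrak{a}$ is precisely the intended argument.
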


\begin{proof}
Exercise. (Hint: use the Baer criterion.)
\end{proof}

\begin{prop}
Let $(X, \mcal{A})$ be a ringed space. Then $\cat{Mod}\, \mcal{A}$ has enough
injectives. 
\end{prop}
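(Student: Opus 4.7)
The plan is to reduce the problem to the module-theoretic case (the preceding theorem, that $\cat{Mod}\, A$ has enough injectives for any ring $A$) by passing to stalks. For each point $x \in X$, let $i_x : \{x\} \to X$ denote the inclusion. Given an $\mcal{A}_x$-module $N$, the skyscraper sheaf $(i_x)_* N$ carries a natural $\mcal{A}$-module structure via the canonical ring homomorphism $\mcal{A} \to (i_x)_* \mcal{A}_x$. The key technical input is the adjunction
\[ \opn{Hom}_{\mcal{A}}(\mcal{N}, (i_x)_* N) \cong \opn{Hom}_{\mcal{A}_x}(\mcal{N}_x, N) \]
for every $\mcal{A}$-module $\mcal{N}$, exhibiting $(i_x)_*$ as right adjoint to the stalk functor $(-)_x : \cat{Mod}\, \mcal{A} \to \cat{Mod}\, \mcal{A}_x$.

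Since the stalk functor is exact, this adjunction implies that the skyscraper functor $(i_x)_*$ sends injective $\mcal{A}_x$-modules to injective $\mcal{A}$-modules: if $I$ is an injective $\mcal{A}_x$-module, then $\opn{Hom}_{\mcal{A}}\bigl(-, (i_x)_* I\bigr)$ is the composition of the exact functor $(-)_x$ with the exact functor $\opn{Hom}_{\mcal{A}_x}(-, I)$, hence is itself exact.

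Now, given $\mcal{M} \in \cat{Mod}\, \mcal{A}$, for each $x \in X$ apply the previous theorem to choose a monomorphism $\mcal{M}_x \inj I_x$ into an injective $\mcal{A}_x$-module. Via the adjunction this corresponds to a morphism $\phi_x : \mcal{M} \to (i_x)_* I_x$ in $\cat{Mod}\, \mcal{A}$. Form the product $\mcal{I} := \prod_{x \in X} (i_x)_* I_x$, which exists in $\cat{Mod}\, \mcal{A}$ (products of sheaves are computed sectionwise) and is injective by the earlier lemma on products of injectives. Assembling the $\phi_x$ yields a morphism $\phi : \mcal{M} \to \mcal{I}$.

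It remains to check that $\phi$ is a monomorphism. For any $y \in X$, the composition $\mcal{M}_y \to \mcal{I}_y \to \bigl((i_y)_* I_y\bigr)_y = I_y$ equals the chosen injection $\mcal{M}_y \inj I_y$, so $\phi_y$ is injective on stalks, and hence $\phi$ is a monomorphism in $\cat{Mod}\, \mcal{A}$. The main obstacle to be verified is the adjunction and the resulting fact that $(i_x)_*$ preserves injectives; everything else is a routine assembly once this is in hand.
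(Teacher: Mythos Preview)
Your proof is correct and follows essentially the same route as the paper: embed each stalk $\mcal{M}_x$ into an injective $\mcal{A}_x$-module $I_x$, push forward via the skyscraper $(i_x)_*$ (which preserves injectives by the exactness of the stalk functor and the adjunction), take the product over all $x$, and verify the resulting map is a monomorphism on stalks. The only cosmetic difference is notation ($i_x$ versus the paper's $g_x$).
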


\begin{proof}
Let $\mcal{M}$ be a left $\mcal{A}$-module. Take a point $x \in X$. The stalk 
$\mcal{M}_x$ is a module over the ring $\mcal{A}_x$, and we can
find an embedding $\phi_x : \mcal{M}_x \to I_x$ into an injective
$\mcal{A}_x$-module. 
Let $g_x : \{ x \} \to X$ be the inclusion, which we may view as a map of
ringed spaces from $( \{ x \}, \mcal{A}_x)$ to $(X, \mcal{A})$.
Define 
$\mcal{I}_x := {g_x}_* I_x$, which is an $\mcal{A}$-module (in fact it is a
constant sheaf of the closed set $\ol{  \{ x \} } \subset X$). 
The adjunction formula gives rise to a sheaf homomorphism 
$\psi_x : \mcal{M} \to  \mcal{I}_x$. 
Since the functor $g_x^* : \cat{Mod}\, \mcal{A} \to \cat{Mod}\, \mcal{A}_x$
is exact, the adjunction formula shows that $\mcal{I}_x$ is an injective
object. 

Finally let $\mcal{I} := \prod_{x \in X} \mcal{I}_x$. This is an injective
$\mcal{A}$-module. There is a homomorphism 
$\psi :=  \prod_{x \in X} \psi_x : \mcal{M} \to \mcal{I}$, and this is a
monomorphism,  since it is a monomorphism at each stalk.
\end{proof}

\cleardoublepage
\section{Outline: the Derived Category}

I will now explain where we are going. Some of the definitions, statements and
proofs will be full (the easy ones...), and the rest (the hard ones...) will
be given later.  

\subsection{The category of complexes}
Let $\cat{M}$ be an additive category. 

\begin{dfn} \label{dfn:2}
A {\em complex} of objects of $\cat{M}$ (or a complex in $\cat{M}$) is a
diagram 
\[ M = ( \cdots \to M^{-1} \xar{\d_M^{-1}}  M^0 \xar{\d_M^0}  
M^1 \xar{\d_M^1} M^2 \to \cdots ) \]
of objects and morphisms in $\cat{M}$, s.t.\ $\d_M^{i+1} \circ \d_M^i = 0$. 

Let $N$ be another such complex. A morphism of complexes 
$\phi : M \to N$ is a collection 
$\phi = \{ \phi^i \}_{i \in \Z}$ of morphisms $\phi^i : M^i \to N^i$ in
$\cat{M}$, s.t.\ 
\[ \d^i_N \circ \phi^i = \phi^{i+1} \circ \d^i_M . \]
\end{dfn}

The collection $\d_M := \{ \d^i_M \}$ is called the {\em differential} of $M$,
or the {\em coboundary operator}. We sometimes we write $\d$ instead of $\d_M$
or $\d_M^i$.  

Note that a morphism $\phi : M \to N$ can be viewed as a commutative diagram 
\[ \UseTips  \xymatrix @C=6ex @R=6ex {
\cdots 
\ar[r]
&
M^i
\ar[r]^{\d_M^i}
\ar[d]_{\phi^i}
&
M^{i+1}
\ar[r]
\ar[d]_{\phi^{i+1}}
&
\cdots
\\
\cdots 
\ar[r]
&
N^i
\ar[r]^{\d_N^i}
&
N^{i+1}
\ar[r]
&
\cdots
} \]

Let us denote by $\dcat{C}(\cat{M})$ the category of complexes in $\cat{M}$. 
This is an additive category: direct sums are degree-wise, i.e.\ 
$(M \oplus N)^i = M^i \oplus N^i$.
If $\cat{M}$ is abelian, then so is $\dcat{C}(\cat{M})$, again with kernels and
cokernels made degree-wise, e.g.\ the kernel of 
$\phi : M \to N$ is the complex $K$ with 
$K^i = \opn{Ker}(\phi^i) \in \cat{M}$.

If  $\cat{N}$ is a full additive subcategory of $\cat{M}$, then  
$\dcat{C}(\cat{N})$ is a full additive subcategory of $\dcat{C}(\cat{M})$.

Any single object $M \in \cat{M}$ can be viewed as a complex 
\[ M' :=  ( \cdots \to 0 \to M \to 0 \to \cdots ) , \] 
where $M$ is in degree $0$; the differential of this complex is of course zero.
The assignment 
$M \mapsto M'$ is a fully faithful additive functor 
$\cat{M} \to \dcat{C}(\cat{M})$.

\begin{exa}
Take $\cat{M} := \cat{Mod}\, A$ for some ring $A$, and 
$\cat{P} \subset \cat{M}$ the full subcategory of the projective $A$-modules. 
Then $\dcat{C}(\cat{M})$ is abelian, and $\dcat{C}(\cat{P})$ is a full additive
subcategory of it.
\end{exa}

\begin{dfn} \label{dfn:5}
Let $\cat{M}$ be an abelian category. For a complex $M \in \dcat{C}(\cat{M})$ we
denote 
\[ \opn{Z}^i(M) := \opn{Ker}(\d : M^i \to M^{i+1}) \subset M^i  \]
and
\[ \opn{B}^i(M) := \opn{Im}(\d : M^{i-1} \to M^{i})  \subset M^i ; \]
these are the objects of {\em $i$-cocycles} and {\em $i$-coboundaries},
respectively, of $M$. Since $\d \circ \d = 0$ we have 
$\opn{B}^i(M) \subset \opn{Z}^i(M)$, and we let
\[ \opn{H}^i(M) := \opn{Z}^i(M) / \opn{B}^i(M) .  \]
This is the {\em $i$-th cohomology} of $M$. 
\end{dfn}

\begin{dfn}  \label{dfn:3}
Let $M, N \in \dcat{C}(\cat{M})$. We define a complex
$\opn{Hom}_{\cat{M}}(M, N) \in \dcat{C}(\cat{Ab})$ as follows. 
In degree $i$ we take 
\[ \opn{Hom}_{\cat{M}}(M, N)^i := \prod_{j \in \Z} 
\opn{Hom}_{\cat{M}}(M^j, N^{j + i}) \in \cat{Ab} . \]
The differential 
\[ \d : \opn{Hom}_{\cat{M}}(M, N)^i \to \opn{Hom}_{\cat{M}}(M, N)^{i+1} \]
is 
\[ \d(\phi) := \d_N \circ \phi - (-1)^i \phi \circ \d_M . \]
\end{dfn}

It is easy to check that $\d \circ \d = 0$. 

In a diagram, an element $\phi \in \opn{Hom}_{\cat{M}}(M, N)^i$
is a collection $\phi = \{ \phi^j \}$ that looks like this:
\[ \UseTips  \xymatrix @C=6ex @R=6ex {
\cdots 
\ar@{-->}[r]
&
M^j
\ar@{-->}[r]
\ar[drr]^{\phi^j}
&
M^{j+1}
\ar@{-->}[r]
\ar[drr]^{\phi^{j+1}}
\ar@{-->}[r]
&
\cdots
\\
& & 
\cdots 
\ar@{-->}[r]
&
N^{j+i}
\ar@{-->}[r]
&
N^{j+1+i}
\ar@{-->}[r]
&
\cdots
} \]
Since $\phi$ does not have to commute with the differentials, they are drawn as
dashed arrows. 

\begin{rem}
A possible ambiguity could arise in the meaning of $\opn{Hom}_{\cat{M}}(M, N)$ 
if $M, N \in \cat{M}$: does it mean the set of morphisms in the category
$\cat{M}$~? Or, if we view $M$ and $N$ as complexes by the canonical
embedding $\cat{M} \to \dcat{C}(\cat{M})$, does $\opn{Hom}_{\cat{M}}(M, N)$
mean the complex from the definition above? 
However in this case the complex $\opn{Hom}_{\cat{M}}(M, N)$
is concentrated in degree $0$, so the ambiguity is eliminated: this is an
instance of the canonical embedding $\cat{Ab} \to \dcat{C}(\cat{Ab})$.
\end{rem}

\begin{prop}
Let $M, N \in \dcat{C}(\cat{M})$. Then there is equality 
\[ \opn{Hom}_{\dcat{C}(\cat{M})}(M, N) = 
\opn{Z}^0 ( \opn{Hom}_{\cat{M}}(M, N) ) . \]
In other words, a morphism of complexes $\phi : M \to N$ is the same as a
$0$-cocycle in the complex $\opn{Hom}_{\cat{M}}(M, N)$. 
\end{prop}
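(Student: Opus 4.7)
The plan is to observe that both sides of the claimed equality are, literally, the same sets, simply described in two different languages. The proof will therefore consist of carefully unwinding Definitions \ref{dfn:2} and \ref{dfn:3} and matching them up.

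First I would identify the underlying data on each side. By Definition \ref{dfn:3}, the degree-zero component is
\[ \opn{Hom}_{\cat{M}}(M, N)^0 = \prod_{j \in \Z} \opn{Hom}_{\cat{M}}(M^j, N^j) , \]
so an element $\phi$ of this abelian group is precisely a family $\phi = \{\phi^j\}_{j \in \Z}$ with $\phi^j : M^j \to N^j$ a morphism in $\cat{M}$. On the other hand, by Definition \ref{dfn:2}, a morphism of complexes $\phi : M \to N$ in $\dcat{C}(\cat{M})$ is by definition such a family $\{\phi^j\}_{j \in \Z}$, subject to an additional compatibility condition with the differentials. So as sets, both $\opn{Hom}_{\dcat{C}(\cat{M})}(M, N)$ and $\opn{Hom}_{\cat{M}}(M, N)^0$ already sit inside the same product.

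Next I would translate the cocycle condition. Specializing the formula in Definition \ref{dfn:3} to $i = 0$, and noting $(-1)^0 = 1$, the differential sends $\phi \in \opn{Hom}_{\cat{M}}(M, N)^0$ to
\[ \d(\phi) = \d_N \circ \phi - \phi \circ \d_M \in \opn{Hom}_{\cat{M}}(M, N)^1 . \]
Reading this off componentwise, its $j$-th entry is $\d_N^j \circ \phi^j - \phi^{j+1} \circ \d_M^j : M^j \to N^{j+1}$. Hence $\phi \in \opn{Z}^0(\opn{Hom}_{\cat{M}}(M, N))$, i.e.\ $\d(\phi) = 0$, if and only if $\d_N^j \circ \phi^j = \phi^{j+1} \circ \d_M^j$ for every $j \in \Z$. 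This is exactly the condition from Definition \ref{dfn:2} which promotes the family $\{\phi^j\}$ to a morphism of complexes.

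Combining the two steps yields the equality of sets. There is essentially no obstacle; the only point to be careful about is the sign $(-1)^i$ in the differential, which at $i = 0$ produces the minus sign needed for the cocycle condition to encode commutation (rather than anti-commutation) with $\d_M$ and $\d_N$.
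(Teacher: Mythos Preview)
Your proposal is correct and takes exactly the same approach as the paper, which simply says to compare Definitions \ref{dfn:2} and \ref{dfn:3}. You have merely written out the comparison explicitly, which is all that is required.
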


\begin{proof}
Compare Definitions \ref{dfn:2} and \ref{dfn:3}. 
\end{proof}

\subsection{The homotopy category}
Again $\cat{M}$ is an additive category. 

\begin{dfn}
A morphism $\phi : M \to N$ in $\dcat{C}(\cat{M})$ is called {\em
null-homotopic} if it is a $0$-coboundary in 
$\opn{Hom}_{\cat{M}}(M, N)$. Namely if
$\phi = \d(\chi)$ for some 
$\chi \in \opn{Hom}_{\cat{M}}(M, N)^{-1}$. 

Two morphisms $\phi, \phi' : M \to N$ in $\dcat{C}(\cat{M})$ are said to be {\em
homotopic} if $\phi - \phi'$ is null-homotopic. In this case we write 
$\phi \sim \phi'$. 

A morphism $\phi : M \to N$ in $\dcat{C}(\cat{M})$ is called a {\em
homotopy equivalence} if there is some $\psi : N \to M$ s.t.\ 
$\psi \circ \phi \sim 1_M$ and $\phi \circ \psi \sim 1_N$.
\end{dfn}

We already noted that a linear category $\cat{A}$ behaves like a noncommutative
ring (cf.\ Subsection \ref{subsec:1}). 
We shall carry this analogy further, including in the next result. 

Suppose $\cat{A}$ is a linear category.
By a {\em two-sided ideal} $\cat{J}$ in $\cat{A}$ we mean the data of
an abelian subgroup 
$\cat{J}(M, N) \subset \opn{Hom}_{\cat{A}}(M, N)$ for any pair of objects
$M, N \in \cat{A}$, such that for 
any $\phi \in \cat{J}(M, N)$,
$\psi \in \opn{Hom}_{\cat{A}}(M', M)$ and
$\chi \in \opn{Hom}_{\cat{A}}(N, N')$ we have
$\phi \circ \psi \in \cat{J}(M', N)$
and
$\chi \circ \phi \in \cat{J}(M, N')$. 

\begin{prop}
Suppose $\cat{A}$ is a linear category, and $\cat{J}$ is a two-sided ideal in 
$\cat{A}$. Then there is a unique linear category $\cat{B}$, with 
additive functor $F : \cat{A} \to \cat{B}$, s.t.\ 
$\opn{Ob}(\cat{B}) = \opn{Ob}(\cat{A})$, $F$ is the identity on objects, 
 $F$ is surjective on morphism, and 
\[ \opn{Ker} \big( F : \opn{Hom}_{\cat{A}}(M, N) \to \opn{Hom}_{\cat{B}}(M, N)
\big) = \cat{J}(M, N) . \]

Furthermore, if $\cat{A}$ is additive then so is $\cat{B}$.
\end{prop}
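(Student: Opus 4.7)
The plan is to build $\cat{B}$ by copying the objects of $\cat{A}$ and quotienting each Hom-group by the corresponding subgroup of $\cat{J}$, and then check that the two-sided ideal condition is exactly what is needed to make this well-defined.

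First I would set $\opn{Ob}(\cat{B}) := \opn{Ob}(\cat{A})$ and, for all $M, N$, define
\[ \opn{Hom}_{\cat{B}}(M, N) := \opn{Hom}_{\cat{A}}(M, N) / \cat{J}(M, N), \]
with $F : \cat{A} \to \cat{B}$ the identity on objects and the canonical surjection on Hom-groups. The composition in $\cat{B}$ is forced: given classes $[\phi], [\psi]$, declare $[\chi] \circ [\phi] := [\chi \circ \phi]$. The content is well-definedness: if $\phi' - \phi \in \cat{J}(M, N)$ and $\chi' - \chi \in \cat{J}(N, N')$, then
\[ \chi' \circ \phi' - \chi \circ \phi = \chi' \circ (\phi' - \phi) + (\chi' - \chi) \circ \phi, \]
which lies in $\cat{J}(M, N')$ by the two ``absorption'' axioms for an ideal. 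Associativity, identities, and $\K$-bilinearity in $\cat{B}$ all descend formally from $\cat{A}$, and by construction $F$ is an additive functor whose kernel on morphisms is $\cat{J}(M, N)$.

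Uniqueness is essentially formal: any $(\cat{B}', F')$ with the stated properties gives a bijection between $\opn{Hom}_{\cat{B}'}(M, N)$ and $\opn{Hom}_{\cat{A}}(M, N) / \cat{J}(M, N)$ compatible with composition, which one promotes to an isomorphism of linear categories that is the identity on objects and commutes with $F$ and $F'$.

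The one step that needs a little care is the additive claim. I expect this to be the main (mild) obstacle. Taking a zero object $0 \in \cat{A}$: since $\opn{Hom}_{\cat{A}}(0, M)$ and $\opn{Hom}_{\cat{A}}(M, 0)$ are already zero groups, their quotients are zero, so $0$ remains a zero object in $\cat{B}$. For finite coproducts, given a coproduct $(M, \{e_i\})$ of $\{M_i\}_{i \in I}$ in $\cat{A}$, Proposition \ref{prop:1} supplies projections $p_i$ satisfying $p_i \circ e_j = \delta_{ij} \cdot 1_{M_j}$ and $\sum_i e_i \circ p_i = 1_M$. Applying the additive functor $F$ (using Proposition \ref{prop:2} or just the fact that $F$ preserves composition and sums) one gets the same relations for $\{F(e_i)\}$ and $\{F(p_i)\}$ in $\cat{B}$, and hence by the converse direction of Proposition \ref{prop:1} (which characterizes coproducts in any additive category by these equations) the pair $(M, \{F(e_i)\})$ is a coproduct in $\cat{B}$. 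Thus $\cat{B}$ inherits both a zero object and finite coproducts, so it is additive.
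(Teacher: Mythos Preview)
Your proof is correct and follows essentially the same approach as the paper. The paper's own proof is extremely terse (``This is the same as in ring theory'' for the main part, and a reference to Proposition~\ref{prop:2} for the additive claim), so you have simply spelled out the details that the paper leaves implicit; in particular your use of Proposition~\ref{prop:1} together with the additivity of $F$ is exactly the content of Proposition~\ref{prop:2}.
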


\begin{proof}
This is the same as in ring theory. As for the ``furthermore'': direct sums in
$\cat{B}$ come from direct sums in $\cat{A}$ -- see Proposition \ref{prop:2}.
\end{proof}

The category $\cat{B}$ will be called the quotient of $\cat{A}$ by $\cat{J}$.

\begin{prop}
The null-homotopic morphisms in $\dcat{C}(\cat{M})$ form a $2$-sided ideal. 
Namely if $\phi : M \to N$ is null-homotopic, and 
$\psi : M' \to M$, $\chi : N \to N'$ are arbitrary morphisms, then 
$\phi \circ \psi$ and $\chi \circ \phi$ are also null-homotopic.
\end{prop}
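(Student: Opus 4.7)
The plan is to unpack the definition of null-homotopy from Definition \ref{dfn:3} and exploit the fact that morphisms of complexes, by definition, commute with the differentials. First I would observe that the null-homotopic morphisms form an abelian subgroup of $\opn{Hom}_{\dcat{C}(\cat{M})}(M,N)$ for free: they are by definition the image under the additive map $\d \colon \opn{Hom}_{\cat{M}}(M,N)^{-1} \to \opn{Hom}_{\cat{M}}(M,N)^{0}$, hence a subgroup. So the work is entirely in the absorption property.

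Suppose $\phi \colon M \to N$ is null-homotopic, witnessed by $\eta \in \opn{Hom}_{\cat{M}}(M,N)^{-1}$ with $\phi = \d(\eta)$. Since $(-1)^{-1} = -1$, Definition \ref{dfn:3} reads
\[ \phi = \d_N \circ \eta + \eta \circ \d_M. \]
For the left-absorption by $\chi \colon N \to N'$, I propose the candidate homotopy $\chi \circ \eta \in \opn{Hom}_{\cat{M}}(M,N')^{-1}$. Using that $\chi$ is a chain map, so $\d_{N'} \circ \chi = \chi \circ \d_N$, one computes
\[ \d(\chi \circ \eta) = \d_{N'} \circ (\chi \circ \eta) + (\chi \circ \eta) \circ \d_M = \chi \circ \d_N \circ \eta + \chi \circ \eta \circ \d_M = \chi \circ (\d_N \circ \eta + \eta \circ \d_M) = \chi \circ \phi, \]
so $\chi \circ \phi$ is null-homotopic.

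Symmetrically, for right-absorption by $\psi \colon M' \to M$, I would use $\eta \circ \psi \in \opn{Hom}_{\cat{M}}(M',N)^{-1}$. Using that $\psi$ is a chain map, so $\psi \circ \d_{M'} = \d_M \circ \psi$, one gets
\[ \d(\eta \circ \psi) = \d_N \circ \eta \circ \psi + \eta \circ \psi \circ \d_{M'} = \d_N \circ \eta \circ \psi + \eta \circ d_M \circ \psi = (\d_N \circ \eta + \eta \circ \d_M) \circ \psi = \phi \circ \psi, \]
which shows $\phi \circ \psi$ is null-homotopic. There is no real obstacle here: the whole proof is a careful sign-tracking of $\d(-)$ together with the defining commutation $\d \circ \psi = \psi \circ \d$ and $\d \circ \chi = \chi \circ \d$ for morphisms of complexes. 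The key conceptual point is that while $\eta$ itself is allowed to ignore the differentials (it is only an element of the graded $\opn{Hom}$, not a chain map), the pre- and post-composition by genuine chain maps $\psi,\chi$ is what lets one factor $\psi$ and $\chi$ past $\d_M$ and $\d_N$ inside the formula for $\d(\eta)$.
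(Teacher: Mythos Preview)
Your proof is correct and is essentially the same as the paper's, just unpacked: the paper writes $\phi = \d(\tau)$ and invokes the Leibniz rule $\d(\chi \circ \tau) = \d(\chi) \circ \tau + \chi \circ \d(\tau)$ together with $\d(\chi) = 0$ (since $\chi$ is a $0$-cocycle), whereas you expand $\d(\chi \circ \eta)$ directly and use the commutation $\d_{N'} \circ \chi = \chi \circ \d_N$. These are the same argument at two levels of abstraction.
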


\begin{proof}
Say $\phi = \d(\tau)$ with $\tau \in \opn{Hom}_{\cat{M}}(M, N)^{-1}$. 
It is easy to see that $\d$ satisfies the Leibniz rule, so 
\[ \d(\chi \circ \tau) = \d(\chi) \circ \tau + \chi \circ \d(\tau) = 
\chi \circ \phi . \]
We are using $\d(\chi) = 0$ of course. 
This shows that $\chi \circ \phi$ is null-homotopic. Likewise for 
$\phi \circ \psi$.
\end{proof}

\begin{dfn}
The {\em homotopy category} of complexes in $\cat{M}$ is the additive category 
$\dcat{K}(\cat{M})$ gotten as the quotient of $\dcat{C}(\cat{M})$ by the ideal
of
null-homotopic morphisms. 
\end{dfn}

Observe that 
\[ \opn{Hom}_{\dcat{K}(\cat{M})}(M, N) = 
\mrm{H}^0 \big( \opn{Hom}_{\cat{M}}(M, N) \big) . \]
If we denote by $F$ the canonical functor 
$\dcat{C}(\cat{M}) \to \dcat{K}(\cat{M})$,
then $F(\phi) = 0$ iff $\phi$ is null-homotopic; 
and $F(\phi)$ is an isomorphism iff $\phi$ is a homotopy equivalence.

\subsection{The homotopy category is triangulated}
Even if $\dcat{C}(\cat{M})$ is an abelian category, the homotopy category 
$\dcat{K}(\cat{M})$ is usually not. It has another structure: a {\em
triangulated
category}. A full definition will be given later. Here is only a sketch. 

\begin{dfn} \label{dfn:20}  \mbox{}
\begin{enumerate}
\item A {\em T-additive category}
is an additive category $\cat{K}$,  equipped
with an additive automorphism $T$ called the {\em translation}.  

\item Suppose $\cat{K}$ and $\cat{L}$ are T-additive categories.
A {\em T-additive functor} is
an additive functor $F : \cat{K} \to \cat{L}$, together with a natural
isomorphism 
\[ \xi :  F \circ T_{\cat{K}}  \iso T_{\cat{L}} \circ F  . \]

\item Let
\[ (F, \xi), (G, \nu) : \cat{K} \to \cat{L} \] 
be T-additive functors between T-additive
categories. A {\em morphism of T-additive  functors}
\[ \eta : (F, \xi) \to (G, \nu) \] 
is a natural transformation $\eta : F \to G$ s.t.\ this diagram is
commutative:
\[ \UseTips  \xymatrix @C=6ex @R=6ex {
F \circ T_{\cat{K}}
\ar[r]^{\xi}
\ar[d]_{\eta \circ 1}
&
T_{\cat{L}} \circ F
\ar[d]^{1 \circ \eta}
\\
G \circ T_{\cat{K}}
\ar[r]^{\nu}
&
T_{\cat{L}} \circ G  \ . 
} \]
\end{enumerate}
\end{dfn}

The translation $T$ is sometimes called ``shift'' or ``suspension''. 
In \cite{Sc} a T-additive category is called an ``additive category with
translation''. This concept is not so important, as it will be subsumed in
``triangulated category''.

A {\em triangle} in a T-additive category  $\cat{K}$ is a diagram 
\begin{equation} \label{eqn:5}
L \xar{\al} M \xar{\be} N \xar{\ga} T(L) . 
\end{equation}
The objects $L, M , N$ are called the vertices of the triangle. 

A {\em triangulated category} is a T-additive category 
$\cat{K}$, together with a set of triangles called {\em distinguished
triangles}, that satisfy a list of axioms. All these details will come later. 

Given triangulated categories $\cat{K}$ and $\cat{L}$, a {\em triangulated
functor} $F : \cat{K} \to \cat{L}$ is a T-additive functor  that sends
distinguished triangles to distinguished triangles. Namely 
if (\ref{eqn:5}) is a distinguished triangle in $\cat{K}$, then 
\[ F(L) \xar{F(\al)} F(M)  \xar{F(\be)} F(N)  \xar{\xi \circ F(\ga)} 
T(F(L)) \]
is a distinguished triangle in $\cat{L}$.
Morphisms between triangulated functors  are those of T-additive functors
(Definition \ref{dfn:20}(3)). 

Getting back to complexes: 

\begin{dfn}
Let $\cat{M}$ be an additive category. 
Define an additive automorphism $T$ of the category $\dcat{C}(\cat{M})$ as
follows. 
For a complex $M \in \dcat{C}(\cat{M})$, we define $T(M)$ to be the
complex whose $i$-th degree component is $T(M)^i := M^{i+1}$. The differential
is  $\d_{T(M)} := - \d_M$. For a morphism $\phi : M \to N$ the corresponding
morphism $T(\phi) : T(M) \to T(N)$ is $T(\phi)^i := \phi^{i+1}$. 

We usually write $M[k] := T^k(M)$, for an integer $k$; this is the
$k$-th translation of $M$.
\end{dfn}

The automorphism $T$ of $\dcat{C}(\cat{M})$ induces an automorphism $T$ of the
quotient category $\dcat{K}(\cat{M})$. So $\dcat{K}(\cat{M})$ is a T-additive
category. It turns out that there is a structure of triangulated category on
$\dcat{K}(\cat{M})$. We will not specify now what are the distinguished
triangles in $\dcat{K}(\cat{M})$ -- this will be done later.

\subsection{Quasi-isomorphisms and localization}
Here $\cat{M}$ is an abelian category. 
For every $i$ we have an additive functor 
$\mrm{H}^i : \dcat{C}(\cat{M}) \to \cat{M}$;
see Definition \ref{dfn:5}.

\begin{prop}
Suppose $\phi, \phi' : M \to N$ are morphisms in $\dcat{C}(\cat{M})$ that are
homotopic, i.e.\ $\phi \sim \phi'$. Then 
$\mrm{H}^i(\phi) = \mrm{H}^i(\phi')$. 
\end{prop}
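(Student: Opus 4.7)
The plan is to reduce to the case where $\phi$ is null-homotopic. Since $\mrm{H}^i$ is an additive functor from $\dcat{C}(\cat{M})$ to $\cat{M}$ (it is built from kernels, images and quotients, all of which respect the additive structure on morphisms), we have $\mrm{H}^i(\phi) - \mrm{H}^i(\phi') = \mrm{H}^i(\phi - \phi')$. By hypothesis $\phi - \phi'$ is null-homotopic, so it suffices to prove that $\mrm{H}^i(\phi) = 0$ whenever $\phi \sim 0$.

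So assume $\phi = \d(\chi)$ for some $\chi \in \opn{Hom}_{\cat{M}}(M,N)^{-1}$; that is, $\chi = \{\chi^i : M^i \to N^{i-1}\}_{i \in \Z}$. Unwinding Definition \ref{dfn:3} with degree $i = -1$, the defining formula $\d(\chi) = \d_N \circ \chi - (-1)^{-1}\chi \circ \d_M$ gives, in each degree,
\[ \phi^i \;=\; \d_N^{i-1} \circ \chi^i \,+\, \chi^{i+1} \circ \d_M^i . \]

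Next I would show that the restriction of $\phi^i$ to $\opn{Z}^i(M)$ factors through $\opn{B}^i(N) \subset \opn{Z}^i(N)$. Indeed, precomposing the above identity with the canonical monomorphism $\opn{Z}^i(M) \hookrightarrow M^i$, the second summand vanishes because $\d_M^i$ restricted to $\opn{Z}^i(M)$ is zero by definition of the kernel. Hence $\phi^i|_{\opn{Z}^i(M)} = \d_N^{i-1} \circ (\chi^i|_{\opn{Z}^i(M)})$, which by definition of the image factors as $\opn{Z}^i(M) \to \opn{B}^i(N) \hookrightarrow \opn{Z}^i(N)$.

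Finally, postcomposing with the canonical epimorphism $\opn{Z}^i(N) \twoheadrightarrow \mrm{H}^i(N) = \opn{Z}^i(N)/\opn{B}^i(N)$ kills this factorization, so the composite $\opn{Z}^i(M) \to \mrm{H}^i(N)$ is zero. By the universal property of the cokernel $\opn{Z}^i(M) \twoheadrightarrow \mrm{H}^i(M)$, this forces the induced morphism $\mrm{H}^i(\phi) : \mrm{H}^i(M) \to \mrm{H}^i(N)$ to be zero, which is what we wanted. The only mild subtlety is that we are working in a general abelian category, so the verification of ``$\phi^i|_{\opn{Z}^i(M)}$ factors through $\opn{B}^i(N)$'' must be phrased in terms of the universal properties of kernel and image rather than element-chasing; alternatively one may invoke the Freyd--Mitchell embedding theorem to argue with elements. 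This is the only step requiring any care.
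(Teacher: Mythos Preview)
Your proof is correct and is the standard argument. The paper itself leaves this proposition as an exercise, so there is no proof in the paper to compare against; your write-up is exactly the kind of argument the author had in mind, and your remark about using universal properties (or Freyd--Mitchell) in place of element-chasing is appropriate for the general abelian setting.
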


\begin{proof}
Exercise.
\end{proof}

It follows that there are well-defined functors 
\[ \mrm{H}^i : \dcat{K}(\cat{M}) \to \cat{M} . \]

\begin{dfn}
A morphism $\phi : M \to N$ in $\dcat{K}(\cat{M})$ is called a {\em
quasi-isomorphism} if the morphisms 
$\mrm{H}^i(\phi) :  \mrm{H}^i(M) \to  \mrm{H}^i(N)$
are isomorphisms for all $i$. 
\end{dfn}

Let us denote by $\cat{S}(M, N)$ the set of quasi-isomorphisms  from $M$ to $N$.

Here is another definition borrowed from ring theory. 

Clearly the quasi-isomorphisms $\cat{S}$ in $\dcat{K}(\cat{M})$ are a
multiplicatively closed set. We will prove that this is in fact a 
{\em left and right denominator set}. 
Just as in ring theory, there is an Ore localization: a linear category 
\[ \dcat{D}(\cat{M}) := \dcat{K}(\cat{M})_{\cat{S}} \]
whose objects are the same as those of $\dcat{K}(\cat{M})$, with an additive
functor 
\[ Q : \dcat{K}(\cat{M}) \to \dcat{D}(\cat{M})  \]
that is the identity on objects. 
For every quasi-isomorphism $\psi$ in $\dcat{K}(\cat{M})$, the morphism 
$Q(\psi)$ is invertible. Every morphism in $\dcat{D}(\cat{M})$ is of the
form $F(\phi) \circ F(\psi)^{-1}$ 
with $\phi \in \dcat{K}(\cat{M})$ and $\psi \in \cat{S}$. 
And $F(\phi) = 0$ iff $\phi \circ \psi = 0$ for some $\psi \in \cat{S}$. 

We will prove that $\dcat{D}(\cat{M})$ is a triangulated category, and that $Q$
is a triangulated functor. Given a triangulated category $\cat{L}$, and a
triangulated functor $F : \dcat{K}(\cat{M}) \to \cat{L}$
such that $F(\psi)$ is invertible for every $\psi \in \cat{S}$, 
there is a unique triangulated functor 
$F_{\cat{S}} : \dcat{D}(\cat{M}) \to \cat{L}$
such that 
$F = F_{\cat{S}} \circ Q$.

\begin{exa}
Consider a short exact sequence 
\[ 0 \to L \xar{\al}  M \xar{\be}  N \to 0 \]
in $\cat{M}$. It turns out that there is an induced morphism 
$\ga : N \to L[1]$ in $\dcat{D}(\cat{M})$, and the triangle 
\[  L \xar{\al}  M \xar{\be}  N \xar{\ga} L[1] \]
is distinguished. 

We will show that the functor $\cat{M} \to  \dcat{D}(\cat{M})$,
sending an object $M$ to the complex concentrated in degree $0$, 
 is fully faithful. And that the exact sequences in $\cat{M}$ can be recovered
as the distinguished triangles in $\dcat{D}(\cat{M})$ whose vertices are in
$\cat{M}$.
\end{exa}

\begin{rem}
Why ``triangle''? This is because sometimes a triangle 
\[ M \xar{\al} N \xar{\be} L \xar{\ga} M[1] \]
is written as a diagram
\[ \UseTips  \xymatrix @C=4ex @R=4ex {
& 
L 
\ar[dl]_{\ga}
\\
M
\ar[rr]^{\al}
& &
N
\ar[lu]_{\be}
} \]
But here $\ga$ is a map of degree $1$. 
\end{rem}

\cleardoublepage
\section{Outline: Derived Functors}

\subsection{From additive to triangulated functors}
Let $\cat{M}$ and $\cat{N}$ be abelian categories, and let 
$F : \cat{M} \to \cat{N}$ be an additive functor. 
The functor $F$ extends to an additive functor 
$\dcat{C}(F) : \dcat{C}(\cat{M}) \to  \dcat{C}(\cat{N})$ 
by sending a complex 
\[ M = ( \cdots \to M^{-1} \xar{\d}  M^0 \xar{\d}  M^1 \to \cdots ) \]
in $\cat{M}$ to the complex 
\[ \dcat{C}(F) (M) := ( \cdots \to F(M^{-1}) \xar{F(\d)}  
F(M^{0}) \xar{F(\d)}  F(M^{1})  \to \cdots ) \]
in $\cat{N}$; and likewise for morphisms. 
$F$ also induces a homomorphism of complexes of abelian groups 
\[ F : \opn{Hom}_{\cat{M}}(M, M') \to \opn{Hom}_{\cat{N}}(F(M), F(M')) , \]
so null-homotopic morphisms in $\dcat{C}(\cat{M})$ go to null-homotopic
morphisms in $\dcat{C}(\cat{N})$. Thus there is an induced additive functor 
\begin{equation} \label{eqn:6}
\dcat{K}(F) : \dcat{K}(\cat{M}) \to  \dcat{K}(\cat{N}) . 
\end{equation}
By construction the functor $\dcat{K}(F)$ respects the translations: 
\[ \dcat{K}(F)(T(M)) = T(\dcat{K}(F)(M)) ; \]
so it is a $T$-additive functor. We will see (this will be an easy consequence
of the definition) that $\dcat{K}(F)$ is in fact a {\em triangulated functor}. 

We want to derive the functor $\dcat{K}(F)$.

\subsection{Derived functors}
Changing notation slightly, suppose that $\cat{M}$ is an abelian category,
$\cat{E}$ is a triangulated category (e.g.\ $\cat{E} = \dcat{D}(\cat{N})$ for
some abelian category $\cat{N}$), and  
$F : \dcat{K}(\cat{M}) \to \cat{E}$ is a triangulated functor (e.g.\ 
$F$ is the functor $Q \circ \dcat{K}(F)$ as in (\ref{eqn:6})).

\begin{dfn} \label{dfn:8}
Let $\cat{M}$ be an abelian category,
$\cat{E}$ a triangulated category, and  
$F : \dcat{K}(\cat{M}) \to \cat{E}$ a triangulated functor. 
A {\em right derived functor} of $F$ 
is a triangulated functor 
\[ \mrm{R} F : \dcat{D}(\cat{M}) \to \cat{E} , \]
together with a morphism 
\[ \eta : F \to \mrm{R} F \circ Q \]
of triangulated functors $\dcat{K}(\cat{M}) \to \cat{E}$. The pair 
$(\mrm{R} F, \eta)$ must have this universal property:
\begin{itemize}
\item[($*$)] Given any triangulated functor $G : \dcat{D}(\cat{M}) \to \cat{E}$,
and a morphism of triangulated functors $\eta' : F \to G \circ Q$, there is a
unique morphism of triangulated functors $\th : \mrm{R} F \to G$
s.t.\ $\eta' = \th \circ \eta$. 
\end{itemize}
\end{dfn}

Pictorially: there is a diagram 
\[ \UseTips  \xymatrix @C=10ex @R=8ex  {
\dcat{K}(\cat{M})
\ar[r]^F _{}="q" 
\ar[d]_{Q}
& 
\cat{E} 
\\
\dcat{D}(\cat{M})
\ar[ur]_{\mrm{R} F} ^(0.5){}="f"
\ar@{=>}  "q";"f" _{\eta}
} \]
where the plain arrows are  triangulated functors, and the double arrow is 
a morphism of triangulated functors\footnote{A correction here relative to 
previous version.}. For any other $(G, \eta')$ there is a unique $\th$ 
\[ \UseTips  \xymatrix @C=14ex @R=12ex {
\dcat{K}(\cat{M})
\ar[r]^F _{}="q"  ^(0.6){}="q2" 
\ar[d]_{Q}
& 
\cat{E} 
\\
\dcat{D}(\cat{M})
\ar[ur] ^(0.5){}="f" ^(0.5){}="f1" 
\ar@{=>}  "q";"f" _{\eta}
\ar@(r,d)[ru]_{G} ^(0.55){}="g"  ^(0.48){}="g1"
\ar@{=>}  "q2";"g" ^(0.7){\eta'}
\ar@{=>}  "f";"g1" _(0.5){\th}
} \]
s.t.\ $\eta' = \th \circ \eta$. 

We record this result, that is an immediate consequence of the definition:

\begin{prop} \label{prop:3}
If a right derived functor $(\mrm{R} F, \eta)$ exists, then it is unique, up to
a unique isomorphism of triangulated functors.
\end{prop}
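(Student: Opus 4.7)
The plan is the standard universal-property argument: given two candidate right derived functors, use the universal property of each to produce mutually inverse comparison morphisms, and use the uniqueness clause to show the composites must be identities.

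More precisely, suppose $(\mrm{R} F, \eta)$ and $(\mrm{R}' F, \eta')$ are both right derived functors of $F$ in the sense of Definition \ref{dfn:8}. First I would apply the universal property of $(\mrm{R} F, \eta)$ to the pair $(G, \eta') := (\mrm{R}' F, \eta')$: this yields a unique morphism of triangulated functors $\th : \mrm{R} F \to \mrm{R}' F$ with $\eta' = \th \circ \eta$. Symmetrically, applying the universal property of $(\mrm{R}' F, \eta')$ to the pair $(\mrm{R} F, \eta)$ produces a unique morphism of triangulated functors $\th' : \mrm{R}' F \to \mrm{R} F$ with $\eta = \th' \circ \eta'$.

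Next I would check that $\th$ and $\th'$ are mutually inverse. Compute $(\th' \circ \th) \circ \eta = \th' \circ \eta' = \eta$, so $\th' \circ \th : \mrm{R} F \to \mrm{R} F$ is a morphism of triangulated functors satisfying the same equation as $1_{\mrm{R} F}$. Now invoke the uniqueness clause of the universal property of $(\mrm{R} F, \eta)$ applied to the pair $(G, \eta') := (\mrm{R} F, \eta)$: the unique $\th''$ with $\eta = \th'' \circ \eta$ must equal both $\th' \circ \th$ and $1_{\mrm{R} F}$, so $\th' \circ \th = 1_{\mrm{R} F}$. The symmetric argument on the other side gives $\th \circ \th' = 1_{\mrm{R}' F}$. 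Hence $\th$ is an isomorphism of triangulated functors, and it is unique with the property $\eta' = \th \circ \eta$ by its very construction.

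There is no genuine obstacle here; the only delicate point is bookkeeping the uniqueness clause of ($*$) carefully, i.e.\ remembering that a morphism of triangulated functors is determined by its compatibility with $\eta$, so that any two candidates for the identity comparison must coincide. This is exactly the same template as uniqueness of initial objects, of adjoints, or of any Kan-type extension.
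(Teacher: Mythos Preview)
Your argument is correct and is exactly the standard universal-property proof the paper has in mind; the paper itself does not spell out the details but simply records the result as ``an immediate consequence of the definition''. Your careful unwinding of the uniqueness clause to identify $\th' \circ \th$ with $1_{\mrm{R} F}$ is precisely what that remark abbreviates.
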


In complete symmetry we have:

\begin{dfn}
Let $\cat{M}$ be an abelian category,
$\cat{E}$ a triangulated category, and  
$F : \dcat{K}(\cat{M}) \to \cat{E}$ a triangulated functor. 
A {\em left derived functor} of $F$ 
is a triangulated functor 
\[ \mrm{L} F : \dcat{D}(\cat{M}) \to \cat{E} , \]
together with a morphism 
\[ \eta : \mrm{L} F \circ Q \to F  \]
of triangulated functors $\dcat{K}(\cat{M}) \to \cat{E}$. The pair 
$(\mrm{L} F, \eta)$ must have this universal property:
\begin{itemize}
\item[($*$)] Given any triangulated functor $G : \dcat{D}(\cat{M}) \to \cat{E}$,
and a morphism of triangulated functors $\eta' :  G \circ Q \to F$, there is
a unique morphism of triangulated functors $\th : G \to \mrm{L} F$
s.t.\ $\eta' = \eta \circ \th$. 
\end{itemize}
\end{dfn}

Again:

\begin{prop}
If a left derived functor $(\mrm{L} F, \eta)$ exists, then it is unique, up to a
unique isomorphism of triangulated functors.
\end{prop}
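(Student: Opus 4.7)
The proof is formal, a standard Yoneda-style uniqueness argument for objects defined by a universal property, and it mirrors Proposition \ref{prop:3} with the arrows reversed to account for the opposite direction of the comparison natural transformation. The plan is: (a) use each candidate's universal property applied to the other to produce comparison morphisms; (b) use the uniqueness clause of the universal property once again, this time with input $(\mrm{L} F, \eta)$ itself, to show these comparison morphisms are mutually inverse; (c) deduce from the same clause that the comparison isomorphism is itself the unique one compatible with the data.

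Suppose $(\mrm{L} F, \eta)$ and $(\mrm{L} F', \eta')$ are two left derived functors of $F$. Applying the universal property of $(\mrm{L} F, \eta)$ to the pair $(\mrm{L} F', \eta')$ yields a unique morphism of triangulated functors $\th : \mrm{L} F' \to \mrm{L} F$ with $\eta' = \eta \circ (\th \ast Q)$, where $\ast$ denotes whiskering of a natural transformation with the functor $Q$. Symmetrically, applying the universal property of $(\mrm{L} F', \eta')$ to $(\mrm{L} F, \eta)$ gives a unique $\th' : \mrm{L} F \to \mrm{L} F'$ with $\eta = \eta' \circ (\th' \ast Q)$. Then
\[ \eta \circ ((\th \circ \th') \ast Q) = \eta \circ (\th \ast Q) \circ (\th' \ast Q) = \eta' \circ (\th' \ast Q) = \eta , \]
and likewise $\eta = \eta \circ (1_{\mrm{L} F} \ast Q)$ trivially; the uniqueness clause of the universal property of $(\mrm{L} F, \eta)$, applied to the pair $(\mrm{L} F, \eta)$ itself, forces $\th \circ \th' = 1_{\mrm{L} F}$. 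The symmetric argument gives $\th' \circ \th = 1_{\mrm{L} F'}$, so $\th$ is an isomorphism of triangulated functors. Any other isomorphism intertwining $\eta$ and $\eta'$ must coincide with $\th$ by the very same uniqueness clause.

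There is really no obstacle here; the content is pure abstract nonsense attached to the universal property. The only point demanding care is bookkeeping the direction and composition order of the various natural transformations and their whiskerings with $Q$, since in the left derived case the defining morphism goes $\mrm{L} F \circ Q \to F$, opposite to the right derived case treated in Proposition \ref{prop:3}; once the direction is fixed, the argument is essentially identical.
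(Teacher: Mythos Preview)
Your argument is correct and is exactly the standard universal-property uniqueness proof that the paper has in mind; the paper itself does not spell it out, simply noting (as for Proposition~\ref{prop:3}) that it is an immediate consequence of the definition. Your careful tracking of the direction of the comparison morphisms and their whiskerings with $Q$ is precisely what is needed.
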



\cleardoublepage
\section{Outline: Existence of Derived Functors}

\subsection{Full triangulated subcategories}

\begin{dfn} \label{dfn:22}
Let $\cat{K}$ be a triangulated category. A {\em full triangulated subcategory}
of $\cat{K}$ is a full subcategory $\cat{J} \subset \cat{K}$, s.t.\ these
conditions hold:
\begin{enumerate}
\rmitem{i} $\cat{J}$ is closed under shifts, i.e.\ $I \in \cat{J}$ iff
$I[1] \in \cat{J}$.

\rmitem{ii} $\cat{J}$ is closed under distinguished triangles, i.e.\ if 
\[ I' \to I \to I'' \to I[1] \] 
is a  distinguished triangle in $\cat{K}$ s.t.\ $I', I \in \cat{J}$, then 
also $I'' \in \cat{J}$.
\end{enumerate}
\end{dfn}

When $\cat{J}$ is a full triangulated subcategory of $\cat{K}$, then $\cat{J}$
itself is triangulated, and the inclusion $\cat{J} \to \cat{K}$ is a
triangulated functor.

\subsection{Right derived functor}
Recall that an additive functor $F : \cat{M} \to \cat{N}$ between abelian
categories induces a triangulated functor 
\[ \dcat{K}(F) : \dcat{K}(\cat{M}) \to \dcat{K}(\cat{N}) . \]
In the next theorem we consider a slightly more general situation. 

\begin{thm} \label{thm:101}
Let $\cat{M}$ be an abelian category,
$\cat{E}$ a triangulated category, and 
$F : \dcat{K}(\cat{M}) \to \cat{E}$ a triangulated functor. 
Assume there is a full triangulated subcategory 
$\cat{J} \subset \dcat{K}(\cat{M})$
with these two properties:

\begin{enumerate}
\rmitem{i} If $\phi : I \to I'$ is a quasi-isomorphism in $\cat{J}$, then 
$F(\phi) : F(I) \to F(I')$ is an isomorphism in $\cat{E}$.
 
\rmitem{ii} Every $M \in \dcat{K}(\cat{M})$ admits a quasi-isomorphism 
$M \to I$ for some $I \in \cat{J}$.
\end{enumerate}

Then the right derived functor 
$\mrm{R} F : \dcat{D}(\cat{M}) \to \cat{E}$
exists. Moreover, for any $I \in \cat{J}$ the morphism  
\[ \eta_I : F(I) \to (\mrm{R} F \circ Q)(I) \]
in $\cat{E}$ is an isomorphism.
\end{thm}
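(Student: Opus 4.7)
The plan is to construct $\mrm{R} F$ by \emph{resolution through $\cat{J}$}. Hypothesis (i) says that $F|_{\cat{J}}$ already inverts the quasi-isomorphisms inside $\cat{J}$, so on $\cat{J}$ there is nothing left to derive; hypothesis (ii) says that $\cat{J}$ is large enough that every complex has a surrogate in $\cat{J}$ and, more importantly, that morphisms in $\dcat{D}(\cat{M})$ between objects of $\cat{J}$ can be represented by roofs lying inside $\cat{J}$. Morally, the canonical triangulated functor from the Ore localization of $\cat{J}$ at its own quasi-isomorphisms into $\dcat{D}(\cat{M})$ is an equivalence, and $\mrm{R} F$ is the composition of $F|_{\cat{J}}$ (which factors through that localization by (i)) with a quasi-inverse.

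First I would, using the axiom of choice and (ii), pick for each $M \in \dcat{K}(\cat{M})$ a quasi-isomorphism $\rho_M : M \to I_M$ with $I_M \in \cat{J}$, normalized so that $I_M = M$ and $\rho_M = 1_M$ whenever $M \in \cat{J}$. Set $\mrm{R} F(M) := F(I_M)$ on objects and $\eta_M := F(\rho_M) : F(M) \to F(I_M)$. Given a morphism $\alpha : M \to N$ in $\dcat{D}(\cat{M})$, the induced $Q(\rho_N) \circ \alpha \circ Q(\rho_M)^{-1} : I_M \to I_N$ is a morphism in $\dcat{D}(\cat{M})$ between objects of $\cat{J}$, and I claim it is represented by a left roof entirely inside $\cat{J}$. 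Indeed, start from any left-fraction representative $I_M \xrightarrow{\phi_0} K \xleftarrow{\sigma_0} I_N$ in $\dcat{K}(\cat{M})$ with $\sigma_0$ a quasi-isomorphism, apply (ii) to $K$ to obtain a quasi-isomorphism $\tau : K \to J$ with $J \in \cat{J}$, and set $\phi := \tau \circ \phi_0$, $\sigma := \tau \circ \sigma_0$; then $I_M \xrightarrow{\phi} J \xleftarrow{\sigma} I_N$ lies in $\cat{J}$, with $\sigma$ a quasi-isomorphism by $2$-out-of-$3$. Define
\[ \mrm{R} F(\alpha) := F(\sigma)^{-1} \circ F(\phi) , \]
which makes sense because $F(\sigma)$ is an isomorphism by hypothesis (i).

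Well-definedness of this formula, functoriality, compatibility with translations and distinguished triangles, and naturality of $\eta$ all reduce to one pattern: any two roofs in $\cat{J}$ representing the same morphism in $\dcat{D}(\cat{M})$ admit a common refinement in $\dcat{K}(\cat{M})$ which, after another application of (ii), may be arranged inside $\cat{J}$; hypothesis (i) then converts the resulting equivalence of roofs into an equality in $\cat{E}$ after applying $F$. For the universal property of Definition \ref{dfn:8}, given another pair $(G, \eta')$ I would define
\[ \theta_M := G(Q(\rho_M))^{-1} \circ \eta'_{I_M} : \mrm{R} F(M) \to G(Q(M)) , \]
which makes sense because $Q(\rho_M)$ is an isomorphism in $\dcat{D}(\cat{M})$. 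The identity $\eta' = \theta \circ \eta$ is then precisely the naturality of $\eta'$ at the morphism $\rho_M$, and uniqueness of $\theta$ follows because for $I \in \cat{J}$ the equation forces $\theta_I = \eta'_I$ by the normalization $\rho_I = 1_I$. This last observation also yields the concluding claim that $\eta_I$ is an isomorphism whenever $I \in \cat{J}$.

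The main obstacle is the technical lemma used tacitly above: that the Ore calculus of fractions for the denominator set of quasi-isomorphisms in $\dcat{K}(\cat{M})$ ``restricts'' to a calculus of fractions on roofs and homotopies living entirely in $\cat{J}$. All the verifications of well-definedness and functoriality come down to repeated application of (ii) to push apexes of roofs, and quasi-isomorphisms witnessing equivalences of roofs, into $\cat{J}$, and then invoking (i) to invert these. Once this lemma is in hand, the rest of the argument is formal.
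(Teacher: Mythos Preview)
Your proposal is correct and follows essentially the same strategy as the paper's (sketch of) proof. The paper packages the argument slightly more cleanly by explicitly introducing the intermediate Ore localization $\cat{J}_{\cat{S}\cap\cat{J}}$, observing that (ii) makes the canonical functor $\cat{J}_{\cat{S}\cap\cat{J}} \to \dcat{D}(\cat{M})$ an equivalence, and that (i) makes $F|_{\cat{J}}$ factor through $\cat{J}_{\cat{S}\cap\cat{J}}$ via a functor $F_{\cat{S}\cap\cat{J}}$; then $\mrm{R}F := F_{\cat{S}\cap\cat{J}} \circ I$ where $I$ is a chosen quasi-inverse coming from the resolutions $\zeta_M$. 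Your ``main obstacle'' lemma is precisely the content of full faithfulness of $\cat{J}_{\cat{S}\cap\cat{J}} \to \dcat{D}(\cat{M})$, and your hands-on roof manipulation is exactly how one proves it; so the two presentations differ only in the level of abstraction at which the same argument is stated.
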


\begin{proof}[Sketch of Proof]
(A complete proof will be given later.)
Recall that $\cat{S} \subset \dcat{K}(\cat{M})$ is the category
(multiplicatively
closed set of morphisms) consisting of the quasi-isomorphisms in
$\dcat{K}(\cat{M})$, and 
\[ Q : \dcat{K}(\cat{M}) \to \dcat{K}(\cat{M})_{\cat{S}} = 
\dcat{D}(\cat{M}) \]
is the localization functor. 

Condition (ii) implies that $\cat{S} \cap \cat{J}$, the quasi-isomorphisms in
$\cat{J}$, is a left denominator set in $\cat{J}$, and the inclusion 
\begin{equation} \label{eqn:101}
\cat{J}_{\cat{S} \cap \cat{J}} \to \dcat{K}(\cat{M})_{\cat{S}} = 
\dcat{D}(\cat{M}) 
\end{equation}
is an equivalence (of triangulated categories). 

For every $M \in \dcat{K}(\cat{M})$ we choose a quasi-isomorphism 
$\zeta_M : M \to I(M)$ with $I(M) \in \cat{J}$. 
We take care so that $I(M)$ and $\zeta_M$ commute with shifts, and that 
$I(M) = M$ and $\zeta_M = 1_M$ when $M \in \cat{J}$. In this way we
obtain a triangulated functor 
\[ I : \dcat{D}(\cat{M}) \to \cat{J}_{\cat{S} \cap \cat{J}} \]
which splits the inclusion (\ref{eqn:101}), with a natural isomorphism 
$\zeta : \bsym{1}_{\dcat{D}(\cat{M})}  \to I$.

We now invoke condition (i). By the universal property of localization there
is a unique functor 
\[ F_{\cat{S} \cap \cat{J}} : \cat{J}_{\cat{S} \cap \cat{J}} \to \cat{E} \]
extending 
$F|_{\cat{J}} : \cat{J} \to \cat{E}$.
We define 
\[ \mrm{R} F := F_{\cat{S} \cap \cat{J}} \circ I 
: \dcat{D}(\cat{M}) \to \cat{E} . \]
For any $M \in \dcat{K}(\cat{M})$ we define 
\[ \eta_M : F(M) \to (\mrm{R} F \circ Q)(M) = F ( I (M)) \]
to be $\eta_M := F(\zeta_M)$. 
This is a natural transformation 
\[ \eta : F \to \mrm{R} F \circ Q . \]
In a diagram (commutative via $\eta$):
\[ \UseTips  \xymatrix @C=10ex @R=7ex {
\cat{J}
\ar[r]^{Q_{\cat{J}}}
\ar[d]_{\mrm{inc}}
&
\cat{J}_{\cat{S} \cap \cat{J}}
\ar[dr]^{F_{\cat{S} \cap \cat{J}}}
\\
\dcat{K}(\cat{M})
\ar[r]^{Q} _{}="q"
\ar@(d,d)[rr]_F  ^(0.34){}="f" 
&
\dcat{D}(\cat{M})
\ar[u]_{I}
\ar[r]^{\mrm{R} F}
&
\cat{E}
\ar@{=>}  "f";"q" _{\eta}
} \]

It remains to check that the pair $(\mrm{R}F, \eta)$ has the universal property.
\end{proof}

\subsection{K-injectives}

\begin{dfn}
Let $\cat{M}$ be an abelian category.
A complex $N \in \dcat{K}(\cat{M})$ is called {\em acyclic} if 
$\mrm{H}^i(N) = 0$ for all $i$. In other words, if the morphism 
$0 \to N$ is a quasi-isomorphism.
\end{dfn}

\begin{dfn} \label{dfn:100}
Let $\cat{M}$ be an abelian category.

\begin{enumerate}
\item A complex $I \in \dcat{K}(\cat{M})$ is called {\em K-injective} if for
every
acyclic $N \in \dcat{K}(\cat{M})$, the complex 
$\opn{Hom}_{\cat{M}}(N, I)$ is also acyclic.

\item Let $M \in \dcat{K}(\cat{M})$. A {\em K-injective resolution} of $M$ is a
quasi-isomorphism $M \to I$ in $\dcat{K}(\cat{M})$, where $I$ is K-injective.

\item We say that $\dcat{K}(\cat{M})$ {\em has enough K-injectives} if every 
$M \in \dcat{K}(\cat{M})$ has some K-injective resolution.
\end{enumerate}
\end{dfn}

The concept of K-injective complex was introduced by Spaltenstein \cite{Sp} in
1988. At about the same time other authors (Keller \cite{Ke}, Bockstedt-Neeman
\cite{BN}, \ldots) discovered this concept independently, with other names (such
as {\em homotopically injective complex}).

\begin{exa}
Let $\cat{M}$ be either $\cat{Mod}\, A$, for some ring $A$,
or $\cat{Mod}\, \mcal{A}$, for some ringed space $(X, \mcal{A})$.
Then $\dcat{K}(\cat{M})$ has enough K-injectives. 
We will prove this later. (?)
\end{exa}

\begin{exa} \label{exa:101}
Let $\cat{M}$ be an abelian category. Any bounded below
complex of injectives is K-injective.

Now assume that $\cat{M}$ has enough injectives. Let 
$\cat{K}^+(\cat{M})$ be the category of bounded below complexes. 
Any $M \in \cat{K}^+(\cat{M})$ admits a quasi-isomorphism 
$M \to I$, with $I$ a bounded below complex of injectives.
This generalizes the ``old-fashioned'' injective resolution 
\[ 0 \to M \to I^0 \to I^1 \to \cdots \]
for $M \in \cat{M}$. 
Thus $\cat{K}^+(\cat{M})$  has enough K-injectives. 

These facts were already known in [RD], in 1966, but of course without the name
``K-injective''.
\end{exa}

There are two wonderful things when $\dcat{K}(\cat{M})$ has enough K-injectives.

\begin{prop} \label{prop:101}
Any quasi-isomorphism $\phi : I \to I'$ between K-injective complexes is a
homotopy equivalence; i.e.\ it is an isomorphism in $\dcat{K}(\cat{M})$.
\end{prop}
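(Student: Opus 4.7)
The plan is to use the mapping cone of $\phi$ together with the standard identification $\opn{Hom}_{\dcat{K}(\cat{M})}(M, N) = \mrm{H}^0 \bigl( \opn{Hom}_{\cat{M}}(M, N) \bigr)$ to convert the K-injectivity hypothesis into a statement about morphisms in $\dcat{K}(\cat{M})$.

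First I would form the mapping cone $C := \opn{cone}(\phi)$, which fits into a short exact sequence of complexes
\[ 0 \to I' \to C \to I[1] \to 0 \]
that is split in each degree (because $C^n = I^{n+1} \oplus I'^n$). The long exact cohomology sequence of this SES, together with the assumption that $\phi$ is a quasi-isomorphism, shows that $C$ is acyclic.

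Next, for $J$ equal to either $I$ or $I'$, I would apply the additive functor $\opn{Hom}_{\cat{M}}(-, J)$ to the above SES. Because the sequence is degreewise split, the result is a short exact sequence of complexes of abelian groups
\[ 0 \to \opn{Hom}_{\cat{M}}(I[1], J) \to \opn{Hom}_{\cat{M}}(C, J) \to \opn{Hom}_{\cat{M}}(I', J) \to 0 . \]
Since $C$ is acyclic and $J$ is K-injective, the middle complex is acyclic by Definition \ref{dfn:100}(1). Taking the long exact cohomology sequence therefore yields an isomorphism, induced by $\phi$, for every integer $n$:
\[ \mrm{H}^n \bigl( \opn{Hom}_{\cat{M}}(I', J) \bigr) \iso \mrm{H}^n \bigl( \opn{Hom}_{\cat{M}}(I, J) \bigr) . \]
In the degree $n = 0$ case this says that composition with $\phi$ defines a bijection $\opn{Hom}_{\dcat{K}(\cat{M})}(I', J) \iso \opn{Hom}_{\dcat{K}(\cat{M})}(I, J)$.

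Applying this with $J = I$, the identity $1_I \in \opn{Hom}_{\dcat{K}(\cat{M})}(I, I)$ lifts to a unique $\psi \in \opn{Hom}_{\dcat{K}(\cat{M})}(I', I)$ with $\psi \circ \phi = 1_I$ in $\dcat{K}(\cat{M})$. Applying it with $J = I'$, the map $\opn{Hom}_{\dcat{K}(\cat{M})}(I', I') \to \opn{Hom}_{\dcat{K}(\cat{M})}(I, I')$ sending $\alpha \mapsto \alpha \circ \phi$ is injective; and since $(\phi \circ \psi) \circ \phi = \phi \circ 1_I = 1_{I'} \circ \phi$, we conclude $\phi \circ \psi = 1_{I'}$ in $\dcat{K}(\cat{M})$. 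Thus $\phi$ is an isomorphism in $\dcat{K}(\cat{M})$, i.e.\ a homotopy equivalence. The only delicate point is the degreewise splitness of the cone sequence, which makes the application of the contravariant functor $\opn{Hom}_{\cat{M}}(-, J)$ preserve exactness; everything else is a formal consequence of the defining property of K-injectivity.
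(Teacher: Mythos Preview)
Your proof is correct and is essentially the same argument the paper gives (via Lemma~\ref{lem:121}): form the cone, note it is acyclic, and use K-injectivity of the target to deduce that precomposition with $\phi$ induces bijections on $\opn{Hom}_{\dcat{K}(\cat{M})}(-,J)$ for $J\in\{I,I'\}$. The only cosmetic difference is that the paper phrases the long exact sequence in the triangulated language (the cohomological functor $\opn{Hom}_{\dcat{K}(\cat{M})}(-,J)$ applied to the distinguished triangle $I\to I'\to C\to I[1]$, together with Lemma~\ref{lem:120}), whereas you work directly with the degreewise-split short exact sequence of complexes and the long exact cohomology sequence; these are two presentations of the same computation.
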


This will be proved later. 

Let us denote by $\dcat{K}(\cat{M})_{\mrm{inj}}$ the full subcategory of 
$\dcat{K}(\cat{M})$ on the K-injective complexes. 

\begin{cor} \label{cor:101}
If $\dcat{K}(\cat{M})$ has enough K-injectives, then any triangulated functor 
$F : \dcat{K}(\cat{M}) \to \cat{E}$ \tup{(}cf.\ Theorem \tup{\ref{thm:101})} has
a right derived functor. 
\end{cor}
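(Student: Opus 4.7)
The plan is to reduce the corollary to Theorem \ref{thm:101} by taking the full subcategory $\cat{J} := \dcat{K}(\cat{M})_{\mrm{inj}}$ of K-injective complexes and verifying its hypotheses. So the work is in three bite-sized pieces: first show $\cat{J}$ is a full triangulated subcategory, then check conditions (i) and (ii) of the theorem.

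First I would verify that $\cat{J}$ satisfies Definition \ref{dfn:22}. Closure under shifts is formal: since $\opn{Hom}_{\cat{M}}(N, I[1])$ agrees up to sign and degree-shift with $\opn{Hom}_{\cat{M}}(N, I)[1]$, and cohomology commutes with shifting, $I$ being K-injective forces $I[1]$ to be K-injective (and conversely). For closure under distinguished triangles, suppose $I' \to I \to I'' \to I'[1]$ is distinguished in $\dcat{K}(\cat{M})$ with $I', I$ K-injective, and let $N$ be acyclic. Applying the cohomological functor $\opn{Hom}_{\dcat{K}(\cat{M})}(N, -)$ (together with its shifts, which by an easy identity compute $\mrm{H}^i \opn{Hom}_{\cat{M}}(N, -)$) yields a long exact sequence. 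The groups attached to $I'$ and $I$ vanish by K-injectivity of $I', I$, so by exactness the groups attached to $I''$ vanish as well, i.e.\ $\opn{Hom}_{\cat{M}}(N, I'')$ is acyclic. Hence $I'' \in \cat{J}$.

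Condition (ii) of Theorem \ref{thm:101} is exactly the hypothesis that $\dcat{K}(\cat{M})$ has enough K-injectives: every $M$ admits a quasi-isomorphism $M \to I$ with $I \in \cat{J}$. For condition (i), let $\phi : I \to I'$ be a quasi-isomorphism in $\cat{J}$. By Proposition \ref{prop:101}, $\phi$ is a homotopy equivalence, hence an isomorphism in $\dcat{K}(\cat{M})$. Since $F$ is a functor, $F(\phi)$ is then an isomorphism in $\cat{E}$. Both hypotheses of Theorem \ref{thm:101} are therefore satisfied, and the theorem produces the right derived functor $\mrm{R} F : \dcat{D}(\cat{M}) \to \cat{E}$ together with the natural transformation $\eta : F \to \mrm{R} F \circ Q$.

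The only genuine content behind the corollary is really Proposition \ref{prop:101}, which is cited but not yet proved; once that is in hand, every step above is a formal check. The mildest subtlety is the triangle-closure argument for $\cat{J}$: one should be a little careful that the long exact sequence obtained by applying $\opn{Hom}_{\cat{M}}(N, -)$ to a distinguished triangle of complexes really is exact, which follows because $\dcat{K}(F)$ sends distinguished triangles to distinguished triangles and $\mrm{H}^*$ is a cohomological functor on $\dcat{K}(\cat{Ab})$. Everything else is a bookkeeping application of the universal property packaged in Theorem \ref{thm:101}.
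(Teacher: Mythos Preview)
Your proposal is correct and follows exactly the paper's approach: the paper's proof is the single line ``Take $\cat{J} := \dcat{K}(\cat{M})_{\mrm{inj}}$ in the theorem,'' and you have simply unpacked the verifications that this $\cat{J}$ meets the hypotheses of Theorem~\ref{thm:101}. Your triangle-closure argument for $\cat{J}$ is the content of what the paper later records as Proposition~\ref{prop:130}, and your use of Proposition~\ref{prop:101} for condition~(i) is precisely what the one-line proof implicitly relies on.
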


\begin{proof}
Take $\cat{J} := \dcat{K}(\cat{M})_{\mrm{inj}}$ in the theorem.
\end{proof}

\begin{exa} \label{exa:102}
Suppose we are in the situation of Example \ref{exa:101}.
Let $\dcat{D}^+(\cat{M}) := \cat{K}^+(\cat{M})_{\cat{S}^+}$, the localization
of $\cat{K}^+(\cat{M})$ with respect to 
$\cat{S}^+ := \cat{S} \cap \cat{K}^+(\cat{M})$. 
If $F : \cat{M} \to \cat{N}$ is an additive functor to some other abelian
category $\cat{N}$, then (by a variant of Corollary \ref{cor:101}) we get a
right derived functor 
\[ \mrm{R} F : \dcat{D}^+(\cat{M}) \to \dcat{D}^+(\cat{N}) . \]

For $M \in \cat{M}$ let $\mrm{R}^i F(M) := \mrm{H}^i (\mrm{R}^i F(M))$.
We get an additive functor 
\[ \mrm{R}^i F : \cat{M} \to \cat{N} , \]
 which is the usual right derived functor. 
If $F$ is left exact then the natural transformation 
$\eta : F \to \mrm{R}^0 F$ is an isomorphism.
\end{exa}

Here is the second good thing. 

\begin{prop} \label{prop:102}
The functor 
\begin{equation} \label{eqn:102}
Q : \dcat{K}(\cat{M})_{\mrm{inj}} \to \dcat{D}(\cat{M}) 
\end{equation}
is fully faithful. 

Hence, if $\dcat{K}(\cat{M})$ has enough K-injectives, then
\tup{(\ref{eqn:102})}
is an equivalence of triangulated categories.
\end{prop}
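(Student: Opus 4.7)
The plan is to reduce both parts of the proposition to a single key lemma: for every K-injective $J$ and every quasi-isomorphism $\psi : M \to I$ in $\dcat{K}(\cat{M})$, the induced map
\[ \psi^* : \opn{Hom}_{\dcat{K}(\cat{M})}(I, J) \to \opn{Hom}_{\dcat{K}(\cat{M})}(M, J) \]
is bijective. To prove this lemma, I would embed $\psi$ in a distinguished triangle $M \xar{\psi} I \to N \to M[1]$ in $\dcat{K}(\cat{M})$ (using the standard mapping cone construction, which will be part of the triangulated structure of $\dcat{K}(\cat{M})$). Since $\psi$ is a quasi-isomorphism, the associated long exact cohomology sequence forces $N$ to be acyclic. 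Applying the cohomological functor $\opn{Hom}_{\dcat{K}(\cat{M})}(-, J)$ to the triangle then gives a long exact sequence in which each group $\opn{Hom}_{\dcat{K}(\cat{M})}(N[k], J) = \mrm{H}^{-k} \opn{Hom}_{\cat{M}}(N, J)$ vanishes, by the very definition of K-injectivity of $J$. The two zero neighbors then pin down $\psi^*$ as an isomorphism.

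Given this lemma, full faithfulness of $Q|_{\dcat{K}(\cat{M})_{\mrm{inj}}}$ drops out of Ore calculus. For fullness: a morphism $I \to J$ in $\dcat{D}(\cat{M})$ between K-injectives is represented by a left roof $I \xleftarrow{\psi} M \xar{\phi} J$ with $\psi \in \cat{S}$, and the lemma provides a unique $\til{\phi} \in \opn{Hom}_{\dcat{K}(\cat{M})}(I, J)$ with $\til{\phi} \circ \psi = \phi$ in $\dcat{K}(\cat{M})$; hence the given roof is equivalent to $(1_I, \til{\phi})$, i.e.\ equal to $Q(\til{\phi})$. For faithfulness: if $\phi, \phi' : I \to J$ in $\dcat{K}(\cat{M})$ satisfy $Q(\phi) = Q(\phi')$, then the description of equality in the Ore localization produces a quasi-isomorphism $\psi : M \to I$ with $\phi \circ \psi = \phi' \circ \psi$ in $\dcat{K}(\cat{M})$, and the injectivity half of the lemma forces $\phi = \phi'$.

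For the ``hence'' clause, assume $\dcat{K}(\cat{M})$ has enough K-injectives. Every $M \in \dcat{K}(\cat{M})$ admits a quasi-isomorphism $M \to I$ with $I$ K-injective, which becomes an isomorphism $Q(M) \iso Q(I)$ in $\dcat{D}(\cat{M})$; so $Q|_{\dcat{K}(\cat{M})_{\mrm{inj}}}$ is essentially surjective. Combined with full faithfulness, this gives an equivalence of categories, and compatibility with the triangulated structures is automatic since $Q$ itself is triangulated and the translation preserves K-injectives.

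The only real obstacle is the key lemma, and within it the identification $\opn{Hom}_{\dcat{K}(\cat{M})}(N[k], J) = \mrm{H}^{-k} \opn{Hom}_{\cat{M}}(N, J)$; everything else is formal consequences of Ore localization and of the fact that in a triangulated category, a morphism is an isomorphism iff its cone is a zero object, which here translates via cohomology to ``acyclic''. A minor subtlety I would flag is that this entire argument implicitly invokes the triangulated structure on $\dcat{K}(\cat{M})$ and on $\dcat{D}(\cat{M})$, and in particular the fact that the localization functor $Q$ is triangulated; these are promised in the preceding subsections and would be used as black boxes.
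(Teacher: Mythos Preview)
Your proof is correct and follows essentially the same route as the paper. The paper's later proof (Theorem \ref{thm:120}, via Lemmas \ref{lem:121} and \ref{lem:122}) uses the same triangle-plus-acyclic-cone argument, but packages it slightly differently: it first isolates the statement that any quasi-isomorphism \emph{out of} a K-injective splits in $\dcat{K}(\cat{M})$ (Lemma \ref{lem:121}), and then runs the Ore calculus in the \emph{left} direction ($q = Q(s)^{-1} \circ Q(a)$ with $s : I \to N$) to obtain the bijection $Q : \opn{Hom}_{\dcat{K}(\cat{M})}(M, I) \iso \opn{Hom}_{\dcat{D}(\cat{M})}(M, I)$ for arbitrary $M$ and K-injective $I$. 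Your version instead proves directly that precomposition with a quasi-isomorphism is a bijection on maps \emph{into} a K-injective, and then uses \emph{right} Ore roofs. The two are mirror images of the same idea; the paper's organization has the minor advantage that the splitting lemma is recorded as a standalone fact, while yours is marginally more streamlined.
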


This will be proved later. The benefit here is that we can avoid the
localization process (inverting the quasi-isomorphisms).

\subsection{Left derived functor}

This is the dual of Theorem \ref{thm:101}. The proof is the same.

\begin{thm} \label{thm:102}
Let $\cat{M}$ be an abelian category, $\cat{E}$ a triangulated category, and 
$F : \dcat{K}(\cat{M}) \to \cat{E}$ a triangulated functor. 
Assume there is a full triangulated subcategory 
$\cat{P} \subset \dcat{K}(\cat{M})$
with these two properties:

\begin{enumerate}
\rmitem{i} If $\phi : P \to P'$ is a quasi-isomorphism in $\cat{P}$, then 
$F(\phi) : F(P) \to F(P')$ is an isomorphism in $\cat{E}$.
 
\rmitem{ii} Every $M \in \dcat{K}(\cat{M})$ admits a quasi-isomorphism 
$P \to M$ for some $P \in \cat{P}$.
\end{enumerate}

Then the left derived functor 
$\mrm{L} F : \dcat{D}(\cat{M}) \to \cat{E}$
exists. Moreover, for any $P \in \cat{P}$ the morphism  
\[ \eta_P :  (\mrm{L} F \circ Q)(P) \to F(P)  \]
in $\cat{E}$ is an isomorphism.
\end{thm}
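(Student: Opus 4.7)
The plan is to mirror the sketch of Theorem \ref{thm:101}, reversing the direction of all quasi-isomorphisms. Condition (ii) now says that every $M \in \dcat{K}(\cat{M})$ receives a quasi-isomorphism from some $P \in \cat{P}$, which is exactly the hypothesis required to show, by the dual argument, that $\cat{S} \cap \cat{P}$ is a \emph{right} denominator set in $\cat{P}$ and that the canonical functor
\[ \cat{P}_{\cat{S} \cap \cat{P}} \to \dcat{K}(\cat{M})_{\cat{S}} = \dcat{D}(\cat{M}) \]
is an equivalence of triangulated categories.

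Next I would choose, for every $M \in \dcat{K}(\cat{M})$, a quasi-isomorphism $\zeta_M : P(M) \to M$ with $P(M) \in \cat{P}$, arranging the choices so that they are compatible with the translation $T$ and so that $P(M) = M$ and $\zeta_M = 1_M$ whenever $M \in \cat{P}$. Passing to the localization, this yields a triangulated functor
\[ P : \dcat{D}(\cat{M}) \to \cat{P}_{\cat{S} \cap \cat{P}} \]
which is a quasi-inverse to the inclusion above, together with a natural isomorphism $\zeta : P \iso \bsym{1}_{\dcat{D}(\cat{M})}$.

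By condition (i), the restriction $F|_{\cat{P}} : \cat{P} \to \cat{E}$ inverts every morphism in $\cat{S} \cap \cat{P}$, so the universal property of the localization produces a unique triangulated functor $F_{\cat{S} \cap \cat{P}} : \cat{P}_{\cat{S} \cap \cat{P}} \to \cat{E}$ extending $F|_{\cat{P}}$. I then set
\[ \mrm{L} F := F_{\cat{S} \cap \cat{P}} \circ P : \dcat{D}(\cat{M}) \to \cat{E} , \]
and for $M \in \dcat{K}(\cat{M})$ define
\[ \eta_M : (\mrm{L} F \circ Q)(M) = F(P(M)) \to F(M) , \quad \eta_M := F(\zeta_M) . \]
By the normalization of $\zeta$, this $\eta_M$ is the identity when $M \in \cat{P}$, hence in particular an isomorphism there, which yields the last assertion of the theorem.

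The main obstacles are the two inputs that I am using as black boxes: (a) showing that $\cat{S} \cap \cat{P}$ is a right denominator set in $\cat{P}$ and that the induced functor to $\dcat{D}(\cat{M})$ is an equivalence, which is the dual of the Ore-type argument invoked in Theorem \ref{thm:101}; and (b) verifying the universal property required by the (left analogue of) Definition \ref{dfn:8}. For (b), given any triangulated $G : \dcat{D}(\cat{M}) \to \cat{E}$ with $\eta' : G \circ Q \to F$, one restricts $\eta'$ to $\cat{P}$, uses that $P$ is a quasi-inverse to the inclusion $\cat{P}_{\cat{S} \cap \cat{P}} \to \dcat{D}(\cat{M})$ to transport $\eta'|_{\cat{P}}$ across the equivalence, and thereby obtains a unique morphism of triangulated functors $\th : G \to \mrm{L} F$ with $\eta' = \eta \circ \th$; uniqueness of $\th$ follows from the fact that every object of $\dcat{D}(\cat{M})$ is isomorphic, via $\zeta$, to one coming from $\cat{P}$.
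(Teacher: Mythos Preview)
Your proposal is correct and is exactly what the paper does: it simply states that this is the dual of Theorem~\ref{thm:101} and that the proof is the same. Your write-up spells out that dualization in detail (reversing the quasi-isomorphisms, using a right rather than left denominator set, defining $\mrm{L}F := F_{\cat{S}\cap\cat{P}} \circ P$ and $\eta_M := F(\zeta_M)$), which is precisely the intended argument.
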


\subsection{K-projectives etc}

\begin{dfn} \label{dfn:101}
Let $\cat{M}$ be an abelian category.

\begin{enumerate}
\item A complex $P \in \dcat{K}(\cat{M})$ is called {\em K-projective} if for
every acyclic $N \in \dcat{K}(\cat{M})$, the complex 
$\opn{Hom}_{\cat{M}}(P, N)$ is also acyclic.

\item Let $M \in \dcat{K}(\cat{M})$. A {\em K-projective resolution} of $M$ is a
quasi-isomorphism $P \to M$ in $\dcat{K}(\cat{M})$, where $P$ is K-projective.

\item We say that $\dcat{K}(\cat{M})$ {\em has enough K-projectives} if every 
$M \in \dcat{K}(\cat{M})$ has some K-projective resolution.
\end{enumerate}
\end{dfn}


\begin{exa}
Let $\cat{M} := \cat{Mod}\, A$, for some ring $A$.
Then $\dcat{K}(\cat{M})$ has enough K-projectives. 
We will prove this later. (?)
\end{exa}

\begin{exa}
Let $\cat{M}$ be an abelian category. Any bounded above
complex of projectives is K-projective.

Now assume that $\cat{M}$ has enough projectives. Let 
$\cat{K}^-(\cat{M})$ be the category of bounded above complexes. 
Any $M \in \cat{K}^-(\cat{M})$ admits a quasi-isomorphism 
$P \to M$, with $M$ a bounded above complex of projectives.
This generalizes the ``old-fashioned'' projective resolution 
\[  \cdots \to P^{-1} \to P^0 \to M \to 0 \]
for $M \in \cat{M}$. 
Thus $\cat{K}^-(\cat{M})$  has enough K-projectives. 
\end{exa}

The dual versions of Proposition \ref{prop:101}, Corollary \ref{cor:101},
Proposition \ref{prop:102} and Example \ref{exa:102} hold. 

Later we will also discuss {\em K-flat complexes} over a ring $A$.
These are very useful for constructing certain left derived functors.

\cleardoublepage
\section{Outline: Derived Bifunctors}

\subsection{Biadditive Bifunctors}
The most important functors for us are these: 
\begin{equation} \label{eqn:9}
\opn{Hom}_{\cat{M}} (-,-) : \cat{M}^{\mrm{op}} \times \cat{M} \to  
\cat{Mod}\, \K , 
\end{equation}
where $\cat{M}$ is a $\K$-linear abelian category; and 
\begin{equation} \label{eqn:8}
- \otimes_A - : \cat{Mod}\, A^{\mrm{op}} \times \cat{Mod}\, A \to
\cat{Mod}\, \K
\end{equation}
where $A$ is a $\K$-algebra. 
We view these as functors on the product categories, or as bifunctors (see
Subsection \ref{subsec:bifunc}). 
They have this additional property:

\begin{dfn}
Let $\cat{L}, \cat{M}$ and $\cat{N}$ be linear categories. A {\em biadditive
bifunctor} 
\[ F : \cat{L} \times \cat{M} \to \cat{N}  \]
is a bifunctor such that for every 
$L_0, L_1 \in \cat{L}$ and $M_0, M_1 \in \cat{M}$ the function 
\[ F : \opn{Hom}_{\cat{L}}(L_0, L_1)
\times \opn{Hom}_{\cat{M}}(M_0, M_1) \to 
\opn{Hom}_{\cat{N}} \big( F(L_0, M_0),  F(L_1, M_1) \big) \]
is bilinear.
\end{dfn}

In general, given additive categories $\cat{L}, \cat{M}$ and $\cat{N}$, and a
biadditive bifunctor 
\[ F : \cat{L} \times \cat{M} \to \cat{N} , \]
there are two different ways to get a biadditive bifunctor on complexes,
depending on which {\em totalization} we use. Let $L \in \dcat{C}(\cat{L})$ and 
$M \in \dcat{C}(\cat{M})$. The first option is the direct sum totalization, that
gives a complex 
$\cat{C}_{\oplus}(F)(L, M) \in \dcat{C}(\cat{N})$ with degree $i$ component
\begin{equation}
\cat{C}_{\oplus}(F)(L, M)^i := \bigoplus_{p + q = i} 
F(L^p, M^{q}) . 
\end{equation}
For this to work we must assume that $\cat{N}$ has countable direct sums. 
The differential $\d$ of this complex is this: its restriction $\d^{p, q}$ to
the component  $F(L^p, M^{q})$ is
\begin{equation} \label{eqn:7}
\d^{p, q} := F(\d_L, 1_M) + (-1)^p F(1_L, \d_M) . 
\end{equation}
 
The second option is to use the product totalization
\begin{equation}
\cat{C}_{\Pi}(F)(L, M)^i := \prod_{p + q = i} F(L^p, M^{q}) . 
\end{equation}
 For this to work we must assume that $\cat{N}$ has countable products. 
The result is a complex $\cat{C}_{\Pi}(F)(L, M)$, with differential
(\ref{eqn:7}).

Thus we obtain biadditive bifunctors 
\[ \cat{C}_{\oplus}(F), \cat{C}_{\Pi}(F) : 
\dcat{C}(\cat{L}) \times \dcat{C}(\cat{M}) \to \dcat{C}(\cat{N}) . \]

\begin{exa} \label{exa:2}
For the tensor functor (\ref{eqn:8}) the direct sum totalization is used. 
For the Hom functor (\ref{eqn:9})  the product totalization is used; and the
complex \lb
$\cat{C}_{\Pi}(\opn{Hom}_{\cat{M}})(L, M)$, for $L, M \in \dcat{C}(\cat{M})$,
coincides with the complex $\opn{Hom}_{\cat{M}}(L, M)$ from Definition
\ref{dfn:3}. 
To get the degrees and signs right, one must observe that the isomorphism of
categories $\dcat{C}(\cat{M})^{\mrm{op}} \iso \dcat{C}(\cat{M}^{\mrm{op}})$
inverts the degree. 
\end{exa}

\begin{exer}
Verify that the example above is correct; namely that the signs in Hom
complexes agree.
\end{exer}

Both bifunctors $\cat{C}_{\oplus}(F)$ and $\cat{C}_{\Pi}(F)$  respect the
homotopy relation (this is easy to check), so there are induced biadditive
bifunctors 
\begin{equation}
\cat{K}_{\oplus}(F), \cat{K}_{\Pi}(F) : 
\dcat{K}(\cat{L}) \times \dcat{K}(\cat{M}) \to \dcat{K}(\cat{N}) .
\end{equation}
It is clear from the constructions that these functors commute with the
translations (shifts) in an obvious sense, so they are {\em bi-T-additive
bifunctors}.

\subsection{Bitriangulated bifunctors}
The functors $\cat{K}_{\oplus}(F)$ and $\cat{K}_{\Pi}(F)$ above also send
distinguished triangles (in each argument) to distinguished triangles. We call
such functors {\em bitriangulated bifunctors}. A full definition will be given
later (?). 

Switching notation, suppose we are given a bitriangulated bifunctor 
\[ F : \dcat{K}(\cat{M}) \times \dcat{K}(\cat{N}) \to \cat{E} . \]
Here $\cat{M}$ and $\cat{N}$ are abelian categories, and $\cat{E}$ is
triangulated. A {\em right derived bifunctor} of $F$ is a 
bitriangulated bifunctor
\[ \mrm{R} F : \dcat{D}(\cat{M}) \times \dcat{D}(\cat{N}) \to \cat{E} , \]
with a morphism of bitriangulated bifunctors
\[ \eta : F \to \mrm{R} F \circ (Q \times Q) , \]
that has the same universal property as in Definition \ref{dfn:8}. 
There is the same sort of uniqueness as in Proposition \ref{prop:3}.

The left derived bifunctor $\mrm{L} F$ is defined similarly.

Here is an existence result, similar to Theorem \ref{thm:101}.

\begin{thm} \label{thm:131}
Let $\cat{M}$ and $\cat{N}$ be abelian categories,  $\cat{E}$ a
triangulated category, and 
\[ F : \dcat{K}(\cat{M}) \times \dcat{K}(\cat{N}) \to \cat{E} . \]
a  bitriangulated bifunctor. 
Suppose there is a full triangulated subcategory 
$\cat{J} \subset \dcat{K}(\cat{M})$ with these two properties:
\begin{enumerate}
\rmitem{i}  If $\phi : I \to I'$ is a quasi-isomorphism in $\cat{J}$, and 
$\psi : N \to N'$ is a quasi-isomorphism in $\dcat{K}(\cat{N})$, then 
\[ F(\psi, \phi) : F(N, I) \to F(N', I') \]
is an isomorphism in $\cat{E}$.

\rmitem{ii} Every $M \in \dcat{K}(\cat{M})$ admits a quasi-isomorphism 
$M \to I$ with $I \in \cat{J}$.
\end{enumerate}
Then the derived functor $\mrm{R} F$ exists, and moreover
\[ \eta_{N, I} :  F(N, I) \to \mrm{R} F(N, I) \]
is an isomorphism for any $I \in \cat{J}$.
\end{thm}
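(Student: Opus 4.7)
The plan is to imitate the proof of Theorem \ref{thm:101}, resolving only the $\cat{M}$-variable, while letting condition (i) handle the $\cat{N}$-variable for free via bi-localization. First, exactly as in Theorem \ref{thm:101}, condition (ii) implies that $\cat{S} \cap \cat{J}$ is a left denominator set in $\cat{J}$ and that the inclusion induces an equivalence $\cat{J}_{\cat{S} \cap \cat{J}} \iso \dcat{D}(\cat{M})$. Choose, functorially in $M$, a quasi-isomorphism $\zeta_M : M \to I(M)$ with $I(M) \in \cat{J}$, with $\zeta_M = 1_M$ when $M \in \cat{J}$, and so that $I$ commutes with shifts. This produces a triangulated functor $I : \dcat{D}(\cat{M}) \to \cat{J}_{\cat{S} \cap \cat{J}}$ splitting the equivalence, together with a natural isomorphism $\zeta : \bsym{1} \to I$.

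Next, restrict $F$ to the subcategory $\cat{J} \times \dcat{K}(\cat{N})$. Applying condition (i) once with $\psi = 1_N$ and once with $\phi = 1_I$ shows that this bitriangulated bifunctor sends every quasi-isomorphism in $\cat{S} \cap \cat{J}$ in the first slot, and every quasi-isomorphism in $\dcat{K}(\cat{N})$ in the second slot, to an isomorphism in $\cat{E}$. A standard ``localize one variable at a time'' argument, applying the universal property of Ore localization to each slot in turn, then produces a unique bitriangulated bifunctor
\[ F_{\cat{S} \cap \cat{J}} : \cat{J}_{\cat{S} \cap \cat{J}} \times \dcat{D}(\cat{N}) \to \cat{E} \]
extending $F|_{\cat{J} \times \dcat{K}(\cat{N})}$. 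Set
\[ \mrm{R} F := F_{\cat{S} \cap \cat{J}} \circ (I \times \bsym{1}_{\dcat{D}(\cat{N})}) : \dcat{D}(\cat{M}) \times \dcat{D}(\cat{N}) \to \cat{E} , \]
and define $\eta_{M, N} : F(M, N) \to (\mrm{R} F)(Q M, Q N)$ to be $F(\zeta_M, 1_N)$. The moreover claim is then immediate: for $I \in \cat{J}$ we have $\zeta_I = 1_I$, so $\eta_{I, N}$ is the identity, hence an isomorphism.

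The main task that remains is the universal property of $(\mrm{R} F, \eta)$. Given another bitriangulated functor $G : \dcat{D}(\cat{M}) \times \dcat{D}(\cat{N}) \to \cat{E}$ and a morphism $\eta' : F \to G \circ (Q \times Q)$, the just-proved moreover part forces $\th_{I, N} := \eta'_{I, N}$ for $I \in \cat{J}$, since $\eta_{I, N} = 1_{F(I,N)}$. This determines $\th$ uniquely on $\cat{J}_{\cat{S} \cap \cat{J}} \times \dcat{D}(\cat{N})$, which through the equivalence $I$ covers all of $\dcat{D}(\cat{M}) \times \dcat{D}(\cat{N})$. The remaining checks -- that $\th$ is well defined, natural, compatible with the translations and distinguished triangles in each slot, and satisfies $\eta' = \th \circ \eta$ -- parallel the single-variable argument sketched for Theorem \ref{thm:101}; the main obstacle, and the only genuinely new element, is verifying that bi-localization preserves the bitriangulated structure, so that $F_{\cat{S} \cap \cat{J}}$ really is a bitriangulated bifunctor. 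This reduces, after fixing one variable, to the corresponding statement for ordinary triangulated localizations used in the proof of Theorem \ref{thm:101}.
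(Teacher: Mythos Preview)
Your proposal is correct and follows exactly the approach the paper intends: the paper does not give a detailed proof of this theorem, instead leaving it as an exercise with the instruction ``Cf.\ proofs of Theorems \ref{thm:101} and \ref{thm:121}'' (see the refined version after Definition \ref{dfn:132}). Your argument is precisely this---resolve only the $\cat{M}$-variable via the subcategory $\cat{J}$ and the splitting $I$, localize $F|_{\cat{J}\times\dcat{K}(\cat{N})}$ in both variables using condition (i), and then verify the universal property by reducing to $\cat{J}$-objects---which is the natural two-variable adaptation of the sketch for Theorem \ref{thm:101}.
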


Note the symmetry between $\cat{M}$ and $\cat{N}$ in this theorem.

\begin{exa}
$\cat{M}$ is any abelian category.
Consider the biadditive bifunctor 
\[ F : \cat{M}^{\mrm{op}} \times \cat{M} \to \cat{Ab} , \]
\[ F(M, N) := \opn{Hom}_{\cat{M}}(M, N) . \]
Using the product totalization (Example \ref{exa:2})
we get a derived functor $\mrm{R} F$. The isomorphisms of
triangulated categories 
$\dcat{K}(\cat{M}^{\mrm{op}}) \cong \dcat{K}(\cat{M})^{\mrm{op}}$
and
$\dcat{D}(\cat{M}^{\mrm{op}}) \cong \dcat{D}(\cat{M})^{\mrm{op}}$
allows us to write  $\mrm{R} F$ like this:
\[ \opn{RHom}_{\cat{M}} : 
\dcat{D}(\cat{M})^{\mrm{op}} \times \dcat{D}(\cat{M}) \to 
\dcat{D}(\cat{Ab}) . \]
(If $\cat{M}$ happens to be a $\K$-linear category, for some commutative ring
$\K$, then $\opn{RHom}_{\cat{M}}$ takes values in $\dcat{D}(\cat{Mod}\, \K)$.)
 
In case $\dcat{K}(\cat{M})$ has enough K-injectives, then the full subcategory 
$\cat{J} := \dcat{K}(\cat{M})_{\mrm{inj}} \subset \dcat{K}(\cat{M})$ 
on the K-injectives satisfies properties (i-ii) of the theorem. 
If $\dcat{K}(\cat{M})$ has enough K-projectives, then we can take the full
subcategory 
$\cat{P} := \dcat{K}(\cat{M})_{\mrm{proj}} \subset \dcat{K}(\cat{M})$ 
on the K-projectives. 
Then $\cat{P}^{\mrm{op}} \subset \dcat{K}(\cat{M}^{\mrm{op}})$
 satisfies properties (i-ii) of the theorem. 
\end{exa}


\cleardoublepage
\section{Triangulated Categories} \label{sec:triangulated}

\subsection{Triangulated Categories}
We now give the full definition of triangulated category. 

Recall the notion of T-additive category $(\cat{K}, T)$   (Definition
\ref{dfn:20}). 
We often refer to the translation automorphism $T$ as the shift, and write 
$M[k] := T^k(M)$ for $k \in \Z$. 

Let  $(\cat{K}, T)$ be a T-additive category. 
A {\em triangle} in   $\cat{K}$ is a diagram 
\begin{equation} 
L \xar{\al} M \xar{\be} N \xar{\ga} T(L) . 
\end{equation}
The objects $L, M , N$ are called the vertices of the triangle. 

Suppose $L' \xar{\al'} M' \xar{\be'} N' \xar{\ga'} T(L')$
is another triangle in  $\cat{K}$. A {\em morphism of triangles} between them is
a commutative diagram 
\begin{equation} \label{eqn:10}
\UseTips  \xymatrix @C=5ex @R=5ex { 
L 
\ar[r]^{\al}
\ar[d]_{\phi}
&
M
\ar[r]^{\be}
\ar[d]_{\psi}
& 
N
\ar[r]^{\ga}
\ar[d]_{\chi}
&
T(L)
\ar[d]_{T(\phi)}
\\
L' 
\ar[r]^{\al'}
&
M'
\ar[r]^{\be'}
& 
N'
\ar[r]^{\ga'}
&
T(L') \ .
} 
\end{equation}
The morphism of triangles (\ref{eqn:10}) is called an isomorphism if 
$\phi, \psi$ and $\chi$ are all isomorphisms. 

\begin{dfn}
A {\em triangulated category} is a T-additive category $(\cat{K}, T)$, equipped
with a set of triangles called  {\em distinguished triangles}. 
The following axioms have to be satisfied:

\begin{enumerate}
\item[(TR1)] 
\begin{itemize}
\item Any triangle that's isomorphic to a distinguished
triangle is also a distinguished triangle. 

\item For every morphism $\al : L \to M$ there is a
distinguished triangle 
\[ L \xar{\al} M \xar{} N \xar{} T(L) . \] 

\item  For every object $M$ the triangle 
\[ M \xar{1_M} M \to 0 \to T(M) \] 
is distinguished.
\end{itemize}

\item[(TR2)] A triangle 
\[ L \xar{\al} M \xar{\be} N \xar{\ga} T(L) \]
is distinguished iff the triangle 
\[ M \xar{\be} N \xar{\ga} T(L) \xar{- T(\al)} T(M) \] 
is distinguished.

\item[(TR3)]  Suppose 
\[ L \xar{\al} M \xar{\be} N \xar{\ga} T(L) \]
and
\[ L' \xar{\al'} M' \xar{\be'} N' \xar{\ga'} T(L') \]
are distinguished triangles, and 
$\phi : L \to L'$ and $\psi : M \to M'$ are morphisms that satisfy 
$\psi \circ \al = \al' \circ \phi$. 
Then there is a morphism $\chi : N \to N'$ such that 
\[ \UseTips  \xymatrix @C=5ex @R=5ex { 
L 
\ar[r]^{\al}
\ar[d]_{\phi}
&
M
\ar[r]^{\be}
\ar[d]_{\psi}
& 
N
\ar[r]^{\ga}
\ar@{-->}[d]_{\chi}
&
T(L)
\ar[d]_{T(\phi)}
\\
L' 
\ar[r]^{\al'}
&
M'
\ar[r]^{\be'}
& 
N'
\ar[r]^{\ga'}
&
T(L') \ .
} \]
is a morphism of triangles.

\item[(TR4)] Suppose 
\[ L \xar{\al} M \xar{\be} N' \xar{} T(L) , \]
\[ M \xar{\ga} N \xar{\de} L' \xar{} T(M) \]
and
\[ L \xar{\ga \circ \al} N \xar{\ep} M' \xar{} T(L) , \]
are distinguished triangles. Then there is a distinguished triangle 
\[  N' \xar{\phi} M' \xar{\psi} L' \xar{} T(N')  \]
making the diagram 
\[ \UseTips  \xymatrix @C=5ex @R=5ex { 
L 
\ar[r]^{\al}
\ar[d]_{1}
&
M
\ar[r]^{\be}
\ar[d]_{\ga}
& 
N'
\ar[r]^{}
\ar@{-->}[d]_{\phi}
&
T(L)
\ar[d]_{1}
\\
L
\ar[r]^{\ga \circ \al}
\ar[d]_{\al}
&
N
\ar[r]^{\ep}
\ar[d]_{1}
& 
M'
\ar[r]^{}
\ar@{-->}[d]_{\psi}
&
T(L)
\ar[d]_{T(\al)}
\\
M
\ar[r]^{\ga}
\ar[d]_{\be}
&
N
\ar[r]^{\de}
\ar[d]_{\ep}
& 
L'
\ar[r]^{}
\ar[d]_{1}
&
T(M)
\ar[d]_{T(\be)}
\\
N'
\ar@{-->}[r]^{\phi}
&
M'
\ar@{-->}[r]^{\psi}
& 
L'
\ar@{-->}[r]^{}
&
T(N')
} \]

commutative.
\end{enumerate}
\end{dfn}

Here are a few remarks on this definition. 
The object $N$ in axiom (TR1) is referred to as a {\em cone} on 
$\al : L \to M$. We should think of the cone as something combining 
``the cokernel'' and ``the kernel'' of $\al$.

Axiom (TR2) says that if we
``turn'' a distinguished triangle we remain with a distinguished triangle.

Axiom (TR3) says that a commutative square induces a morphism on the cones
of the horizontal morphisms, that fits into a morphism of distinguished
triangles. Note however that the new morphism
$\chi$ is {\em not unique}; in other words,  {\em cones are not
functorial}.  This fact has some deep consequences in many
applications. 

Axiom (TR4) is called the {\em octahedron axiom}. Frankly I never understood
its role... Let's see if, during our course, we can discover why we need this
axiom. 

Note that the numbering of the axioms in \cite{Sc, KS1, KS2} is different.

\subsection{Triangulated Functors}
Suppose $\cat{K}$ and $\cat{L}$ are T-additive categories. 
The notion of  T-additive functor $F : \cat{K} \to \cat{L}$
was defined in Definition \ref{dfn:20}. In that definition we also introduced
the notion of morphism 
$\eta : F \to G$ between such T-additive functors.

\begin{dfn}
Let $\cat{K}$ and $\cat{L}$ be triangulated categories. 
\begin{enumerate}
\item A {\em triangulated functor} from $\cat{K}$ to $\cat{L}$
is a T-additive functor 
\[ (F, \xi) : \cat{K} \to \cat{L} \] 
that sends distinguished triangles to
distinguished triangles. Namely for any distinguished triangle 
\[ L \xar{\al} M \xar{\be} N \xar{\ga} T(L) \]
in $\cat{K}$, the triangle 
\[ F(L) \xar{F(\al)} F(M)  \xar{F(\be)} F(N)  \xar{\xi \circ F(\ga)} 
T(F(L)) \]
is a distinguished triangle in $\cat{L}$.

\item Suppose $(G, \nu) : \cat{K} \to \cat{L}$ is another triangulated functor.
A {\em morphism of triangulated functors} $\eta : (F, \xi) \to (G , \nu)$ is 
a morphism of T-additive functors.
\end{enumerate}
\end{dfn}

We usually keep the isomorphism $\xi$ implicit, and refer to $F$ as a
triangulated functor. 

For a category $\cat{K}$ there is a canonical contravariant functor 
$\opn{op} : \cat{K} \to \cat{K}^{\opn{op}}$,
that is the identity on objects, and reverses the arrows. In fact $\opn{op}$ is
an anti-isomorphism of categories.

\begin{prop}
Let $\cat{K}$ be a triangulated category, and let 
$\opn{op} : \cat{K} \to \cat{K}^{\mrm{op}}$ be the canonical contravariant
isomorphism from $\cat{K}$ to its opposite category. 
Define a translation $T^{\mrm{op}}$ on $\cat{K}^{\mrm{op}}$ by the formula 
$T^{\mrm{op}} := \opn{op} \circ\, T^{-1} \circ \opn{op}^{-1}$. 
The distinguished triangles in $\cat{K}^{\mrm{op}}$ are defined to be the
triangles 
\[ N \xar{\opn{op}(\be)} M \xar{\opn{op}(\al)} L 
\xar{\opn{op}(- T^{-1}(\ga))}  T^{\mrm{op}}(N) , \]
where 
$L \xar{\al} M \xar{\be} N \xar{\ga} T(L)$
is any distinguished triangle in $\cat{K}$.
Then $(\cat{K}^{\mrm{op}}, T^{\mrm{op}})$ is a triangulated category. 
\end{prop}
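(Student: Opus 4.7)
The plan is to verify the four axioms (TR1)--(TR4) for $(\cat{K}^{\mrm{op}}, T^{\mrm{op}})$ by translating each one, through the contravariant isomorphism $\opn{op}$, into an equivalent statement about $(\cat{K}, T)$ that is either direct from its axioms or reducible to them by an auxiliary rotation. I would first record the preliminaries: $\cat{K}^{\mrm{op}}$ inherits an additive structure from $\cat{K}$; the automorphism $T^{\mrm{op}} = \opn{op} \circ T^{-1} \circ \opn{op}^{-1}$ is additive with inverse $\opn{op} \circ T \circ \opn{op}^{-1}$ and agrees with $T^{-1}$ on objects; and the prescribed formula sets up a bijection $\Phi$ between distinguished triangles of $\cat{K}$ and the triangles declared distinguished in $\cat{K}^{\mrm{op}}$, with morphisms of triangles corresponding under componentwise application of $\opn{op}$.

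For (TR1), closure under isomorphism of $\cat{K}^{\mrm{op}}$-triangles follows by applying $\opn{op}$ componentwise to reduce to (TR1) in $\cat{K}$ via $\Phi$. To complete a morphism $\opn{op}(\al) : M \to L$ in $\cat{K}^{\mrm{op}}$ to a distinguished triangle, I would first complete $\al : L \to M$ in $\cat{K}$ to a distinguished triangle, rotate it backwards once using (TR2) in $\cat{K}$, and then apply $\Phi$; the resulting distinguished triangle of $\cat{K}^{\mrm{op}}$ begins with $\opn{op}(\al)$. The identity-triangle clause reduces similarly to the fact that $0 \to M \xar{1_M} M \to 0$ is distinguished in $\cat{K}$.

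For (TR2), the key observation is that rotating a $\Phi$-image triangle one step forward in $\cat{K}^{\mrm{op}}$ corresponds, after applying $\opn{op}$ and simplifying $T^{\mrm{op}} \circ \opn{op} = \opn{op} \circ T^{-1}$, to $\Phi$ applied to the one-step \emph{backward} rotation of the original triangle in $\cat{K}$. Concretely, rotating
\[ N \xar{\opn{op}(\be)} M \xar{\opn{op}(\al)} L \xar{\opn{op}(-T^{-1}(\ga))} T^{\mrm{op}}(N) \]
in $\cat{K}^{\mrm{op}}$ yields precisely $\Phi$ applied to the triangle $T^{-1}(N) \xar{-T^{-1}(\ga)} L \xar{\al} M \xar{\be} N$ in $\cat{K}$. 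Since (TR2) in $\cat{K}$ is an ``iff'', both directions of (TR2) in $\cat{K}^{\mrm{op}}$ then follow. The sign $\opn{op}(-T^{-1}(\ga))$ built into the definition is exactly what makes these signs match up, and this bookkeeping is the main technical nuisance of the proof.

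For (TR3), I would use (TR2) to rotate both distinguished triangles in $\cat{K}^{\mrm{op}}$ and apply $\opn{op}$, producing two distinguished triangles in $\cat{K}$ together with a commutativity condition matching the hypothesis of (TR3) in $\cat{K}$; that axiom yields the third morphism, which transports back via $\opn{op}$. For (TR4), the three distinguished triangles in $\cat{K}^{\mrm{op}}$ forming an octahedron hypothesis translate, via $\Phi$ and suitable rotations, into three distinguished triangles in $\cat{K}$ fitting the hypothesis of (TR4) in $\cat{K}$; the fourth distinguished triangle produced there transports back via $\Phi$, and commutativity of the octahedron is preserved because $\opn{op}$ is an anti-isomorphism of categories. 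The main obstacle, as flagged above, is the sign-and-rotation bookkeeping in (TR2), together with the combinatorial matching of the four octahedral triangles in (TR4); once that is handled, everything else is a formal translation.
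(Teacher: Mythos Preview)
Your proposal is correct and follows the standard route: set up the bijection $\Phi$ between distinguished triangles in $\cat{K}$ and $\cat{K}^{\mrm{op}}$, then verify each axiom by translating through $\opn{op}$, using (TR2) in $\cat{K}$ to handle the rotations and sign bookkeeping. The paper does not actually give a proof of this proposition; it leaves it as an exercise with the hint to imitate the proof of the subsequent proposition (the one establishing that $\opn{Hom}_{\cat{K}}(P,-)$ is cohomological, which itself relies on rotating triangles via (TR2)). Your sketch is precisely the argument the exercise is asking for, and your identification of the sign in $\opn{op}(-T^{-1}(\ga))$ as the crux of the (TR2) verification is exactly the point the author flags with ``Please check that I got the formulas right!''.
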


\begin{proof}
This is an exercise. Please check that I got the formulas right!
(Hint: use the proof of the next proposition.)
\end{proof}

\begin{dfn}
Let $\cat{K}$ be a triangulated category, and let $\cat{M}$ be an abelian
category. A {\em cohomological functor} $F : \cat{K} \to \cat{M}$ is an
additive functor, such that for every distinguished triangle 
$L \xar{\al} M \xar{\be} N \xar{\ga} T(L)$ in $\cat{K}$, the sequence 
\[ F(L) \xar{F(\al)} F(M) \xar{F(\be)} F(N) \] 
is exact.
\end{dfn}

\begin{prop}
Let  $F : \cat{K} \to \cat{M}$ be a cohomological functor, and let 
$L \xar{\al} M \xar{\be} N \xar{\ga} T(L)$
be a distinguished triangle in $\cat{K}$. Then the sequence 
\[ \begin{aligned}
&    \cdots \to F(L[i]) \xar{F(\al[i])} F(M[i]) \xar{F(\be[i])} F(N[i])
\\
& \hspace{12ex}
\xar{F(\ga[i])} F(L[i+1])   \xar{F(\al[i+1])} F(M[i+1]) \to \cdots 
\end{aligned} \]
is exact.
\end{prop}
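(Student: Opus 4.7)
The plan is to reduce the claim to the basic exactness property in the definition of a cohomological functor, by iterating the rotation axiom (TR2). Starting from the given distinguished triangle $L \xar{\al} M \xar{\be} N \xar{\ga} T(L)$, axiom (TR2) asserts that the rotated triangle $M \xar{\be} N \xar{\ga} T(L) \xar{-T(\al)} T(M)$ is also distinguished, and since the axiom is an ``if and only if'' one may rotate in either direction. Iterating this, I obtain for every integer $i$ a distinguished triangle whose vertices in order are $L[i]$, $M[i]$, $N[i]$, $L[i+1]$, with connecting morphisms equal to $\al[i]$, $\be[i]$, $\ga[i]$ up to a sign coming from the $-T(\al)$ in (TR2).

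Next I would apply the cohomological property of $F$ to each of these rotations. By definition, $F$ sends a distinguished triangle to a sequence that is exact at its middle vertex. So from the triangle centered at $M[i]$ I get exactness of $F(L[i]) \to F(M[i]) \to F(N[i])$, from the triangle centered at $N[i]$ I get exactness of $F(M[i]) \to F(N[i]) \to F(L[i+1])$, and from the triangle centered at $L[i+1]$ I get exactness of $F(N[i]) \to F(L[i+1]) \to F(M[i+1])$. Assembling these local exactness statements for every $i \in \Z$ yields exactness of the long sequence at every term.

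The only real bookkeeping issue is the signs produced by successive applications of (TR2), but these are harmless: since $F$ is additive, $F(-f) = -F(f)$, and a sequence is exact at a term with respect to a morphism $f$ if and only if it is exact with respect to $-f$, because the kernel and image of $f$ and $-f$ coincide. Consequently I do not expect any genuine obstacle; the one thing to verify carefully is that iterating (TR2) in both directions indeed produces, for every $i$, a distinguished triangle whose vertices are four consecutive shifted objects in the long sequence and whose arrows are (up to sign) the shifts of $\al$, $\be$, $\ga$.
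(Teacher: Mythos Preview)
Your proposal is correct and matches the paper's proof essentially line for line: the paper also iterates (TR2) to produce the rotated triangles (writing the sign as $(-1)^i$ on each morphism), applies the cohomological property at each vertex, and then observes that multiplying morphisms in an exact sequence by $\pm 1$ preserves exactness.
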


\begin{proof}
By axiom (TR2) we have distinguished triangles 
\[ L[i] \xar{(-1)^{i} \al[i]} M[i] \xar{(-1)^{i} \be[i]} N[i] 
\xar{(-1)^{i} \ga[i]} L[i+1] , \]
\[ M[i] \xar{(-1)^{i} \be[i]} N[i] \xar{(-1)^{i} \ga[i]} L[i+1] 
\xar{(-1)^{i+1} \al[i+1]} M[i+1] \]
and
\[ N[i] \xar{(-1)^{i} \ga[i]} L[i+1] 
\xar{(-1)^{i+1} \al[i+1]} M[i+1] \xar{(-1)^{i+1} \be[i+1]} N[i+1] . 
\]
Now use the definition, noting that multiplying morphisms in an exact sequence
preserves exactness.
\end{proof}


\begin{prop} \label{prop:4}
Let $\cat{K}$ be a triangulated category.
\begin{enumerate}
\item If $L \xar{\al} M \xar{\be} N \xar{\ga} T(L)$ is a distinguished triangle
in $\cat{K}$, then $\be \circ \al = 0$. 

\item For any $P \in \cat{K}$ the functors 
\[ \opn{Hom}_{\cat{K}}(-, P) : \cat{K}^{\mrm{op}} \to \cat{Ab} \]
and 
\[ \opn{Hom}_{\cat{K}}(P, -) : \cat{K} \to \cat{Ab} \]
 are cohomological functors. 
\end{enumerate}
\end{prop}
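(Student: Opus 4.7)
For part (1), my plan is to use axiom (TR1) to produce the ``trivial'' distinguished triangle $L \xar{1_L} L \to 0 \to T(L)$, and then apply (TR3) to the commutative square with $\phi := 1_L$ on the left edge and $\psi := \al$ on the right edge. The compatibility condition $\al \circ 1_L = \al \circ 1_L$ is automatic, so (TR3) yields a morphism of triangles with some $\chi : 0 \to N$ in the third slot. The commutativity of the middle square of that morphism reads $\be \circ \al = \chi \circ 0 = 0$, which is exactly the desired identity. (The dashed $\chi$ is forced to be $0$ anyway, but this is not needed.)

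For part (2), consider first $F := \opn{Hom}_{\cat{K}}(P,-)$. Given a distinguished triangle $L \xar{\al} M \xar{\be} N \xar{\ga} T(L)$, I must show that the sequence
\[ \opn{Hom}_{\cat{K}}(P, L) \xar{\al_{*}} \opn{Hom}_{\cat{K}}(P, M) \xar{\be_{*}} \opn{Hom}_{\cat{K}}(P, N) \]
is exact at the middle term. Part (1) immediately gives $\be_* \circ \al_* = (\be \circ \al)_* = 0$, so $\opn{Im}(\al_*) \subset \opn{Ker}(\be_*)$. For the reverse inclusion, suppose $\psi : P \to M$ satisfies $\be \circ \psi = 0$; I must produce $\phi : P \to L$ with $\al \circ \phi = \psi$.

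Here the main idea is to rotate both triangles using (TR2) so that (TR3) becomes applicable. The plan is to take as source the triangle $P \to 0 \to T(P) \xar{-1_{T(P)}} T(P)$, which is distinguished by one application of (TR2) to $P \xar{1_P} P \to 0 \to T(P)$ (itself distinguished by (TR1)); and as target the rotated triangle $M \xar{\be} N \xar{\ga} T(L) \xar{-T(\al)} T(M)$, which is distinguished by (TR2). The square with $\psi$ on the left and $0$ on the right commutes because $\be \circ \psi = 0$, so (TR3) produces $\chi : T(P) \to T(L)$ fitting into a morphism of distinguished triangles. Commutativity of the rightmost square of that morphism says $T(\psi) \circ (-1_{T(P)}) = -T(\al) \circ \chi$, i.e.\ $T(\psi) = T(\al) \circ \chi$. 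Applying the automorphism $T^{-1}$ gives $\psi = \al \circ T^{-1}(\chi)$, so $\phi := T^{-1}(\chi)$ is the desired lift. The main subtlety I expect is bookkeeping with the signs introduced by (TR2) — one has to make sure the signs cancel and that $T^{-1}$ is legitimately an automorphism (which holds since $T$ is an automorphism by definition of a T-additive category).

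The exactness for $\opn{Hom}_{\cat{K}}(-, P) : \cat{K}^{\mrm{op}} \to \cat{Ab}$ follows by the formally dual argument: one works in $\cat{K}^{\mrm{op}}$ with the triangulated structure constructed in the preceding proposition, and applies what has just been proved for covariant Hom. Equivalently, one can rerun the argument directly by rotating the given triangle backward using (TR2), pairing it with the trivial triangle $0 \to P \xar{1_P} P \to 0$ (suitably rotated), and invoking (TR3) to lift a morphism $\psi : M \to P$ with $\psi \circ \al = 0$ through $\be$.
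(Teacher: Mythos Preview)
Your proof is correct and follows essentially the same approach as the paper: part (1) uses (TR1) and (TR3) with the trivial triangle on $L$, and part (2) rotates both triangles once via (TR2), applies (TR3) to obtain a morphism $\chi : T(P) \to T(L)$, and then sets $\phi := T^{-1}(\chi)$, with the contravariant case deduced from the triangulated structure on $\cat{K}^{\mrm{op}}$. Your explicit sign bookkeeping is a bit more detailed than the paper's terse ``by (TR2) and (TR3)'', but the argument is the same.
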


\begin{proof}
(1) By axioms (TR1) and (TR3) we have a commutative diagram 
\[ \UseTips  \xymatrix @C=5ex @R=5ex { 
L
\ar[r]^{1}
\ar[d]_{1}
&
L
\ar[r]^{}
\ar[d]_{\al}
& 
0
\ar[r]^{}
\ar[d]_{}
&
T(L)
\ar[d]_{1}
\\
L
\ar[r]^{\al}
&
M
\ar[r]^{\be}
& 
N
\ar[r]^{\ga}
&
T(L) \ .
}  \]
We see that $\be \circ \al$ factors through $0$. 

\medskip \noindent
(2) We will prove the covariant statement; the contravariant statement is
immediate consequence, since 
\[ \opn{Hom}_{\cat{K}}(M, P) = \opn{Hom}_{\cat{K}^{\mrm{op}}}(P, M) , \]
and $\cat{K}^{\mrm{op}}$ is triangulated (with the correct triangulated
structure to make this true).

Consider a distinguished triangle 
$L \xar{\al} M \xar{\be} N \xar{\ga} T(L)$.
We have to prove that the sequence 
\[ \opn{Hom}_{\cat{K}}(P, L) \xar{\al \circ} 
\opn{Hom}_{\cat{K}}(P, M) \xar{\be \circ} 
\opn{Hom}_{\cat{K}}(P, N)  \]
is exact. In view of part (1), all we need to show is that for any 
$\psi : P \to M$ s.t.\ $\be \circ \psi = 0$, there is some  
$\phi : P \to L$ s.t.\ $\psi = \al \circ \phi$.  In a picture, we must show
that the diagram below (solid arrows) 
\[ \UseTips  \xymatrix @C=5ex @R=5ex { 
P
\ar[r]^{1}
\ar@{-->}[d]_{\phi}
&
P
\ar[r]^{}
\ar[d]_{\psi}
& 
0
\ar[r]^{}
\ar[d]_{}
&
T(P)
\ar@{-->}[d]_{\phi}
\\
L
\ar[r]^{\al}
&
M
\ar[r]^{\be}
& 
N
\ar[r]^{\ga}
&
T(L) \ .
}  \]
can be completed (dashed arrow). 
This is true by (TR) (=turning) and and (TR3) (=extending). 
\end{proof}

I need to remind you of the Yoneda Lemma. Let $\cat{C}$ be any category. There
is a related category 
$\cat{Fun}(\cat{C}^{\mrm{op}}, \cat{Set})$
whose objects are the functors $F : \cat{C}^{\mrm{op}} \to \cat{Set}$, and
whose morphisms are the natural transformations. (From the set theory point of
view this construction requires enlarging the universe.)
Any object $C \in \cat{Set}$ gives rise to an object 
$G_C \in \cat{Fun}(\cat{C}^{\mrm{op}}, \cat{Set})$,
namely the functor 
$G_C := \opn{Hom}_{\cat{C}}(-, C)$.

\begin{prop}[Yoneda Lemma]
The functor 
\[ G : \cat{C} \to \cat{Fun}(\cat{C}^{\mrm{op}}, \cat{Set}) \]
is fully faithful.
\end{prop}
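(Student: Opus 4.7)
The plan is to prove the statement by constructing an explicit inverse to the function
\[ G : \opn{Hom}_{\cat{C}}(C, C') \to
\opn{Hom}_{\cat{Fun}(\cat{C}^{\mrm{op}}, \cat{Set})}(G_C, G_{C'}) \]
for each pair $C, C' \in \cat{C}$, and verifying that the two are mutually inverse. Unwinding definitions, $G$ sends a morphism $f : C \to C'$ to the natural transformation $G_f : G_C \to G_{C'}$ whose component at an object $D$ is the function
\[ (G_f)_D : \opn{Hom}_{\cat{C}}(D, C) \to \opn{Hom}_{\cat{C}}(D, C'),
\quad (G_f)_D(g) := f \circ g . \]

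For the inverse, I would define
\[ H : \opn{Hom}_{\cat{Fun}(\cat{C}^{\mrm{op}}, \cat{Set})}(G_C, G_{C'})
\to \opn{Hom}_{\cat{C}}(C, C') , \quad H(\eta) := \eta_C(1_C) . \]
The easy direction is $H \circ G = \opn{id}$: given $f : C \to C'$, one computes $H(G_f) = (G_f)_C(1_C) = f \circ 1_C = f$.

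The less trivial direction, $G \circ H = \opn{id}$, is the main point, and it is where the naturality of $\eta$ must be used. Fix a natural transformation $\eta : G_C \to G_{C'}$ and set $f := H(\eta) = \eta_C(1_C)$. I need to show that $G_f = \eta$, i.e.\ for every object $D \in \cat{C}$ and every $g \in \opn{Hom}_{\cat{C}}(D, C)$, one has $(G_f)_D(g) = \eta_D(g)$. To prove this, apply the naturality square of $\eta$ with respect to the morphism $g : D \to C$ in $\cat{C}$ (equivalently, $g \in \opn{Hom}_{\cat{C}^{\mrm{op}}}(C, D)$):
\[ \UseTips  \xymatrix @C=10ex @R=6ex {
\opn{Hom}_{\cat{C}}(C, C)
\ar[r]^{\eta_C}
\ar[d]_{(- \circ g)}
&
\opn{Hom}_{\cat{C}}(C, C')
\ar[d]^{(- \circ g)}
\\
\opn{Hom}_{\cat{C}}(D, C)
\ar[r]^{\eta_D}
&
\opn{Hom}_{\cat{C}}(D, C') \ .
} \]
Chasing $1_C$ around the square: the top-right route yields $\eta_C(1_C) \circ g = f \circ g = (G_f)_D(g)$, while the bottom-left route yields $\eta_D(1_C \circ g) = \eta_D(g)$. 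Equality of the two routes gives $(G_f)_D(g) = \eta_D(g)$, as desired.

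Thus $G$ and $H$ are mutually inverse on each Hom set, so $G$ is fully faithful. The only conceptual obstacle is spotting that the naturality of $\eta$, evaluated at $1_C$, forces the entire transformation to be determined by the single element $\eta_C(1_C)$; once this is noticed, the rest is bookkeeping.
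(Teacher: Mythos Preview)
Your argument is correct and is the standard proof of the Yoneda Lemma. The paper itself does not actually give a proof of this statement; it simply refers the reader to \cite{KS2}, so there is nothing to compare against beyond noting that your write-up is the usual one found in that and similar references.
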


See \cite{KS2} for a proof. 
We get an embedding of $\cat{C}$ into 
$\cat{Fun}(\cat{C}^{\mrm{op}}, \cat{Set})$ as a full subcategory.
A functor in the essential image of $G$ is called a {\em representable functor}.

\begin{prop}
Let $\cat{K}$ be a triangulated category, and let 
\[ \UseTips  \xymatrix @C=5ex @R=5ex { 
L 
\ar[r]^{\al}
\ar[d]_{\phi}
&
M
\ar[r]^{\be}
\ar[d]_{\psi}
& 
N
\ar[r]^{\ga}
\ar[d]_{\chi}
&
T(L)
\ar[d]_{T(\phi)}
\\
L' 
\ar[r]^{\al'}
&
M'
\ar[r]^{\be'}
& 
N'
\ar[r]^{\ga'}
&
T(L') \ .
}  \]
be a morphism of distinguished triangles. If $\phi$ and $\psi$ are isomorphisms,
then $\chi$ is also an isomorphism.
\end{prop}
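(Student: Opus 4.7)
The plan is to reduce the problem to the classical five lemma in $\cat{Ab}$ via the cohomological functor $\opn{Hom}_{\cat{K}}(P, -)$ and then invoke the Yoneda lemma.

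First, I would fix an arbitrary object $P \in \cat{K}$ and apply the covariant cohomological functor $\opn{Hom}_{\cat{K}}(P, -)$ to both distinguished triangles. By Proposition \ref{prop:4}(2) together with axiom (TR2) (which ensures the triangles can be rotated indefinitely to produce long exact sequences), this yields two long exact sequences in $\cat{Ab}$, connected by a commutative ladder whose vertical maps are $\opn{Hom}(P, \phi)$, $\opn{Hom}(P, \psi)$, $\opn{Hom}(P, \chi)$, $\opn{Hom}(P, T(\phi))$, $\opn{Hom}(P, T(\psi))$, and so on. Commutativity of this ladder follows from the functoriality of $\opn{Hom}_{\cat{K}}(P, -)$ applied to the commutative diagram defining the morphism of triangles (including the square involving $T(\phi)$ and $T(\psi)$, which is the image of the rotated commutative square under the functor).

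Next, I would observe that since $\phi$ and $\psi$ are isomorphisms in $\cat{K}$, their shifts $T(\phi)$ and $T(\psi)$ are also isomorphisms (the translation $T$ is an automorphism of $\cat{K}$, hence in particular preserves isomorphisms). Consequently the induced maps $\opn{Hom}(P, \phi)$, $\opn{Hom}(P, \psi)$, $\opn{Hom}(P, T(\phi))$ and $\opn{Hom}(P, T(\psi))$ are all isomorphisms of abelian groups. Now I would invoke the classical five lemma for abelian groups applied to the relevant five consecutive columns of the ladder: four of the vertical arrows are isomorphisms, forcing the middle one $\opn{Hom}_{\cat{K}}(P, \chi)$ to be an isomorphism as well.

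Finally, since $P$ was an arbitrary object of $\cat{K}$, the natural transformation $\opn{Hom}_{\cat{K}}(-, \chi)$ between the representable functors $\opn{Hom}_{\cat{K}}(-, N)$ and $\opn{Hom}_{\cat{K}}(-, N')$ is a natural isomorphism (checking on each $P$ gives an isomorphism of sets/groups). By the Yoneda lemma, a morphism that becomes an isomorphism under the Yoneda embedding is itself an isomorphism, so $\chi : N \to N'$ is an isomorphism in $\cat{K}$. The main technical obstacle is ensuring that the ladder truly commutes at the ``shifted'' squares; this requires unpacking how the morphism-of-triangles diagram propagates under the rotation in (TR2), but it is essentially a bookkeeping exercise once one notes that $T$ is a functor and $\opn{Hom}_{\cat{K}}(P,-)$ preserves commutativity of diagrams.
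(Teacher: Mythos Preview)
Your proposal is correct and follows essentially the same approach as the paper's proof: apply the cohomological functor $\opn{Hom}_{\cat{K}}(P,-)$ to obtain a ladder of long exact sequences, use the five lemma to conclude that $\opn{Hom}_{\cat{K}}(P,\chi)$ is an isomorphism for all $P$, and then invoke Yoneda. The paper is slightly more terse (it draws the five-term diagram explicitly rather than speaking of rotating via (TR2)), but the argument is the same.
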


\begin{proof}
Take an arbitrary $P \in \cat{K}$, and let 
$F := \opn{Hom}_{\cat{K}}(P, -)$. We get a 
commutative diagram 
\[ \UseTips  \xymatrix @C=8ex @R=8ex { 
F(L) 
\ar[r]^{F(\al)}
\ar[d]_{F(\phi)}
&
F(M)
\ar[r]^{F(\be)}
\ar[d]_{F(\psi)}
& 
F(N)
\ar[r]^{F(\ga)}
\ar[d]_{F(\chi)}
&
F(T(L))
\ar[d]_{F(T(\phi))}
\ar[r]^{F(T(\al))}
&
F(T(M)) 
\ar[d]_{F(T(\psi))}
\\
F(L') 
\ar[r]^{F(\al')}
&
F(M')
\ar[r]^{F(\be')}
& 
F(N')
\ar[r]^{F(\ga')}
&
F(T(L'))
\ar[r]^{F(T(\al'))}
&
F(T(M')) 
} \]
in $\cat{Ab}$. By Proposition \ref{prop:4}(2)  the rows in the diagram 
are exact sequences.. 
Since the other vertical arrows are isomorphisms, it follows that 
\[ F(\chi) : \opn{Hom}_{\cat{K}}(P, N) \to 
\opn{Hom}_{\cat{K}}(P, N') \]
is an isomorphism. 

Let us write $G_N := \opn{Hom}_{\cat{K}}(-, N)$
and $G_{N'} := \opn{Hom}_{\cat{K}}(-, N')$. 
The calculation above shows that the
morphism $G_{\chi} : G_{N} \to G_{N'}$ between these  representable functors
is an isomorphism. Using the Yoneda Lemma we conclude that $\chi$ is an
isomorphism.
\end{proof}

\cleardoublepage
\section{The Homotopy Category is Triangulated}

\subsection{Standard Triangles}
Let $\cat{M}$ be an abelian category. We already know that 
$\dcat{K}(\cat{M})$ is a T-additive category, where the shift $T(M) = M[1]$ is 
$M[1]^i = M^{i+1}$ and $\d_{M[1]} = - \d_M$.

By {\em graded object} of $\cat{M}$ we mean a collection 
$M = \{ M^i \}_{i \in \Z}$ of objects. Thus a complex in $\cat{M}$ is a graded
object with a differential.

In what follows we view a direct sum $M_1 \oplus M_2$ 
of objects of $\cat{M}$, or of graded objects of $\cat{M}$, as a column
$\bmat{M_1 \\ M_2}$. Given homomorphisms $\phi_{i, j} : M_j \to N_i$,
the combined morphism 
\[ \phi : M_1 \oplus M_2 \to N_1 \oplus N_2 \]
(compatible with the embeddings and the projections)
has a matrix representation
$\phi = \bmat{ \phi_{1,1} &  \phi_{1,2} \\  \phi_{2,1} &  \phi_{2,2} }$.
Namely 
\[ \phi( \bmat{m_1 \\ m_2}) = 
\bmat{ \phi_{1,1} &  \phi_{1,2} \\  \phi_{2,1} &  \phi_{2,2} } \cdot 
\bmat{m_1 \\ m_2} = 
\bmat{ \phi_{1,1}(m_1) + \phi_{1,2}(m_2) \\
 \phi_{2,1}(m_1) + \phi_{2,2}(m_2)  } . \]
Likewise for other finite direct sums. 

Consider a morphism $\al : L \to M$ in $\dcat{C}(\cat{M})$. Define a complex
$N$ like this: as graded object of $\cat{M}$ we take
$N := L[1] \oplus M$, namely 
\[ N^i := L^{i+1} \oplus M^i = \bmat{L^{i+1} \\ M^i} . \]
The differential
\[ \d_N^i : N^i  = L^{i+1} \oplus M^i \to  N^{i+1}  =  L^{i+2} \oplus M^{i+1} \]
is given by the matrix  
\[ \d^i_N := 
\bmat{ - \d_L^{i+1} & 0 \\[0.5em] \al^{i+1} &  \d_M^{i} } .  \]
We call $N$ the {\em mapping cone} of $\al$, and denote it by 
$\opn{cone}(\al)$.

There are morphisms 
\[ M \xar{\be} N = \bmat{L[1] \\ M} \xar{\ga} L[1] \] 
in $\dcat{C}(\cat{M})$ given in matrix notation by
\[ \be := \bmat{ 0 \\ 1_M  } \ , \ 
\ga := \bmat{ 1_{L[1]} & 0 } . \]

I leave it to the reader to check that $\d_N$ is indeed a differential, and
that $\be$ and $\ga$ are morphisms in $\dcat{C}(\cat{M})$.
We get a triangle 
\begin{equation} \label{eqn:12}
L \xar{\al} M \xar{\be} N \xar{\ga} L[1] , 
\end{equation}
in $\dcat{C}(\cat{M})$, which we call  the the {\em standard
triangle} associated to $\al$. 

Passing to $\dcat{K}(\cat{M})$, we get a triangle 
\begin{equation} \label{eqn:11}
L \xar{\bar{\al}} M \xar{\bar{\be}} N \xar{\bar{\ga}} L[1] , 
\end{equation}
where $\bar{\al}, \bar{\be}, \bar{\ga}$ are the morphisms in 
$\dcat{K}(\cat{M})$ represented by $\al, \be, \ga$ respectively (their homotopy
classes). 
We call (\ref{eqn:11}) the {\em standard triangle in $\dcat{K}(\cat{M})$}
associated to $\al$.

Note that the standard triangle (\ref{eqn:11}) depends functorially on the
morphism $\al$ in $\dcat{C}(\cat{M})$;
but it is not functorial in the morphism $\bar{\al}$ in $\dcat{K}(\cat{M})$.

\subsection{Distinguished triangles in 
\texorpdfstring{$\dcat{K}(\cat{M})$}{K(M)}}

\begin{dfn} \label{dfn:140}
A triangle in $\dcat{K}(\cat{M})$ is called a {\em distinguished triangle} if it
isomorphic, in $\dcat{K}(\cat{M})$, to a standard triangle as in (\ref{eqn:11}).
\end{dfn}

\begin{thm}
The T-additive category $\dcat{K}(\cat{M})$, with the set of distinguished
triangles defined above, is triangulated.
\end{thm}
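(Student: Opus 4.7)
My strategy is to verify the axioms one by one, exploiting the fact that by Definition \ref{dfn:140}, every distinguished triangle is by fiat isomorphic in $\dcat{K}(\cat{M})$ to a standard triangle $L \xar{\bar\al} M \xar{\bar\be} N \xar{\bar\ga} L[1]$ arising from a morphism $\al : L \to M$ in $\dcat{C}(\cat{M})$. Since every axiom is phrased in a way that is invariant under isomorphism of triangles, I may at each step replace given distinguished triangles by standard ones and argue by explicit matrix computation in $\dcat{C}(\cat{M})$, then pass to $\dcat{K}(\cat{M})$.

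For \tup{(TR1)}, invariance under isomorphism is built into the definition. Any morphism in $\dcat{K}(\cat{M})$ lifts to some $\al$ in $\dcat{C}(\cat{M})$, and the associated standard triangle gives the required extension. For the triangle $M \xar{1_M} M \to 0 \to M[1]$, I would show that $\opn{cone}(1_M)$, which has $i$-th component $M^{i+1} \oplus M^i$ with differential $\bmat{ -\d_M & 0 \\ 1_M & \d_M }$, is null-homotopic: the degree $-1$ map $h := \bmat{0 & 1_M \\ 0 & 0}$ satisfies $\d h + h \d = 1$, so $\opn{cone}(1_M) \cong 0$ in $\dcat{K}(\cat{M})$, making the standard triangle of $1_M$ isomorphic to the desired one.

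For \tup{(TR2)}, the rotation axiom, I would compare the rotated standard triangle of $\al$ with the standard triangle of $\be : M \to \opn{cone}(\al)$. The cone $\opn{cone}(\be)$ has $i$-th component $M^{i+1} \oplus L^{i+1} \oplus M^i$. I would exhibit mutually inverse homotopy equivalences $u : \opn{cone}(\be) \to L[1]$, $u := \bmat{0 & 1 & 0}$, and $v : L[1] \to \opn{cone}(\be)$, $v := \bmat{-\al \\ 1 \\ 0}$; the chain map condition for $v$ is precisely the relation $\d_M \al = \al \d_L$. One checks $u v = 1_{L[1]}$, and that $1 - v u$ is null-homotopic via $h := \bmat{0 & 0 & 1_M \\ 0 & 0 & 0 \\ 0 & 0 & 0}$. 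A short direct calculation shows that $u$ and $v$ intertwine the two triangles (with the sign $-T(\al)$ appearing on the connecting morphism) up to an explicit null-homotopy, giving the isomorphism of triangles. The converse direction of (TR2) follows formally since $T$ is an automorphism of $\dcat{K}(\cat{M})$.

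For \tup{(TR3)}, I reduce to the case of two standard triangles by isomorphism. Given $\phi : L \to L'$ and $\psi : M \to M'$ with $\psi \al \sim \al' \phi$, lift the homotopy to $s \in \opn{Hom}_{\cat{M}}(L, M')^{-1}$ with $\psi \al - \al' \phi = \d s + s \d$, and set $\chi := \bmat{\phi[1] & 0 \\ s & \psi} : \opn{cone}(\al) \to \opn{cone}(\al')$; a direct matrix calculation verifies $\chi$ is a chain map and makes the two squares commute, in $\dcat{C}(\cat{M})$ for the left one and strictly (even before passing to $\dcat{K}$) for the right one. Finally, \tup{(TR4)}, the octahedron, is the main obstacle. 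Writing $N' := \opn{cone}(\al)$, $L' := \opn{cone}(\ga)$, $M' := \opn{cone}(\ga \circ \al)$, I define the morphism $\phi : N' \to M'$ by $\phi := \bmat{1_{L[1]} & 0 \\ 0 & \ga}$ on $L[1] \oplus M \to L[1] \oplus N$, and $\psi : M' \to L'$ by $\psi := \bmat{\al[1] & 0 \\ 0 & 1_N}$ on $L[1] \oplus N \to M[1] \oplus N$. One then verifies by matrix calculation that $\phi$ and $\psi$ are chain maps and that all four faces of the octahedron commute in $\dcat{C}(\cat{M})$. The hard part is to exhibit a distinguished triangle with vertices $N', M', L'$, which amounts to constructing an explicit homotopy equivalence between $\opn{cone}(\phi)$ and $L'$; the homotopy inverses and contracting homotopy have the same flavor as in (TR2), involving identity blocks in the $L$-direction and $-\ga$ in the off-diagonal to correct the chain condition. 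This tedious but mechanical computation completes the verification.
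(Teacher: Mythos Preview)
Your proof is correct and follows essentially the same approach as the paper: the null-homotopy of $\opn{cone}(1_M)$ for (TR1), the explicit homotopy equivalence between $\opn{cone}(\be)$ and $L[1]$ for (TR2) (your maps $u,v$ and homotopy $h$ are exactly the paper's $\chi,\rho,\th$ in Lemma~\ref{lem:1}), and the matrix $\chi = \bmat{\phi[1] & 0 \\ s & \psi}$ for (TR3). The only difference is that you sketch a verification of (TR4), whereas the paper explicitly declines to prove it.
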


The proof below is worked out using a hint in \cite{RD}, and a lemma in 
 \cite{KS1} -- I hope it is correct!

\begin{lem}  \label{lem:2}
Let $M \in \dcat{C}(\cat{M})$, and consider the mapping cone 
$N := \opn{cone}(1_M)$. Then the complex $N$ is null-homotopic, i.e.\ 
$0 \to N$ is an isomorphism in $\dcat{K}(\cat{M})$.
\end{lem}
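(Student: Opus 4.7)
The strategy is to construct an explicit contracting homotopy $\chi$ on $N = \opn{cone}(1_M)$ showing that the identity $1_N$ is null-homotopic in $\dcat{C}(\cat{M})$. Once $1_N \sim 0$, we get $1_N = 0$ in $\dcat{K}(\cat{M})$, which forces $\opn{Hom}_{\dcat{K}(\cat{M})}(N,N)$ to be the zero ring, hence $N$ is a zero object and $0 \to N$ is an isomorphism there.

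\textbf{Construction.} Unwinding the definition of the cone, $N^i = M^{i+1} \oplus M^i$, and with respect to this column decomposition the differential is
\[ \d_N^i \;=\; \bmat{-\,\d_M^{i+1} & 0 \\[0.3em] 1_{M^{i+1}} & \d_M^{i}} \colon M^{i+1}\oplus M^i \;\longrightarrow\; M^{i+2}\oplus M^{i+1}. \]
I would define $\chi \in \opn{Hom}_{\cat{M}}(N,N)^{-1}$ componentwise by
\[ \chi^i \;=\; \bmat{0 & 1_{M^i} \\[0.3em] 0 & 0} \colon M^{i+1}\oplus M^i \;\longrightarrow\; M^i\oplus M^{i-1}, \]
i.e.\ the projection onto the second summand followed by inclusion into the first summand of $N^{i-1}$.

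\textbf{Verification.} A direct matrix computation of $\d_N^{i-1}\circ\chi^i + \chi^{i+1}\circ\d_N^i$ gives
\[ \bmat{-\d_M^i & 0 \\ 1 & \d_M^{i-1}}\bmat{0 & 1 \\ 0 & 0} + \bmat{0 & 1 \\ 0 & 0}\bmat{-\d_M^{i+1} & 0 \\ 1 & \d_M^i} \;=\; \bmat{0 & -\d_M^i \\ 0 & 1} + \bmat{1 & \d_M^i \\ 0 & 0} \;=\; \bmat{1 & 0 \\ 0 & 1} \;=\; 1_{N^i}. \]
Since the differential on $\opn{Hom}_{\cat{M}}(N,N)$ in degree $-1$ sends $\chi$ to $\d_N\circ\chi - (-1)^{-1}\chi\circ\d_N = \d_N\circ\chi + \chi\circ\d_N$, this shows $\d(\chi)=1_N$, so $1_N$ is a $0$-coboundary in $\opn{Hom}_{\cat{M}}(N,N)$, i.e.\ null-homotopic.

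\textbf{Conclusion.} In $\dcat{K}(\cat{M})$ we therefore have $1_N = 0$, so $\opn{End}_{\dcat{K}(\cat{M})}(N)$ is the zero ring. By the criterion used in the proof of Proposition \ref{prop:109}, $N$ is a zero object in $\dcat{K}(\cat{M})$, whence the unique morphism $0 \to N$ is an isomorphism. There is no real obstacle here beyond getting the signs and matrix entries right; the only subtle point is the sign convention for $\d$ on the Hom complex in odd degree, which must be applied correctly to recognize $1_N = \d(\chi)$.
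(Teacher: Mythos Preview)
Your proof is correct and takes essentially the same approach as the paper: the homotopy $\chi^i = \bmat{0 & 1_{M^i} \\ 0 & 0}$ is exactly the one the paper uses (there called $\th^i$), with the same matrix verification that $\d_N \circ \chi + \chi \circ \d_N = 1_N$. You add a bit more detail on the sign convention and on why $1_N = 0$ forces $N$ to be a zero object, but the argument is the same.
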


\begin{proof}
We shall exhibit a homotopy $\th$ from $0_N$ to $1_N$. 
Define 
\[ \th^i : N^i = M^{i+1} \oplus M^{i} \to N^{i-1} = M^{i} \oplus M^{i-1} \]
to be the matrix 
\[ \th^i := \bmat{ 0 & 1_{M^i} \\[0.5em] 0 & 0  } . \]
We have
\[ \d_N^{i-1} \circ \th^i + \th^{i+1} \circ \d_N^i =
\bmat{ 1_{M^{i+1}} & 0 \\ 0 & 1_{M^{i}}  }  =  1_{N^i} . \]
\end{proof}


\begin{lem}[{\cite[Lemma 1.4.2]{KS1}}] \label{lem:1}
Consider a morphism $\al : L \to M$ in $\dcat{C}(\cat{M})$, the associated
 standard  triangle
\[ L \xar{\al} M \xar{\be} N \xar{\ga} L[1] , \]
and  the associated  standard  triangle
\[ M \xar{\be} N \xar{\phi} P \xar{\psi} M[1]  \]
in  $\dcat{C}(\cat{M})$. Here $N = \opn{cone}(\al)$ and $P = \opn{cone}(\be)$.
There is a morphism 
$\rho : L[1] \to P$ in $\dcat{C}(\cat{M})$ s.t.\ 
$\bar{\rho}$ is an isomorphism in $\dcat{K}(\cat{M})$, and the diagram 
\[ \UseTips \xymatrix @C=5ex @R=5ex { 
M
\ar[r]^{\bar{\be}}
\ar[d]_{\bar{1}_M}
&
N
\ar[r]^{\bar{\ga}}
\ar[d]_{\bar{1}_N}
& 
L[1]
\ar[r]^{- \bar{\al}[1]}
\ar[d]_{\bar{\rho}}
&
M[1]
\ar[d]_{\bar{1}_{M[1]}}
\\
M
\ar[r]^{\bar{\be}}
&
N
\ar[r]^{\bar{\phi}}
& 
P
\ar[r]^{\bar{\psi}}
&
M[1] 
} \]
commutes in $\dcat{K}(\cat{M})$.
\end{lem}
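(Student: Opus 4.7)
The strategy is to guess an explicit formula for the chain map $\rho$, verify its properties by matrix calculation in $\dcat{C}(\cat{M})$, and then check that $\bar\rho$ is invertible in $\dcat{K}(\cat{M})$. First I would expand the cones. By definition $N = \opn{cone}(\al)$ has $N^i = L^{i+1} \oplus M^i$, and therefore $P = \opn{cone}(\be)$ has
\[
P^i = M^{i+1} \oplus L^{i+1} \oplus M^i,
\]
with a block-triangular differential whose $(3,1)$ entry is $1_{M^{i+1}}$, $(3,2)$ entry is $\al^{i+1}$, and diagonal blocks $-\d_M, -\d_L, \d_M$. In these coordinates the canonical morphisms become $\phi = \bmat{0 \\ 1_{L[1]} \\ 0 \\ 0 \\ 1_M}$ (viewed as a block column into $M[1]\oplus L[1]\oplus M$), $\psi = \bmat{1_{M[1]} & 0 & 0}$, $\be = \bmat{0 \\ 1_M}$ and $\ga = \bmat{1_{L[1]} & 0}$. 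Given this, the natural candidate is
\[
\rho := \bmat{-\al[1] \\ 1_{L[1]} \\ 0} : L[1] \to P.
\]

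Next I would verify the three required properties of $\rho$ in $\dcat{C}(\cat{M})$. That $\rho$ is a chain map follows from a direct block computation, the only nontrivial cancellation being $\d_M \al - \al \d_L = 0$, which is exactly the condition that $\al$ is a chain map. The right square commutes on the nose: $\psi \circ \rho = -\al[1]$ by reading off the first entry. The middle square commutes only up to homotopy; the difference $\phi - \rho \circ \ga$, acting on $(\ell,m) \in N^i$, equals $(\al^{i+1}\ell,\,0,\,m)$, and this is killed by the homotopy
\[
\chi^i := \bmat{0 & 1_{M^i} \\ 0 & 0 \\ 0 & 0} : N^i \to P^{i-1},
\]
as one checks by computing $\d_P \circ \chi + \chi \circ \d_N$ and using $\al \d_L = \d_M \al$ once more.

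The heart of the lemma is showing that $\bar\rho$ is an isomorphism in $\dcat{K}(\cat{M})$. The most transparent way is to exhibit a two-sided homotopy inverse: take $\sigma := \bmat{0 & 1_{L[1]} & 0} : P \to L[1]$, which is immediately a chain map. Then $\sigma \circ \rho = 1_{L[1]}$ strictly, while $1_P - \rho \circ \sigma$ has the block form $\bmat{1 & \al[1] & 0 \\ 0 & 0 & 0 \\ 0 & 0 & 1}$, which can be realized as the coboundary $\d(\tau)$ of the single-entry homotopy
\[
\tau^i := \bmat{0 & 0 & 1_{M^i} \\ 0 & 0 & 0 \\ 0 & 0 & 0} : P^i \to P^{i-1}.
\]
Thus $\bar\rho$ is a homotopy equivalence. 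Alternatively one can derive the invertibility of $\bar\rho$ conceptually by identifying the quotient $P / \rho(L[1])$ (or a suitable summand complement) with $\opn{cone}(1_M)$ and invoking Lemma \ref{lem:2}; but spelling out $\sigma$ and $\tau$ by hand is the shortest route.

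Finally the left square of the diagram commutes trivially because it reads $\bar 1_N \circ \bar\be = \bar\be \circ \bar 1_M$. The main obstacle is really just bookkeeping: keeping track of the signs and block positions in $P$ so that $\rho$ simultaneously satisfies $\psi \rho = -\al[1]$ on the nose and $\phi \sim \rho \ga$ up to an \emph{explicit} homotopy, and then producing $\tau$ in the correct sign convention from Definition \ref{dfn:3}. Once the formulas for $\rho$, $\chi$, $\sigma$, $\tau$ are pinned down, every verification reduces to a finite $3\times 3$ matrix computation using only $\al \d_L = \d_M \al$.
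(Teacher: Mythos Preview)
Your proof is correct and follows essentially the same route as the paper: you define the identical map $\rho = \bmat{-\al[1] \\ 1_{L[1]} \\ 0}$, exhibit the same homotopy inverse (your $\sigma$ is the paper's $\chi$) and the same contracting homotopy (your $\tau$ is the paper's $\th$). The only minor variation is that for the middle square you produce an explicit homotopy between $\phi$ and $\rho\circ\ga$, whereas the paper instead records the strict equality $\chi\circ\phi=\ga$ and lets the commutativity follow once $\bar\chi=\bar\rho^{-1}$ is established; both arguments are equivalent. (Your block presentation of $\phi$ as a five-entry column is a typo---it should be the $3\times 2$ matrix $\bmat{0 & 0 \\ 1_{L[1]} & 0 \\ 0 & 1_M}$---but the subsequent computations are unaffected.)
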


\begin{proof}
Note that 
$P^i = M^{i+1} \oplus L^{i+1} \oplus M^i$
and $L[1]^i = L^{i+1}$. 
Define morphisms 
$\rho^i : L^{i+1} \to P^i$ 
and
$\chi^i : P^i \to L^{i+1}$ in $\cat{M}$ by the matrix representations
\[ \rho^i := \bmat{ -\al^{i+1} \\ 1_{L^{i+1}} \\ 0 } \ , \
\chi^i := \bmat{ 0 & 1_{L^{i+1}} & 0 } .  \]
We get morphisms of graded objects 
$\rho : L[1] \to P$ and $\chi : P \to L[1]$. 
Direct calculations (please verify!) show that:
\begin{itemize}
\item $\rho$ and $\chi$ are morphisms in $\dcat{C}(\cat{M})$.
\item $\chi \circ \rho = 1_{L[1]}$.
\item $\chi \circ \phi = \ga$.
\item $\psi \circ \rho = -\al[1]$.
\end{itemize}

It remains to prove that $\rho \circ \chi$ is homotopic to $1_P$. 
Define a morphism $\th^i : P^i \to P^{i-1}$ by the matrix
\[ \th^i := \bmat{ 0 & 0 & 1_{M^i} \\ 0 & 0 & 0 \\ 0 & 0 & 0} . \]
We get a morphism $\th : P \to P[-1]$ of graded objects, and 
\[ 1_P - \rho \circ \chi = \th \circ \d_P + \d_P \circ \th . \]
\end{proof}

\begin{proof}[Proof of the Theorem] 
(TR1): The only nontrivial thing to show is that 
\[ M \xar{1_M} M \to 0 \to M[1] \] 
is a distinguished triangle. But this follows from Lemma \ref{lem:2}.

\medskip \noindent
(TR2): This is an immediate consequence of Lemma \ref{lem:1}, since the bottom
triangle there (the one with $P$) is standard.

\medskip \noindent
(TR3): 
Consider a commutative diagram (solid arrows) in $\dcat{K}(\cat{M})$:
\[ \UseTips \xymatrix @C=5ex @R=5ex { 
L 
\ar[r]^{\bar{\al}}
\ar[d]_{\bar{\phi}}
&
M
\ar[r]^{\bar{\be}}
\ar[d]_{\bar{\psi}}
& 
N
\ar[r]^{\bar{\ga}}
\ar@{-->}[d]_{\bar{\chi}}
&
T(L)
\ar[d]_{T(\bar{\phi})}
\\
L' 
\ar[r]^{\bar{\al}'}
&
M'
\ar[r]^{\bar{\be}'}
& 
N'
\ar[r]^{\bar{\ga}'}
&
T(L') 
} \]
where the horizontal triangles are distinguished. 
By definition this diagram is isomorphic to a diagram 
in $\dcat{K}(\cat{M})$, that comes from a diagram 
\[ \UseTips \xymatrix @C=5ex @R=5ex { 
L 
\ar[r]^{\al}
\ar[d]_{\phi}
&
M
\ar[r]^{\be}
\ar[d]_{\psi}
& 
N
\ar[r]^{\ga}
\ar@{-->}[d]_{\chi}
&
T(L)
\ar[d]_{T(\phi)}
\\
L' 
\ar[r]^{\al'}
&
M'
\ar[r]^{\be'}
& 
N'
\ar[r]^{\ga'}
&
T(L') 
} \]
(solid arrows) in $\dcat{C}(\cat{M})$,
in which $N = \opn{cone}(\al)$, $N' = \opn{cone}(\al')$, and the horizontal
triangles are the standard ones. 
However this diagram in $\dcat{C}(\cat{M})$ in only commutative up to homotopy. 
This means that there is a degree $-1$ homomorphism 
$\th : L \to M'$ s.t.\ 
\[ \al' \circ \phi = \psi \circ \al + \d(\th) . \]
For every $i$ define the morphism 
\[ \chi^i : N^i = \bmat{ L^{i+1} \\ M^i } \to 
N'^{\, i} = \bmat{ L'^{\, i+1} \\ M'^{\, i} }
\]
to be left multiplication with the matrix
$\bmat{ \phi^{i+1} & 0 \\ - \th^{i+1} & \psi^{i} }$.
A matrix calculation shows that 
$\chi : N \to N'$ is a morphism in
$\dcat{C}(\cat{M})$. It is easy to see that  
$\chi \circ \be = \be' \circ \psi$ and 
$\phi[1] \circ \ga = \ga' \circ \chi$ in $\dcat{C}(\cat{M})$.
Hence passing to $\dcat{K}(\cat{M})$ we have a morphism of triangles.

\medskip \noindent
(TR4): I will not prove this axiom, since it looks as if we won't need it. 
\end{proof}

\cleardoublepage
\section{Localization of Categories}

\subsection{Definition of localization}
I will start with the general concept of localization of a category.
Later we will talk about localization of linear categories. 
It turns out that it is easier to prove localization results with arrows!

The emphasis will be on morphisms rather than on objects. 
Let $\cat{A}$ be a category.
It will be convenient to write 
\[ \cat{A}(M, N) := \opn{Hom}_{\cat{A}}(M, N)  \]
for $M, N \in \opn{Ob}(\cat{A})$. 

We use sometimes use the notation $a \in \cat{A}$, leaving the objects
implicit. When we write $b \circ a$ for $a, b \in \cat{A}$, we mean that they
are composable.

\begin{dfn}
Let $\cat{A}$ be a category. A {\em multiplicatively closed set} 
$\cat{S}$ in $\cat{A}$ is the data of a subset 
$\cat{S}(M, N) \subset \cat{A}(M, N)$ for any pair of objects
$M, N \in \cat{A}$, such that 
$1_M \in \cat{S}(M, M)$, and such that for any
$s \in \cat{S}(L, M)$ and
$t \in \cat{S}(M, N)$ the composition
$t \circ s \in \cat{S}(L, N)$. 
\end{dfn}

Using our shorthand, we can write the definition like this: $1_M \in \cat{S}$,
and $s, t \in \cat{S}$ implies $t \circ s \in \cat{S}$. 

\begin{dfn}
A {\em weak localization} of $\cat{A}$ with respect to $\cat{S}$ 
is a pair $(\cat{A}_{\cat{S}}, Q)$, consisting of a  category 
$\cat{A}_{\cat{S}}$ and a functor 
$Q : \cat{A} \to \cat{A}_{\cat{S}}$, having the following properties:
\begin{itemize}
\rmitem{i} For every $s \in \cat{S}$, the morphism $Q(s) \in \cat{A}_{\cat{S}}$
is invertible (i.e.\ an isomorphism). 

\rmitem{ii} Suppose $\cat{B}$ is a category, and 
$F : \cat{A} \to \cat{B}$
is a functor such that $F(s)$ is an isomorphism for every 
$s \in \cat{S}$. 
Then there is a pair $(F_{\cat{S}}, \eta)$, consisting of a functor 
$F_{\cat{S}} : \cat{A}_{\cat{S}} \to \cat{B}$
and an isomorphism 
$\eta : F \iso F_{\cat{S}} \circ Q$ of functors
$\cat{A} \to \cat{B}$. 
Furthermore, the pair $(F_{\cat{S}}, \eta)$ is unique up to a unique
isomorphism. 
\end{itemize}
\end{dfn}

The last sentence in the universal property means that if 
$(F_{\cat{S}}', \eta')$ is another such pair, then there is a unique
isomorphism of functors $\zeta : F_{\cat{S}} \iso  F_{\cat{S}}'$
s.t.\ $\eta' = \zeta 
\circ \eta$. 

We refer to $Q$ as the localization functor. 

In a diagram:
\[ \UseTips  \xymatrix @C=10ex @R=8ex  {
\cat{S}
\ar[r]^{\mrm{inc}}
&
\cat{A}
\ar[r]^F _{}="q" 
\ar[d]_{Q}
& 
\cat{B}
\\
&
\cat{A}_{\cat{S}}
\ar[ur]_{F_{\cat{S}}} ^(0.5){}="f"
\ar@{=>}  "q";"f" _{\eta}
} \]
This is is commutative via $\eta$. 
For any other $(F_{\cat{S}}', \eta')$ there is a unique $\ze$ s.t.\ 
$\eta' = \ze \circ \eta$:
\[ \UseTips  \xymatrix @C=14ex @R=12ex {
\cat{A}
\ar[r]^F _{}="q"  ^(0.6){}="q2" 
\ar[d]_{Q}
& 
\cat{B} 
\\
\cat{A}_{\cat{S}}
\ar[ur] ^(0.5){}="f" ^(0.5){}="f1" 
\ar@{=>}  "q";"f" _{\eta}
\ar@(r,d)[ru]_{F_{\cat{S}}'} ^(0.55){}="g"  ^(0.48){}="g1"
\ar@{=>}  "q2";"g" ^(0.7){\eta'}
\ar@{=>}  "f";"g1" _(0.5){\ze}
} \]
. 

\begin{prop}
A weak localization $(\cat{A}_{\cat{S}}, Q)$ is unique up to an equivalence, 
and this equivalence is unique up to a unique isomorphism. 
Namely if $(\cat{A}'_{\cat{S}}, Q')$ is another localization of $\cat{A}$
w.r.t.\ $\cat{S}$, then there is a pair $(G, \eta)$, consisting of an
equivalence 
$G : \cat{A}_{\cat{S}} \to \cat{A}_{\cat{S}}'$
and an isomorphism of functors 
$\eta : Q' \iso G \circ Q$.
Moreover, the pair $(G, \eta)$ is unique up to a unique isomorphism.
\end{prop}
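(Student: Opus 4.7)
The plan is to invoke the universal property (ii) of both weak localizations, going in both directions, and then use the uniqueness clauses to conclude that the resulting functors are mutually quasi-inverse.

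First I would apply the universal property of $(\cat{A}_{\cat{S}}, Q)$ to the functor $F := Q' : \cat{A} \to \cat{A}'_{\cat{S}}$. This is legitimate because $Q'$ inverts every $s \in \cat{S}$ by property (i) of the second localization. This produces a functor $G : \cat{A}_{\cat{S}} \to \cat{A}'_{\cat{S}}$ together with an isomorphism $\eta : Q' \iso G \circ Q$. Symmetrically, applying the universal property of $(\cat{A}'_{\cat{S}}, Q')$ to $Q : \cat{A} \to \cat{A}_{\cat{S}}$ yields a functor $G' : \cat{A}'_{\cat{S}} \to \cat{A}_{\cat{S}}$ and an isomorphism $\eta' : Q \iso G' \circ Q'$.

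Next I would show that $G' \circ G$ is isomorphic to $\bsym{1}_{\cat{A}_{\cat{S}}}$, and similarly $G \circ G' \cong \bsym{1}_{\cat{A}'_{\cat{S}}}$. For this I use the uniqueness clause of the universal property of $(\cat{A}_{\cat{S}}, Q)$, applied to $F := Q$. On the one hand, the pair $(\bsym{1}_{\cat{A}_{\cat{S}}}, \mrm{id}_Q)$ satisfies the required property. On the other hand, pasting the two isomorphisms gives
\[ Q \xrightarrow{\eta'} G' \circ Q' \xrightarrow{G' \circ \eta} G' \circ G \circ Q, \]
so $(G' \circ G, (G' \circ \eta) \circ \eta')$ also satisfies it. By the uniqueness part of (ii), there is a unique isomorphism $\ze : \bsym{1}_{\cat{A}_{\cat{S}}} \iso G' \circ G$ compatible with these data; in particular $G' \circ G \cong \bsym{1}_{\cat{A}_{\cat{S}}}$. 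The symmetric argument on the other side gives $G \circ G' \cong \bsym{1}_{\cat{A}'_{\cat{S}}}$. Hence $G$ is an equivalence.

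Finally, for the uniqueness of the pair $(G, \eta)$, I would observe that this is exactly the uniqueness already guaranteed by clause (ii) of the universal property applied to $(\cat{A}_{\cat{S}}, Q)$ and the functor $F = Q'$: any other pair $(\tilde{G}, \tilde{\eta})$ with $\tilde{\eta} : Q' \iso \tilde{G} \circ Q$ is connected to $(G, \eta)$ by a unique isomorphism of functors $\ze : G \iso \tilde{G}$ satisfying $\tilde{\eta} = (\ze \circ 1_Q) \circ \eta$. The main (small) obstacle is bookkeeping the direction of the natural isomorphisms and making sure the correct pasting is used when invoking the uniqueness clauses; everything else is a formal diagram chase.
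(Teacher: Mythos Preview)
Your proposal is correct and is exactly the standard argument one expects here. The paper itself leaves this proposition as an exercise, so there is no detailed proof to compare against; your plan is precisely the kind of argument the author had in mind.
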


In a diagram:
\[ \UseTips  \xymatrix @C=10ex @R=8ex  {
\cat{S}
\ar[r]^{\mrm{inc}}
&
\cat{A}
\ar[r]^{Q'} _{}="q" 
\ar[d]_{Q}
& 
\cat{A}_{\cat{S}}'
\\
&
\cat{A}_{\cat{S}}
\ar[ur]_{G} ^(0.5){}="f"
\ar@{=>}  "q";"f" _{\eta}
} \]

\begin{proof}
Exercise. 
\end{proof}

The general concept of localization is quite messy. We prefer:

\begin{dfn}
A {\em strict localization} of $\cat{A}$ with respect to $\cat{S}$ 
is a pair $(\cat{A}_{\cat{S}}, Q)$, consisting of a category 
$\cat{A}_{\cat{S}}$ and a functor 
$Q : \cat{A} \to \cat{A}_{\cat{S}}$, having the following properties:
\begin{itemize}
\rmitem{i} For every $s \in \cat{S}$, the morphism $Q(s) \in \cat{A}_{\cat{S}}$
is invertible. 

\rmitem{ii} There is equality 
$\opn{Ob}(\cat{A}_{\cat{S}}) = \opn{Ob}(\cat{A})$, and $Q$ is the identity on
objects.

\rmitem{iii} Suppose $\cat{B}$ is a category, and 
$F : \cat{A} \to \cat{B}$
is a functor such that $F(s)$ is invertible for every 
$s \in \cat{S}$. 
Then there is a unique functor 
$F_{\cat{S}} : \cat{A}_{\cat{S}} \to \cat{B}$
such that 
$F_{\cat{S}} \circ Q = F$ 
as functors $\cat{A} \to \cat{B}$.
\end{itemize}
\end{dfn}

\begin{prop} \label{prop:5}
\begin{enumerate}
\item A strict localization is a weak localization. 

\item A strict localization is unique up to a unique isomorphism of categories.
Namely if
$(\cat{A}'_{\cat{S}}, Q')$ is another strict localization, then there is a
unique functor $G : \cat{A}_{\cat{S}} \to \cat{A}'_{\cat{S}}$
which is the identity on objects, bijective on morphisms, and 
$G \circ Q = Q'$. 
\end{enumerate}
\end{prop}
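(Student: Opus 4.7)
The existence half is immediate from property (iii) of strict localization: given any $F : \cat{A} \to \cat{B}$ that inverts $\cat{S}$, I would take $F_{\cat{S}}$ to be the unique functor with $F_{\cat{S}} \circ Q = F$ and set $\eta := 1_F$. For the uniqueness half, suppose $(F'_{\cat{S}}, \eta')$ is a second pair with $\eta' : F \iso F'_{\cat{S}} \circ Q$. Because $\opn{Ob}(\cat{A}_{\cat{S}}) = \opn{Ob}(\cat{A})$ and $Q$ is the identity on objects, for each $M \in \cat{A}$ the formula $\zeta_M := \eta'_M \circ \eta_M^{-1}$ defines a morphism $F_{\cat{S}}(M) \to F'_{\cat{S}}(M)$ in $\cat{B}$, and this is the only possible definition, because the requirement $\eta' = \zeta \circ \eta$ (in the abused sense of the paper) read off at $M$ says exactly $\eta'_M = \zeta_M \circ \eta_M$. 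Each $\zeta_M$ is automatically an isomorphism.

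\textbf{The main obstacle.} The real work is verifying that the family $\zeta = \{ \zeta_M \}$ is natural with respect to \emph{every} morphism in $\cat{A}_{\cat{S}}$, not merely those of the form $Q(a)$ with $a \in \cat{A}$. My plan is to first establish a generation lemma: the subcategory $\cat{A}'' \subset \cat{A}_{\cat{S}}$ generated by $Q(\cat{A})$ together with the inverses $Q(s)^{-1}$ for $s \in \cat{S}$ equals $\cat{A}_{\cat{S}}$ itself. This follows from property (iii) applied to the restricted functor $Q : \cat{A} \to \cat{A}''$ (which inverts $\cat{S}$ by construction): it yields a unique functor $\cat{A}_{\cat{S}} \to \cat{A}''$ extending $Q$, whose composition with the inclusion $\cat{A}'' \hookrightarrow \cat{A}_{\cat{S}}$ must then coincide with $\bsym{1}_{\cat{A}_{\cat{S}}}$ by the uniqueness part of (iii). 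Once this is in hand, naturality of $\zeta$ along $Q(a)$ is inherited from the naturality of $\eta$ and $\eta'$ on $\cat{A}$; naturality along $Q(s)^{-1}$ is equivalent, by invertibility, to naturality along $Q(s)$; and naturality along arbitrary compositions of these then follows formally.

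\textbf{Part (2).} This is the standard uniqueness-of-a-universal-object argument, which I expect to be entirely formal. Since $Q' : \cat{A} \to \cat{A}'_{\cat{S}}$ inverts $\cat{S}$, property (iii) for $(\cat{A}_{\cat{S}}, Q)$ produces a unique functor $G : \cat{A}_{\cat{S}} \to \cat{A}'_{\cat{S}}$ with $G \circ Q = Q'$; by property (ii) for both localizations, $G$ is the identity on objects. Swapping the roles of the two localizations gives a unique $G' : \cat{A}'_{\cat{S}} \to \cat{A}_{\cat{S}}$ with $G' \circ Q' = Q$. Then $(G' \circ G) \circ Q = Q = \bsym{1}_{\cat{A}_{\cat{S}}} \circ Q$, so the uniqueness clause in (iii) applied to the functor $Q$ itself (which trivially inverts $\cat{S}$) forces $G' \circ G = \bsym{1}_{\cat{A}_{\cat{S}}}$; symmetrically $G \circ G' = \bsym{1}_{\cat{A}'_{\cat{S}}}$. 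Hence $G$ is an isomorphism of categories, in particular bijective on every morphism set, and its uniqueness is built into the first invocation of (iii).
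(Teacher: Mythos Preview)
Your proof is correct; the paper leaves this proposition as an exercise, so there is no proof to compare against. The generation lemma you isolate in Part~(1)---that $\cat{A}_{\cat{S}}$ is generated as a category by $Q(\cat{A})$ together with the inverses $Q(s)^{-1}$---is indeed the substantive point, and your argument for it via property~(iii) applied to the corestricted functor $Q : \cat{A} \to \cat{A}''$ is clean and correct.
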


\begin{proof}
Exercise.
\end{proof}

\subsection{Ore localization}
There is even a better notion of localization. 
The references here are \cite{RD, Wei, Ste, Row}. 

\begin{dfn}
A {\em right Ore localization} of $\cat{A}$ with respect to $\cat{S}$ 
is a pair $(\cat{A}_{\cat{S}}, Q)$, consisting of a category 
$\cat{A}_{\cat{S}}$ and a functor 
$Q : \cat{A} \to \cat{A}_{\cat{S}}$, having the following properties:
\begin{itemize}
\item[(L1)] There is equality 
$\opn{Ob}(\cat{A}_{\cat{S}}) = \opn{Ob}(\cat{A})$, and $Q$ is the identity on
objects.

\item[(L2)]  For every $s \in \cat{S}$, the morphism $Q(s) \in
\cat{A}_{\cat{S}}$
is invertible. 

\item[(L3)]  Every morphism $q \in \cat{A}_{\cat{S}}$ can be written as
$q = Q(a) \circ Q(s)^{-1}$ for some 
$a \in \cat{A}$ and $s \in \cat{S}$.

\item[(L4)]  Suppose $a, b \in \cat{A}$ satisfy 
$Q(a) = Q(b)$. Then $a \o s = b \o s$ for some $s \in \cat{S}$. 
\end{itemize}
\end{dfn}


\begin{lem} \label{lem:3}
Let $(\cat{A}_{\cat{S}}, Q)$ be an Ore localization, 
let $a_1, a_2 \in \cat{A}$ and $s_1, s_2 \in \cat{S}$.
TFAE:
\begin{enumerate}
\rmitem{i} 
$Q(a_1) \o Q(s_1)^{-1} =  Q(a_2) \o Q(s_2)^{-1}$
in $\cat{A}_{\cat{S}}$. 

\rmitem{ii} There are $b_1, b_2 \in \cat{A}$ s.t.\ 
$a_1 \o b_1 = a_2 \o b_2$,
and $s_1 \o b_1  = s_2 \o b_2 \in \cat{S}$.
\end{enumerate}
\end{lem}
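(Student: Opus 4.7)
The plan is to prove the two directions separately, with (ii) $\Rightarrow$ (i) being an essentially mechanical check and (i) $\Rightarrow$ (ii) requiring the extraction of a genuine Ore condition from the axioms.

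For (ii) $\Rightarrow$ (i), I would apply $Q$ to the two hypotheses. Since $s_1 \o b_1 = s_2 \o b_2 \in \cat{S}$, its image $Q(s_1) \o Q(b_1) = Q(s_2) \o Q(b_2)$ is invertible in $\cat{A}_{\cat{S}}$; together with the invertibility of $Q(s_i)$, this forces $Q(b_1)$ and $Q(b_2)$ to be invertible as well. Then from $Q(a_1) \o Q(b_1) = Q(a_2) \o Q(b_2)$ one rewrites
\[ Q(a_1) \o Q(s_1)^{-1} = Q(a_1 \o b_1) \o Q(s_1 \o b_1)^{-1} = Q(a_2 \o b_2) \o Q(s_2 \o b_2)^{-1} = Q(a_2) \o Q(s_2)^{-1}, \]
giving (i).

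For the converse, I first plan to derive a \emph{right Ore condition} from (L3) and (L4): given $a : L \to M$ in $\cat{A}$ and $s : N \to M$ in $\cat{S}$, there exist $t \in \cat{S}$ and $b$ with $a \o t = s \o b$. To get this, I apply (L3) to the morphism $Q(s)^{-1} \o Q(a) \in \cat{A}_{\cat{S}}$, writing it as $Q(b_0) \o Q(t_0)^{-1}$ with $t_0 \in \cat{S}$; this yields $Q(a \o t_0) = Q(s \o b_0)$, and (L4) then supplies some $u \in \cat{S}$ so that $a \o t_0 \o u = s \o b_0 \o u$ already in $\cat{A}$. Setting $t := t_0 \o u$ and $b := b_0 \o u$ works, since $t_0 \o u \in \cat{S}$ by multiplicative closure.

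Armed with this, I apply the Ore condition to $s_1$ and $s_2$ (both in $\cat{S}$, with common codomain) to produce $\al, \be$ with $\al \in \cat{S}$ and $s_1 \o \al = s_2 \o \be$; multiplicative closure shows $s_1 \o \al \in \cat{S}$. From hypothesis (i) and the relation $Q(s_1) Q(\al) = Q(s_2) Q(\be)$, a direct calculation gives $Q(a_1 \o \al) = Q(a_2 \o \be)$, and applying (L4) once more yields $u \in \cat{S}$ with $a_1 \o \al \o u = a_2 \o \be \o u$. Setting $b_1 := \al \o u$ and $b_2 := \be \o u$, both equalities in (ii) hold and $s_1 \o b_1 = s_1 \o \al \o u$ remains in $\cat{S}$. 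The main obstacle I expect is the derivation of the right Ore condition, since the axioms do not state it outright; once this is available, the rest is bookkeeping combined with one more application of (L4).
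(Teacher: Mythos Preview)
Your proof is correct. The direction (ii) $\Rightarrow$ (i) is identical to the paper's. For (i) $\Rightarrow$ (ii) you take a genuinely cleaner route than the paper.

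The paper applies (L3) to $Q(s_2)^{-1}\o Q(s_1)$ to obtain $c\in\cat{A}$, $u\in\cat{S}$ with $Q(s_1\o u)=Q(s_2\o c)$ and $Q(a_1\o u)=Q(a_2\o c)$; it then needs \emph{two} separate applications of (L4), one for the $a$-equation and one for the $s$-equation, producing distinct $v,v'\in\cat{S}$. Reconciling $v$ and $v'$ costs a further application of (L3) followed by yet another (L4). Your approach invests one (L4) up front to upgrade the Ore relation to an honest equality $s_1\o\al=s_2\o\be$ in $\cat{A}$ (this is exactly the right Ore condition, which the paper also derives, but only later in the proof of the main theorem). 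Because this equality already lives in $\cat{A}$, it survives post-composition with any $u$ automatically; so after one more (L4) for the $a$-equation you are done. Net: two uses of (L4) and one of (L3) for you, versus three uses of (L4) and two of (L3) for the paper. The packaging of the Ore condition as a preliminary step is also conceptually clarifying, since it is the key ingredient and is reused elsewhere.
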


\begin{proof}
(ii) $\Rightarrow$ (i):
Since $Q(s_i)$ and $Q(s_i \o b_i)$ are invertible, it follows that 
$Q(b_i)$ are invertible. So 
\[ \begin{aligned}
& Q(a_1) \o Q(s_1)^{-1} = Q(a_1) \o Q(b_1) \o Q(b_1)^{-1} \o Q(s_1)^{-1} 
\\
& \quad = 
Q(a_2) \o Q(b_2) \o Q(b_2)^{-1} \o Q(s_2)^{-1} = 
 Q(a_2) \o Q(s_2)^{-1} .
\end{aligned} \]

\medskip \noindent
(i) $\Rightarrow$ (ii):
This is almost from \cite{Ste}. 
By property (L3) there are $c \in \cat{A}$ and $u \in \cat{S}$ s.t.\
\[ Q(s_2)^{-1} \circ Q(s_1) = Q(c) \o Q(u)^{-1} . \]
This gives 
\[ Q(s_1) \o Q(u) = Q(s_2) \o Q(c) . \]
Hence  
\[ Q(a_1) = Q(a_2) \o Q(s_2)^{-1} \o Q(s_1) = Q(a_2) \o Q(c) \o Q(u)^{-1}  \]
and  
\[ Q(a_1) \o Q(u) = Q(a_2) \o Q(c) . \]

Because 
$Q(a_1 \o u) = Q(a_2 \o c)$, by property (L4) there is 
$v \in \cat{S}$ s.t.\ 
\[ a _1 \o u \o v = a_2 \o c \o v . \]
Likewise $Q(s_1 \o u) = Q(s_2 \o c)$, so there is $v' \in \cat{S}$
s.t.\ 
\[ s_1 \o u \o v' = s_2 \o c \o v' . \] 

Again using property (L3), there are $d \in \cat{A}$ and $w \in \cat{S}$ s.t.\
\[ Q(v)^{-1} \circ Q(v') = Q(d) \o Q(w)^{-1} . \]
Rearranging we get 
\[ Q(v' \o w) = Q(v \o d) . \]
 By property (L4) there is $w' \in \cat{S}$ s.t.\ 
\[ v' \o w \o w' = v \o d \o w' . \]
Define 
\[ b_1 := u \o v \o  d \o w' \ , \ b_2 := c \o v \o d \o w' \ . \]
Then 
\[ \begin{aligned}
& s_1 \o b_1 = s_1 \o u \o v \o  d \o w' = 
s_1 \o u \o v' \o w \o w' 
\\
& \quad = s_2 \o c \o v' \o w \o w' = s_2 \o b_2 , 
\end{aligned} \]
and it is in $\cat{S}$. 
Also 
\[ a_1 \o b_1 = a_1 \o u \o v \o  d \o w' =
a_2 \o c \o v  \o  d \o w' = a_2 \o b_2 . \]
\end{proof}

\begin{prop} \label{prop:103}
A right Ore localization is a strict localization. 
\end{prop}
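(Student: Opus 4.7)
The plan is to verify the three defining conditions of a strict localization. Conditions (i) and (ii) are exactly the Ore axioms (L2) and (L1), so nothing needs to be said there. Everything hinges on the universal property (iii): given a functor $F : \cat{A} \to \cat{B}$ that sends every $s \in \cat{S}$ to an isomorphism, I need to produce a \emph{unique} $F_{\cat{S}} : \cat{A}_{\cat{S}} \to \cat{B}$ with $F_{\cat{S}} \circ Q = F$.

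I would define $F_{\cat{S}}$ to agree with $F$ on objects, using (L1). On a morphism $q \in \cat{A}_{\cat{S}}$, I pick via (L3) a presentation $q = Q(a) \circ Q(s)^{-1}$ with $a \in \cat{A}$, $s \in \cat{S}$, and set
\[ F_{\cat{S}}(q) := F(a) \circ F(s)^{-1}. \]
The uniqueness half of (iii) is then immediate: any such $F_{\cat{S}}$ must send $Q(s)$ to $F(s)$, hence $Q(s)^{-1}$ to $F(s)^{-1}$, and then (L3) forces its value on every morphism.

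The first genuine obstacle is well-definedness of this formula. If $Q(a_1) \circ Q(s_1)^{-1} = Q(a_2) \circ Q(s_2)^{-1}$, Lemma \ref{lem:3} supplies $b_1, b_2 \in \cat{A}$ with $a_1 \circ b_1 = a_2 \circ b_2$ and $s_1 \circ b_1 = s_2 \circ b_2 \in \cat{S}$. Since $F(s_i \circ b_i)$ and $F(s_i)$ are invertible, so is $F(b_i) = F(s_i)^{-1} \circ F(s_i \circ b_i)$, and a short computation gives
\[ F(a_i) \circ F(s_i)^{-1} = F(a_i \circ b_i) \circ F(s_i \circ b_i)^{-1}, \]
whose right-hand side is symmetric in $i$.

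The main nontrivial step is showing that $F_{\cat{S}}$ respects composition. Given composable $q_1 = Q(a_1) \circ Q(s_1)^{-1}$ and $q_2 = Q(a_2) \circ Q(s_2)^{-1}$, I apply (L3) to $Q(s_2)^{-1} \circ Q(a_1)$ to obtain $c \in \cat{A}$ and $u \in \cat{S}$ with $Q(s_2)^{-1} \circ Q(a_1) = Q(c) \circ Q(u)^{-1}$; then $q_2 \circ q_1 = Q(a_2 \circ c) \circ Q(s_1 \circ u)^{-1}$, so by definition $F_{\cat{S}}(q_2 \circ q_1) = F(a_2 \circ c) \circ F(s_1 \circ u)^{-1}$. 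Rewriting the same Ore identity as $Q(a_1 \circ u) = Q(s_2 \circ c)$ and invoking (L4) produces $v \in \cat{S}$ with $a_1 \circ u \circ v = s_2 \circ c \circ v$; pushing through $F$ and cancelling the isomorphism $F(v)$ on the right yields $F(a_1) \circ F(u) = F(s_2) \circ F(c)$, i.e.\ $F(s_2)^{-1} \circ F(a_1) = F(c) \circ F(u)^{-1}$. Substituting this back gives $F_{\cat{S}}(q_2 \circ q_1) = F(a_2) \circ F(s_2)^{-1} \circ F(a_1) \circ F(s_1)^{-1} = F_{\cat{S}}(q_2) \circ F_{\cat{S}}(q_1)$. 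Preservation of identities follows at once from $1_M = Q(1_M) \circ Q(1_M)^{-1}$. I expect the bookkeeping of these ``common denominator'' manipulations using (L3) and (L4) to be the main source of friction; once those are in place, functoriality is routine.
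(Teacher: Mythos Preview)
Your proof is correct and follows essentially the same approach as the paper: define $F_{\cat{S}}$ by the formula $F(a)\circ F(s)^{-1}$ and use Lemma~\ref{lem:3} for well-definedness. You go a step further than the paper by explicitly verifying that $F_{\cat{S}}$ preserves composition (via (L3) and (L4)), a check the paper leaves implicit.
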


\begin{proof}
Say  $\cat{B}$ is a category, and 
$F : \cat{A} \to \cat{B}$
is a functor such that $F(s)$ is an isomorphism for every 
$s \in \cat{S}$. 

The uniqueness of a functor
$F_{\cat{S}} : \cat{A}_{\cat{S}} \to \cat{B}$
satisfying $F_{\cat{S}} \circ Q = F$  is clear from property (L3).
We have to prove existence.

Define $F_{\cat{S}}$
to be $F$ on objects, and 
\[ F_{\cat{S}}(q) := F(a_1) \circ F(s_1)^{-1} \]
for 
\[ q = Q(a_1) \circ Q(s_1)^{-1} \in \cat{A}_{\cat{S}} , \
a_1 \in \cat{A},  \ s_1 \in \cat{S} . \] 
We have to prove this is well defined.
So suppose that 
$q = Q(a_2) \circ Q(s_2)^{-1}$
is another presentation of $q$.
Let $b_1, b_2 \in \cat{A}$ be as in the lemma. 
Since $F(s_i)$ and $F(s_i \o b_i)$ are invertible, then so is $F(b_i)$. 
We get
\[ F(a_2) = F(a_1) \o F(b_1) \o F(b_2)^{-1} \]
and 
\[ F(s_2) = F(s_1) \o F(b_1) \o F(b_2)^{-1} . \]
Hence 
\[ F(a_2) \circ F(s_2)^{-1}  = F(a_1) \circ F(s_1)^{-1}  . \]
\end{proof}

There are corresponding ``left'' versions of the definitions and the results. 

\begin{cor}
\begin{enumerate}
\item A right Ore localization is unique up to a unique isomorphism.

\item If a right Ore localization $(\cat{A}_{\cat{S}}, Q)$ and a left Ore 
localization $(\cat{A}'_{\cat{S}}, Q')$ both exist, then they are uniquely
isomorphic. 
\end{enumerate}
\end{cor}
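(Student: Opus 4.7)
The plan is to deduce both parts directly from the machinery already in place: Proposition \ref{prop:103} tells us that any right Ore localization is a strict localization, and Proposition \ref{prop:5}(2) tells us that strict localizations are unique up to a unique isomorphism of categories that is the identity on objects. So the main work has already been done, and the corollary is essentially a bookkeeping exercise.

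For part (1), I would simply observe that if $(\cat{A}_{\cat{S}}, Q)$ and $(\cat{A}'_{\cat{S}}, Q')$ are two right Ore localizations of $(\cat{A}, \cat{S})$, then both are strict localizations by Proposition \ref{prop:103}. Invoking Proposition \ref{prop:5}(2), there is a unique functor $G : \cat{A}_{\cat{S}} \to \cat{A}'_{\cat{S}}$ that is the identity on objects and satisfies $G \circ Q = Q'$, and it is bijective on morphisms; hence it is an isomorphism of categories. Symmetrically there is a unique $G' : \cat{A}'_{\cat{S}} \to \cat{A}_{\cat{S}}$ in the opposite direction, and the uniqueness clause of Proposition \ref{prop:5}(2) applied to the identity functors forces $G' \circ G = \bsym{1}$ and $G \circ G' = \bsym{1}$. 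This gives the unique isomorphism.

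For part (2), the idea is identical once one notes the ``left'' analogue of Proposition \ref{prop:103}, mentioned in the paragraph preceding the corollary: a left Ore localization is also a strict localization. Given a right Ore localization $(\cat{A}_{\cat{S}}, Q)$ and a left Ore localization $(\cat{A}'_{\cat{S}}, Q')$, both objects satisfy the universal property of a strict localization of $\cat{A}$ with respect to $\cat{S}$. Proposition \ref{prop:5}(2) then produces a unique isomorphism of categories $G : \cat{A}_{\cat{S}} \to \cat{A}'_{\cat{S}}$ that is the identity on objects and satisfies $G \circ Q = Q'$, exactly as in part (1).

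I do not foresee any real obstacle: the hardest piece is the content of Proposition \ref{prop:103} and its left analogue, which are already established. The only point worth being careful about is to state explicitly in what sense the isomorphism is unique — namely, as an isomorphism of categories that is the identity on objects and intertwines the two localization functors — since without this normalization one could of course compose with arbitrary automorphisms of $\cat{A}_{\cat{S}}$.
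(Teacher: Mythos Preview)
Your proposal is correct and follows exactly the paper's approach: the paper's proof simply states that both assertions are immediate consequences of Propositions \ref{prop:103} and \ref{prop:5}, which is precisely what you do. Your added remark clarifying the sense of uniqueness (identity on objects and intertwining the localization functors) is a helpful elaboration but not a departure from the paper's argument.
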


\begin{proof}
Both assertions are immediate consequences of Propositions \ref{prop:103} and 
\ref{prop:5}.
\end{proof}

\begin{dfn} \label{dfn:21}
Let $\cat{S}$ be multiplicatively closed set in a  category $\cat{A}$. We
say that $\cat{S}$ is a {\em right denominator set} if it satisfies these two
conditions:
\begin{enumerate}
\item[(D1)]
(Right Ore condition) Given $a \in \cat{A}$ and $s \in \cat{S}$,
there exist $b \in \cat{A}$ and $t \in \cat{S}$ s.t.\ 
$a \o t = s \o b$.

\item[(D2)] 
(Right cancellation condition) Given $a, b \in \cat{A}$
and  $s \in \cat{S}$
s.t.\ $s \o a = s \o b$, there exists $t \in \cat{S}$ s.t.\ 
$a \o t = b \o t$.
\end{enumerate}
\end{dfn}

In diagrams:

\[ 
\UseTips  \xymatrix @C=5ex @R=5ex {
K
\ar@{-->}[d]_{t}
\\
M
\ar@(dl,ul)[d]_{a}
\ar@(dr,ur)[d]^{b}
\\
N
\ar[d]_{s}
\\
L
} 
\qquad \qquad
\UseTips  \xymatrix @C=5ex @R=5ex  {
& 
K
\ar@{-->}[dl]_{t}
\ar@{-->}[dr]^{b}
\\
M 
\ar[dr]_{a}
& &
N
\ar[dl]^{s}
\\
&
L 
} \]

\begin{thm} \label{thm:103}
The following conditions are equivalent for a category $\cat{A}$ 
and a multiplicatively closed set $\cat{S} \subset \cat{A}$. 
\begin{enumerate}
\rmitem{i} The right Ore localization $(\cat{A}_{\cat{S}}, Q)$ exists.

\rmitem{ii} $\cat{S}$ is a right denominator set. 
\end{enumerate}
\end{thm}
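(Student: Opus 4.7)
The plan is to handle the two directions separately; direction (i)$\Rightarrow$(ii) is a quick consequence of the defining properties (L1)--(L4), while the main work is in direction (ii)$\Rightarrow$(i), where we must construct the localization explicitly.

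For (i)$\Rightarrow$(ii), assume $(\cat{A}_{\cat{S}}, Q)$ is a right Ore localization and verify the two conditions of Definition~\ref{dfn:21}. To check (D1), given $a : M \to L$ in $\cat{A}$ and $s : N \to L$ in $\cat{S}$, I would consider the morphism $Q(s)^{-1} \circ Q(a) : M \to N$ in $\cat{A}_{\cat{S}}$. By (L3) this can be rewritten as $Q(b) \circ Q(t)^{-1}$ for some $b \in \cat{A}$ and $t \in \cat{S}$; rearranging gives $Q(a \circ t) = Q(s \circ b)$, and then (L4) furnishes a further $u \in \cat{S}$ with $a \circ t \circ u = s \circ b \circ u$, so $t \circ u \in \cat{S}$ and $b \circ u$ solve (D1). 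Condition (D2) is even easier: if $s \circ a = s \circ b$ with $s \in \cat{S}$, then applying $Q$ and cancelling the invertible $Q(s)$ yields $Q(a) = Q(b)$, and (L4) directly produces the desired $t \in \cat{S}$.

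For (ii)$\Rightarrow$(i), I would construct $\cat{A}_{\cat{S}}$ by the classical roof/fraction method. Take $\opn{Ob}(\cat{A}_{\cat{S}}) := \opn{Ob}(\cat{A})$; a morphism $M \to N$ is an equivalence class of roofs $(s, a)$ with $s : K \to M$ in $\cat{S}$ and $a : K \to N$ in $\cat{A}$, intended to represent $Q(a) \circ Q(s)^{-1}$. Two roofs $(s_1, a_1)$ and $(s_2, a_2)$ should be declared equivalent exactly when there exist $b_1, b_2$ with $s_1 \circ b_1 = s_2 \circ b_2 \in \cat{S}$ and $a_1 \circ b_1 = a_2 \circ b_2$, as dictated by Lemma~\ref{lem:3}. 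Composition of $(s,a) : M \to N$ with $(s', a') : N \to P$ is defined by invoking the right Ore condition (D1) on $a : K \to N$ and $s' : K' \to N$ to produce a common refinement $K'' \to K$, $K'' \to K'$, and declaring the composite to be $(s \circ t, a' \circ b)$. The functor $Q$ sends $a : M \to N$ to the roof $(1_M, a)$.

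The main obstacle -- and where both (D1) and (D2) are used in essential ways -- is showing that the above data really define a category: reflexivity and symmetry of the equivalence are formal, but transitivity requires combining (D1) to align three roofs over a common domain and then (D2) to absorb an error term by post-composing with some $u \in \cat{S}$; well-definedness of composition (independence of the auxiliary Ore choice $(t, b)$ and of representatives of each factor) requires a similar diagram chase. Once associativity and unitality are checked, conditions (L1), (L3) and (L4) are immediate from the very form of morphisms in $\cat{A}_{\cat{S}}$, and (L2) follows because $(s, 1_K)$ provides an explicit inverse to the image of $s \in \cat{S}$. The universal property (which makes this a \emph{strict} localization, via Proposition~\ref{prop:103}) then follows by defining $F_{\cat{S}}$ on a roof $(s, a)$ by $F(a) \circ F(s)^{-1}$ and using Lemma~\ref{lem:3} together with the hypothesis that $F$ inverts $\cat{S}$ to verify it is well-defined on equivalence classes.
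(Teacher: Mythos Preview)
Your proposal is correct and follows essentially the same route as the paper: the direction (i)$\Rightarrow$(ii) is argued identically via (L3) and (L4), and for (ii)$\Rightarrow$(i) both you and the paper build $\cat{A}_{\cat{S}}$ from equivalence classes of roofs under the relation suggested by Lemma~\ref{lem:3}, compose via the right Ore condition, and identify transitivity and well-definedness of composition as the places where (D1) and (D2) must be combined. The paper carries out those diagram chases in detail (see Lemma~\ref{lem:4} and the figures in Step~2), whereas you only sketch them, but the strategy is the same.
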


The proof is a bit later. Of course Definition \ref{dfn:21} and Theorem
\ref{thm:103} have ``left'' versions.

Let's assume that $\cat{S}$ is a right denominator set. 
For any $M, N \in \opn{Ob}(A)$ consider the set 
\[ (\cat{A} \times \cat{S})(M, N) :=  
\coprod_{L \in \opn{Ob}(A)} 
\cat{A}(L, N) \times \cat{S}(L, M) . \]
This could be a very big set... So an element 
$(a, s) \in (\cat{A} \times \cat{S})(M, N)$ can be pictured as a diagram
\[ \UseTips  \xymatrix @C=5ex @R=5ex  {
& L 
\ar[dl]_{s}
\ar[dr]^{a}
\\
M 
& &
N
} \]
in $\cat{A}$.

We define a relation $\sim$ on the set $\cat{A} \times \cat{S}$
like this:
\[ (a_1, s_1) \sim (a_2, s_2) \]
if there exist $b_1, b_2 \in \cat{A}$ s.t.\ 
\[ a_1 \o b_1 = a_2 \o b_2 \tup{ and } 
s_1 \o b_1 = s_2 \o b_2 \in \cat{S} . \]

Note that this relation imposes condition (ii) of Lemma \ref{lem:3}.

In a commutative diagram:
\[ \UseTips  \xymatrix @C=6ex @R=6ex  {
&
K
\ar[dl]_{b_1}
\ar[dr]^{b_2}
\\
L_1
\ar[d]_{s_1}
\ar[drr]_(0.75){a_1}
& &
L_2
\ar[dll]_(0.75){s_2}
\ar[d]^{a_2}
\\
M 
& &
N
} \]
The arrows ending at $M$ are in $\cat{S}$. 

\begin{lem} \label{lem:4}
If the right Ore condition holds then the relation $\sim$ is an equivalence.
\end{lem}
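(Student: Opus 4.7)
The plan is to verify the three axioms of an equivalence relation separately, with the right Ore condition doing its real work only in the transitivity step. Reflexivity of $(a, s)$, with common domain $L$, is witnessed by $b_1 := b_2 := 1_L$: then $a \circ 1_L = a \circ 1_L$ and $s \circ 1_L = s \in \cat{S}$. Symmetry is equally formal: if $(a_1, s_1) \sim (a_2, s_2)$ is witnessed by $b_1, b_2$, then swapping gives witnesses for $(a_2, s_2) \sim (a_1, s_1)$.

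For transitivity, suppose $(a_1, s_1) \sim (a_2, s_2)$ is witnessed by $b_1 : K \to L_1$ and $b_2 : K \to L_2$, and $(a_2, s_2) \sim (a_3, s_3)$ by $c_2 : K' \to L_2$ and $c_3 : K' \to L_3$. The key idea is to exploit the two distinguished morphisms into $M$: $u := s_1 \circ b_1 = s_2 \circ b_2 \in \cat{S}$ (from $K$) and $v := s_3 \circ c_3 = s_2 \circ c_2 \in \cat{S}$ (from $K'$). Since these share codomain $M$, the right Ore condition, applied to $u$ (viewed in $\cat{A}$) and $v \in \cat{S}$, yields an object $K''$ together with $t : K'' \to K$ in $\cat{S}$ and $e : K'' \to K'$ in $\cat{A}$ such that $u \circ t = v \circ e$. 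Setting $d_1 := b_1 \circ t$ and $d_3 := c_3 \circ e$, we immediately get $s_1 \circ d_1 = u \circ t = v \circ e = s_3 \circ d_3$, and this morphism lies in $\cat{S}$ as a composition of two $\cat{S}$-morphisms.

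The main obstacle, and the delicate part of the proof, is the identity $a_1 \circ d_1 = a_3 \circ d_3$. Using $a_1 \circ b_1 = a_2 \circ b_2$ and $a_3 \circ c_3 = a_2 \circ c_2$, this reduces to $a_2 \circ (b_2 \circ t) = a_2 \circ (c_2 \circ e)$, whereas Ore has produced only $s_2 \circ (b_2 \circ t) = s_2 \circ (c_2 \circ e)$. To bridge the gap one must cancel $s_2 \in \cat{S}$ from the left, which is exactly the right cancellation condition (D2): it supplies some $f \in \cat{S}$ with $(b_2 \circ t) \circ f = (c_2 \circ e) \circ f$, after which $d_1 \circ f$ and $d_3 \circ f$ serve as the required witnesses, and $s_1 \circ d_1 \circ f$ remains in $\cat{S}$. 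I therefore expect the proof implicitly interprets ``right Ore condition'' as the full right-denominator-set hypothesis from Definition \ref{dfn:21}, using both (D1) to build the common refinement $K''$ and (D2) to coerce the two lifts into $L_2$ to agree after a further shrink; indeed without (D2) the passage from equality post-composed with $s_2$ to equality post-composed with $a_2$ does not seem available.
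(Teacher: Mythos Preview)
Your proof is correct and follows essentially the same route as the paper's: apply (D1) to the two $\cat{S}$-morphisms $K \to M$ and $K' \to M$ to get a common refinement, observe that the candidate witnesses already agree after composing with $s_2$, and then invoke (D2) to cancel $s_2$ and force agreement after a further shrink. Your remark that the lemma's hypothesis must be read as the full right-denominator-set condition is exactly right; the paper's own proof explicitly uses (D2) at the same step, despite the lemma's wording mentioning only the Ore condition.
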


\begin{proof}
Reflexivity: take $K := L$ and $b_i := 1_L : L \to L$. 
Symmetry is trivial. 

We shall use the right Ore condition (D1) to prove transitivity. 
Suppose we are given 
$(a_1, s_1) \sim (a_2, s_2)$ and $(a_2, s_2) \sim (a_3, s_3)$.
So we have the first solid commutative diagram in  Figure \ref{fig:5}.

\begin{figure}
\[ \UseTips  \xymatrix @C=8ex @R=8ex  {
&
H
\ar@{-->}[d]^{u}
\\
&
I
\ar@{-->}[dl]_{t}
\ar@{-->}[dr]^{d}
\\
K
\ar[d]_{b_1}
\ar[dr]^{b_2}
& & 
J
\ar[dl]_{c_2}
\ar[d]^{c_3}
\\
L_1
\ar[d]_{s_1}
\ar[drr]_(0.65){a_1}
&
L_2
\ar[dl]_(0.65){s_2}
\ar[dr]^(0.65){a_2}
&
L_3
\ar[dll]^(0.65){s_3}
\ar[d]^{a_3}
\\
M 
& &
N
} 
\qquad \qquad 
\UseTips  \xymatrix @C=8ex @R=8ex  {
&
H
\ar[dddl]_{b_1 \o t \o u}
\ar[dddr]^{a_3 \o d \o u}
\\
\\
\\
L_1
\ar[d]_{s_1}
\ar[drr]_(0.65){a_1}
&
&
L_3
\ar[dll]^(0.65){s_3}
\ar[d]^{a_3}
\\
M 
& &
N
} \]

\caption{} 
\label{fig:5}
\end{figure}
The arrows ending at $M$ are in $\cat{S}$. 
By the right Ore condition applied to $K \to M \leftarrow J$ there are 
$t \in \cat{S}$ and $d \in \cat{A}$ s.t.\ 
\[ (s_3 \o c_3) \o d = (s_1 \o b_1) \o t . \]
Now 
\[ s_2 \o (b_2 \o t) = s_1 \o (b_1 \o t) = s_2 \o (c_2 \o d) . \]
By (D2) there is $u \in \cat{S}$ s.t.\ 
\[ (b_2 \o t) \o u = (c_2 \o d) \o u . \]
So all paths from $H \to M$ and $H \to N$ commute, and all paths ending at $M$
are in $\cat{S}$. Now delete $L_2$ and the arrows through it. Then delete 
$I, J, K$,  but keep the paths through them. 
We get the second diagram in Figure \ref{fig:5}. It is commutative, and all
arrows ending at $M$ are in $\cat{S}$. We have evidence for 
$(a_1, s_1) \sim (a_3, s_3)$.
\end{proof}


\begin{proof}[Proof of Theorem \tup{\ref{thm:103}}]
\mbox{}

\medskip \noindent
{\bf Step 1.} We prove (i) $\Rightarrow$ (ii). 
Take $a \in \cat{A}$ and $s \in \cat{S}$. 
Consider $q := Q(s)^{-1} \o Q(a)$. By (L3) there are 
$b \in \cat{A}$ and $t \in \cat{S}$ s.t.\ 
$q = Q(b) \o Q(t)^{-1}$. 
So
\[ Q(s \o b) = Q(a \o t) . \]
By (L4) there is $u \in \cat{S}$ s.t.\ 
\[ s \o b \o u = a \o t \o u . \]
We read this as 
\[ s \o (b \o u) = a \o (t \o u) , \]
and note that $t \o u \in \cat{S}$. So (D1) holds.

Next $a, b \in \cat{A}$ and $s \in \cat{S}$ s.t.\ 
$s \o a = s \o b$.  Then 
$Q(s \o a) = Q(s \o b)$. But $Q(s)$ is invertible, so 
$Q(a) = Q(b)$. By (L4) there is $t \in \cat{S}$ s.t.\ 
$a \o t = b \o t$. We have proved (D2).

\medskip \noindent
{\bf Step 2.} Now we assume that condition (ii) holds, and we define 
the sets $\cat{A}_{\cat{S}}(M, N)$, composition between them, and the identity
morphisms. 

For any $M, N \in \opn{Ob}(\cat{A})$ let
\[ \cat{A}_{\cat{S}}(M, N) :=
\frac{(\cat{A} \times \cat{S})(M, N)}{\sim} , \]
where $\sim$ is the equivalence relation from Lemma \ref{lem:4}.

We define composition like this. Given 
$q_1 \in \cat{A}_{\cat{S}}(M_0, M_1)$
and 
$q_2 \in \cat{A}_{\cat{S}}(M_1, M_2)$,
choose representatives 
$(a_i, s_i) \in (\cat{A} \times \cat{S})(M_{i-1}, M_i))$.
We use the notation 
$q_i = \ol{ (a_i, s_i) }$ to indicate this. 
By (D1) there are $c \in \cat{A}$ and $u \in \cat{S}$ s.t.\ 
$s_2 \o c = a_1 \o u$. 
The composition 
\[ q_2 \o q_1 \in \cat{A}_{\cat{S}}(M_0, M_2) \]
is defined to be
\[ q_2 \o q_1 := \ol{ (a_2 \o c, s_1 \o u) } 
 \in (\cat{A} \times \cat{S})(M_{0}, M_2)) . \]

The algebraic idea behind the formula is this: we want
\[ \begin{aligned}
& q_2 \o q_1 = 
Q(a_2) \o Q(s_2)^{-1} \o Q(a_1) \o Q(s_1)^{-1} 
\\
& \quad 
= Q(a_2) \o Q(c) \o Q(u)^{-1} \o Q(s_1)^{-1} =
Q(a_2 \o c) \o Q(s_1 \o u)^{-1} . 
\end{aligned} \]
See diagram in Figure \ref{fig:2}. 

\begin{figure}
\[ \UseTips  \xymatrix @C=6ex @R=6ex  {
& & 
K
\ar[dl]_{c}
\ar[dr]^{u}
\\
&
L_1
\ar[dl]_{s_1}
\ar[dr]_(0.5){a_1}
& & 
L_2
\ar[dl]_(0.5){s_2}
\ar[dr]^{a_2}
\\
M_0 
& &
M_1
& &
M_2
} \]
\caption{} 
\label{fig:2}
\end{figure}

We have to verify that this definition is independent of the representatives.
So suppose we take other representatives 
$q_i = \ol{ (a'_i, s'_i) }$, and we choose $u', c'$ to
construct a composition (see diagram in Figure \ref{fig:3}). We must prove that 
\[ \ol{ (a_2 \o c, s_1 \o u) }  = \ol{ (a'_2 \o c', s'_1 \o u') } . \]

\begin{figure}
\[ \UseTips  \xymatrix @C=8ex @R=8ex  {
H''
\ar@{-->}[r]^{w''}
& 
H'
\ar@{-->}[r]^{w'}
& 
H
\ar@{-->}[dl]_{w}
\ar@{-->}[dr]^{e}
\\
&
I_1
\ar@{-->}[dl]_{v_1}
\ar@{-->}[d]^{d_1}
& & 
I_2
\ar@{-->}[d]_{v_2}
\ar@{-->}[dr]^{d_2}
\\
J_1
\ar[d]_{b_1}
\ar[dr]^(0.7){b'_1}
&
K
\ar[dl]^(0.7){u}
\ar[drr]^(0.7){c}
& & 
K'
\ar[dll]_(0.7){u'}
\ar[dr]_(0.7){c'}
& 
J_2
\ar[dl]_(0.7){b_2}
\ar[d]^{b'_2}
\\
L_1
\ar[d]_{s_1}
\ar[drr]_(0.65){a_1}
&
L'_1
\ar[dl]^(0.65){s'_1}
\ar[dr]^{a'_1}
& & 
L_2
\ar[dl]_{s_2}
\ar[dr]_(0.65){a_2}
& 
L'_2
\ar[dll]^(0.65){s'_2}
\ar[d]^{a'_2}
\\
M_0 
& &
M_1
& &
M_2
} \]
\caption{} 
\label{fig:3}
\end{figure}

\begin{figure}
\[ \UseTips  \xymatrix @C=8ex @R=8ex  {
& & 
H''
\ar[dl]_{w \o w' \o w''}
\ar[dr]^{e \o w' \o w''}
\\
&
I_1
\ar[d]^{d_1}
& & 
I_2
\ar[d]_{v_2}
\\
&
K
\ar[dl]^(0.7){u}
\ar[drr]^(0.7){c}
& & 
K'
\ar[dll]_(0.7){u'}
\ar[dr]_(0.7){c'}
& 
\\
L_1
\ar[d]_{s_1}
&
L'_1
\ar[dl]^(0.65){s'_1}
& & 
L_2
\ar[dr]_(0.65){a_2}
& 
L'_2
\ar[d]^{a'_2}
\\
M_0 
& &
& &
M_2
} \]
\caption{} 
\label{fig:4}
\end{figure}

There are morphisms $b_i, b'_i$ the are evidence for 
$(a_i, s_i) \sim (a'_i, s'_i)$.
The solid diagram in Figure \ref{fig:3} is commutative. 
All morphisms ending at $M_0$ (i.e.\ oriented paths ending at $M_0$)  are
in $\cat{S}$.  Also the morphism $J_2 \to M_1$ is in $\cat{S}$.

Choose $v_1 \in \cat{S}$ and $d_1 \in \cat{A}$ s.t.\ the diagram above $L_1$
is commutative. This can be done by (D1). 


Consider the solid diagram on the left side of Figure \ref{fig:7}.
Since $J_2 \to M_1$ is in $\cat{S}$, by (D1) there are 
$\til{v}_1 \in \cat{S}$ and $\til{d}_1 \in \cat{A}$ s.t.\ the two paths
$\til{I}_2 \to M_1$ are equal.
By (D2) there is $\til{v} \in \cat{S}$ s.t.\ the two paths 
$I_2 \to L'_2$ are equal. 
We get the commutative diagram on the right side of Figure \ref{fig:7},
with 
$d_2 := \til{d}_2 \o \til{v}$ and 
$v_2 := \til{v}_2 \o \til{v} \in \cat{S}$.

\begin{figure}
\[ \UseTips  \xymatrix @C=5ex @R=5ex {
&
I_2
\ar@{-->}[d]^{\til{v}}
\\
& 
\til{I}_2
\ar@{-->}[dl]_{\til{v}_2}
\ar@{-->}[dr]^{\til{d}_2}
\\
K'
\ar[dr]_{c'}
& &
J_2
\ar[dl]^{b'_2}
\\
&
L'_2 
\ar[d]_{s'_2}
\\
&
M_1
} 
\qquad \qquad 
\xymatrix @C=5ex @R=5ex {
\\
&
I_2
\ar[dl]_{v_2}
\ar[dr]^{d_2}
\\
K'
\ar[dr]_{c'}
& &
J_2
\ar[dl]^{b'_2}
\\
&
L'_2
} \]
\caption{} 
\label{fig:7}
\end{figure}

We now embed the diagram on the right side of Figure \ref{fig:7}
into the diagram in Figure \ref{fig:3}.
At this stage the whole diagram in Figure \ref{fig:3} is commutative.

Choose $w \in \cat{S}$ and $e \in \cat{A}$ to fill the diagram
$I_2 \to M_0 \leftarrow I_2$, using (D1). The path $H \to I_1 \to M_0$ is
in $\cat{S}$.  But we could have failure of commutativity 
in the paths $H \to L'_1$ and $H \to L_2$. 

The two paths  $H \to L'_1$ satisfy 
\[ s'_1 \o (b'_1 \o v \o w) = s'_1 \o (u' \o v_2 \o e) . \]
Therefore there is $w' \in \cat{S}$ s.t.\ 
\[  (b'_1 \o v \o w) \o w' = (u' \o v_2 \o e) \o w' . \]
Next, the two paths  $H' \to L_2$ satisfy 
\[ s_2 \o (c \o d_1 \o w \o w') = s_2 \o (b_2 \o d_2 \o e \o w') . \]
Therefore there is $w'' \in \cat{S}$ s.t.\ 
\[ (c \o d_1 \o w \o w') \o w'' = (b_2 \o d_2 \o e \o w') \o w'' . \]
Now all paths $H'' \to M_2$ are equal. All paths $H'' \to M_0$ are equal and
are in $\cat{S}$.

Erase $M_1, J_1, J_2$ and all arrows touching them. 
Then erase $H, H'$, but keep the paths through them. 
So we have the commutative diagram of Figure \ref{fig:4}).
This is evidence for 
\[  (a_2 \o c, s_1 \o u) \sim (a'_2 \o c', s'_1 \o u') . \]
The proof that composition is well-defined is done.

The identity morphism $1_M$ of an object $M$ is $\ol{(1_M, 1_M)}$.  

\medskip \noindent
{\bf Step 3.} 
We have to verify the associativity and the identity properties of 
composition in $\cat{A}_{\cat{S}}$. Namely that $\cat{A}_{\cat{S}}$ is a
category. This seems to be not too hard, given Step
2, and we leave it as an exercise!

\medskip \noindent
{\bf Step 4.}  
The functor $Q : \cat{A} \to \cat{A}_{\cat{S}}$ is defined to be 
$Q(M) := M$, and $Q(a) := \ol{(a, 1_M)}$ for $a : M \to N$ in
$\cat{A}$. We have to verify this is a functor... Again, an exercise.

\medskip \noindent
{\bf Step 5.}  
Finally we verify properties (L1)-(L4). (L1) is clear. The inverse of
$Q(s)$ is $\ol{(1, s)}$, so (L2) holds. 

It is not hard to see that 
\[ \ol{(a, s)} = \ol{(a, 1)} \o \ol{(1, s)}  ; \]
this is (L3). 

If $Q(a_1) = Q(a_2)$, then $(a_1, 1_M) \sim (a_2, 1_M)$; so there
are
$b_1, b_2 \in \cat{A}$  s.t.\ 
$a_1 \o b_1 = a_2 \o b_2$ and $1 \o b_1 = 1 \o b_2 \in \cat{S}$. 
Writing $s := b_1 \in \cat{S}$, we get $a_1 \o s = a_2 \o s$. This proves (L4). 
\end{proof}


\begin{rem} \label{rem:1}
Suppose $A$ is a ring and $S$ is a right denominator set in it. 
Then the right Ore localization $A_S$ is {\em flat} as left $A$-module. 
See \cite[Theorem 3.1.20]{Row}.
I have no idea if something like this is true for linear categories with more
than one object. 
\end{rem}

\cleardoublepage
\section{The Derived Category}

\subsection{Localization of linear categories}

\begin{prop} \label{prop:107}
Let $\cat{A}$ be a category, let $\cat{S}$ be a right
denominator set in $\cat{A}$, and let $(\cat{A}_{\cat{S}}, Q)$ be the right Ore
localization.
For any two morphisms $q_1, q_2 : M \to N$ in $\cat{A}_{\cat{S}}$ there
is a common denominator. Namely we can write 
\[ q_i = Q(a_i) \o Q(s)^{-1} \]
for suitable $a_i \in \cat{A}$ and $s \in \cat{S}$. 
\end{prop}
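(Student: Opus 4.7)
The plan is to start by writing each $q_i$ with its own denominator using property (L3), and then to use the right Ore condition (D1) to pull these two denominators back to a common one. Explicitly, begin by choosing $a'_i \in \cat{A}$ and $t_i \in \cat{S}$ with $t_i : L_i \to M$ and $a'_i : L_i \to N$, so that
\[ q_i = Q(a'_i) \o Q(t_i)^{-1} . \]
The task is to replace the two separate denominators $t_1$ and $t_2$ by a single element of $\cat{S}$.

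The key step will be to apply the right Ore condition (D1) (Definition \ref{dfn:21}) to the pair consisting of the arrow $t_1 : L_1 \to M$ and the element $t_2 \in \cat{S}$ with $t_2 : L_2 \to M$. This produces an object $K$ together with $r \in \cat{S}$, $r : K \to L_1$, and $b \in \cat{A}$, $b : K \to L_2$, such that
\[ t_1 \o r = t_2 \o b . \]
Set $s := t_1 \o r$. Since $t_1, r \in \cat{S}$ and $\cat{S}$ is multiplicatively closed, $s \in \cat{S}$. Note also that $s = t_2 \o b$; in $\cat{A}_{\cat{S}}$ both $Q(s)$ and $Q(t_2)$ are invertible, which forces $Q(b)$ to be invertible as well.

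Now define $a_i \in \cat{A}$ by $a_1 := a'_1 \o r$ and $a_2 := a'_2 \o b$. A direct calculation using functoriality of $Q$ gives
\[ Q(a_1) \o Q(s)^{-1} = Q(a'_1) \o Q(r) \o Q(r)^{-1} \o Q(t_1)^{-1} = q_1 , \]
and analogously, using that $Q(b)$ is invertible,
\[ Q(a_2) \o Q(s)^{-1} = Q(a'_2) \o Q(b) \o Q(b)^{-1} \o Q(t_2)^{-1} = q_2 , \]
which completes the proof. The main (and only) obstacle is picking the right instance of (D1); once the square is closed above $M$, everything else is formal.
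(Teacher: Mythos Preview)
Your proof is correct and follows essentially the same route as the paper: write each $q_i$ as a right fraction, apply (D1) to the two denominators to produce a common roof, and compose. The only cosmetic difference is that the paper invokes Lemma~\ref{lem:3} for the final equalities $q_i = Q(a_i)\o Q(s)^{-1}$, whereas you verify them directly using invertibility of $Q(r)$ and $Q(b)$; this amounts to reproving the easy direction (ii) $\Rightarrow$ (i) of that lemma inline.
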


\begin{proof}
Choose representatives
$q_i = Q(a'_i) \o Q(s'_1)^{-1}$. 
By (D1) applied to $L_1 \to M \leftarrow L_2$, 
there are $b \in \cat{A}$ and $t \in \cat{S}$ 
s.t.\ the diagram above $M$ commutes:
\[ \UseTips  \xymatrix @C=6ex @R=6ex  {
&
L
\ar[dl]_{t}
\ar[dr]^{b}
\\
L_1
\ar[d]_{s'_1}
\ar[drr]_(0.75){a'_1}
& &
L_2
\ar[dll]_(0.75){s'_2}
\ar[d]^{a'_2}
\\
M 
& &
N
} \]
Write
$s := s'_1 \o t = s'_2 \o b$,  $a_1 := a'_1 \o t$ and 
$a_1 := a'_2 \o b$. 
By Lemma \ref{lem:3} we get 
$q_i = Q(a_i) \o Q(s)^{-1}$.
\end{proof}

\begin{thm} \label{thm:104}
Let $\cat{A}$ be a $\K$-linear category, let $\cat{S}$ be a right
denominator set in $\cat{A}$, and let $(\cat{A}_{\cat{S}}, Q)$ be the right Ore
localization.

\begin{enumerate}
\item  The category $\cat{A}_{\cat{S}}$ has a unique $\K$-linear structure
such that $Q$ becomes a $\K$-linear functor. 

\item Suppose $\cat{B}$ is another $\K$-linear category, and 
$F : \cat{A} \to \cat{B}$ is a $\K$-linear functor s.t.\
$F(s)$ is invertible for every $s \in \cat{S}$. Let 
$F_{\cat{S}} : \cat{A}_{\cat{S}} \to \cat{B}$ be the localization of $F$. Then 
$F_{\cat{S}}$ is a $\K$-linear functor. 
\end{enumerate}
\end{thm}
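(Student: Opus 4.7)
The strategy is to transport the $\K$-module structure of each $\cat{A}(L, N)$ through the common-denominator formula supplied by Proposition~\ref{prop:107}. For $q_1, q_2 \in \cat{A}_{\cat{S}}(M, N)$ I would invoke that proposition to write
\[ q_i = Q(a_i) \o Q(s)^{-1}, \qquad a_i \in \cat{A}(L, N), \ s \in \cat{S}(L, M), \]
and then define
\[ q_1 + q_2 := Q(a_1 + a_2) \o Q(s)^{-1}, \qquad \la \cdot q_1 := Q(\la \cdot a_1) \o Q(s)^{-1} \]
for $\la \in \K$, with $\ol{(0, 1_M)}$ as the zero morphism.

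The main obstacle will be well-definedness of addition: if $q_i = Q(a'_i) \o Q(s')^{-1}$ is a second common-denominator presentation, I need to show the two definitions of $q_1 + q_2$ agree. The plan is to apply Lemma~\ref{lem:3} to $q_1$ to produce $b_1, b'_1 \in \cat{A}$ with $a_1 \o b_1 = a'_1 \o b'_1$ and $s \o b_1 = s' \o b'_1 \in \cat{S}$. Since $Q(a_2 \o b_1) \o Q(s \o b_1)^{-1}$ and $Q(a'_2 \o b'_1) \o Q(s \o b_1)^{-1}$ are both equal to $q_2$ (because $s \o b_1 = s' \o b'_1$), property (L4) of the Ore localization then yields $t \in \cat{S}$ with $a_2 \o b_1 \o t = a'_2 \o b'_1 \o t$. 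Setting $c := b_1 \o t$ and $c' := b'_1 \o t$, the $\K$-bilinearity of composition in $\cat{A}$ gives
\[ (a_1 + a_2) \o c = (a'_1 + a'_2) \o c', \qquad s \o c = s' \o c' \in \cat{S}, \]
which is precisely the evidence required by Lemma~\ref{lem:3} for the equality $Q(a_1 + a_2) \o Q(s)^{-1} = Q(a'_1 + a'_2) \o Q(s')^{-1}$. Well-definedness of scalar multiplication is analogous and simpler. Once this is established, the $\K$-module axioms for $\cat{A}_{\cat{S}}(M, N)$ reduce to those of $\cat{A}(L, N)$ after passing to a common denominator.

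Bilinearity of composition in $\cat{A}_{\cat{S}}$ is checked by the same pattern: for composable $q_1, q_2 : M \to N$ and $r : N \to P$, I would first take a common denominator $s$ for $q_1, q_2$, then use the right Ore condition (D1) to rewrite $r$ in a form $Q(d) \o Q(u)^{-1}$ whose numerator can be composed with $a_1, a_2$ simultaneously, reducing the claim to $\K$-bilinearity in $\cat{A}$. The functor $Q$ is $\K$-linear by construction, since $Q(a) = \ol{(a, 1_M)}$ admits $1_M$ as universal common denominator. Uniqueness of the $\K$-linear structure is forced: if $Q$ is $\K$-linear and composition in $\cat{A}_{\cat{S}}$ is $\K$-bilinear, then $(Q(a_1) + Q(a_2)) \o Q(s)^{-1} = Q(a_1 + a_2) \o Q(s)^{-1}$ must hold, and by Proposition~\ref{prop:107} this determines the sum on every pair of morphisms.

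Part (2) is then immediate from the explicit formula $F_{\cat{S}}(Q(a) \o Q(s)^{-1}) = F(a) \o F(s)^{-1}$ established in the proof of Proposition~\ref{prop:103}. Writing $q_1, q_2$ with a common denominator $s$, the computation
\[ F_{\cat{S}}(q_1 + q_2) = F(a_1 + a_2) \o F(s)^{-1} = (F(a_1) + F(a_2)) \o F(s)^{-1} = F_{\cat{S}}(q_1) + F_{\cat{S}}(q_2) \]
uses $\K$-linearity of $F$ followed by $\K$-bilinearity of composition in $\cat{B}$; scalar multiplication is handled identically.
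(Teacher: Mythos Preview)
Your proposal is correct and follows essentially the same approach as the paper: use Proposition~\ref{prop:107} to obtain a common denominator, define the $\K$-module operations on numerators, and verify well-definedness. The paper's proof is in fact much sketchier than yours---it writes down the same formulas for $q_1 + q_2$ and $\la \cdot q_i$, then simply says ``the usual tricks are then used to prove independence of representatives'' without spelling them out; your argument via Lemma~\ref{lem:3} and property~(L4) is exactly what those ``usual tricks'' amount to.
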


\begin{proof}
(1) Let $q_i : M \to N$ be morphisms in $\cat{A}_{\cat{S}}$.
Choose common denominator presentations
$q_i = Q(a_i) \o Q(s)^{-1}$. Since $Q$ must be an additive functor, we have
to define 
\[ Q(a_1) + Q(a_2) := Q(a_1 + a_2) . \]
By the distributive law (bilinearity of composition) we must define 
\[ q_1 + q_2 := \bigl( Q(a_1) + Q(a_2) \bigr) \o Q(s)^{-1} = 
Q(a_1 + a_2) \o Q(s)^{-1}  . \]
For $\la \in \K$ we must define 
\[ \la \cdot q_i := Q(\la a_i) \o Q(s)^{-1}  . \]
The usual tricks are then used to prove independence of representatives. So 
$\cat{A}_{\cat{S}}$ is a $\K$-linear category, and $Q$ is a $\K$-linear
functor. 

\medskip \noindent (2)
The only option for $F_{\cat{S}}$ is 
$F_{\cat{S}}(q_i) := F(a_i) \o F(s)^{-1}$. 
The usual tricks are used to prove independence of representatives.
\end{proof}

This includes the case $\K = \Z$ of course. There are ``left'' versions of
these results. 

\begin{exa}
Let $A$ be a ring, which we can think of as a one object linear category
$\cat{A}$. In this context, Theorem
\ref{thm:104} is one of the most important results in ring theory. See
\cite{Row, Ste}. 
\end{exa}

\begin{prop} \label{prop:105}
Let $\cat{A}$ be an additive category, let $\cat{S}$ be a right
denominator set in $\cat{A}$, and let $(\cat{A}_{\cat{S}}, Q)$ be the right Ore
localization. Then the category $\cat{A}_{\cat{S}}$ has finite direct sums. 
So $\cat{A}_{\cat{S}}$ is an additive category. 
\end{prop}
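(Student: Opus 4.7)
The plan is to leverage Theorem \ref{thm:104}, which tells us that $\cat{A}_{\cat{S}}$ is already a $\Z$-linear category and that $Q : \cat{A} \to \cat{A}_{\cat{S}}$ is an additive functor. It then suffices to exhibit a zero object and binary coproducts in $\cat{A}_{\cat{S}}$, from which all finite coproducts follow by an easy induction. For the zero object, I claim that $Q(0_{\cat{A}})$ works. Indeed, $Q$ induces a ring homomorphism $\opn{End}_{\cat{A}}(0_{\cat{A}}) \to \opn{End}_{\cat{A}_{\cat{S}}}(Q(0_{\cat{A}}))$ sending $1 \mapsto 1$, and the domain is the zero ring, so the codomain is too. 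In any $\Z$-linear category an object $M$ with $\opn{End}_{\cat{M}}(M) = 0$ is a zero object, because for any $\phi : M \to N$ one has $\phi = \phi \circ 1_M = \phi \circ 0 = 0$, and dually; this reuses the argument in the proof of Proposition \ref{prop:109}, which is valid as soon as the target is linear.

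For binary direct sums, given $M_1, M_2 \in \cat{A}_{\cat{S}}$ (the objects are the same as in $\cat{A}$), I form the direct sum $M := M_1 \oplus M_2$ inside the additive category $\cat{A}$ with embeddings $e_i$. Because $Q$ is an additive functor between linear categories, Proposition \ref{prop:2} applies directly and shows that $\big( Q(M), \{ Q(e_i) \} \big)$ is a direct sum of $\{ M_1, M_2 \}$ in $\cat{A}_{\cat{S}}$. I do not foresee any real obstacle: the content of the proposition is essentially that Theorem \ref{thm:104} and Proposition \ref{prop:2} do all the work, and no further use of the Ore or cancellation conditions on $\cat{S}$ is needed here.
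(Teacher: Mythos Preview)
Your proof is correct and follows exactly the approach indicated by the paper, which simply says the result is clear from Propositions \ref{prop:2} and \ref{prop:109}. You have properly unpacked these references, including the observation that the \emph{proof} of Proposition \ref{prop:109} (that an object with zero endomorphism ring is a zero object) only uses the linear structure, even though its \emph{statement} is formulated for additive categories; this is precisely the content needed here since at this stage $\cat{A}_{\cat{S}}$ is only known to be linear via Theorem \ref{thm:104}.
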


\begin{proof}
Clear from Propositions \ref{prop:2} and \ref{prop:109}.
\end{proof}

\subsection{Localization of triangulated categories}

Let $\cat{K}$ be a triangulated category, with translation $T$.

\begin{prop} \label{prop:106}
Suppose 
$H : \cat{K} \to \cat{M}$ is a cohomological functor, where $\cat{M}$ is some
abelian category. Let 
\[ \cat{S} := \{ s \in \cat{K} \mid H( T^i(s)) \tup{ is invertible for all }
i \in \Z  \} . \] 

Then $\cat{S}$ is a left and right denominator set in $\cat{K}$.
\end{prop}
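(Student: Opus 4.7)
The plan is to verify the four conditions (multiplicative closure plus (D1)--(D2), and their left-sided analogues) in turn, using as the main tool the fact that completing morphisms to distinguished triangles and applying the long exact sequences coming from the cohomological functors $H \circ T^i$ (Proposition \ref{prop:4}(2) applied to $H$) lets us detect membership in $\cat{S}$ via vanishing of $H(T^i(-))$ on the third vertex.

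First I would record the multiplicative closure: identities are sent to identities by any functor, and if $H(T^i(s))$ and $H(T^i(t))$ are invertible for all $i$ then so is $H(T^i(t \circ s)) = H(T^i(t)) \circ H(T^i(s))$. The crucial preliminary observation is this: call an object $P \in \cat{K}$ \emph{$H$-acyclic} if $H(T^i(P))=0$ for every $i \in \Z$. In any distinguished triangle $L \xar{s} N \xar{\be} P \xar{\ga} T(L)$, applying the long exact sequence coming from $H \circ T^i$ shows that $s \in \cat{S}$ if and only if $P$ is $H$-acyclic; dually, if some vertex of a distinguished triangle is $H$-acyclic, the opposite edge lies in $\cat{S}$.

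To establish (D1) I would argue as follows. Given $a : M \to N$ and $s : L \to N$ with $s \in \cat{S}$, embed $s$ in a distinguished triangle $L \xar{s} N \xar{\be} P \xar{\ga} T(L)$; by the preliminary observation $P$ is $H$-acyclic. Embed $\be \circ a$ in a distinguished triangle $K \xar{t} M \xar{\be \circ a} P \xar{\de} T(K)$; then $t \in \cat{S}$ because $P$ is $H$-acyclic (again by the preliminary observation applied to the second triangle). Since $\be \circ (a \circ t) = 0$, exactness of the cohomological functor $\opn{Hom}_{\cat{K}}(K,-)$ applied to the first triangle (Proposition \ref{prop:4}(2)) produces $b : K \to L$ with $s \circ b = a \circ t$, which is (D1). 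For (D2), given $s \circ a = s \circ b$ with $s \in \cat{S}$, set $c := a - b$, so $s \circ c = 0$. Complete $s$ to a distinguished triangle $Q \xar{\al} N \xar{s} L \xar{} T(Q)$; by the cohomological functor $\opn{Hom}_{\cat{K}}(M,-)$ lift $c$ to $c' : M \to Q$ with $\al \circ c' = c$. Embed $c'$ in a triangle $K \xar{t} M \xar{c'} Q \xar{} T(K)$; then $c \circ t = \al \circ c' \circ t = 0$, and $t \in \cat{S}$ because $Q$ is $H$-acyclic (the triangle containing $s$ shows this).

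The left-sided versions are handled by the mirror argument using the contravariant cohomological functors $\opn{Hom}_{\cat{K}}(-,P)$ of Proposition \ref{prop:4}(2), or equivalently by transporting the above proof through the triangulated opposite category $\cat{K}^{\mrm{op}}$, on which the composition $H \circ \opn{op}^{-1}$ defines a cohomological functor whose associated multiplicative set matches $\cat{S}$ under $\opn{op}$. I expect the main conceptual obstacle to be the preliminary observation that $\cat{S}$ is exactly the class of edges whose opposite vertex is $H$-acyclic: one needs that invertibility of $H(T^i(s))$ for \emph{all} $i$, not just $i=0$, forces vanishing in all degrees of $H(T^i(P))$, which is why $\cat{S}$ was defined with the shifts built in. Once this is isolated, each axiom reduces to a two-step construction (complete one map to a triangle, extract a lift from a long exact sequence), and the verification that the auxiliary morphism $t$ lies in $\cat{S}$ is automatic from the acyclicity of the intermediate cone.
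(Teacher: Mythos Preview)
Your proof is correct and follows essentially the same route as the paper. The only cosmetic difference is in (D1): the paper invokes axiom (TR3) directly to produce the morphism $b$ from the commutative square, whereas you obtain it by applying the cohomological functor $\opn{Hom}_{\cat{K}}(K,-)$ of Proposition~\ref{prop:4}(2) to the triangle on $s$; since the proof of Proposition~\ref{prop:4}(2) itself rests on (TR3), the two arguments are interchangeable.
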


\begin{proof}
It is clear that $\cat{S}$ is closed under composition and contains the
identity morphisms. So it is a multiplicatively closed set. 

Let's prove that the right Ore condition (D1) holds. Suppose we are given 
$L \xar{a} N \xleftarrow{s} M$. 
Consider the solid commutative diagram 
\[ \UseTips  \xymatrix @C=5ex @R=5ex { 
K
\ar[r]^{t}
\ar@{-->}[d]_{b}
&
L
\ar[r]^{c \o a}
\ar[d]_{a}
& 
P
\ar[r]^{}
\ar[d]_{=}
&
K[1]
\ar@{-->}[d]_{b[1]}
\\
M
\ar[r]^{s}
&
N
\ar[r]^{c}
& 
P
\ar[r]^{}
&
M[1]
}  \]
where the bottom row is a distinguished triangle built on $M \xar{s} N$,
and and the top row is a distinguished triangle built on 
$L \xar{c \o a} P$, turned $120^\circ$ to the right. By axiom (TR3) there is a
morphism $b$ making the diagram commutative. Since $\mrm{H}(s[i])$ are
invertible for all $i \in \Z$, it follows that $\mrm{H}(P[i]) = 0$. But then 
$\mrm{H}(t[i])$ are invertible for all $i \in \Z$, so $t \in \cat{S}$. 

Finally we prove (D2). Say $a \in \cat{K}$ and $s \in \cat{S}$ satisfy 
$a \o s = 0$. Let 
\[ P \xar{b} M \xar{s} N \xar{} P[1] \]
be a distinguished triangle built on $s$. We get an exact sequence 
\[ \opn{Hom}_{\cat{K}}(L, P) \xar{b \o} 
\opn{Hom}_{\cat{K}}(L, M) \xar{s \o} 
\opn{Hom}_{\cat{K}}(L, N) . \]
Since $a : L \to M$ satisfies $s \o a = 0$, there is $c : L \to P$ s.t.\ 
$a = b \o c$. 
Now look at the distinguished triangle 
\[ K \xar{t} L \xar{c} P \xar{} K[1] \]
built on $c$. We know that 
$c \o t = 0$; hence 
$a \o t = b \o c \o t = 0$. 
But ($s \in \cat{S}$) $\Rightarrow$ ($\mrm{H}(P[i]) = 0$ for all $i$)
$\Rightarrow$ ($t \in \cat{S}$).

The left versions of (D1) and (D2) are proved the same way. 
\end{proof}


\begin{thm} \label{thm:105}
Let $\cat{S}$ be the denominator set in $\cat{K}$ associated to a cohomological
functor, as in Proposition \tup{\ref{prop:106}}, 
and let  $(\cat{K}_{\cat{S}}, Q)$ be the Ore localization.
The additive category $\cat{K}_{\cat{S}}$ has a unique triangulated structure
such that these two properties hold:

\begin{enumerate}
\rmitem{i} The functor  $Q : \cat{K} \to \cat{K}_{\cat{S}}$ is
triangulated. 

\rmitem{ii} Suppose $\cat{E}$ is another triangulated category, and 
$F : \cat{K} \to \cat{E}$ is a triangulated functor s.t.\
$F(s)$ is invertible for every $s \in \cat{S}$. Let 
$F_{\cat{S}} : \cat{K}_{\cat{S}} \to \cat{E}$ be the localization of $F$. Then 
$F_{\cat{S}}$ is a triangulated functor. 
\end{enumerate}
\end{thm}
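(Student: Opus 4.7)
The plan is to construct the triangulated structure by decreeing that a triangle in $\cat{K}_{\cat{S}}$ is distinguished iff it is $\cat{K}_{\cat{S}}$-isomorphic to the $Q$-image of a distinguished triangle in $\cat{K}$; the four axioms are then verified by lifting problems back to $\cat{K}$ via the Ore denominator properties. First, the defining condition for $\cat{S}$ is stable under $T^{\pm 1}$, so $T$ and $T^{-1}$ preserve $\cat{S}$. By the universal property of Ore localization (Proposition \ref{prop:103}), $T$ descends to a unique additive automorphism $T_{\cat{S}}$ of $\cat{K}_{\cat{S}}$ with $T_{\cat{S}} \circ Q = Q \circ T$, making $Q$ a T-additive functor. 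Property (i) then holds by construction, and TR2 together with the first and third bullets of TR1 pass up from $\cat{K}$ without work. For the middle bullet of TR1, given any $q : L \to M$ in $\cat{K}_{\cat{S}}$, I write $q = Q(a) \circ Q(s)^{-1}$ with $s : L_0 \to L$ in $\cat{S}$ and $a : L_0 \to M$, complete $a$ to a distinguished triangle in $\cat{K}$, apply $Q$, and transport the first vertex along the isomorphism $Q(s)$.

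The main obstacle is TR3. After using the defining isomorphisms, I reduce to the case where both triangles are $Q$-images of distinguished triangles $\Delta$ and $\Delta'$ in $\cat{K}$. The difficulty is that a square commuting in $\cat{K}_{\cat{S}}$ need not lift to a commuting square in $\cat{K}$: one must represent $\phi$ and $\psi$ as roofs and, by repeated application of the Ore conditions (Propositions \ref{prop:106} and \ref{prop:107}), arrange a single $u : \hat L \to L$ in $\cat{S}$ along which the whole square lifts consistently to $\cat{K}$. Pulling $\Delta$ back along $u$ --- using TR1 and TR3 in $\cat{K}$ to build a distinguished triangle $\hat\Delta$ with first vertex $\hat L$, connected to $\Delta$ by a morphism in $\cat{S}$ and hence isomorphic to it in $\cat{K}_{\cat{S}}$ --- reduces the problem to an honest commuting square in $\cat{K}$. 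Axiom TR3 in $\cat{K}$ then supplies the filling morphism on cones, whose image under $Q$ gives the required $\chi$. The octahedron axiom TR4 is handled by the analogous lifting of all three triangles to a common refinement followed by the octahedron axiom in $\cat{K}$.

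For property (ii), given a triangulated functor $F : \cat{K} \to \cat{E}$ inverting $\cat{S}$, Theorem \ref{thm:104} yields the additive $F_{\cat{S}} : \cat{K}_{\cat{S}} \to \cat{E}$ with $F_{\cat{S}} \circ Q = F$. The T-additive structural isomorphism $F \circ T \iso T_{\cat{E}} \circ F$ of $F$ is equivalently a natural isomorphism between the two extensions $F_{\cat{S}} \circ T_{\cat{S}}$ and $T_{\cat{E}} \circ F_{\cat{S}}$ of the same functor $\cat{K} \to \cat{E}$, so the universal property of localization endows $F_{\cat{S}}$ with a canonical T-additive structure; and $F_{\cat{S}}$ preserves distinguished triangles because every such triangle in $\cat{K}_{\cat{S}}$ is isomorphic to some $Q(\Delta)$ while $F_{\cat{S}}(Q(\Delta)) = F(\Delta)$ is distinguished in $\cat{E}$ by hypothesis. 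Uniqueness of the triangulated structure on $\cat{K}_{\cat{S}}$ then follows by applying (ii) with $\cat{E} = \cat{K}_{\cat{S}}$ endowed with any other qualifying triangulated structure and $F = Q$: the identity functor becomes triangulated from either structure to the other, so the two collections of distinguished triangles coincide.
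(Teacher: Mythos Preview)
Your approach is essentially the paper's: declare a triangle in $\cat{K}_{\cat{S}}$ distinguished iff it is isomorphic to the $Q$-image of a distinguished triangle in $\cat{K}$, then verify the axioms by lifting problems back to $\cat{K}$ via the Ore conditions. Your construction of $T_{\cat{S}}$ via the universal property, and your TR1 argument (simply transporting the first vertex along the isomorphism $Q(s)$), are in fact cleaner than the paper's versions; the paper defines $T_{\cat{S}}$ by an explicit formula on presentations and, for TR1, takes an unnecessary detour through the \emph{left} Ore condition to build an auxiliary triangle.

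One point in your TR3 sketch needs tightening: a single refinement $u : \hat L \to L$ is not enough to lift the commuting square to $\cat{K}$, because $\psi : M \to M'$ is still only a roof and there is no reason it should become an honest morphism after refining only the $L$-vertex. The paper refines at \emph{both} vertices: writing $\phi = Q(a)\circ Q(s)^{-1}$ and $\psi = Q(b)\circ Q(t)^{-1}$ with $s : \tilde L \to L$ and $t : \tilde M \to M$ in $\cat{S}$, one uses the Ore conditions repeatedly to manufacture a morphism $c : \tilde L \to \tilde M$ in $\cat{K}$ making both the upper square (against $\al$) and the lower square (against $\al'$) commute in $\cat{K}$. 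Completing $c$ to a distinguished ``middle row'' and applying TR3 in $\cat{K}$ twice --- once upward (yielding a morphism in $\cat{S}$ on cones) and once downward --- produces the desired $\chi$ as a right fraction. This is the maneuver you are gesturing at, but the bookkeeping genuinely requires refining both $L$ and $M$. (The paper, incidentally, omits TR4 entirely.)
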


\begin{proof} 
Step 1. 
We define the translation functor $T_{\cat{S}}$ on
$\cat{K}_{\cat{S}}$. For objects we take 
$T_{\cat{S}}(M) := T(M)$ of course. And for a morphism 
$q \in \cat{K}_{\cat{S}}$ we choose a presentation 
$q = Q(a) \o Q(s)^{-1}$, and define
$T_{\cat{S}}(q) := Q(T(a)) \o Q((T(s))^{-1}$.
We must prove independence of presentation; but this is standard. 

\medskip \noindent
Step 2.
The distinguished triangles in $\cat{K}_{\cat{S}}$ are defined to be those
triangles that are isomorphic to the images under $Q$ of distinguished triangles
in $\cat{K}$. 
Let us verify the axioms of triangulated category. 

\medskip \noindent
(TR1). 
It is trivial that every
triangle that's isomorphic to a distinguished triangle is distinguished; and
that the triangle 
\[ M \xar{1_M} M \to 0 \to M[1] \] 
is distinguished.

Suppose we are given $\al : L \to M$ in $\cat{K}_{\cat{S}}$. We have to build a
distinguished triangle on it. Choose a presentation $\al = Q(a) \o Q(s)^{-1}$.
Use (D1)$_{\tup{left}}$ to find $b \in \cat{K}$ and $t \in \cat{S}$ such that
$t \o a = b \o s$. Consider the solid commutative diagram below, where
the rows are  distinguished triangles  built on $a$  and $b$ respectively.
\[ \UseTips  \xymatrix @C=5ex @R=5ex { 
K
\ar[r]^{a}
\ar[d]_{s}
&
M
\ar[r]^{e}
\ar[d]_{t}
& 
N
\ar[r]^{c}
\ar@{-->}[d]_{u}
&
K[1]
\ar[d]_{s[1]}
\\
L
\ar[r]^{b}
&
\til{L}
\ar[r]^{}
& 
P
\ar[r]^{d}
&
L[1]
}  \]
By (TR3) there is a morphism $u$ that makes the whole diagram commutative.
Since $s, t \in \cat{S}$ and $H$ is a cohomological functor, it follows that 
$u \in \cat{S}$. 
We get a commutative diagram
\[ \UseTips  \xymatrix @C=10ex @R=7ex { 
K
\ar[r]^{Q(a)}
\ar[d]_{Q(s)}
&
M
\ar[r]^{Q(e)}
\ar[d]_{Q(1_M)}
& 
N
\ar[r]^{Q(c)}
\ar[d]_{Q(u)}
&
K[1]
\ar[d]_{Q(s)[1]}
\\
L
\ar[r]^{\al}
&
M
\ar[r]^{Q(u \o e)}
& 
P
\ar[r]^{Q(d)}
&
L[1]
}  \]
in $\cat{K}_{\cat{S}}$. The top row is a distinguished triangle, and the
vertical arrows are isomorphisms. So the bottom row is a distinguished
triangle, and it is built on $\al$. 

\medskip \noindent 
(TR2). Turning: this is trivial. 

\medskip \noindent 
(TR3). We are given the solid commutative diagram in $\cat{K}_{\cat{S}}$,
where the rows are distinguished triangles:
\begin{equation} \label{eqn:104}
\UseTips \xymatrix @C=5ex @R=5ex { 
L 
\ar[r]^{\al}
\ar[d]_{\phi}
&
M
\ar[r]^{\be}
\ar[d]_{\psi}
& 
N
\ar[r]^{\ga}
\ar@{-->}[d]_{\chi}
&
L[1]
\ar[d]_{\phi[1]}
\\
L' 
\ar[r]^{\al'}
&
M'
\ar[r]^{\be'}
& 
N'
\ar[r]^{\ga'}
&
L'[1] 
} 
\end{equation}
and we have to find $\chi$ to complete the diagram. 

By replacing the rows with isomorphic triangles, we can assume they come from
$\cat{K}$.
Thus we can replace (\ref{eqn:104}) with this diagram:
\begin{equation} \label{eqn:110}
\UseTips \xymatrix @C=7ex @R=7ex { 
L 
\ar[r]^{Q(\al)}
\ar[d]_{\phi}
&
M
\ar[r]^{Q(\be)}
\ar[d]_{\psi}
& 
N
\ar[r]^{Q(\ga)}
\ar@{-->}[d]_{\chi}
&
L[1]
\ar[d]_{\phi[1]}
\\
L' 
\ar[r]^{Q(\al')}
&
M'
\ar[r]^{Q(\be')}
& 
N'
\ar[r]^{Q(\ga')}
&
L'[1] 
} 
\end{equation}
Let us choose presentations 
$\phi = Q(a) \o Q(s)^{-1}$ and $\psi = Q(b) \o Q(t)^{-1}$. 
Then the solid diagram (\ref{eqn:110}) comes from applying $Q$ to the diagram 
\begin{equation} \label{eqn:103}
\UseTips \xymatrix @C=5ex @R=5ex { 
L 
\ar[r]^{\al}
&
M
\ar[r]^{\be}
& 
N
\ar[r]^{\ga}
&
L[1]
\\
\til{L} 
\ar[d]_{a}
\ar[u]^{s}
&
\til{M}
\ar[d]_{b}
\ar[u]^{t}
& 
&
\til{L}[1]
\ar[d]_{a[1]}
\ar[u]^{s[1]}
\\
L' 
\ar[r]^{\al'}
&
M'
\ar[r]^{\be'}
& 
N'
\ar[r]^{\ga'}
&
L'[1] 
} \end{equation}
in $\cat{K}$. Here the rows are distinguished triangles.

By (L3) we can find $c \in \cat{K}$ and $u \in \cat{S}$ s.t.\ 
\[ Q(t)^{-1} \o Q(\al) \o Q(s) = Q(c) \o Q(u)^{-1} . \]
This is the solid diagram:
\[ \UseTips \xymatrix @C=5ex @R=5ex { 
& &
L 
\ar[r]^{\al}
&
M
\\
\til{L}''
\ar@{-->}[r]^{u'}
&
\til{L}'
\ar[r]^{u}
\ar@(dr,dl)[rr]_(0.2){c}
&
\til{L} 
\ar[d]_(0.3){a}
\ar[u]^{s}
&
\til{M}
\ar[d]_{b}
\ar[u]^{t}
\\
& &
L' 
\ar[r]^{\al'}
&
M'
} \]
Thus 
\[ Q(\al \o s \o u) = Q(t \o c) . \]
Take $u' \in \cat{S}$ s.t.\ 
\[ (\al \o s \o u) \o u' = (t \o c) \o u' . \]
This is possible by (L4). We get
\[ \phi = Q(a) \o Q(s)^{-1} = Q(a \o u \o u') \o Q(s \o u \o u')^{-1} . \]
Thus, after substituting $\til{L} :=  \til{L}''$,
 $s := s \o u \o u'$,
$a := a \o u \o u'$
and $c := c \o u'$, we get a new diagram
\begin{equation} \label{eqn:105}
\UseTips \xymatrix @C=5ex @R=5ex { 
L 
\ar[r]^{\al}
&
M
\ar[r]^{\be}
& 
N
\ar[r]^{\ga}
&
L[1]
\\
\til{L} 
\ar[r]^{c}
\ar[d]_{a}
\ar[u]^{s}
&
\til{M}
\ar[d]_{b}
\ar[u]^{t}
& 
&
\til{L}[1]
\ar[d]_{a[1]}
\ar[u]^{s[1]}
\\
L' 
\ar[r]^{\al'}
&
M'
\ar[r]^{\be'}
& 
N'
\ar[r]^{\ga'}
&
L'[1] 
} 
\end{equation}
in $\cat{K}$. Here the top left square is commutative; but maybe the bottom
left square is not commutative. 

When we apply $Q$ to the diagram (\ref{eqn:105}), the  whole diagram, including
the bottom left square, becomes commutative, since (\ref{eqn:110}) is
commutative. Again using condition (L4), there is 
$v \in \cat{S}$ s.t.\ 
\[ (\al' \o a) \o v = (b \o c) \o v . \]
In a diagram:
\[ \UseTips \xymatrix @C=5ex @R=5ex { 
& 
L 
\ar[r]^{\al}
&
M
\\
\til{L}'
\ar[r]^{v}
&
\til{L} 
\ar[d]_{a}
\ar[u]^{s}
\ar[r]^{c}
&
\til{M}
\ar[d]_{b}
\ar[u]^{t}
\\
& 
L' 
\ar[r]^{\al'}
&
M'
} \]
Performing the replacements
$\til{L} :=  \til{L}'$,
$s := s \o v$,
$c := c \o v$ and 
$a := a \o v$ we now have a commutative square also at the bottom
left of (\ref{eqn:105}). Since $\ga \o \be = 0 = \ga' \o \be'$, 
in fact the whole diagram (\ref{eqn:105}) in $\cat{K}$ is now commutative.

Now by (TR1) we can embed $c$ in a distinguished
triangle. We get the solid diagram
\begin{equation} \label{eqn:106}
\UseTips \xymatrix @C=5ex @R=5ex { 
L 
\ar[r]^{\al}
&
M
\ar[r]^{\be}
& 
N
\ar[r]^{\ga}
&
L[1]
\\
\til{L} 
\ar[r]^{c}
\ar[d]_{a}
\ar[u]^{s}
&
\til{M}
\ar[r]^{\til{\be}}
\ar[d]_{b}
\ar[u]^{t}
& 
\til{N}
\ar[r]^{\til{\ga}}
\ar@{-->}[d]_{d}
\ar@{-->}[u]^{w}
&
\til{L}[1]
\ar[d]_{a[1]}
\ar[u]^{s[1]}
\\
L' 
\ar[r]^{\al'}
&
M'
\ar[r]^{\be'}
& 
N'
\ar[r]^{\ga'}
&
L'[1] 
} 
\end{equation}
in $\cat{K}$. The rows are distinguished triangles. 
Since $\til{\ga} \o \til{\be} = 0$, the solid diagram is commutative.  
By (TR3) there are morphisms $w$ and $d$ that make the whole diagram
commutative. Now the morphism $w \in \cat{S}$ by the usual long exact sequence
argument. The morphism 
\[ \chi := Q(d) \o Q(w)^{-1} :  N \to N' \]
solves the problem.

\medskip \noindent 
(TR4). As always, we neglect this axiom. 

\medskip \noindent
Step 3. Now $\cat{K}_{\cat{S}}$ is a triangulated category. 
Conditions (i)-(ii) are clear. They imply the uniqueness of the
triangulated structure that we imposed on $\cat{K}_{\cat{S}}$. 
Indeed, since $Q$ must be a triangulated functor, we can't have less
distinguished triangles than those we declared. We can't have more
distinguished triangles, because of condition (ii). 
\end{proof}

\subsection{The derived category}

\begin{prop}
Let $\cat{M}$ be an abelian category. The functor 
\[ \mrm{H}^0 : \dcat{K}(\cat{M}) \to \cat{M} \]
is cohomological. 
\end{prop}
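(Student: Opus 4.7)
The plan is to reduce to standard triangles and then verify exactness by an explicit mapping-cone computation.

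First I would use Definition \ref{dfn:140}: any distinguished triangle in $\dcat{K}(\cat{M})$ is isomorphic to a standard triangle
\[ L \xar{\al} M \xar{\be} N \xar{\ga} L[1] \]
with $N = \opn{cone}(\al)$, $\be = \bmat{0 \\ 1_M}$, $\ga = \bmat{1_{L[1]} & 0}$, associated to some $\al : L \to M$ in $\dcat{C}(\cat{M})$. Because $\mrm{H}^0$ is a functor to $\cat{M}$ and isomorphisms in $\cat{M}$ preserve exactness of sequences of two composable arrows, it suffices to prove that
\[ \mrm{H}^0(L) \xar{\mrm{H}^0(\al)} \mrm{H}^0(M) \xar{\mrm{H}^0(\be)} \mrm{H}^0(N) \]
is exact.

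Next I would check $\mrm{H}^0(\be) \circ \mrm{H}^0(\al) = 0$ by exhibiting a null-homotopy of $\be \circ \al$ in $\dcat{C}(\cat{M})$. Define
\[ \th^i : L^i \to N^{i-1} = L^i \oplus M^{i-1} , \qquad \th^i := \bmat{ 1_{L^i} \\[0.3em] 0 }. \]
A small matrix calculation using
$\d_N^{i-1} = \bmat{ -\d_L^i & 0 \\[0.3em] \al^i & \d_M^{i-1} }$
shows
\[ \d_N^{i-1} \circ \th^i + \th^{i+1} \circ \d_L^i = \bmat{ 0 \\[0.3em] \al^i } = \be^i \circ \al^i . \]
Hence $\be \circ \al \sim 0$, so $\mrm{H}^0(\be \circ \al) = 0$ in $\cat{M}$.

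For the reverse inclusion $\opn{Ker}(\mrm{H}^0(\be)) \subset \opn{Im}(\mrm{H}^0(\al))$, I would invoke the Freyd--Mitchell embedding theorem to argue with elements (equivalently, a subobject/diagram-chase in $\cat{M}$). Take $m \in \opn{Z}^0(M)$ whose class is killed by $\mrm{H}^0(\be)$. Then $\be^0(m) = \bmat{0 \\ m} \in \opn{B}^0(N)$, so there exist $l \in L^0$ and $m' \in M^{-1}$ with
\[ \d_N^{-1} \bmat{ l \\[0.3em] m' } = \bmat{ -\d_L^0(l) \\[0.3em] \al^0(l) + \d_M^{-1}(m') } = \bmat{ 0 \\[0.3em] m } . \]
Reading off components gives $l \in \opn{Z}^0(L)$ and $m - \al^0(l) \in \opn{B}^0(M)$, so $[m] = \mrm{H}^0(\al)([l])$.

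The only real subtlety is the element-chase in a general abelian category, which is justified by Freyd--Mitchell; alternatively, one can view the above as the snake-lemma argument applied to the degree-wise split short exact sequence $0 \to M \xar{\be} N \xar{\ga} L[1] \to 0$ in $\dcat{C}(\cat{M})$, yielding the standard long exact sequence in cohomology whose connecting map is (up to sign) $\mrm{H}^{\bullet}(\al)$, from which the required exactness at $\mrm{H}^0(M)$ is immediate.
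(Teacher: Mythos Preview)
Your proof is correct and follows exactly the approach the paper indicates: the paper leaves this as an exercise with the hint ``enough to check for standard triangles'', and you carry out precisely that reduction followed by the expected mapping-cone element-chase (with Freyd--Mitchell as justification). The null-homotopy for $\be \circ \al$ and the kernel-inclusion computation are both accurate.
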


\begin{proof}
Exercise. (Hint: enough to check for standard triangles.)
\end{proof}

The set $\dcat{S}(\cat{M})$ of quasi-isomorphisms in $\dcat{K}(\cat{M})$
satisfies 
\[ \dcat{S}(\cat{M}) = 
\{ s \in \dcat{K}(\cat{M}) \mid \mrm{H}^i(s) 
\tup{ is an isomorphism for all } i \}  . \]
Therefore Theorem \ref{thm:105} applies, and the next definition makes sense. 

\begin{dfn}
Let $\cat{M}$ be a $\K$-linear abelian category.  The {\em derived category} of
$\cat{M}$ is the $\K$-linear triangulated category 
$\dcat{D}(\cat{M}) := \dcat{K}(\cat{M})_{\dcat{S}(\cat{M})}$.
\end{dfn}



\cleardoublepage
\section{Full Subcategories of the Derived Category}

\subsection{General facts}
In this section $\cat{M}$ is an abelian category. Recall that 
$\dcat{C}(\cat{M})$ is the category of complexes in $\cat{M}$, 
$\dcat{K}(\cat{M})$ is the homotopy
category of complexes, $\dcat{S}(\cat{M})$ is the set of
quasi-isomorphisms in $\dcat{K}(\cat{M})$, and 
$\dcat{D}(\cat{M}) = \dcat{K}(\cat{M})_{\dcat{S}(\cat{M})}$ is the derived
category.

Let $\cat{L}$ be a full triangulated subcategory of $\dcat{K}(\cat{M})$,
and let $\cat{S} := \cat{L} \cap \dcat{S}(\cat{M})$. 
Note that $\cat{S}$  satisfies 
\[ \cat{S} = \{ s \in  \cat{L}
\mid \mrm{H}^i(s) \tup{ is an isomorphism for all } i \}  . \]
Hence Theorem \ref{thm:105} applies, and the Ore localization 
$(\cat{L}_{\cat{S}}, Q)$ exists. By the universal properties, there is a unique
triangulated functor 
$\cat{L}_{\cat{S}} \to \dcat{D}(\cat{M})$
that extends the inclusion $\cat{L} \to \dcat{K}(\cat{M})$.
We are interested in sufficient conditions for the functor 
$\cat{L}_{\cat{S}} \to \dcat{D}(\cat{M})$ to be fully faithful.

\begin{rem}
I could not find a counterexample -- can anyone think up / locate one?
\end{rem}

\begin{lem} \label{lem:101}
Let $\cat{L} \subset \til{\cat{L}}$ be full triangulated subcategories of
$\dcat{K}(\cat{M})$, let $\cat{S} := \cat{L} \cap \dcat{S}(\cat{M})$,
and let $\til{\cat{S}} := \til{\cat{L}} \cap \dcat{S}(\cat{M})$. 
Assume either of these conditions holds:
\begin{itemize}
\item[(r)] Given any morphism $s : M \to L$ in $\til{\cat{S}}$ such that 
$L \in \opn{Ob}(\cat{L})$, there 
exists a morphism $t : L' \to M$  in $\til{\cat{S}}$ such that 
$L' \in \opn{Ob}(\cat{L})$.

\item[(l)] The same, but with arrows reversed.
\end{itemize}
Then the functor $\cat{L}_{\cat{S}} \to \til{\cat{L}}_{\til{\cat{S}}}$ is fully
faithful.
\end{lem}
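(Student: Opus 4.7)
The plan is to observe that the functor $F \colon \cat{L}_{\cat{S}} \to \til{\cat{L}}_{\til{\cat{S}}}$ is the identity on objects (both Ore localizations preserve the object class), so full faithfulness reduces to showing that for any $L, L' \in \opn{Ob}(\cat{L})$ the induced map on Hom-sets is bijective. I will treat condition (r), in which case I work with right fractions $Q(a) \o Q(s)^{-1}$; the proof under (l) is the mirror image, using left fractions.

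For fullness, start with a morphism $\bar{q} \in \til{\cat{L}}_{\til{\cat{S}}}(L, L')$. By property (L3), write $\bar{q} = Q(a) \o Q(s)^{-1}$ with some $M \in \til{\cat{L}}$, $s \colon M \to L$ in $\til{\cat{S}}$, and $a \colon M \to L'$ in $\til{\cat{L}}$. Applying condition (r) to $s$, choose $t \colon L'' \to M$ in $\til{\cat{S}}$ with $L'' \in \cat{L}$. Then $s \o t \colon L'' \to L$ and $a \o t \colon L'' \to L'$ both have source and target in $\cat{L}$, so by fullness of $\cat{L}$ in $\dcat{K}(\cat{M})$ they are morphisms of $\cat{L}$, and $s \o t \in \cat{S}$. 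By Lemma \ref{lem:3} we have $\bar{q} = Q(a \o t) \o Q(s \o t)^{-1}$, which is visibly in the image of $F$.

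For faithfulness, suppose $\bar{q}_1, \bar{q}_2 \in \cat{L}_{\cat{S}}(L, L')$ have the same image under $F$. Using Proposition \ref{prop:107} inside $\cat{L}$, put them over a common denominator: $\bar{q}_i = Q(a_i) \o Q(s)^{-1}$ with $M \in \cat{L}$, $s \colon M \to L$ in $\cat{S}$, and $a_i \colon M \to L'$ in $\cat{L}$. Since their common image in $\til{\cat{L}}_{\til{\cat{S}}}$ agree, Lemma \ref{lem:3} (applied in $\til{\cat{L}}_{\til{\cat{S}}}$) produces $b_1, b_2 \colon M' \to M$ in $\til{\cat{L}}$ with $a_1 \o b_1 = a_2 \o b_2$ and $s \o b_1 = s \o b_2 \in \til{\cat{S}}$. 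The morphism $s \o b_1 \colon M' \to L$ lies in $\til{\cat{S}}$ and ends in $L \in \cat{L}$, so condition (r) supplies $t \colon L'' \to M'$ in $\til{\cat{S}}$ with $L'' \in \cat{L}$. Then $b_i \o t \colon L'' \to M$ are morphisms in $\cat{L}$, while $s \o b_1 \o t = s \o b_2 \o t$ is a quasi-isomorphism in $\cat{L}$, hence in $\cat{S}$, and $a_1 \o (b_1 \o t) = a_2 \o (b_2 \o t)$. This is exactly the evidence required by Lemma \ref{lem:3} to conclude $\bar{q}_1 = \bar{q}_2$ already in $\cat{L}_{\cat{S}}$.

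The only real point to watch is the bookkeeping: every time we produce a repaired representative, its source must actually lie in $\cat{L}$ (not merely in $\til{\cat{L}}$) and the relevant distinguished morphism must lie in $\cat{S}$ (not merely in $\til{\cat{S}}$). Condition (r) is precisely what is needed to make a single such replacement, and the equalities $\cat{S} = \cat{L} \cap \dcat{S}(\cat{M})$ together with the fullness of $\cat{L}$ in $\dcat{K}(\cat{M})$ then force everything else into place. The symmetric case of condition (l) is handled by reversing arrows and replacing right fractions by left fractions throughout.
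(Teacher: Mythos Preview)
Your proof is correct and follows essentially the same route as the paper's. The fullness argument is identical. For faithfulness, the paper exploits additivity to reduce to the case $F(q) = 0 \Rightarrow q = 0$ (working with a single morphism), whereas you compare two morphisms directly via a common denominator and Lemma~\ref{lem:3}; this is a harmless variation of the same idea, and your version has the minor virtue of not invoking the linear structure.
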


\begin{proof}
We will prove the case (r); the other case is done the same way. 
Let $L_1, L_2 \in \opn{Ob}(\cat{L})$, and let $q : L_1 \to L_2$ be a
morphism in $\til{\cat{L}}_{\til{\cat{S}}}$. Choose a presentation 
$q = Q(a) \o Q(s)^{-1}$ with
$s : M \to L_1$ a morphism in $\til{\cat{S}}$
and $a : M \to L_2$  a morphism in $\til{\cat{L}}$. 
\[ \UseTips  \xymatrix @C=5ex @R=5ex  {
&
L'
\ar@{-->}[d]_{t}
\\
& M 
\ar[dl]_{s}
\ar[dr]^{a}
\\
L_1
\ar[rr]^{q}
& &
L_2
} \]
By condition (r) we can find a morphism $t : L' \to M$ in $\til{\cat{S}}$ with 
$L' \in \opn{Ob}(\cat{L})$. It follows that 
$q = Q(a \o t) \o Q(s \o t)^{-1}$; so $q$ is in the image of the functor 
$F : \cat{L}_{\cat{S}} \to \til{\cat{L}}_{\til{\cat{S}}}$. Thus $F$ is full. 

Now let $q : L_1 \to L_2$ be a morphism in $\cat{L}_{\cat{S}}$
s.t.\ $F(q) = 0$.  Choose a presentation 
$q = Q(a) \o Q(s)^{-1}$ with
$s : L' \to L_1$ a morphism in $\cat{S}$
and $a : L' \to L_2$  a morphism in $\cat{L}$. 
\[ \UseTips  \xymatrix @C=5ex @R=5ex  {
&
L''
\ar@{-->}[d]_{t'}
\\
& M 
\ar@{-->}[d]_{t}
\\
&
L'
\ar[dl]_{s}
\ar[dr]^{a}
\\
L_1
\ar[rr]^{q}
& &
L_2
} \]
Because $F(q) = 0$, and using Lemma \ref{lem:3}, there is a morphism 
$t : M \to L'$ in $\til{\cat{L}}$ 
 s.t.\ $a \o t = 0$ and $s \o t \in \til{\cat{S}}$.
Note that $t \in \til{\cat{S}}$. 
By condition (r), 
applied to $t : M \to L'$, there is $t' : L'' \to M$ in $\til{\cat{S}}$
s.t.\ $L'' \in \opn{Ob}(\cat{L})$. Then 
$q = Q(a \o t \o t') \o Q(s \o t \o t')^{-1} = 0$. 
This proves that $F$ is faithful.
\end{proof}

\subsection{Bounded complexes}
A graded object $M = \{ M^i \}_{i \in \Z}$ of $\cat{M}$ is said to be {\em
bounded above} if the set 
$\{ i \mid M^i \neq 0 \}$ is bounded above. 
Likewise we define {\em bounded below} and {\em bounded} graded objects. 

\begin{dfn}
We define $\dcat{C}^-(\cat{M})$, $\dcat{C}^+(\cat{M})$ and 
$\dcat{C}^{\mrm{b}}(\cat{M})$ to be  full subcategories of 
$\dcat{C}(\cat{M})$ consisting of bounded above, bounded below and bounded
complexes respectively. 

Likewise we define $\dcat{K}^-(\cat{M})$, $\dcat{K}^+(\cat{M})$ and 
$\dcat{K}^{\mrm{b}}(\cat{M})$ to be the corresponding full subcategories of 
$\dcat{K}(\cat{M})$. 
\end{dfn}

Of course 
$\dcat{K}^{\mrm{b}}(\cat{M}) = \dcat{K}^-(\cat{M})
\cap \dcat{K}^+(\cat{M})$.
For $\star \in \{ -, +, \mrm{b} \}$, the category 
$\dcat{K}^{\star}(\cat{M})$ is a full triangulated subcategory of 
$\dcat{K}^{\star}(\cat{M})$; this is because the mapping cone preserves the
various boundedness conditions. 
Let 
\[ \dcat{S}^{\star}(\cat{M}) := \dcat{K}^{\star}(\cat{M}) \cap
\dcat{S}(\cat{M}) , \]
the set of quasi-isomorphisms in $\dcat{K}^{\star}(\cat{M})$. 

\begin{dfn}
For $\star \in \{ -, +, \mrm{b} \}$ let 
\[  \dcat{D}^{\star}(\cat{M}) := 
\dcat{K}^{\star}(\cat{M})_{\dcat{S}^{\star}(\cat{M})} , \]
the Ore localization of $\dcat{K}^{\star}(\cat{M})$ with respect to 
$\dcat{S}^{\star}(\cat{M})$.
\end{dfn}

\begin{prop}
For $\star \in \{ -, +, \mrm{b} \}$ the canonical functor 
$\dcat{D}^{\star}(\cat{M}) \to \dcat{D}(\cat{M})$ 
is fully faithful.
\end{prop}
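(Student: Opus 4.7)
The plan is to apply Lemma~\ref{lem:101} to each of the inclusions $\dcat{K}^{\star}(\cat{M}) \subset \dcat{K}(\cat{M})$ by exhibiting explicit quasi-isomorphisms coming from the \emph{smart truncation} functors. For a complex $M \in \dcat{C}(\cat{M})$ and an integer $n$, set
\[ \tau^{\leq n} M := \bigl( \cdots \to M^{n-1} \to \opn{Z}^n(M) \to 0 \to \cdots \bigr) , \]
with its canonical inclusion $\tau^{\leq n} M \to M$, and
\[ \tau^{\geq n} M := \bigl( \cdots \to 0 \to M^n / \opn{B}^n(M) \to M^{n+1} \to \cdots \bigr) , \]
with its canonical projection $M \to \tau^{\geq n} M$. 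I would first check the standard fact (immediate from the definitions) that if $\mrm{H}^i(M) = 0$ for all $i > n$ then $\tau^{\leq n} M \to M$ is a quasi-isomorphism, and dually $M \to \tau^{\geq n} M$ is a quasi-isomorphism when $\mrm{H}^i(M) = 0$ for all $i < n$. Both truncations preserve the obvious boundedness conditions.

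For $\star = +$, I apply Lemma~\ref{lem:101} with $\cat{L} = \dcat{K}^+(\cat{M})$ and $\til{\cat{L}} = \dcat{K}(\cat{M})$, checking condition~(l): given a quasi-isomorphism $s : L \to M$ with $L \in \dcat{K}^+(\cat{M})$, say $L^i = 0$ for $i < n$, one has $\mrm{H}^i(M) = 0$ for $i < n$, so $t := (M \to \tau^{\geq n} M)$ is a quasi-isomorphism with target in $\dcat{K}^+(\cat{M})$. Dually, for $\star = -$ I apply the lemma with $\cat{L} = \dcat{K}^-(\cat{M})$ and $\til{\cat{L}} = \dcat{K}(\cat{M})$, checking condition~(r): given $s : M \to L$ with $L \in \dcat{K}^-(\cat{M})$, say $L^i = 0$ for $i > m$, the inclusion $t := (\tau^{\leq m} M \to M)$ is a quasi-isomorphism with source in $\dcat{K}^-(\cat{M})$.

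For $\star = \mrm{b}$, I would factor the canonical functor as
\[ \dcat{D}^{\mrm{b}}(\cat{M}) \to \dcat{D}^+(\cat{M}) \to \dcat{D}(\cat{M}) , \]
where the second arrow is fully faithful by the previous case. For the first arrow I apply Lemma~\ref{lem:101} with $\cat{L} = \dcat{K}^{\mrm{b}}(\cat{M})$ and $\til{\cat{L}} = \dcat{K}^+(\cat{M})$, checking condition~(r): given a quasi-isomorphism $s : M \to L$ with $M \in \dcat{K}^+(\cat{M})$ and $L \in \dcat{K}^{\mrm{b}}(\cat{M})$ concentrated in degrees $\leq m$, the complex $\tau^{\leq m} M$ is bounded above by construction and bounded below because $M$ is, hence lies in $\dcat{K}^{\mrm{b}}(\cat{M})$, and $\tau^{\leq m} M \to M$ is a quasi-isomorphism.

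There is no serious obstacle here; the only pitfall is matching the direction of the truncation map to the correct condition of the lemma. The natural map for $\tau^{\geq n}$ points \emph{out of} $M$, which is exactly what condition~(l) asks for, while the natural map for $\tau^{\leq m}$ points \emph{into} $M$, matching condition~(r); this is why the bounded case is handled cleanly by inserting $\dcat{D}^+(\cat{M})$ in the middle so that $\tau^{\leq m}$ can be applied to a complex that is already bounded below.
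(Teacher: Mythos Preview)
Your proof is correct and follows essentially the same approach as the paper: smart truncations are used to verify condition~(r) of Lemma~\ref{lem:101} for $\star=-$, condition~(l) for $\star=+$, and the bounded case is handled by factoring through $\dcat{D}^{+}(\cat{M})$ and reusing the $\tau^{\leq m}$ argument. The only differences are notational (the paper writes $\opn{smt}^{\leq i}$, $\opn{smt}^{\geq i}$ for your $\tau^{\leq i}$, $\tau^{\geq i}$).
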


\begin{proof}
Let $s : M \to L$ be a quasi-isomorphism with $L \in \dcat{K}^-(\cat{M})$.
Say $L$ is concentrated in degrees $\leq i$.
Then $\opn{H}^j(M) = \opn{H}^j(L) = 0$ for all $j > i$. 
Consider the {\em smart truncation} of $M$ at $i$:
\begin{equation} \label{eqn:112}
\opn{smt}^{\leq i}(M) := 
\bigl( \cdots \to M^{i-2} \xar{\d} M^{i-1} \xar{\d} \opn{Z}^i(M) \to 0 \to
\cdots \bigr)
\end{equation}
where 
$\opn{Z}^i(M) := \opn{Ker}(\d : M^i \to M^{i+1})$,
the object of $i$-cocycles, is in degree $i$. 
Then $\opn{smt}^{\leq i}(M)$ is a subcomplex of $M$, 
$\opn{smt}^{\leq i}(M) \in \dcat{K}^-(\cat{M})$, 
and the monomorphism
$t : \opn{smt}^{\leq i}(M) \to M$ is a quasi-isomorphism. 
According to Lemma \ref{lem:101}, with 
$\cat{L} := \dcat{K}^-(\cat{M})$,
$\til{\cat{L}} := \dcat{K}(\cat{M})$ and 
with condition (r), we see that 
$\dcat{D}^{-}(\cat{M}) \to \dcat{D}(\cat{M})$ 
is fully faithful.

Next let $s : L \to M$ be a quasi-isomorphism with $L \in \dcat{K}^+(\cat{M})$.
Say $L$ is concentrated in degrees $\geq i$. 
Then $\opn{H}^j(M) = \opn{H}^j(L) = 0$ for all $j < i$. 
Consider the other smart truncation of $M$ at $i$:
\begin{equation} \label{eqn:111}
\opn{smt}^{\geq i}(M) := 
\bigl( \cdots \to 0 \to \opn{Y}^{i}(M) \xar{\d} M^{i+1} \xar{\d} M^{i+2}
\to \cdots \bigr) 
\end{equation}
where 
\begin{equation} \label{eqn:130}
\opn{Y}^{i}(M) := \opn{Coker}(\d : M^{i-1} \to M^{i})
\end{equation}
is in degree $i$. 
Then $\opn{smt}^{\geq i}(M)$ is a quotient complex of $M$, 
$\opn{smt}^{\geq i}(M) \in \dcat{K}^+(\cat{M})$, 
and the epimorphism
$t : M \to \opn{smt}^{\geq i}(M)$ is a quasi-isomorphism. 
According to Lemma \ref{lem:101}, with condition (l), we see that 
$\dcat{D}^{-}(\cat{M}) \to \dcat{D}(\cat{M})$ 
is fully faithful.

Finally we note that 
\[ \dcat{K}^{\mrm{b}}(\cat{M}) =  \dcat{K}^{-}(\cat{M}) \cap
\dcat{K}^{+}(\cat{M}) . \]
This implies, as in the proof for 
$\dcat{D}^{-}(\cat{M}) \to \dcat{D}(\cat{M})$, that
$\dcat{D}^{\mrm{b}}(\cat{M}) \to \dcat{D}^{+}(\cat{M})$
is fully faithful. Hence 
$\dcat{D}^{\mrm{b}}(\cat{M}) \to \dcat{D}(\cat{M})$
is fully faithful. 
\end{proof}

\newpage
\subsection{Thick subcategories of \texorpdfstring{$\cat{M}$}{M}}

Let $\cat{M}$ be an abelian category. A {\em thick abelian subcategory} of
$\cat{M}$ is a full abelian subcategory $\cat{N}$ that is closed under
extensions. Namely if 
\[ 0 \to M' \to M \to M'' \to 0 \]
is a short exact sequence in $\cat{M}$ with $M', M'' \in \cat{N}$, then 
$M \in \cat{N}$ too. 

Let $\dcat{D}_{\cat{N}}(\cat{M})$ be the full subcategory of $\dcat{D}(\cat{M})$
consisting of complexes $M$ such that 
$\opn{H}^i(M) \in \cat{N}$ for every $i$. 

Full triangulated subcategories were defined in Definition \ref{dfn:22}.

\begin{prop}
If $\cat{N}$ is a thick abelian subcategory of $\cat{M}$ then
$\dcat{D}_{\cat{N}}(\cat{M})$ is a triangulated subcategory of 
$\dcat{D}(\cat{M})$.
\end{prop}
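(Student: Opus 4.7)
The plan is to verify the two conditions in Definition~\ref{dfn:22}, exploiting the long exact cohomology sequence associated to a distinguished triangle.

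First, shift closure is immediate: for any complex $M$ one has $\opn{H}^i(M[1]) = \opn{H}^{i+1}(M)$, so the condition ``$\opn{H}^i(M) \in \cat{N}$ for all $i$'' is manifestly invariant under translation.

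Next, for closure under distinguished triangles, let
\[ M' \xar{\al} M \xar{\be} M'' \xar{\ga} M'[1] \]
be a distinguished triangle in $\dcat{D}(\cat{M})$ with $M', M'' \in \dcat{D}_{\cat{N}}(\cat{M})$. I would first observe that $\opn{H}^0 : \dcat{K}(\cat{M}) \to \cat{M}$ is cohomological (stated as a proposition just above), sends quasi-isomorphisms to isomorphisms, and hence by the universal property of Ore localization descends to a cohomological functor $\opn{H}^0 : \dcat{D}(\cat{M}) \to \cat{M}$. Applying the standard shifting trick (Proposition above Proposition~\ref{prop:4}) yields a long exact sequence
\[ \cdots \to \opn{H}^{i-1}(M'') \xar{\ga_{i-1}} \opn{H}^i(M') \xar{\al_i} \opn{H}^i(M) \xar{\be_i} \opn{H}^i(M'') \xar{\ga_i} \opn{H}^{i+1}(M') \to \cdots \]
in $\cat{M}$, in which, by assumption, every term except possibly $\opn{H}^i(M)$ lies in $\cat{N}$.

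From this sequence one extracts a short exact sequence
\[ 0 \to \opn{Coker}(\ga_{i-1}) \to \opn{H}^i(M) \to \opn{Ker}(\ga_i) \to 0 . \]
Since $\cat{N}$ is a full abelian subcategory of $\cat{M}$, the kernels and cokernels of morphisms in $\cat{N}$ computed in $\cat{M}$ already lie in $\cat{N}$; thus $\opn{Coker}(\ga_{i-1})$ and $\opn{Ker}(\ga_i)$ are objects of $\cat{N}$. Finally, the thickness hypothesis (closure under extensions) forces $\opn{H}^i(M) \in \cat{N}$, completing the proof.

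The main (mild) obstacle is the first bullet: justifying that $\opn{H}^0$ is cohomological on $\dcat{D}(\cat{M})$ rather than only on $\dcat{K}(\cat{M})$. This is handled by noting that every distinguished triangle in $\dcat{D}(\cat{M})$ is, by construction (Theorem~\ref{thm:105} and Definition~\ref{dfn:140}), isomorphic to the image under $Q$ of a standard triangle, so exactness of $\opn{H}^0$ on the latter transfers to the former. The rest is a routine diagram chase in the abelian category $\cat{M}$.
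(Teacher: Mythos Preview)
Your proof is correct and follows essentially the same route as the paper: both use the long exact cohomology sequence of the distinguished triangle and then the thickness of $\cat{N}$ to conclude. The paper is terser at the end (it does not spell out the short exact sequence with $\opn{Coker}(\ga_{i-1})$ and $\opn{Ker}(\ga_i)$), but the argument is the same. One small point: Definition~\ref{dfn:22} literally asks that $M', M \in \dcat{D}_{\cat{N}}(\cat{M})$ forces $M'' \in \dcat{D}_{\cat{N}}(\cat{M})$, whereas you assume $M', M''$ are in the subcategory; the paper handles this by first turning the triangle via (TR2), and you should add a sentence to the same effect.
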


\begin{proof}
Clearly $\dcat{D}_{\cat{N}}(\cat{M})$ is closed under translations. 
Now suppose 
\[ M' \to M \to M'' \to M[1] \]
is a distinguished triangle in $\dcat{D}(\cat{M})$ such that two of its
vertices is in $\dcat{D}_{\cat{N}}(\cat{M})$; we have to show that the third
vertex is also in $\dcat{D}_{\cat{N}}(\cat{M})$.
Since we can turn the triangle, we may as well assume that 
$M', M''\in \dcat{D}_{\cat{N}}(\cat{M})$. Consider the  exact sequence 
\[  \opn{H}^{i-1}(M'') \to \opn{H}^i(M') \to \opn{H}^i(M) \to
\opn{H}^i(M'') \to \opn{H}^{i+1}(M') . \]
The four outer objects belong to $\cat{N}$. 
Since $\cat{N}$ is a thick abelian subcategory of $\cat{M}$ it follows that 
$\opn{H}^i(M) \in \cat{N}$.
\end{proof}

\begin{exa}
Let $A$ be a noetherian commutative ring. 
The category $\cat{Mod}_{\mrm{f}}\, A$ of finitely generated
modules is a thick abelian subcategory of $\cat{Mod}\, A$.
\end{exa}

\begin{exa}
Consider $\cat{Mod}\, \Z = \cat{Ab}$. 
As above we have the thick abelian subcategory 
$\cat{Ab}_{\mrm{fg}} = \cat{Mod}_{\mrm{f}}\, \Z$ of finitely generated
abelian groups.
There is also the thick abelian subcategory 
$\cat{Ab}_{\mrm{tors}}$ of torsion abelian groups (every element has a finite
order). The intersection of $\cat{Ab}_{\mrm{tors}}$ and 
$\cat{Ab}_{\mrm{fg}}$ is the category $\cat{Ab}_{\mrm{fin}}$ of finite abelian
groups. This is also thick. 
\end{exa}

\begin{exa}
Let $X$ be a noetherian scheme (or an algebraic variety over an algebraically
closed field of you prefer). Consider the abelian category 
$\cat{Mod}\, \mcal{O}_X$ of $\mcal{O}_X$-modules. In it there the thick abelian
subcategory $\cat{QCoh}\, \mcal{O}_X$ of quasi-coherent sheaves, and in that 
there the thick abelian
subcategory $\cat{Coh}\, \mcal{O}_X$ of coherent sheaves.
\end{exa}


For a left noetherian ring $A$ we write 
\[ \dcat{D}_{\mrm{f}}(\cat{Mod}\, A) :=
\dcat{D}_{\cat{Mod}_{\mrm{f}}\, A}(\cat{Mod}\, A) . \]

\begin{prop} \label{prop:200}
Let $A$ be a left noetherian ring and $\star \in \{ -, \mrm{b} \}$. Then the
canonical functor 
\[ \dcat{D}^{\star}(\cat{Mod}_{\mrm{f}}\, A) \to 
\dcat{D}^{\star}_{\mrm{f}}(\cat{Mod}\, A) \]
is fully faithful.
\end{prop}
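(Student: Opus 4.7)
The plan is to apply Lemma~\ref{lem:101} with
$\cat{L} := \dcat{K}^{\star}(\cat{Mod}_{\mrm{f}}\, A)$ and
$\til{\cat{L}} := \dcat{K}^{\star}(\cat{Mod}\, A)$, both viewed as full
triangulated subcategories of $\dcat{K}(\cat{Mod}\, A)$. Left noetherianness
ensures that $\cat{Mod}_{\mrm{f}}\, A$ is a full abelian subcategory of
$\cat{Mod}\, A$, so $\cat{L}$ sits inside $\til{\cat{L}}$ as a full
triangulated subcategory, and the notion of quasi-isomorphism is the same in
both ambient homotopy categories; thus
$\cat{L}_{\cat{S}} = \dcat{D}^{\star}(\cat{Mod}_{\mrm{f}}\, A)$ and
$\til{\cat{L}}_{\til{\cat{S}}} = \dcat{D}^{\star}(\cat{Mod}\, A)$. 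Once the
lemma is applied it will yield a fully faithful functor
$\dcat{D}^{\star}(\cat{Mod}_{\mrm{f}}\, A) \to \dcat{D}^{\star}(\cat{Mod}\, A)$;
its image clearly lies in $\dcat{D}^{\star}_{\mrm{f}}(\cat{Mod}\, A)$, and
corestriction to a subcategory containing the image preserves fullness and
faithfulness, so the proposition will follow.

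The real task is to verify condition (r) of Lemma~\ref{lem:101}: given any
quasi-isomorphism $s : M \to L$ with $M \in \til{\cat{L}}$ and $L \in \cat{L}$,
to produce a quasi-isomorphism $t : L' \to M$ with $L' \in \cat{L}$. Note
that $M$ automatically has finitely generated cohomology, since $L$ does. I
would handle $\star = -$ first, since this is the main substantive step.
Choose $n$ with $M^i = 0$ for $i > n$. The plan is to build $L'$ top-down as
a bounded-above complex of finitely generated free $A$-modules, constructing
${L'}^i$ together with $\d : {L'}^i \to {L'}^{i+1}$ and a compatible chain
map $\phi^i : {L'}^i \to M^i$ by descending induction, arranging at each
stage that the induced map on cohomology is an isomorphism in degrees $> i$
and a surjection in degree $i$. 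Each inductive step chooses a finite
generating set for some finitely generated module -- a cohomology group, or
the kernel of a map between f.g.\ modules -- and this is where the noetherian
hypothesis enters, to guarantee that such kernels remain finitely generated.
This classical inductive construction (compare \cite{RD, Wei}) is the sole
piece of real work in the proof; everything else will be formal.

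For $\star = \mrm{b}$ I would bootstrap from the $\star = -$ case. Choose
$m \leq n$ with $M^i = 0$ outside $[m, n]$, and apply that case to obtain a
quasi-isomorphism $L'' \to M$ with $L'' \in \dcat{K}^{-}(\cat{Mod}_{\mrm{f}}\, A)$.
Since $M^{m-1} = 0$, the chain map $L'' \to M$ sends
$\opn{Im}(\d : {L''}^{m-1} \to {L''}^{m})$ to zero in $M^m$, so it descends
through the canonical epimorphism $L'' \surj \opn{smt}^{\geq m}(L'')$ of
equation \tup{(\ref{eqn:111})}. Setting $L' := \opn{smt}^{\geq m}(L'')$, the
resulting morphism $t : L' \to M$ will be a quasi-isomorphism by the
$2$-out-of-$3$ property, as both $L'' \to M$ and $L'' \to L'$ are (the second
because $L''$ inherits zero cohomology below $m$ from $M$). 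Finally $L'$
lies in $\cat{L}$: it is bounded in $[m, n]$, its entries in degrees $> m$
are exactly those of $L''$, and its degree-$m$ entry
$\opn{Y}^m(L'') = \opn{Coker}(\d : {L''}^{m-1} \to {L''}^{m})$ is a quotient
of the finitely generated module ${L''}^{m}$. This verifies condition (r)
and completes the argument.
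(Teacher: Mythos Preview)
Your proof is correct and follows essentially the same route as the paper: apply Lemma~\ref{lem:101} under condition (r), verify that condition by resolving any $M$ with finitely generated cohomology by a bounded-above complex of finitely generated free modules (the paper defers this to Theorem~\ref{thm:125}, while you sketch the descending induction inline), and handle $\star = \mrm{b}$ by smart truncation. Your treatment of the bounded case is slightly more explicit than the paper's about why the chain map factors through $\opn{smt}^{\geq m}(L'')$, but the argument is the same.
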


\begin{proof}
A bit later (Theorem \ref{thm:125}) I will prove that any 
$M \in \dcat{D}^{-}_{\mrm{f}}(\cat{Mod}\, A)$
admits a free resolution $P \to M$, where $P$ is a
bounded above complex of finitely generated free modules. 
Thus we get a quasi-isomorphism $P \to M$ with 
$P \in \dcat{D}^{-}(\cat{Mod}_{\mrm{f}}\, A)$.
By Lemma \ref{lem:101} with condition (r) we conclude that 
\[ \dcat{D}^{-}(\cat{Mod}_{\mrm{f}}\, A) \to 
\dcat{D}^{-}_{\mrm{f}}(\cat{Mod}\, A) \]
is fully faithful.

Now if moreover $M \in \dcat{D}^{\mrm{b}}_{\mrm{f}}(\cat{Mod}\, A)$,
then take $i$ s.t.\ $M$ is concentrated in degrees $\geq i$. 
Then $\opn{H}^i(P) = 0$ for $j \leq i$, and we get a quasi-isomorphism 
$\opn{smt}^{\geq i}(P) \to M$ (see (\ref{eqn:111})). 
But $\opn{smt}^{\geq i}(P) \in \dcat{D}^{\mrm{b}}(\cat{Mod}_{\mrm{f}}\, A)$.
\end{proof}

\begin{rem} \label{rem:200}
The proposition holds more generally for any {\em locally noetherian abelian
category} $\cat{M}$. I need to explain this. The abelian category $\cat{M}$
is a {\em Grothendieck abelian category} if it has infinite direct limits,
and they are exact. An object $M \in \cat{M}$ is called a {\em noetherian
object} if it satisfies the ascending chain condition on subobjects. 
The set of noetherian objects is denoted by $\cat{M}_{\mrm{f}}$. 
This is a thick abelian subcategory. We say that $\cat{M}$ is locally
noetherian if it is generated by its noetherian objects; namely for any 
$M \in \cat{M}$ there is an epimorphism 
$\bigoplus_{i \in I} M_i \to M$ with $M_i \in \cat{M}_{\mrm{f}}$. 

The categories $\cat{Mod}\, A$, for $A$ a left noetherian ring, and
$\cat{QCoh}\, \mcal{O}_X$ for a noetherian scheme $X$, are locally noetherian.
\end{rem}

\subsection{The embedding of \texorpdfstring{$\cat{M}$}{M} in 
\texorpdfstring{$\dcat{D}(\cat{M})$}{M}}

For $M, N \in \cat{M}$ there is no difference between 
$\opn{Hom}_{\cat{M}}(M, N)$, $\opn{Hom}_{\dcat{C}(\cat{M})}(M, N)$ and 
$\opn{Hom}_{\dcat{K}(\cat{M})}(M, N)$. Thus the canonical functors 
$\cat{M} \to \dcat{C}(\cat{M})$ and 
$\cat{M} \to \dcat{K}(\cat{M})$ are fully faithful. 
The same is true for $\dcat{D}(\cat{M})$, but this requires a proof.

Let $\dcat{D}(\cat{M})^0$ be the full subcategory of 
$\dcat{D}(\cat{M})$ consisting of complexes whose cohomology is 
concentrated in degree $0$. This is an additive subcategory of 
$\dcat{D}(\cat{M})$.

\begin{prop} \label{prop:121}
The canonical functor $\cat{M} \to \dcat{D}(\cat{M})^0$ 
is an equivalence.
\end{prop}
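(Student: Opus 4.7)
The plan is to verify essential surjectivity and full faithfulness separately, reducing both to statements about complexes in degree zero via the smart truncations from (\ref{eqn:112}) and (\ref{eqn:111}).

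For essential surjectivity, given $M \in \dcat{D}(\cat{M})^0$, the vanishing $\opn{H}^i(M) = 0$ for all $i \neq 0$ makes both arrows in the zigzag
\[ M \;\xleftarrow{\ }\; \opn{smt}^{\leq 0}(M) \;\xrightarrow{\ }\; \opn{smt}^{\geq 0} \bigl( \opn{smt}^{\leq 0}(M) \bigr) \]
quasi-isomorphisms, by the same computation used in the previous proof (the left arrow because $\opn{H}^j = 0$ above degree $0$, the right one because the left-truncated complex has $\opn{H}^j = 0$ below degree $0$). The right-hand term is the object $\opn{H}^0(M) \in \cat{M}$ placed in degree $0$, so $M$ is isomorphic in $\dcat{D}(\cat{M})$ to an object of $\cat{M}$.

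For full faithfulness, fix $M, N \in \cat{M}$ and a morphism $q \in \opn{Hom}_{\dcat{D}(\cat{M})}(M, N)$. Write $q = Q(a) \o Q(s)^{-1}$ with $s : L \to M$ a quasi-isomorphism in $\dcat{K}(\cat{M})$ and $a : L \to N$; necessarily $L$ has cohomology concentrated in degree $0$. Set $L' := \opn{smt}^{\leq 0}(L)$ and $L'' := \opn{H}^0(L) \in \cat{M}$, with inclusion $i : L' \to L$ and projection $p : L' \to L''$, both quasi-isomorphisms. The key observation is that for any $X \in \cat{M}$, viewed as a complex in degree $0$, composition with $p$ induces a bijection
\[ \opn{Hom}_{\dcat{K}(\cat{M})}(L'', X) \iso \opn{Hom}_{\dcat{K}(\cat{M})}(L', X) . \]
In $\dcat{C}(\cat{M})$ a morphism $\phi : L' \to X$ is determined by $\phi^0 : L'^0 \to X$, which must vanish on $\d(L'^{-1})$ and hence factors uniquely through $\opn{H}^0(L) = L''$; passage from $\dcat{C}(\cat{M})$ to $\dcat{K}(\cat{M})$ loses nothing because any homotopy $L' \to X[-1]$ has all components zero, since $L'^i = 0$ for $i > 0$ and $X[-1]^i = 0$ for $i \neq 1$.

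Applying this bijection to $s \o i$ and $a \o i$ yields morphisms $\bar{s} : L'' \to M$ and $\bar{a} : L'' \to N$ in $\cat{M}$ with $\bar{s} \o p = s \o i$ and $\bar{a} \o p = a \o i$. The morphism $\bar{s}$ is a quasi-isomorphism between objects of $\cat{M}$, hence an isomorphism there. A short calculation in $\dcat{D}(\cat{M})$, using invertibility of $Q(i)$ and $Q(p)$, then gives
\[ q \;=\; Q(\bar{a}) \o Q(p) \o Q(i)^{-1} \o Q(i) \o Q(p)^{-1} \o Q(\bar{s})^{-1} \;=\; Q(\bar{a} \o \bar{s}^{-1}), \]
so $q$ lies in the image of $\cat{M} \to \dcat{D}(\cat{M})$. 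For injectivity, if $f, g \in \opn{Hom}_{\cat{M}}(M, N)$ satisfy $Q(f) = Q(g)$, then Lemma \ref{lem:3} produces a quasi-isomorphism $s : K \to M$ with $f \o s = g \o s$ in $\dcat{K}(\cat{M})$; running the same truncation argument on $K$ yields an isomorphism $\bar{s} : K'' \iso M$ in $\cat{M}$ satisfying $f \o \bar{s} = g \o \bar{s}$, whence $f = g$. The main obstacle is the factorization bijection above; everything else is bookkeeping in the Ore calculus.
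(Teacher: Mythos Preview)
Your proof is correct and follows essentially the same truncation strategy as the paper. The one notable difference is in faithfulness: the paper observes that the functor $\opn{H}^0 : \dcat{D}(\cat{M}) \to \cat{M}$ satisfies $\opn{H}^0 \circ Q = \bsym{1}_{\cat{M}}$, which gives faithfulness of $Q|_{\cat{M}}$ in one line, whereas you work it out by hand via Lemma \ref{lem:3} and truncation. Your explicit verification of the bijection $\opn{Hom}_{\dcat{K}(\cat{M})}(L'', X) \iso \opn{Hom}_{\dcat{K}(\cat{M})}(L', X)$ is a nice touch that the paper leaves implicit.
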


\begin{proof}
There is a functor $\opn{H}^0 : \dcat{D}(\cat{M}) \to \cat{M}$
satisfying $\opn{H}^0 \o\, Q = \bsym{1}_{\cat{M}}$. This implies that $Q$ is
faithful.

Now take any $M, N \in\cat{M}$ and a morphism $q : M \to N$ in 
$\dcat{D}(\cat{M})$. So 
$q = Q(a) \o Q(s)^{-1}$ for some
morphisms $a : L \to N$ and $s : L \to M$ in $\dcat{D}(\cat{M})$, with $s$
a quasi-isomorphism. Let $L' := \opn{smt}^{\leq 0}(L)$, as in (\ref{eqn:112});
so there is a
quasi-isomorphism $u : L' \to L$. Writing 
$a' := a \o u$ and $s' := s \o u$, we see that $s'$ is a quasi-isomorphism, and
$q = Q(a') \o Q(s')^{-1}$. 
Next let $L'' := \opn{smt}^{\geq 0}(L')$, as in (\ref{eqn:130}); so there are 
morphisms $v : L' \to L''$, $a'' : L'' \to N$ and $s'' : L'' \to M$ s.t.\ 
$v$ is a quasi-isomorphism, $a' = a'' \o v$ and $s' = s'' \o v$. 
But $L'' \in \cat{M}$, so $s'' : L'' \to M$ is an isomorphism in 
$\cat{M}$. (To be precise, $s''$ is an isomorphism in the image of $\cat{M}$
inside $\dcat{K}(\cat{M})$, but we know that
$\cat{M} \to \dcat{K}(\cat{M})$ is fully faithful.) We get 
$q = Q(a'') \o Q(s'')^{-1} = Q(a'' \o (s'')^{-1})$.

Finally we have to prove that any $L \in \dcat{D}(\cat{M})^0$ is isomorphic, in 
$\dcat{D}(\cat{M})$, to a complex $L''$ that's concentrated in degree $0$. But
we already showed it in the previous paragraph.
\end{proof}

\begin{rem}
Let's write $\dcat{D}(\cat{M})^{\geq 0}$ 
for the full subcategory of $\dcat{D}(\cat{M})$ on the objects $M$ s.t.\ 
$\opn{H}^i(M) = 0$ for $i < 0$. Like we define 
$\dcat{D}(\cat{M})^{\leq 0}$.
The pair 
$\bigl( \dcat{D}(\cat{M})^{\geq 0}, \dcat{D}(\cat{M})^{\leq 0} \bigr)$
is called the {\em standard t-structure} on $\dcat{D}(\cat{M})$. The abelian
category 
$\dcat{D}(\cat{M})^{0} = \dcat{D}(\cat{M})^{\geq 0} \cap 
\dcat{D}(\cat{M})^{\leq 0}$ 
is called the {\em heart} of this t-structure. 
\end{rem}

This last result doesn't belong here properly -- it should be in Section 
\ref{sec:triangulated}.

\begin{prop} \label{prop:120}
Let $\cat{K}$ be a triangulated category, and let 
\[ M \xar{\phi} N \to L \to M[1] \]
be a distinguished triangle in it. Then $L \cong 0$ iff $\phi$ is
an isomorphism.
\end{prop}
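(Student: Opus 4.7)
The plan is to handle the two implications separately, using the previously established tools: axiom (TR3), the ``five lemma'' for distinguished triangles (the Proposition immediately preceding the Yoneda Lemma), and the fact that $\opn{Hom}_{\cat{K}}(P,-)$ is a cohomological functor (Proposition \ref{prop:4}(2)).

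For the implication ($\phi$ iso $\Rightarrow L \cong 0$), I would compare the given distinguished triangle with the trivial distinguished triangle $M \xar{1_M} M \to 0 \to M[1]$ guaranteed by (TR1). Since $\phi \circ 1_M = \phi \circ 1_M$, the pair $(1_M, \phi)$ forms a commutative square between the first two vertices, so by (TR3) there is a morphism $\chi : 0 \to L$ extending it to a morphism of distinguished triangles. The outer vertical maps $1_M$ and $\phi$ are isomorphisms (as is $T(1_M) = 1_{M[1]}$), so the five-lemma for triangulated categories forces $\chi$ to be an isomorphism. Hence $L \cong 0$.

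For the converse ($L \cong 0 \Rightarrow \phi$ iso), I would use the cohomological functor $F_P := \opn{Hom}_{\cat{K}}(P, -)$ for an arbitrary object $P \in \cat{K}$. The associated long exact sequence runs
\[ F_P(L[-1]) \to F_P(M) \xar{F_P(\phi)} F_P(N) \to F_P(L). \]
If $L \cong 0$, then $F_P(L) = F_P(L[-1]) = 0$, so $F_P(\phi)$ is an isomorphism of abelian groups for every $P$. By the Yoneda Lemma, the induced natural transformation of representable functors $G_M \to G_N$ is an isomorphism, hence $\phi : M \to N$ is an isomorphism.

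I do not expect a serious obstacle: both directions are direct applications of results already proved in this section. The only mild care needed is in setting up the morphism of triangles in the first direction correctly so that (TR3) and the five-lemma for triangulated categories apply; alternatively one could run both directions uniformly via $F_P$ and Yoneda, which avoids invoking (TR3) at all.
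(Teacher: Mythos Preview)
Your proposal is correct. Both directions work as you outline.

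The paper takes a somewhat different route. For the direction $L \cong 0 \Rightarrow \phi$ iso, the paper does essentially what you do, but rather than quoting Yoneda it works with the specific choices $P = M$ and $P = N$ to explicitly locate an inverse $\psi : N \to M$. Your appeal to Yoneda is cleaner and equally valid. For the direction $\phi$ iso $\Rightarrow L \cong 0$, the paper argues differently: it applies the cohomological functor $\opn{Hom}_{\cat{K}}(L,-)$ to the triangle and uses that the maps induced by $\phi$ and $\phi[1]$ are isomorphisms to conclude $\opn{Hom}_{\cat{K}}(L,L) = 0$, whence $1_L = 0$ and $L \cong 0$. Your approach via (TR3) and the five-lemma for distinguished triangles is equally good and perhaps more conceptual; the paper's approach has the small advantage of avoiding (TR3) and the five-lemma, relying only on Proposition~\ref{prop:4}(2). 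Your closing remark that both directions could be run uniformly through $F_P$ and Yoneda is in fact close to how the paper handles it.
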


\begin{proof}
Assume $L \cong 0$. 
Consider the commutative diagram 
\begin{equation} \label{eqn:120}
\UseTips \xymatrix  @C=12ex @R=6ex { 
\opn{Hom}_{\cat{K}}(M, M)
\ar[r]^{\opn{Hom}_{\cat{K}}(1_M, \phi)}
&
\opn{Hom}_{\cat{K}}(M, N)
\\
\opn{Hom}_{\cat{K}}(N, M)
\ar[r]^{\opn{Hom}_{\cat{K}}(1_N, \phi)}
\ar[u]^{\opn{Hom}_{\cat{K}}(\phi, 1_M)}
&
\opn{Hom}_{\cat{K}}(N, N)
\ar[u]_{\opn{Hom}_{\cat{K}}(\phi, 1_N)}
} 
\end{equation}
in $\cat{Ab}$. 

Applying the cohomological functor $\opn{Hom}_{\cat{K}}(M, -)$ to the given
triangle we get an exact sequence 
\[ \opn{Hom}_{\cat{K}}(M, L[-1]) \to \opn{Hom}_{\cat{K}}(M, M)
\xar{\opn{Hom}_{\cat{K}}(1_M, \phi)}
\opn{Hom}_{\cat{K}}(M, N) \to \opn{Hom}_{\cat{K}}(M, L) \]
in $\cat{Ab}$. Since
\[ \opn{Hom}_{\cat{K}}(M, L[-1]) = 0 = \opn{Hom}_{\cat{K}}(M, L) \]
we see that $\opn{Hom}_{\cat{K}}(1_M, \phi)$ is an isomorphism. 
Likewise all other morphisms in the diagram (\ref{eqn:120}) are shown to be
isomorphisms. Now $\phi$ in the top right corner 
corresponds to unique morphisms in the other three positions; and they are 
$1_M$ in the top left, $1_N$ in the bottom right, and some morphism
$\psi : N \to M$ at the bottom left. So 
$\phi \o \psi = 1_N$ and $\psi \o \phi = 1_M$.

Conversely, if $\phi$ is an isomorphism, then from the exact sequence 
\[ \opn{Hom}_{\cat{K}}(L, M) \xar{\cong} \opn{Hom}_{\cat{K}}(L, N)
\to \opn{Hom}_{\cat{K}}(L, L) \to \opn{Hom}_{\cat{K}}(L, M[1]) 
\xar{\cong} \opn{Hom}_{\cat{K}}(L, N[1])  \]
we see that $\opn{Hom}_{\cat{K}}(L, L) =  0$. So $1_L = 0$, and hence 
$L \cong 0$.
\end{proof}

\cleardoublepage
\section{Resolutions}

In this section I will prove that under certain conditions, K-injective and
K-projective resolutions exist. See Definitions \ref{dfn:100} and \ref{dfn:101}.
We will see some more important properties of these resolutions. 

\subsection{More on K-injectives}
Let $\cat{M}$ be an abelian category.

Let's recall what K-injectives are (Definition \ref{dfn:100}).
A complex $I \in \dcat{K}(\cat{M})$ is called {\em K-injective} if for
every acyclic $N \in \dcat{K}(\cat{M})$, the complex 
$\opn{Hom}_{\cat{M}}(N, I)$ is also acyclic.

We use the symbol ``\textvisiblespace'' to denote the
empty string (!?).

\begin{dfn} Let $\star \in \{ \mrm{b}, -, + , \tup{\textvisiblespace} \}$.
\begin{enumerate}
\item We denote by $\dcat{K}^{\star}(\cat{M})_{\tup{K-inj}}$  the full
subcategory of 
$\dcat{K}^{\star}(\cat{M})$ on the K-injective complexes. 

\item We say that $\dcat{K}^{\star}(\cat{M})$ has {\em enough K-injectives} if
every $M \in \dcat{K}^{\star}(\cat{M})$ admits a quasi-isomorphism 
$M \to I$ with $I \in \dcat{K}^{\star}(\cat{M})_{\tup{K-inj}}$.
\end{enumerate}
\end{dfn}

The category $\dcat{K}^{\star}(\cat{M})_{\tup{K-inj}}$  usually will be
interesting only for 
$\star \in \{ +, \text{\textvisiblespace} \}$. 
The next lemma says that being K-injective is intrinsic to 
$\dcat{K}^{\star}(\cat{M})$.

\begin{lem}
Let $I \in \dcat{K}^{\star}(\cat{M})$, where 
$\star \in \{ \mrm{b}, -, + \}$.
The following conditions are equivalent:
\begin{enumerate}
\rmitem{i} $I$ is K-injective.

\rmitem{ii} The complex $\opn{Hom}_{\cat{M}}(M, I)$ is acyclic for every
acyclic $M \in \dcat{K}^{\star}(\cat{M})$. 
\end{enumerate}
\end{lem}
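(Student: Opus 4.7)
Direction (i) $\Rightarrow$ (ii) is immediate, since $\dcat{K}^\star(\cat{M}) \subseteq \dcat{K}(\cat{M})$.

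For the converse, the plan is to fix an arbitrary acyclic $N \in \dcat{K}(\cat{M})$ and a degree $n \in \Z$, and to show $H^n\bigl( \opn{Hom}_{\cat{M}}(N, I) \bigr) = 0$, by approximating $N$ by a smart truncation $N' \in \dcat{K}^\star(\cat{M})$ chosen so that the induced map on Hom-complexes is a quasi-isomorphism in the degrees relevant to $H^n$. Concretely, for $\star = +$, with $I^j = 0$ for $j < i_0$, I would take $N' := \opn{smt}^{\geq i_0 - n - 1}(N) \in \dcat{K}^+(\cat{M})$, which is acyclic since smart truncations preserve acyclicity, together with the canonical surjection $\pi : N \twoheadrightarrow N'$. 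The key observation is that $\opn{Hom}_{\cat{M}}(N, I)^m = \prod_j \opn{Hom}_{\cat{M}}(N^j, I^{j+m})$ has contributions only from $j \geq i_0 - m$ (since $I^{j+m} = 0$ otherwise); for $m \leq n$ this forces $j \geq i_0 - n$, a range in which $\pi^j = 1_{N^j}$, so the induced map $\pi^* : \opn{Hom}_{\cat{M}}(N', I) \to \opn{Hom}_{\cat{M}}(N, I)$ is an isomorphism in degree $m$. At $m = n+1$ the additional index $j = i_0 - n - 1$ contributes, where $\pi^j : N^j \twoheadrightarrow \opn{Y}^j(N)$ is merely surjective, so $\pi^*_{n+1}$ is injective but not necessarily bijective. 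A short chase, using isomorphism of $\pi^*$ at $m = n-1$ and $m = n$ together with injectivity at $m = n+1$, shows that $\pi^*$ identifies $\opn{Z}^n$ and $\opn{B}^n$ of the two Hom-complexes bijectively, hence induces an isomorphism on $H^n$. Since $H^n\bigl( \opn{Hom}_{\cat{M}}(N', I) \bigr) = 0$ by hypothesis (ii), this gives $H^n\bigl( \opn{Hom}_{\cat{M}}(N, I) \bigr) = 0$.

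The case $\star = -$ is dual: with $I^j = 0$ for $j > i_1$, take $N' := \opn{smt}^{\leq i_1 - n + 2}(N) \in \dcat{K}^-(\cat{M})$ and the inclusion $\iota : N' \hookrightarrow N$; the analogous degreewise analysis shows $\iota^* : \opn{Hom}_{\cat{M}}(N, I) \to \opn{Hom}_{\cat{M}}(N', I)$ is in fact an isomorphism in every degree $m \geq n - 1$, immediately yielding iso on $H^n$. The case $\star = \mrm{b}$ is handled by composing both kinds of truncations. The one point that requires care throughout is the contravariance of $\opn{Hom}_{\cat{M}}(-, I)$: at the boundary degree where $I$ begins to vanish, the induced comparison map between Hom-complexes may be only injective rather than bijective, and one must verify that this partial failure is still compatible with bijectivity on $H^n$. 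This is the main (small) obstacle; it resolves cleanly because the truncation maps $\pi^j$ are surjective in every degree (resp.\ $\iota^j$ injective).
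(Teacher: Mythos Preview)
Your proof is correct and follows essentially the same approach as the paper: reduce to an acyclic complex in $\dcat{K}^\star(\cat{M})$ via a smart truncation chosen far enough out that the Hom-complex is unaffected in the degrees relevant to $H^n$. The only difference is that the paper truncates one step further (at $i_0-n-2$ in the $\star=+$ case), which makes $\pi^*$ an isomorphism already in degrees $n-1,n,n+1$ and thereby avoids your ``short chase'' at the boundary degree.
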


\begin{proof}
The implication (i) $\Rightarrow$ (ii) is trivial. We will prove the reverse
direction only for $\star = +$; the rest is similar.
Say $M$ is an unbounded acyclic complex, and $I$ is concentrated in degrees 
$\geq k_0$. Take any $k \in \Z$. We want to prove that
$\opn{H}^k \bigl( \opn{Hom}_{\cat{M}}(M, I) \bigr) = 0$.
Now 
$\opn{H}^k \bigl( \opn{Hom}_{\cat{M}}(M, I) \bigr)$
only depends on the quotient complex 
$M' := \opn{smt}^{\geq k_0 - k - 2}(M)$ of $M$;
see (\ref{eqn:111}). Namely 
\[ \opn{H}^k \bigl( \opn{Hom}_{\cat{M}}(M, I) \bigr) \cong 
\opn{H}^k \bigl( \opn{Hom}_{\cat{M}}(M', I) \bigr) . \]
But $M'$ is a bounded below acyclic complex, so by
assumption 
$\opn{H}^k \bigl( \opn{Hom}_{\cat{M}}(M', I) \bigr) = 0$.
\end{proof}

\begin{lem} \label{lem:120}
A complex $I \in \dcat{K}^{\star}(\cat{M})$ is K-injective iff 
$\opn{Hom}_{\dcat{K}(\cat{M})}(N, I) = 0$ for every acyclic complex 
$N \in \dcat{K}^{\star}(\cat{M})$.
\end{lem}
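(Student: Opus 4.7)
The plan is to reduce both directions to the identity $\opn{Hom}_{\dcat{K}(\cat{M})}(M, N) = \mrm{H}^0(\opn{Hom}_{\cat{M}}(M, N))$ recorded just after the definition of the homotopy category. Applied with $(M, N) = (N[-i], I)$, and combined with the straightforward identification $\opn{Hom}_{\cat{M}}(N[-i], I)^0 = \opn{Hom}_{\cat{M}}(N, I)^i$ of underlying groups (the differentials match once one tracks the Koszul sign coming from $\d_{N[-i]} = (-1)^i \d_N$ in Definition \ref{dfn:3}), this yields, for each $i \in \Z$, a natural isomorphism
\[ \mrm{H}^i \bigl( \opn{Hom}_{\cat{M}}(N, I) \bigr) \cong \opn{Hom}_{\dcat{K}(\cat{M})}(N[-i], I) . \]

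The forward implication is then immediate: if $I$ is K-injective and $N \in \dcat{K}^{\star}(\cat{M})$ is acyclic, the whole complex $\opn{Hom}_{\cat{M}}(N, I)$ is acyclic by Definition \ref{dfn:100}, so in particular its zeroth cohomology $\opn{Hom}_{\dcat{K}(\cat{M})}(N, I)$ vanishes.

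For the converse, suppose $\opn{Hom}_{\dcat{K}(\cat{M})}(N', I) = 0$ for every acyclic $N' \in \dcat{K}^{\star}(\cat{M})$. Given an acyclic $N \in \dcat{K}^{\star}(\cat{M})$ and any $i \in \Z$, the shift $N[-i]$ is again acyclic and still lies in $\dcat{K}^{\star}(\cat{M})$, since shifting preserves each of the boundedness conditions $\star \in \{ \mrm{b}, -, +, \tup{\textvisiblespace} \}$. Applying the hypothesis together with the displayed isomorphism yields $\mrm{H}^i \bigl( \opn{Hom}_{\cat{M}}(N, I) \bigr) = 0$ for all $i$, so $\opn{Hom}_{\cat{M}}(N, I)$ is acyclic for every acyclic $N \in \dcat{K}^{\star}(\cat{M})$. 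The preceding lemma then identifies $I$ as K-injective. The only point requiring any care is the shift-sign bookkeeping in establishing the displayed isomorphism; the rest is formal.
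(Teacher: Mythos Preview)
Your proof is correct and follows essentially the same approach as the paper: the key isomorphism $\opn{Hom}_{\dcat{K}(\cat{M})}(N[-p], I) \cong \mrm{H}^p(\opn{Hom}_{\cat{M}}(N, I))$ together with the observation that shifts preserve acyclicity (and the boundedness condition $\star$). You have simply spelled out both directions and the sign bookkeeping more explicitly than the paper's terse two-line argument; your appeal to the preceding lemma to pass from ``acyclic Hom for all acyclic $N \in \dcat{K}^{\star}(\cat{M})$'' back to K-injectivity is also exactly right (for $\star = \tup{\textvisiblespace}$ this is just the definition).
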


\begin{proof}
This is because 
\[ \opn{Hom}_{\dcat{K}(\cat{M})}(N[-p], I) \cong
\opn{H}^0 \bigl( \opn{Hom}_{\cat{M}}(N[-p], I) \bigr) \cong 
\opn{H}^p \bigl( \opn{Hom}_{\cat{M}}(N, I) \bigr) , \]
and $N$ is acyclic iff $N[-p]$ is acyclic.
\end{proof}

\begin{prop}  \label{prop:130}
$\dcat{K}^{\star}(\cat{M})_{\tup{K-inj}}$ is a triangulated subcategory of 
$\dcat{K}^{\star}(\cat{M})$.
\end{prop}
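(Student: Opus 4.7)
The plan is to verify the two conditions of Definition \ref{dfn:22} for the full subcategory $\dcat{K}^{\star}(\cat{M})_{\tup{K-inj}} \subset \dcat{K}^{\star}(\cat{M})$: closure under shifts, and closure under distinguished triangles.

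For closure under shifts, I would observe that for any complex $I$ and any $k \in \Z$, there is a canonical isomorphism of Hom complexes $\opn{Hom}_{\cat{M}}(N, I[k]) \cong \opn{Hom}_{\cat{M}}(N, I)[k]$, so one complex is acyclic iff the other is. Hence if $I$ is K-injective then so is $I[k]$ for every $k$, and in particular condition (i) of Definition \ref{dfn:22} holds.

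For closure under distinguished triangles, I would use the cohomological characterization provided by Lemma \ref{lem:120}: $I''$ is K-injective iff $\opn{Hom}_{\dcat{K}(\cat{M})}(N, I''[i]) = 0$ for every acyclic $N \in \dcat{K}^{\star}(\cat{M})$ and every $i \in \Z$. Given a distinguished triangle $I' \to I \to I'' \to I'[1]$ with $I', I$ K-injective, I would apply the cohomological functor $\opn{Hom}_{\dcat{K}(\cat{M})}(N, -)$ (Proposition \ref{prop:4}(2)) to obtain a long exact sequence
\[ \cdots \to \opn{Hom}_{\dcat{K}(\cat{M})}(N, I[i]) \to \opn{Hom}_{\dcat{K}(\cat{M})}(N, I''[i]) \to \opn{Hom}_{\dcat{K}(\cat{M})}(N, I'[i+1]) \to \cdots \]
in $\cat{Ab}$. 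By the already established closure under shifts, all shifts of $I'$ and $I$ are K-injective, so by Lemma \ref{lem:120} the flanking groups vanish, forcing the middle group to vanish as well. Another application of Lemma \ref{lem:120} then gives that $I''$ is K-injective, establishing condition (ii).

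I do not anticipate any real obstacle: the key observation is simply that Lemma \ref{lem:120} translates the defining property of K-injectives into the vanishing of a Hom group in $\dcat{K}(\cat{M})$, which is visibly cohomological and hence plays well with distinguished triangles. The shift-invariance step is immediate from the definition of the shift on Hom complexes, and the triangle step is a standard ``two out of three vanish, so the third vanishes'' argument in a long exact sequence.
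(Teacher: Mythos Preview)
Your proposal is correct and follows essentially the same approach as the paper: closure under shifts is immediate, and closure under distinguished triangles is obtained by applying the cohomological functor $\opn{Hom}_{\dcat{K}(\cat{M})}(N,-)$ and invoking Lemma \ref{lem:120} to see that the flanking terms in the resulting exact sequence vanish, forcing the middle term to vanish. The paper's proof is slightly terser (it just says closure under shifts ``is clear'' and writes only the three-term exact sequence rather than the full long exact sequence), but the argument is the same.
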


\begin{proof}
It is clear that $\dcat{K}^{\star}(\cat{M})_{\tup{K-inj}}$ is closed under
shifts. 

Suppose 
\[ I \to J \to K \to I[1] \]
is a distinguished triangle in $\dcat{K}^{\star}(\cat{M})$, with 
$I, J$ being K-injective complexes. We have to show that $K$ is also
K-injective. Take any acyclic complex $N \in \dcat{K}^{\star}(\cat{M})$. There
are exact sequences
\[ \opn{Hom}_{\dcat{K}(\cat{M})}(N, J[p]) \to 
\opn{Hom}_{\dcat{K}(\cat{M})}(N, K[p])
\to \opn{Hom}_{\dcat{K}(\cat{M})}(N, I[p+1])  \]
in $\cat{Ab}$ for all $p$. 
By Lemma \ref{lem:120} we have
\[ \opn{Hom}_{\dcat{K}(\cat{M})}(N, J[p]) = 0 = 
\opn{Hom}_{\dcat{K}(\cat{M})}(N, I[p+1]) , \]
and therefore 
$\opn{Hom}_{\dcat{K}(\cat{M})}(N, K[p]) = 0$. 
Thus $K$ is K-injective.
\end{proof}

\begin{lem} \label{lem:121}
Let $\phi : I \to M$ be a quasi-isomorphism in $\dcat{K}^{}(\cat{M})$, and
assume that $I$ is K-injective. Then $\phi$ is a split monomorphism: there
exists a morphism $\psi : M \to I$ in $\dcat{K}^{}(\cat{M})$ s.t.\ 
$\psi \o \phi = 1_I$.
\end{lem}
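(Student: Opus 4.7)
The plan is to embed $\phi$ into a distinguished triangle and then exploit K-injectivity of $I$ by applying the cohomological functor $\opn{Hom}_{\dcat{K}(\cat{M})}(-, I)$.

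First, by axiom (TR1) applied in $\dcat{K}(\cat{M})$, I can complete $\phi$ to a distinguished triangle
\[ I \xar{\phi} M \xar{\be} N \xar{\ga} I[1] . \]
Since $\opn{H}^i$ is a cohomological functor on $\dcat{K}(\cat{M})$ (it factors through $\dcat{D}(\cat{M})$), the long exact cohomology sequence together with the hypothesis that $\phi$ is a quasi-isomorphism forces $\opn{H}^i(N) = 0$ for all $i$; that is, $N$ is acyclic.

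Next, turn the triangle to obtain the distinguished triangle
\[ N[-1] \to I \xar{\phi} M \xar{\be} N , \]
and apply the contravariant cohomological functor $\opn{Hom}_{\dcat{K}(\cat{M})}(-, I)$ (Proposition \ref{prop:4}(2)). This yields an exact sequence
\[ \opn{Hom}_{\dcat{K}(\cat{M})}(N, I) \to \opn{Hom}_{\dcat{K}(\cat{M})}(M, I)
\xar{\phi^{*}} \opn{Hom}_{\dcat{K}(\cat{M})}(I, I) \to
\opn{Hom}_{\dcat{K}(\cat{M})}(N[-1], I) . \]
Now I invoke K-injectivity of $I$: by Lemma \ref{lem:120}, since $N$ is acyclic, both $\opn{Hom}_{\dcat{K}(\cat{M})}(N, I)$ and $\opn{Hom}_{\dcat{K}(\cat{M})}(N[-1], I)$ vanish. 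Hence $\phi^{*}$ is an isomorphism of abelian groups, so there exists a unique $\psi : M \to I$ in $\dcat{K}(\cat{M})$ with $\psi \circ \phi = 1_I$.

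There is no real obstacle here; the only mild subtlety is making sure one uses the correct turning of the triangle so that the two flanking Hom-groups involve shifts of $N$ rather than of $I$, and then citing Lemma \ref{lem:120} to kill them. The argument uses only (TR1), (TR2), the long exact sequence of a cohomological functor, and the definition of K-injective via Lemma \ref{lem:120}.
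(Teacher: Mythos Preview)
Your proof is correct and takes essentially the same approach as the paper: complete $\phi$ to a distinguished triangle, observe the cone is acyclic, apply the cohomological functor $\opn{Hom}_{\dcat{K}(\cat{M})}(-, I)$, and use K-injectivity (via Lemma \ref{lem:120}) to kill the flanking terms so that $\phi^*$ is bijective. If anything, your version is slightly more careful about the turning of the triangle than the paper's own write-up.
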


\begin{proof}
Consider a distinguished triangle built on $\phi$:
\[ M \xar{\phi} I \to L \to M[1] . \]
Since $\phi$ is a quasi-isomorphism, the complex $L$ is acyclic. But $I$ is
K-injective, and therefore  
$\opn{Hom}_{\dcat{K}(\cat{M})}(L[p], I) = 0$ for all $p$.
Look at the exact sequence of abelian groups:
\[ \opn{Hom}_{\dcat{K}(\cat{M})}(L, I) \to 
\opn{Hom}_{\dcat{K}(\cat{M})}(I, I) 
\xar{\opn{Hom}_{\cat{K}}(\phi, 1_I)}
\opn{Hom}_{\dcat{K}(\cat{M})}(M, I)
\to  \opn{Hom}_{\dcat{K}(\cat{M})}(L[-1], I) .  \]
The end terms are zero, implying that 
$\opn{Hom}_{\cat{K}}(\phi, 1_I)$ is bijective. 
The morphism $\psi \in \opn{Hom}_{\dcat{K}(\cat{M})}(M, I)$
corresponding to $1_I$ is what we want.
\end{proof}
%
%
%
%


\begin{lem} \label{lem:122}
Let $I \in \dcat{K}^{\star}(\cat{M})_{\tup{K-inj}}$ and 
$M \in \dcat{K}^{\star}(\cat{M})$. Then 
\[ Q : \opn{Hom}_{\dcat{K}^{\star}(\cat{M})}(M, I) \to 
\opn{Hom}_{\dcat{D}^{\star}(\cat{M})}(M, I) \]
is bijective.
\end{lem}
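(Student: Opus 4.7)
The plan is to describe both directions via the right Ore description of morphisms in $\dcat{D}^{\star}(\cat{M}) = \dcat{K}^{\star}(\cat{M})_{\cat{S}}$, together with the single key technical fact: precomposition with a quasi-isomorphism induces a bijection on Hom into a K-injective complex.

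First I would establish that technical fact. Given any quasi-isomorphism $s : M' \to M$ in $\dcat{K}^{\star}(\cat{M})$, I embed it in a distinguished triangle
\[ M' \xar{s} M \to L \to M'[1] \]
in $\dcat{K}^{\star}(\cat{M})$. Since $s$ is a quasi-isomorphism, the long exact cohomology sequence (the functor $\opn{H}^0$ is cohomological) forces $L$ to be acyclic. Applying the contravariant cohomological functor $\opn{Hom}_{\dcat{K}(\cat{M})}(-, I)$ from Proposition~\ref{prop:4}(2), I obtain the exact sequence
\[ \opn{Hom}_{\dcat{K}(\cat{M})}(L, I) \to \opn{Hom}_{\dcat{K}(\cat{M})}(M, I) \xar{- \circ\, s} \opn{Hom}_{\dcat{K}(\cat{M})}(M', I) \to \opn{Hom}_{\dcat{K}(\cat{M})}(L[-1], I). \]
Because $L$ and $L[-1]$ are acyclic and $I$ is K-injective, Lemma~\ref{lem:120} annihilates both outer terms, so the map $- \circ s$ is bijective.

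Next I would handle surjectivity of $Q$. A general morphism $q \in \opn{Hom}_{\dcat{D}^{\star}(\cat{M})}(M,I)$ has, by the right Ore construction of $\dcat{K}^{\star}(\cat{M})_{\cat{S}}$, a roof presentation $q = Q(a) \circ Q(s)^{-1}$ for some quasi-isomorphism $s : M' \to M$ and morphism $a : M' \to I$ in $\dcat{K}^{\star}(\cat{M})$. By the bijection just proved, there is a unique $\til{a} : M \to I$ in $\dcat{K}^{\star}(\cat{M})$ with $\til{a} \circ s = a$, and then $Q(\til{a}) \circ Q(s) = Q(a)$, whence $Q(\til{a}) = q$.

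For injectivity, suppose $\phi, \psi : M \to I$ in $\dcat{K}^{\star}(\cat{M})$ satisfy $Q(\phi) = Q(\psi)$. Property (L4) of the Ore localization supplies a quasi-isomorphism $s : M' \to M$ in $\dcat{K}^{\star}(\cat{M})$ with $\phi \circ s = \psi \circ s$ in $\dcat{K}^{\star}(\cat{M})$; injectivity of $- \circ s$ then forces $\phi = \psi$. The main conceptual point is the technical bijection in the first paragraph, and there I expect no real obstacle since everything is driven by Lemma~\ref{lem:120} applied to the acyclic third vertex of the triangle on $s$; the only bookkeeping to watch is that when $\star \in \{\mrm{b}, -, +\}$ the complex $L$ and the resulting acyclic complexes still lie in $\dcat{K}^{\star}(\cat{M})$, so the K-injectivity criterion inherited on that subcategory applies.
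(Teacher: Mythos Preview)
Your proof is correct, but it takes a different route from the paper's. The paper works with the \emph{left} Ore description: it writes $q = Q(s)^{-1} \circ Q(a)$ with $s : I \to N$ a quasi-isomorphism \emph{out of} $I$, and then invokes Lemma~\ref{lem:121} (a quasi-isomorphism out of a K-injective is a split monomorphism) to produce a retraction $b : N \to I$, so that $q = Q(b \circ a)$; injectivity is handled the same way. You instead use the \emph{right} Ore description and prove directly, via the cone triangle and Lemma~\ref{lem:120}, that precomposition with any quasi-isomorphism $s : M' \to M$ is a bijection on $\opn{Hom}_{\dcat{K}(\cat{M})}(-,I)$. Your technical lemma is in some sense the ``dual'' of Lemma~\ref{lem:121} and is slightly more tailored to the statement at hand; the paper's approach has the minor advantage of isolating Lemma~\ref{lem:121} as a reusable structural fact about K-injectives, while yours is more self-contained and avoids that intermediate step. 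Both are equally short and rely on the same underlying input (vanishing of $\opn{Hom}$ from acyclics into $I$).
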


\begin{proof}
Recall that $Q : \dcat{K}^{\star}(\cat{M}) \to \dcat{D}^{\star}(\cat{M})$ is a
left Ore localization. 

Suppose and $q : M \to I$ is a morphism in $\dcat{D}^{\star}(\cat{M})$. 
By axiom (L3) we have
$q = Q(s)^{-1} \o Q(a)$ for some 
$a : M \to N$ and some quasi-isomorphism $s: I \to N$ in 
$\dcat{K}^{\star}(\cat{M})$.
Lemma \ref{lem:121} says that there exists $b : N \to I$ in 
$\dcat{K}^{\star}(\cat{M})$ s.t.\ 
$b \o s = 1_I$. But then we get
\[ q = Q(s)^{-1} \o Q(a) =  Q(b \o s) \o  Q(s)^{-1} \o Q(a) = 
Q(b \o a) .  \]
This shows $Q$ is surjective.

Now let $a : M \to I$ be a morphism in $\dcat{K}^{\star}(\cat{M})$ s.t.\ 
$Q(a) = 0$. By axiom (L4)  there is a quasi-isomorphism 
$s : I \to N$  in $\dcat{K}^{\star}(\cat{M})$ s.t.\ $s \o a = 0$. 
Lemma \ref{lem:121} says that there exists $b : N \to I$ in 
$\dcat{K}^{\star}(\cat{M})$ s.t.\ 
$b \o s = 1_I$. Then
\[ a = (b \o s) \o a = b \o (s \o a) = 0 . \]
We proved that $Q$ is injective.
\end{proof}

\begin{thm} \label{thm:120}
The localization functor
\[ Q : \dcat{K}^{\star}(\cat{M})_{\tup{K-inj}} \to
\dcat{D}^{\star}(\cat{M}) \]
is fully faithful.
\end{thm}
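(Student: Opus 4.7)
The plan is to derive this theorem as an immediate corollary of Lemma \ref{lem:122}. Recall that $\dcat{K}^{\star}(\cat{M})_{\tup{K-inj}}$ is by definition a full subcategory of $\dcat{K}^{\star}(\cat{M})$, and the localization functor $Q$ sends objects to themselves (strictly, since Ore localization preserves objects). So the only content of the statement is full faithfulness on morphism sets.

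First I would fix two K-injective complexes $I, J \in \dcat{K}^{\star}(\cat{M})_{\tup{K-inj}}$ and consider the map
\[ Q : \opn{Hom}_{\dcat{K}^{\star}(\cat{M})_{\tup{K-inj}}}(I, J) \to \opn{Hom}_{\dcat{D}^{\star}(\cat{M})}(I, J) . \]
Because $\dcat{K}^{\star}(\cat{M})_{\tup{K-inj}}$ is a full subcategory of $\dcat{K}^{\star}(\cat{M})$, the left-hand side equals $\opn{Hom}_{\dcat{K}^{\star}(\cat{M})}(I, J)$. Now apply Lemma \ref{lem:122} with $M := I$ (which is allowed since $I$ is in particular an object of $\dcat{K}^{\star}(\cat{M})$) and target the K-injective $J$: the lemma asserts exactly that
\[ Q : \opn{Hom}_{\dcat{K}^{\star}(\cat{M})}(I, J) \to \opn{Hom}_{\dcat{D}^{\star}(\cat{M})}(I, J) \]
is bijective. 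This gives simultaneously full and faithful on Hom-sets of K-injective pairs, which is precisely the claim.

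There is essentially no obstacle: the substantive work has already been carried out in Lemmas \ref{lem:121} and \ref{lem:122}, which in turn rest on the axioms (L3) and (L4) of the Ore localization together with the splitting property of quasi-isomorphisms into K-injectives. Thus the proof of Theorem \ref{thm:120} is a one-line invocation, and I would write it as: \emph{Apply Lemma \ref{lem:122} taking the first argument to be a K-injective complex.}
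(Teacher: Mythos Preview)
Your proposal is correct and matches the paper's own proof essentially verbatim: the paper also writes that the theorem ``is clear from Lemma \ref{lem:122}'' and simply notes that $Q$ is bijective on $\opn{Hom}_{\dcat{K}^{\star}(\cat{M})}(I,J)$ for all K-injective $I,J$.
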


\begin{proof}
This is clear from Lemma \ref{lem:122}: 
\[ Q : \opn{Hom}_{\dcat{K}^{\star}(\cat{M})}(I, J) \to 
\opn{Hom}_{\dcat{D}^{\star}(\cat{M})}(I, J) \]
is bijective for all $I, J \in \dcat{K}^{\star}(\cat{M})_{\tup{K-inj}}$.
\end{proof}

\begin{cor} \label{cor:120}
If $\dcat{K}^{\star}(\cat{M})$ has enough K-injectives, then the localization
functor
\[ Q : \dcat{K}^{\star}(\cat{M})_{\tup{K-inj}} \to
\dcat{D}^{\star}(\cat{M}) \]
is an equivalence.
\end{cor}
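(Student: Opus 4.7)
The plan is to combine the fully faithfulness proved in Theorem~\ref{thm:120} with an essential surjectivity argument based on the assumption that $\dcat{K}^{\star}(\cat{M})$ has enough K-injectives. Since essential surjectivity plus fully faithful is equivalent to being an equivalence (per the criterion recalled in the Equivalence subsection), this reduces the problem to checking essential surjectivity on objects of $\dcat{D}^{\star}(\cat{M})$.

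First I would note that $\dcat{K}^{\star}(\cat{M})_{\tup{K-inj}}$ and $\dcat{D}^{\star}(\cat{M})$ have, by construction, the same class of objects as $\dcat{K}^{\star}(\cat{M})$ when passed through $Q$. So take any $M \in \dcat{D}^{\star}(\cat{M})$; we may view it as an object of $\dcat{K}^{\star}(\cat{M})$. By the hypothesis that $\dcat{K}^{\star}(\cat{M})$ has enough K-injectives, there exists a quasi-isomorphism $\phi : M \to I$ in $\dcat{K}^{\star}(\cat{M})$ with $I \in \dcat{K}^{\star}(\cat{M})_{\tup{K-inj}}$. Since $Q$ is a localization with respect to quasi-isomorphisms, $Q(\phi) : Q(M) \to Q(I)$ is an isomorphism in $\dcat{D}^{\star}(\cat{M})$. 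Thus every object of $\dcat{D}^{\star}(\cat{M})$ is isomorphic to one coming from $\dcat{K}^{\star}(\cat{M})_{\tup{K-inj}}$, establishing essential surjectivity.

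Theorem~\ref{thm:120} already gives that $Q : \dcat{K}^{\star}(\cat{M})_{\tup{K-inj}} \to \dcat{D}^{\star}(\cat{M})$ is fully faithful. Putting the two properties together, $Q$ is an equivalence of categories.

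There is no real obstacle here: the hard work has already been done in proving Lemma~\ref{lem:122} and Theorem~\ref{thm:120}. The corollary is essentially a one-line consequence, with the only content being the observation that ``enough K-injectives'' literally supplies the quasi-isomorphisms needed for essential surjectivity after localization.
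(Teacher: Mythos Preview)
Your proposal is correct and matches the paper's own proof exactly: the paper simply notes that $Q$ is already known to be fully faithful (Theorem~\ref{thm:120}), and that the hypothesis of enough K-injectives gives essential surjectivity. You have spelled out the essential surjectivity step in more detail than the paper does, but the argument is the same.
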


\begin{proof}
We already know that $Q$ is fully faithful. The extra condition says that it is
essentially surjective on objects. 
\end{proof}

A {\em generator} of $\cat{M}$ is an object $G \in \cat{M}$ s.t.\ every object
$M \in \cat{M}$ admits an epimorphism 
$G^{\oplus X} \surj M$ for some index set $X$ (in the universe $\cat{U}$). 

\begin{dfn}
A {\em Grothendieck abelian category} is an abelian category $\cat{M}$ that has
infinite direct sums, exact direct limits, and a generator.
\end{dfn}

\begin{exa}
Let $A$ be a ring. Then the free left module $A$ is a generator of 
$\cat{Mod}\, A$.

Let $(X, \mcal{A})$ be a ringed space. For any open set $U$ of $X$ let 
$g_U : U \to X$ be the inclusion, and let 
${g_U}_! (\mcal{A}|_U)$ be the extension by zero of the sheaf $\mcal{A}|_U$.
Then the sheaf 
$\bigoplus_{U} {g_U}_! (\mcal{A}|_U)$ is a generator of 
$\cat{Mod}\, \mcal{A}$.
\end{exa}

Here is the best existence result for K-injectives. 

\begin{thm}
Assume $\cat{M}$ is a Grothendieck abelian category. Then $\dcat{K}(\cat{M})$
has enough K-injectives. 
\end{thm}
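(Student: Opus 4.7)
The plan is to combine Grothendieck's existence-of-injectives theorem with Spaltenstein's homotopy-limit construction to produce K-injective resolutions of arbitrary (possibly unbounded) complexes. The starting point is that any Grothendieck abelian category $\cat{M}$ has enough injective objects --- this is Grothendieck's Tôhoku theorem, proved using the generator $G$ and a transfinite small-object argument applied to the Baer-type extension problem. Consequently, as recorded in Example \ref{exa:101}, every bounded below complex in $\dcat{K}^+(\cat{M})$ admits a quasi-isomorphism to a bounded below complex of injectives, and such complexes are K-injective. Thus the task reduces to handling unbounded complexes.

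Given $M \in \dcat{K}(\cat{M})$, I would consider the smart truncations $M_n := \opn{smt}^{\geq -n}(M)$ (notation as in \ref{eqn:111}); these fit into an inverse system with canonical degree-wise epimorphic transition maps $M_{n+1} \to M_n$, and $M$ is recovered as the degree-wise inverse limit $\varprojlim_n M_n$ in $\dcat{C}(\cat{M})$. I would build inductively, for each $n$, a bounded-below complex $I_n$ of injective objects, together with a quasi-isomorphism $M_n \to I_n$ and a morphism $I_{n+1} \to I_n$, arranged so that each transition $I_{n+1}^k \to I_n^k$ is a split surjection in $\cat{M}$. This is achieved at stage $n+1$ by first choosing any bounded-below injective resolution $J$ of $M_{n+1}$, then forming the mapping cocone of $J \oplus I_n$ along the two induced maps to $I_n$, and finally embedding this into a bounded below complex of injectives whose projection to $I_n$ is degree-wise split. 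Set $I := \varprojlim_n I_n$, the degree-wise inverse limit in $\dcat{C}(\cat{M})$; the compatible maps $M_n \to I_n$ assemble to a morphism $\phi : M \to I$.

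The two remaining points are that $\phi$ is a quasi-isomorphism and that $I$ is K-injective. For the first, in each fixed degree $i$ the tower $\{\opn{H}^i(I_n)\}_n$ stabilizes: once $n > \lvert i \rvert + 1$ the map $M_{n+1} \to M_n$ is a quasi-isomorphism in degree $i$, hence so is $I_{n+1} \to I_n$ in degree $i$. Combined with the degree-wise split surjectivity of the transition maps (which guarantees $\varprojlim^1 = 0$ for the towers of cocycles and coboundaries), one obtains $\opn{H}^i(I) \cong \varprojlim_n \opn{H}^i(I_n) \cong \opn{H}^i(M)$. For K-injectivity of $I$, I would invoke Lemma \ref{lem:120}: for any acyclic $N \in \dcat{K}(\cat{M})$, there is a natural isomorphism
\[ \opn{Hom}_{\cat{M}}(N, I) \cong \varprojlim_n \opn{Hom}_{\cat{M}}(N, I_n). \]
Each $\opn{Hom}_{\cat{M}}(N, I_n)$ is acyclic because $I_n$ is K-injective, and the transition maps between them are degree-wise surjective (since each $I_{n+1}^k \to I_n^k$ is split, the induced map on Hom-complexes is split surjective); so the Mittag-Leffler vanishing of $\varprojlim^1$ forces $\opn{Hom}_{\cat{M}}(N, I)$ to be acyclic.

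The main obstacle is ensuring the inductive construction of the tower $\{I_n\}$ with degree-wise split surjective transition maps --- this must be arranged carefully at every stage, and it is the key technical input that rescues the argument from the failure of exactness of inverse limits in a general Grothendieck abelian category. A secondary subtlety, implicit in the very definition of $I$, is that one must work with the degree-wise (i.e., componentwise) inverse limit in $\dcat{C}(\cat{M})$ rather than the categorical limit in $\dcat{K}(\cat{M})$ or $\dcat{D}(\cat{M})$; the split surjectivity condition is precisely what makes the degree-wise limit compute the correct ``homotopy limit'' here.
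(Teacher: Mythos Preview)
The paper does not give its own proof of this theorem --- it cites \cite[Theorem 14.3.1]{KS2}, credits the result to Spaltenstein, Bockstedt--Neeman and Keller, and then proves only the bounded-below case (Theorems \ref{thm:110} and \ref{thm:111}). Your outline is exactly the Spaltenstein homotopy-limit construction those references carry out, so there is no methodological divergence to report.

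Your K-injectivity argument is correct: applying $\opn{Hom}_{\cat{M}}(N,-)$ converts the problem into a tower of complexes of abelian groups with split-surjective transition maps, and there Mittag--Leffler is available.

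The quasi-isomorphism step, however, has a genuine gap. You claim that degree-wise split surjectivity of $I_{n+1}^k \to I_n^k$ ``guarantees $\varprojlim^1 = 0$ for the towers of cocycles and coboundaries''. Two problems. First, those towers $\{\opn{Z}^i(I_n)\}_n$ and $\{\opn{B}^i(I_n)\}_n$ live in $\cat{M}$, not in $\cat{Ab}$; a Grothendieck category is AB5 but typically \emph{not} AB4* (countable products need not be exact --- already false for sheaves on a space), so the Mittag--Leffler criterion can simply fail. Second, the splittings of $I_{n+1}^k \to I_n^k$ do not restrict to the subobjects $\opn{Z}^i(I_{n+1}) \to \opn{Z}^i(I_n)$ or $\opn{B}^i(I_{n+1}) \to \opn{B}^i(I_n)$; there is no reason these induced maps are even epimorphisms. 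So neither the hypothesis nor the conclusion of your parenthetical is available in $\cat{M}$. Your mapping-cocone construction of $I_{n+1}$ also adds fresh summands in \emph{every} degree (from the $I_n[-1]$ piece), so the tower does not stabilize degree-wise and no shortcut is on offer. This is exactly the difficulty that separates the general Grothendieck case from $\cat{Mod}\,A$; the cited references handle it with more care, e.g.\ by controlling the tower so that $\ker(I \to I_N)$ is visibly acyclic above degree $-N$, or by working with homotopy limits together with a separate comparison argument. Your closing paragraph correctly flags inverse-limit exactness as the crux, but the fix you propose does not suffice as stated.
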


See \cite[Theorem 14.3.1]{KS2} for a proof. This is due essentially to
Spaltenstein, Bockstedt-Neeman and Keller (around 1990). 
I will prove a weaker result, that will be sufficient for us.

\subsection{Bounded below injective resolutions}
Again $\cat{M}$ is some abelian category.

\begin{thm} \label{thm:110}
Let $I$ be a bounded below complex of injective objects of $\cat{M}$. Then 
$I$ is a K-injective complex. 
\end{thm}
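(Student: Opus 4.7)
By Lemma \ref{lem:120}, it suffices to show that $\opn{Hom}_{\dcat{K}(\cat{M})}(N, I) = 0$ for every acyclic complex $N \in \dcat{K}(\cat{M})$, i.e.\ that every morphism of complexes $\phi : N \to I$ in $\dcat{C}(\cat{M})$ is null-homotopic. The plan is to construct a null-homotopy $h = \{ h^i : N^i \to I^{i-1} \}$ satisfying
\[ \phi^i = \d_I^{i-1} \o h^i + h^{i+1} \o \d_N^i \]
by induction on $i$, using boundedness of $I$ below to start the induction, the acyclicity of $N$ to identify the obstruction, and injectivity of the $I^j$ to extend at each step.

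Pick $k_0$ with $I^i = 0$ for $i < k_0$, and set $h^i := 0$ for $i \le k_0$. For $i < k_0$ the required identity is automatic since $\phi^i = 0$. Suppose by induction that $h^j$ has been constructed for all $j \le i$ so that the homotopy relation holds below degree $i$; define $\psi^i := \phi^i - \d_I^{i-1} \o h^i : N^i \to I^i$, so that constructing $h^{i+1}$ amounts to finding a morphism $h^{i+1} : N^{i+1} \to I^i$ with $\psi^i = h^{i+1} \o \d_N^i$. The key calculation is that $\psi^i \o \d_N^{i-1} = 0$: using the chain-map condition $\phi^i \o \d_N^{i-1} = \d_I^{i-1} \o \phi^{i-1}$ and the inductive relation $h^i \o \d_N^{i-1} = \phi^{i-1} - \d_I^{i-2} \o h^{i-1}$, the two contributions cancel. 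Hence $\psi^i$ vanishes on $\opn{B}^i(N)$, which by acyclicity of $N$ equals $\opn{Z}^i(N) = \opn{Ker}(\d_N^i)$. Therefore $\psi^i$ factors as $N^i \surj N^i / \opn{Z}^i(N) \xrightarrow{\sim} \opn{B}^{i+1}(N) \inj N^{i+1}$ followed by a morphism into $I^i$; since $I^i$ is injective, this extends along the monomorphism $\opn{B}^{i+1}(N) \inj N^{i+1}$ to the desired $h^{i+1} : N^{i+1} \to I^i$.

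The inductive construction proves $\opn{Hom}_{\dcat{K}(\cat{M})}(N, I) = 0$. The same argument applied to the shifted complex $I[p]$, which is still a bounded below complex of injectives, shows $\opn{Hom}_{\dcat{K}(\cat{M})}(N, I[p]) = 0$ for every $p$ and every acyclic $N$, so by Lemma \ref{lem:120} the complex $I$ is K-injective. The only nontrivial step is the cancellation in the definition of $\psi^i$, which is a routine two-line calculation; there is no genuine obstacle, the boundedness below being precisely what permits the induction to start cleanly.
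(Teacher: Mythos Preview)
Your proof is correct and follows essentially the same approach as the paper: both reduce to showing that a degree-$0$ chain map $\phi$ from an acyclic complex into $I$ is null-homotopic, and both build the homotopy by induction, using the key computation $(\phi^i - \d_I^{i-1}\circ h^i)\circ \d_N^{i-1}=0$ together with acyclicity and injectivity of $I^i$ to extend at each step. The only cosmetic difference is that you invoke Lemma~\ref{lem:120} and then shift $I$, whereas the paper shifts directly to reduce an arbitrary $k$-cocycle to degree~$0$; the inductive construction itself is identical (the paper phrases the factorization via $\opn{Y}^k(M)=\opn{Coker}(\d_M^{k-1})$, which coincides with your $N^i/\opn{Z}^i(N)$ since $N$ is acyclic).
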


\begin{proof}
Let $M$ be an acyclic complex and 
$\phi = \{ \phi^l \} \in \opn{Z}^k \bigl( \opn{Hom}_{\cat{M}}(M, I) \bigr)$,
i.e.\ $\phi$ is a $k$-cocycle. 
We have to prove that 
$\phi \in \opn{B}^k \bigl( \opn{Hom}_{\cat{M}}(M, I) \bigr)$,
i.e.\ $\phi$ is a coboundary. 
By shifting we can assume that $I$ is concentrated in degrees $\geq 0$, and
that $\phi$ has degree $0$. 

So the morphisms $\phi^l : M^l \to I^l$ 
satisfy 
\[ \d_I^l \o \phi^l  = \phi^{l+1} \o \d_M^l . \]
We have to find $\th^l : M^l \to I^{l-1}$ that satisfy 
\begin{equation} \label{eqn:113}
 \phi^l  = \th^{l+1} \o \d_M^l + \d_I^{l-1} \o \th^l .
\end{equation}

The proof is by induction. 
For $l \leq 0$ we take $\th^l := 0$ of course. Let $k \geq 0$, 
and assume that we have $\th^l$ defined for every $l \leq k$ s.t.\ 
(\ref{eqn:113}) holds for all $l < k$. 
We will construct $\th^{k+1}$  s.t.\ 
(\ref{eqn:113}) holds for all $l \leq k$. 
\[ \UseTips \xymatrix @C=9ex @R=10ex { 
&
\cdots
\ar[r]^{}
&
M^{k-1}
\ar[r]^{\d_M^{k-1}}
\ar[d]_(0.65){\phi^{k-1}}
\ar[dl]_{\th^{k-1}}
& 
M^{k}
\ar[r]^{\d_M^{k}}
\ar[d]_(0.65){\phi^{k}}
\ar[dl]_{\th^{k}}
&
M^{k+1}
\ar[r]^{}
\ar[d]_(0.65){\phi^{k+1}}
\ar@{-->}[dl]_{\th^{k+1}}
&
\cdots
\\
\cdots
\ar[r]^{}
&
I^{k-2}
\ar[r]_{\d_I^{k-2}}
&
I^{k-1}
\ar[r]_{\d_I^{k-1}}
& 
I^{k}
\ar[r]_{\d_I^{k}}
&
I^{k+1}
\ar[r]^{}
&
\cdots
} \]

According to (\ref{eqn:113}) with $l := k-1$ we get
\[ \phi^k \o \d_M^{k-1} = \d_I^{k-1} \o \phi^{k-1} = 
\d_I^{k-1} \o (\th^k \o \d_M^{k-1} + \d_I^{k-2} \o \th^{k-1}) 
= \d_I^{k-1} \o \th^k \o \d_M^{k-1}  . \]
Thus 
\begin{equation} \label{eqn:116}
(\phi^k - \d_I^{k-1} \o \th^k) \o \d_M^{k-1} = 0 . 
\end{equation}

Let 
\begin{equation} \label{eqn:114}
\opn{Y}^k(M) := \opn{Coker} \bigl( \d_M^{k-1} : M^{k-1} \to  M^{k} \bigr) . 
\end{equation}
There is an induced morphism 
\[ \bar{\d}_M^{k} : \opn{Y}^k(M) \to M^{k+1} , \]
and a canonical isomorphism
\begin{equation} \label{eqn:115}
\opn{H}^k(M) \cong \opn{Ker} \bigl( \bar{\d}_M^{k} : \opn{Y}^k(M) \to M^{k+1}
\bigr) . 
\end{equation}
Since $M$ is acyclic we conclude that $\bar{\d}_M^{k}$ is a monomorphism.
By (\ref{eqn:116}) we get an induced morphism 
$\psi^k : \opn{Y}^k(M) \to I^{k}$
s.t.\ the solid diagram 
\[ \UseTips \xymatrix @C=8ex @R=8ex { 
M^k 
\ar@{->>}[r]
\ar[dr]_{\phi^k - \d_I^{k-1} \o\, \th^k}
&
\opn{Y}^k(M)
\ar[d]^{\psi^k}
\ar@{^{(}->}[r]^{\bar{\d}_M^{k}}
&
M^{k+1}
\ar@{-->}[dl]^{\th^{k+1}}
\\
&
I^k
} \]
commutes. But $I^k$ is an injective object, so a morphism $\th^{k+1}$ exists to
make the whole diagram commutative. 
It is easy to check that (\ref{eqn:113}) holds for $k$. 
\end{proof}

Recall that $\cat{M}$ is said to have {\em enough injectives} is every 
$M \in \cat{M}$ admits an embedding (a monomorphism) $M \to I$ for some
injective object $I$. 

We know that in this case any $M \in \cat{M}$ admits an injective resolution 
$M \to I$. Let's remember how $I$ is constructed. We start by embedding 
$M$ in an injective object $I^0$. Then we embed $\opn{Coker}(M \to I^0)$ in an
injective object $I^1$. And so on. 
This construction works also for a bounded below complex $M$, as we now prove.

\begin{thm} \label{thm:111}
Assume $\cat{M}$ has enough injectives. Then every $M \in \dcat{K}^+(\cat{M})$
admits a quasi-isomorphism $M \to I$, where $I$ is a bounded below complex 
of injectives in $\cat{M}$. 
\end{thm}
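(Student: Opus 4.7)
The plan is to build $I$ and the quasi-isomorphism $\phi : M \to I$ one degree at a time. By choosing a representative and shifting we may assume $M \in \dcat{C}^+(\cat{M})$ is concentrated in degrees $\geq 0$. Set $I^k := 0$ and $\phi^k := 0$ for $k < 0$. For the base step I would use ``enough injectives'' to embed $M^0$ in an injective $I^0$, taking $\phi^0$ to be this monomorphism. Throughout the induction I shall maintain the following invariants at stage $n$: (a)~each $I^k$ with $k \leq n$ is injective; (b)~$\d_I \circ \d_I = 0$ and $\d_I \circ \phi = \phi \circ \d_M$ in the range already defined; (c)~$\opn{H}^i(\phi)$ is an isomorphism for all $i < n$; and (d)~the induced morphism $\bar{\phi}^n : \opn{Y}^n(M) \to \bar{I}^n$ is a monomorphism, where $\bar{I}^n := \opn{Coker}(\d_I^{n-1})$ and $\opn{Y}^n(M)$ is as in \tup{(\ref{eqn:114})}. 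All four invariants hold trivially at $n = 0$.

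For the inductive step from $n$ to $n+1$ form the pushout
\[ N := \bar{I}^n \sqcup_{\opn{Y}^n(M)} M^{n+1} \]
along $\bar{\phi}^n$ and $\bar{\d}_M^n$, with structure maps $j_I : \bar{I}^n \to N$ and $j_M : M^{n+1} \to N$. Then use enough injectives to embed $N$ in an injective $I^{n+1}$ via a monomorphism $\iota : N \inj I^{n+1}$. Define the differential as the composite $\d_I^n : I^n \surj \bar{I}^n \xar{j_I} N \xar{\iota} I^{n+1}$, which automatically satisfies $\d_I^n \circ \d_I^{n-1} = 0$, and set $\phi^{n+1} := \iota \circ j_M$. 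Commutativity of the new square with $\d_M^n$ is exactly the pushout relation $\iota \circ j_I \circ \bar{\phi}^n = \iota \circ j_M \circ \bar{\d}_M^n$.

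The two key checks both exploit the pushout. For invariant (c) at degree $n$: in any abelian category the pushout of a monomorphism is a monomorphism, and from the explicit description $N \cong (\bar{I}^n \oplus M^{n+1}) / \{ (\bar{\phi}^n(y), -\bar{\d}_M^n(y)) \}$ one reads off $\opn{Ker}(j_I) = \bar{\phi}^n \bigl( \opn{Ker}(\bar{\d}_M^n) \bigr)$; since $\iota$ is mono this gives $\opn{Ker}(\bar{\d}_I^n) = \bar{\phi}^n \bigl( \opn{Ker}(\bar{\d}_M^n) \bigr)$, which via (\ref{eqn:115}) and the fact that $\bar{\phi}^n$ is mono identifies with $\opn{H}^n(M)$, the identification being precisely $\opn{H}^n(\phi)$. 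For invariant (d) at stage $n+1$: if $m \in M^{n+1}$ satisfies $\phi^{n+1}(m) \in \opn{Im}(\d_I^n)$, the same explicit pushout description combined with $\iota$ being mono forces $m \in \opn{Im}(\d_M^n)$, so $\bar{\phi}^{n+1}$ is a monomorphism.

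The main obstacle is really just the choice of the right inductive invariant, and careful bookkeeping with the pushout. Once one commits to maintaining that $\bar{\phi}^n$ is a monomorphism at every stage, the pushout simultaneously absorbs the identities $\d \circ \d = 0$, the commutativity with $\phi$, and the cohomology isomorphism at the new degree; the existence of enough injectives takes care of producing $I^{n+1}$ itself. Element-style manipulations with the pushout can be justified either via Freyd--Mitchell or via generalized elements in an abelian category.
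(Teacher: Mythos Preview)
Your proof is correct and follows essentially the same construction as the paper: your pushout $\bar{I}^n \sqcup_{\opn{Y}^n(M)} M^{n+1}$ is exactly the cokernel $\opn{Coker}\bigl((\d_M^{n}, \zeta^{n}) : M^{n} \to M^{n+1} \oplus \opn{Y}^{n}(I)\bigr)$ that the paper uses (since both maps from $M^n$ factor through $\opn{Y}^n(M)$ and the quotient map is epi). The paper leaves the quasi-isomorphism check as an exercise with the hint that each step makes $\opn{H}^{p-1}(\zeta)$ epi and $\opn{H}^p(\zeta)$ mono, whereas you carry out this verification explicitly via your invariants (c) and (d).
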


\begin{proof}
After shifting is necessary, we can assume $M$ is concentrated in degrees 
$\geq 0$.
We will find injective objects $I^p$, and morphisms
$\d^p_I : I^p \to I^{p+1}$, $\zeta^p : M^p \to I^p$
that form a complex $I$ and a quasi-isomorphism $\zeta : M \to I$. 
This is done by induction on $p$. For $p < 0$ we take $I^p := 0$, and of
course $\zeta^p = \d_I^{p-1} := 0$. 
Now let $p \geq 0$, and suppose that we have found the objects 
$I^{p'}$, and the related morphisms $\zeta^{p'}$ and
$\d_I^{p'-1}$, for all $p' < p$. Consider the object 
\[ N := \opn{Coker} \Bigl( (\d_M^{p-1}, \zeta^{p-1}) : M^{p-1} 
\to M^p \oplus \opn{Y}^{p-1}(I) \Bigr) . \]
See (\ref{eqn:114}). 

\[ \UseTips \xymatrix @C=7ex @R=5ex {
& &
M^{p-1}
\ar[r]^{\d_M^{p-1}}
\ar[dl]_{\ze^{p-1}}
\ar[d]
& 
M^p
\ar[d]
\ar@{-->}[dr]^{\ze^{p}}
\\
I^{p-2}
\ar[r]^{\d_I^{p-2}}
&
I^{p-1}
\ar@{->>}[r]^{}
\ar@(dr,dl)@{-->}[rrr]_{\d_I^{p-1}}
&
\opn{Y}^{p-1}(I)
\ar[r]^{}
&
N 
\ar@{^{(}-->}[r]^{}
&
I^p
} \]

Choose an embedding $N \inj I^{p}$ for some injective object $I^p$. 
There are induced morphisms 
$\d^{p-1}_I : I^{p-1} \to I^{p}$ and $\zeta^p : M^p \to I^p$, and these satisfy 
\[ \d^{p-1}_I \o \zeta^{p-1} = \zeta^p \o \d_M^{p-1} \]
and 
\[ \d^{p-1}_I \o \d^{p-2}_I = 0 . \]
Thus, after doing this for all $p$, we get a complex $I$ and 
a morphism of complexes $\zeta : M \to I$.

It remains to prove that $\zeta$ is a quasi-isomorphism. This is a
straightforward calculation that is left as an exercise. To facilitate the
calculation, notice first that we can work with ``elements'' of the objects.
This is justified by the Freyd-Mitchell Theorem. Alternatively we can use
``points'' as in algebraic geometry; i.e.\ for any $N \in \cat{M}$ its
``points'' are the morphisms $x :L \to N$ in $\cat{M}$. (Actually this is the
idea behind the  Freyd-Mitchell Theorem.) So $\phi : N \to N'$ is an epimorphism
(resp.\ monomorphism) iff it is surjective (resp.\ injective) on points.

Also notice that the way we introduced $I^p$ guarantees that 
\[ \opn{H}^{p-1}(\ze) : \opn{H}^{p-1}(M) \to \opn{H}^{p-1}(I) \]
is an epimorphism, and 
\[ \opn{H}^{p}(\ze) : \opn{H}^{p}(M) \to \opn{H}^{p}(I) \]
is a monomorphism.  
\end{proof}

\begin{exer}
Theorem \ref{thm:111} can be improved as follows: instead of assuming that $M$
is bounded below, let's only assume that its cohomology $\opn{H}(M)$ is bounded
below. Say 
$\opn{H}^p(M) = 0$ for $p < p_0$. Then there is a quasi-isomorphism 
$\ze : M \to I$, where $I$ is a complex of injectives concentrated in degrees 
$\geq p_0$. 
\end{exer}


\begin{cor} \label{cor:130}
If $\cat{M}$ has enough injectives, then $\dcat{K}^{+}(\cat{M})$ has enough
K-injectives. 
\end{cor}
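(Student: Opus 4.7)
The plan is to combine the two theorems immediately preceding the corollary, which already do all the real work. Given any $M \in \dcat{K}^+(\cat{M})$, Theorem \ref{thm:111} provides a quasi-isomorphism $\zeta : M \to I$ in $\dcat{K}^+(\cat{M})$ such that $I$ is a bounded below complex of injective objects of $\cat{M}$. Since $I$ is bounded below and consists of injectives, Theorem \ref{thm:110} tells us that $I$ is K-injective. In particular $I \in \dcat{K}^+(\cat{M})_{\tup{K-inj}}$, and $\zeta$ is the required K-injective resolution.

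There is essentially no obstacle: the only thing to verify is that "K-injective" in the sense relevant to $\dcat{K}^+(\cat{M})$ agrees with the ambient notion, but this is guaranteed by the lemma (immediately after the definition) stating that for a complex $I \in \dcat{K}^+(\cat{M})$ it suffices to test acyclicity of $\opn{Hom}_{\cat{M}}(M, I)$ against acyclic $M \in \dcat{K}^+(\cat{M})$, and in any case Theorem \ref{thm:110} produces K-injectivity in the full sense. Thus the proof is a one-line appeal to \ref{thm:110} and \ref{thm:111}.
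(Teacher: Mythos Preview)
Your proposal is correct and matches the paper's own proof exactly: the paper simply says ``Combine Theorems \ref{thm:111} and \ref{thm:110}.'' Your extra remark about the intrinsic characterization of K-injectivity in $\dcat{K}^+(\cat{M})$ is not needed here, since Theorem \ref{thm:110} already gives K-injectivity in the full sense, but it does no harm.
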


\begin{proof}
Combine Theorems \ref{thm:111} and \ref{thm:110}.
\end{proof}

We don't know much about the structure of K-injective complexes. All we can say
is:

\begin{prop}
Assume $\cat{M}$ has enough injectives. Let 
$I \in \dcat{K}^{+}(\cat{M})_{\tup{K-inj}}$. Then 
there is an isomorphism $I \cong J$ in $\dcat{K}^{}(\cat{M})$
with $J$ a bounded below complex of injective objects of $\cat{M}$.
\end{prop}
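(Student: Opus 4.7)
The plan is to exhibit the desired $J$ by applying the known bounded-below injective resolution construction to $I$ itself, and then upgrade the resulting quasi-isomorphism to an isomorphism in $\dcat{K}(\cat{M})$ using the K-injectivity of both complexes.

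First I would invoke Theorem~\ref{thm:111}: since $\cat{M}$ has enough injectives and $I \in \dcat{K}^{+}(\cat{M})$, there exists a quasi-isomorphism $\phi : I \to J$ in $\dcat{K}^{+}(\cat{M})$ with $J$ a bounded below complex of injective objects of $\cat{M}$. By Theorem~\ref{thm:110}, this $J$ is automatically K-injective, so $\phi$ is a quasi-isomorphism between two objects of $\dcat{K}^{+}(\cat{M})_{\tup{K-inj}}$.

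The remaining task is to promote $\phi$ from a quasi-isomorphism to an isomorphism in $\dcat{K}(\cat{M})$. The cleanest route is Theorem~\ref{thm:120}: the localization functor $Q : \dcat{K}^{+}(\cat{M})_{\tup{K-inj}} \to \dcat{D}^{+}(\cat{M})$ is fully faithful, and a fully faithful functor reflects isomorphisms. Since $\phi$ is a quasi-isomorphism, $Q(\phi)$ is an isomorphism in $\dcat{D}^{+}(\cat{M})$, whence $\phi$ is an isomorphism in $\dcat{K}^{+}(\cat{M})_{\tup{K-inj}}$ — in particular in $\dcat{K}(\cat{M})$.

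An alternative, more hands-on finish (not using Theorem~\ref{thm:120}) is to apply Lemma~\ref{lem:121} twice: since $I$ is K-injective, there is $\psi : J \to I$ in $\dcat{K}(\cat{M})$ with $\psi \circ \phi = 1_I$; then $\psi$ is also a quasi-isomorphism (as $\opn{H}^i(\psi)$ is left-inverse to the isomorphism $\opn{H}^i(\phi)$), so applying Lemma~\ref{lem:121} to the K-injective target $J$ produces $\phi' : I \to J$ with $\phi' \circ \psi = 1_J$, and a short calculation gives $\phi' = \phi$ so that $\phi$ and $\psi$ are mutually inverse. The main conceptual point — and the only thing that is not completely routine — is that K-injectivity is precisely the condition needed to convert the bare quasi-isomorphism provided by Theorem~\ref{thm:111} into an honest homotopy equivalence, so nothing beyond the already-proved lemmas about K-injectives is required.
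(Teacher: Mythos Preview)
Your proof is correct and follows essentially the same route as the paper: invoke Theorem~\ref{thm:111} to get a quasi-isomorphism $\phi : I \to J$ with $J$ a bounded below complex of injectives, then use the full faithfulness of $Q$ on K-injectives (Theorem~\ref{thm:120}) to conclude that $\phi$ is already an isomorphism in $\dcat{K}(\cat{M})$. Your explicit mention of Theorem~\ref{thm:110} (to verify $J$ is K-injective so that Theorem~\ref{thm:120} applies) fills in a step the paper leaves implicit, and the alternative finish via Lemma~\ref{lem:121} is a nice bonus but not needed.
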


\begin{proof}
By Theorem \ref{thm:111} there is a quasi-isomorphism $\phi : I \to J$
in $\dcat{K}^{}(\cat{M})$, where $J$
is a bounded below complex of injective objects of $\cat{M}$. Now $Q(\phi)$ is
an isomorphism in $\dcat{D}^{+}(\cat{M})$. According to Theorem
\ref{thm:120} the morphism $\phi$ is already an isomorphism.
\end{proof}

\begin{exer}
Find a K-injective complex $I \in \dcat{K}^{+}(\cat{Ab})$
which is {\em not} made up of injective objects of $\cat{Ab}  = \cat{Mod}\, \Z$.
\end{exer}

\subsection{More on K-projectives}
Let's recall what K-projectives are.
A complex $P \in \dcat{K}(\cat{M})$ is called {\em K-projective} if for
every acyclic $N \in \dcat{K}(\cat{M})$, the complex 
$\opn{Hom}_{\cat{M}}(P, N)$ is also acyclic.

There are similar results here. We omit the proofs. The results can either be
proved by modifying the proofs of the corresponding results in the previous
subsections, or by using:

\begin{lem}
There is a contravariant isomorphism of triangulated categories 
$\opn{op} : \dcat{K}(\cat{M}) \to \dcat{K}(\cat{M}^{\mrm{op}})$, such
that $\opn{op}(\dcat{K}^{\star}(\cat{M})) =
\dcat{K}^{\opn{op}(\star)}(\cat{M}^{\mrm{op}})$
for $\star \in \{ \mrm{b}, - , + , \tup{\textvisiblespace} \}$.
A complex $P \in \dcat{K}(\cat{M})$ is K-projective iff 
$\opn{op}(P) \in \dcat{K}(\cat{M}^{\mrm{op}})$ is 
K-injective.

Similarly there are contravariant isomorphisms of triangulated categories 
$\opn{op} : \dcat{D}^{\star}(\cat{M}) \to 
\dcat{D}^{\opn{op}(\star)}(\cat{M}^{\mrm{op}})$,
that commute with the localization functors $Q$.
\end{lem}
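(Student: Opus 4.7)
The plan is to construct $\opn{op}$ explicitly at the level of complexes and then propagate the properties upward. For a complex $P \in \dcat{C}(\cat{M})$ set $\opn{op}(P)^i := P^{-i}$, with differential $\d^i_{\opn{op}(P)}$ equal to the morphism in $\cat{M}^{\mrm{op}}$ corresponding to $\d^{-i-1}_P : P^{-i-1} \to P^{-i}$ in $\cat{M}$; for a morphism of complexes $\phi : P \to Q$, set $\opn{op}(\phi)^i$ equal to the morphism in $\cat{M}^{\mrm{op}}$ corresponding to $\phi^{-i} : P^{-i} \to Q^{-i}$. A direct check shows this gives a contravariant functor $\dcat{C}(\cat{M}) \to \dcat{C}(\cat{M}^{\mrm{op}})$ which is a bijection both on objects and on morphisms, hence a contravariant isomorphism of categories. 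The boundedness statement $\opn{op}(\dcat{C}^{\star}(\cat{M})) = \dcat{C}^{\opn{op}(\star)}(\cat{M}^{\mrm{op}})$ is immediate from $P^i = 0 \iff \opn{op}(P)^{-i} = 0$.

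Next I would descend to the homotopy category. A null-homotopy $\th = \{ \th^i : P^i \to Q^{i-1} \}$ of $\phi$ transposes to a null-homotopy $\opn{op}(\th)$ of $\opn{op}(\phi)$, so $\opn{op}$ induces a contravariant functor $\dcat{K}(\cat{M}) \to \dcat{K}(\cat{M}^{\mrm{op}})$, still bijective on objects and morphisms. To make this triangulated, fix a natural isomorphism $\opn{op}(P[1]) \cong \opn{op}(P)[-1]$ in $\dcat{K}(\cat{M}^{\mrm{op}})$ (this is where signs must be chosen carefully) and verify that $\opn{op}$ carries the mapping cone of $\al : L \to M$ in $\dcat{C}(\cat{M})$, up to a controlled shift and sign, to the mapping cone of $\opn{op}(\al)$ in $\dcat{C}(\cat{M}^{\mrm{op}})$. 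This sign bookkeeping is the main obstacle; once it is settled, preservation of distinguished triangles (Definition \ref{dfn:140}) follows at once, since every distinguished triangle is isomorphic to a standard triangle.

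For the K-projective/K-injective correspondence, the key point is the natural isomorphism of complexes of abelian groups
\[ \opn{Hom}_{\cat{M}}(P, N) \cong \opn{Hom}_{\cat{M}^{\mrm{op}}}(\opn{op}(N), \opn{op}(P)) , \]
which is immediate from Definition \ref{dfn:3} together with the identity $\opn{Hom}_{\cat{M}^{\mrm{op}}}(X, Y) = \opn{Hom}_{\cat{M}}(Y, X)$. Moreover $\opn{H}^i(\opn{op}(M))$ corresponds to $\opn{H}^{-i}(M)$, so $M$ is acyclic in $\dcat{K}(\cat{M})$ iff $\opn{op}(M)$ is acyclic in $\dcat{K}(\cat{M}^{\mrm{op}})$. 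Now suppose $P$ is K-projective, and let $N' \in \dcat{K}(\cat{M}^{\mrm{op}})$ be acyclic. Writing $N' = \opn{op}(N)$ for the unique preimage $N$, the complex $N$ is also acyclic, hence $\opn{Hom}_{\cat{M}}(P, N)$ is acyclic, and the displayed isomorphism forces $\opn{Hom}_{\cat{M}^{\mrm{op}}}(N', \opn{op}(P))$ to be acyclic. Thus $\opn{op}(P)$ is K-injective; the converse is symmetric.

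Finally, since $\opn{op}$ sends quasi-isomorphisms to quasi-isomorphisms and is bijective on objects, applying the universal property of Ore localization (Proposition \ref{prop:103}, or rather its strict-localization version Proposition \ref{prop:5}) symmetrically on both sides produces the desired contravariant isomorphism $\opn{op} : \dcat{D}^{\star}(\cat{M}) \to \dcat{D}^{\opn{op}(\star)}(\cat{M}^{\mrm{op}})$ for each $\star \in \{ \mrm{b}, -, +, \tup{\textvisiblespace} \}$, automatically commuting with the localization functors $Q$.
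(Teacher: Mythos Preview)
Your proposal is correct and follows the natural approach. The paper itself does not give a proof at all: it simply declares ``The proof is an exercise.'' So your outline is in fact considerably more detailed than what the paper provides, and it is exactly the kind of argument the author has in mind---build $\opn{op}$ on $\dcat{C}(\cat{M})$ by reindexing $i \mapsto -i$, check it respects homotopies and cones (up to the sign adjustments you flag), use the tautological identification of Hom complexes to transfer K-projectivity to K-injectivity, and invoke the universal property of localization for the derived statement. The one place that genuinely requires care, as you note, is the sign convention in the isomorphism $\opn{op}(P[1]) \cong \opn{op}(P)[-1]$ and the corresponding identification of cones; but this is bookkeeping rather than a conceptual obstacle, and the paper does not spell it out either.
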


The meaning of $\opn{op}(\star)$ should be clear...
The proof is an exercise. 

We denote by $\dcat{K}^{\star}(\cat{M})_{\tup{K-proj}}$  the full subcategory
of $\dcat{K}^{\star}(\cat{M})$ on the K-projective complexes. 
This will be usually interesting only for 
$\star \in \{ -, \text{\textvisiblespace} \}$.

\begin{prop} 
$\dcat{K}^{\star}(\cat{M})_{\tup{K-proj}}$ is a triangulated subcategory of 
$\dcat{K}^{\star}(\cat{M})$.
\end{prop}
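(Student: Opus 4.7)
The plan is to mirror the proof of Proposition \ref{prop:130}, either directly (dualizing the argument) or by invoking the contravariant isomorphism $\opn{op} : \dcat{K}(\cat{M}) \to \dcat{K}(\cat{M}^{\mrm{op}})$ stated in the lemma immediately preceding the statement. I will sketch both routes, with the direct one as primary.

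First, closure under shifts: if $P$ is K-projective and $N$ is acyclic, then $\opn{Hom}_{\cat{M}}(P[k], N) \cong \opn{Hom}_{\cat{M}}(P, N)[-k]$, so acyclicity is preserved. Equivalently, by the K-projective analog of Lemma \ref{lem:120}, $P[k]$ is K-projective iff $\opn{Hom}_{\dcat{K}(\cat{M})}(P[k], N[p]) = 0$ for every acyclic $N$ and every $p$, and this follows immediately from K-projectivity of $P$.

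Next, closure under distinguished triangles: suppose
\[ P' \to P \to P'' \to P'[1] \]
is a distinguished triangle in $\dcat{K}^{\star}(\cat{M})$ with $P'$ and $P$ K-projective. Fix an acyclic $N \in \dcat{K}^{\star}(\cat{M})$. By Proposition \ref{prop:4}(2), the contravariant functor $\opn{Hom}_{\dcat{K}(\cat{M})}(-, N[p])$ is cohomological, so for every $p$ we obtain an exact sequence
\[ \opn{Hom}_{\dcat{K}(\cat{M})}(P, N[p]) \to \opn{Hom}_{\dcat{K}(\cat{M})}(P'', N[p]) \to \opn{Hom}_{\dcat{K}(\cat{M})}(P'[-1], N[p]) \]
in $\cat{Ab}$. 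By the K-projective version of Lemma \ref{lem:120}, the two outer groups vanish, hence $\opn{Hom}_{\dcat{K}(\cat{M})}(P'', N[p]) = 0$ for all $p$, showing $P''$ is K-projective.

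The only thing to be careful about is confirming the K-projective analog of Lemma \ref{lem:120}, namely that $P$ is K-projective iff $\opn{Hom}_{\dcat{K}(\cat{M})}(P, N) = 0$ for every acyclic $N \in \dcat{K}^{\star}(\cat{M})$; this is immediate from the same isomorphism used in the injective case, applied to the first variable instead of the second. Alternatively, the whole statement can be deduced in one line from Proposition \ref{prop:130} by applying the $\opn{op}$-lemma: a distinguished triangle with two K-projective vertices maps under $\opn{op}$ to a distinguished triangle in $\dcat{K}(\cat{M}^{\mrm{op}})$ with two K-injective vertices, whose third vertex is K-injective by Proposition \ref{prop:130}, and pulling back along $\opn{op}^{-1}$ identifies that third vertex with $P''$ being K-projective. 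The main (and essentially only) obstacle is just bookkeeping — ensuring that shifts and signs commute with $\opn{op}$ as in the lemma — which has already been absorbed into the statement of the $\opn{op}$-lemma.
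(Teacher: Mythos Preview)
Your proposal is correct and matches the paper's approach exactly: the paper's proof is simply ``This is like Proposition \ref{prop:130}'', and just before this proposition the paper explicitly states that the K-projective results can be proved either by modifying the K-injective proofs or by applying the $\opn{op}$-lemma. You have carried out both routes in detail, which is precisely what the paper intends but omits.
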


\begin{proof}
This is like Proposition \ref{prop:130}.
\end{proof}

\begin{thm} 
The localization functor
\[ Q : \dcat{K}^{\star}(\cat{M})_{\tup{K-proj}} \to
\dcat{D}^{\star}(\cat{M}) \]
is fully faithful.
\end{thm}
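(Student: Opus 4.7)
The plan is to mimic, step by step, the proof of Theorem \ref{thm:120} for K-injectives, replacing left Ore data with right Ore data and reversing every arrow. The hypothesis of the lemma in the excerpt about the opposite category makes this duality rigorous, but it is cleaner (and just as fast) to redo the two preparatory lemmas directly, since the K-projective dual of Lemma \ref{lem:121} is the main engine and its proof is short.

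First I would establish the dual of Lemma \ref{lem:121}: if $\phi : M \to P$ is a quasi-isomorphism in $\dcat{K}(\cat{M})$ with $P$ K-projective, then $\phi$ is a \emph{split epimorphism}, i.e.\ there exists $\psi : P \to M$ with $\phi \circ \psi = 1_P$. To see this, embed $\phi$ in a distinguished triangle
\[ M \xar{\phi} P \to L \to M[1] . \]
Since $\phi$ is a quasi-isomorphism, $L$ is acyclic. Because $P$ is K-projective, $\opn{Hom}_{\dcat{K}(\cat{M})}(P, L[p]) \cong \mrm{H}^p(\opn{Hom}_{\cat{M}}(P, L)) = 0$ for all $p$. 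Applying the cohomological functor $\opn{Hom}_{\dcat{K}(\cat{M})}(P, -)$ to the triangle gives the exact sequence
\[ \opn{Hom}_{\dcat{K}(\cat{M})}(P, L[-1]) \to \opn{Hom}_{\dcat{K}(\cat{M})}(P, M) \xar{\phi \, \circ} \opn{Hom}_{\dcat{K}(\cat{M})}(P, P) \to \opn{Hom}_{\dcat{K}(\cat{M})}(P, L) , \]
whose end terms vanish. Hence the preimage of $1_P$ produces the required $\psi$.

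Next I would prove the dual of Lemma \ref{lem:122}: for any $P \in \dcat{K}^{\star}(\cat{M})_{\tup{K-proj}}$ and $M \in \dcat{K}^{\star}(\cat{M})$, the map $Q : \opn{Hom}_{\dcat{K}^{\star}(\cat{M})}(P, M) \to \opn{Hom}_{\dcat{D}^{\star}(\cat{M})}(P, M)$ is bijective. Here I invoke the \emph{right} Ore description of the localization $Q$, which is available because $\cat{S}$ is a two-sided denominator set (Proposition \ref{prop:106}). For surjectivity, given $q : P \to M$ in $\dcat{D}^{\star}(\cat{M})$, use (L3) to write $q = Q(a) \circ Q(s)^{-1}$ with $s : N \to P$ a quasi-isomorphism and $a : N \to M$. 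The previous step yields $b : P \to N$ with $s \circ b = 1_P$, whence $Q(b) = Q(s)^{-1}$ and therefore $q = Q(a \circ b)$. For injectivity, suppose $a : P \to M$ in $\dcat{K}^{\star}(\cat{M})$ satisfies $Q(a) = 0$. By the right version of (L4) there is a quasi-isomorphism $s : N \to P$ with $a \circ s = 0$; splitting $s$ via $b$ as above gives $a = a \circ s \circ b = 0$.

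Finally, to conclude the theorem, apply this last bijectivity statement to $M = P'$ ranging over K-projective complexes; that is precisely the statement that $Q$ is fully faithful on $\dcat{K}^{\star}(\cat{M})_{\tup{K-proj}}$. The one subtle point to check along the way is that the right Ore axioms (L3) and (L4) for $Q$ really do hold in the form I used (with the quasi-isomorphism landing in $P$), but this is exactly Proposition \ref{prop:106} combined with Theorem \ref{thm:103} applied on the right. No other obstacle is anticipated, as the whole argument is formally dual to the K-injective one.
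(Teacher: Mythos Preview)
Your proposal is correct and takes essentially the same approach as the paper: the paper simply says ``This is like Theorem \ref{thm:120}'', and you have spelled out precisely that dualization, proving the K-projective analogues of Lemmas \ref{lem:121} and \ref{lem:122} with the right Ore presentation in place of the left one.
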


\begin{proof}
This is like Theorem \ref{thm:120}.
\end{proof}

We say that $\dcat{K}^{\star}(\cat{M})$ has {\em enough K-projectives} if every 
$M \in \dcat{K}^{\star}(\cat{M})$ admits a quasi-isomorphism 
$P \to M$ with $P \in \dcat{K}^{\star}(\cat{M})_{\tup{K-proj}}$.

\begin{cor} 
If $\dcat{K}^{\star}(\cat{M})$ has enough K-projectives, then the localization
functor
\[ Q : \dcat{K}^{\star}(\cat{M})_{\tup{K-proj}} \to
\dcat{D}^{\star}(\cat{M}) \]
is an equivalence.
\end{cor}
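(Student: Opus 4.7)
The plan is to mimic exactly the proof of Corollary \ref{cor:120}, since the statement here is the formal dual and all the needed ingredients are already in place. A functor is an equivalence iff it is fully faithful and essentially surjective on objects, so I only need to verify these two properties for $Q$ restricted to $\dcat{K}^{\star}(\cat{M})_{\tup{K-proj}}$.

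First, fully faithfulness is already granted by the theorem immediately preceding the corollary (the K-projective analogue of Theorem \ref{thm:120}), which asserts that
\[ Q : \dcat{K}^{\star}(\cat{M})_{\tup{K-proj}} \to \dcat{D}^{\star}(\cat{M}) \]
is fully faithful. So nothing remains to be proved on this side; I would simply cite that theorem.

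Second, essential surjectivity is exactly the content of the hypothesis ``$\dcat{K}^{\star}(\cat{M})$ has enough K-projectives.'' Given any $M \in \dcat{D}^{\star}(\cat{M}) = \dcat{K}^{\star}(\cat{M})_{\dcat{S}^{\star}(\cat{M})}$, view $M$ as an object of $\dcat{K}^{\star}(\cat{M})$ (recall that the Ore localization $Q$ is the identity on objects). The hypothesis provides a quasi-isomorphism $P \to M$ with $P \in \dcat{K}^{\star}(\cat{M})_{\tup{K-proj}}$. Since $Q$ sends quasi-isomorphisms to isomorphisms, we obtain an isomorphism $Q(P) \iso Q(M) = M$ in $\dcat{D}^{\star}(\cat{M})$, exhibiting $M$ as isomorphic to an object in the image of $Q$.

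There is no real obstacle here: the entire content has already been absorbed into the preceding theorem (fully faithful) and into the hypothesis (essentially surjective). The only thing to be careful about is the direction of the arrow in ``enough K-projectives'' (it is $P \to M$, not $M \to P$), but since the definition of equivalence only requires an isomorphism $Q(P) \cong M$ and $Q$ inverts quasi-isomorphisms in either direction, this is harmless.
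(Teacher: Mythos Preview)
Your proof is correct and follows exactly the paper's approach: the paper simply says ``This is like Corollary \ref{cor:120},'' whose proof in turn reads ``We already know that $Q$ is fully faithful. The extra condition says that it is essentially surjective on objects.'' Your write-up just spells this out in slightly more detail.
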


\begin{proof}
This is like Corollary \ref{cor:120}.
\end{proof}

The best existence result is this:

\begin{thm} \label{thm:132}
For a ring $A$ the category $\dcat{K}(\cat{Mod}\, A)$ has enough K-projectives.
\end{thm}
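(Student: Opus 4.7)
The plan is to use a Spaltenstein-style mapping telescope, leveraging that $\cat{Mod}\, A$ has enough projective modules, that direct sums of projectives are projective, that direct sums and filtered colimits in $\cat{Mod}\, A$ are exact, and that arbitrary products in $\cat{Ab}$ are exact.

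First I would recall from the earlier example that any bounded above complex of projective modules is K-projective, and that every $M \in \dcat{K}^-(\cat{Mod}\, A)$ admits a quasi-isomorphism from such a complex; this settles the bounded above case. For a general $M \in \dcat{K}(\cat{Mod}\, A)$, set $M_n := \opn{smt}^{\leq n}(M)$ for $n \in \Z$, so that $M = \varinjlim_n M_n$ in $\dcat{C}(\cat{Mod}\, A)$ via the natural inclusions $\iota_n : M_n \inj M_{n+1}$. Choose quasi-isomorphisms $\pi_n : P_n \to M_n$ with each $P_n$ a bounded above complex of projectives (hence K-projective). Because $P_n$ is K-projective and $\pi_{n+1}$ is a quasi-isomorphism, the K-projective analogue of Lemma \ref{lem:121} lets us lift $\iota_n \o \pi_n$ through $\pi_{n+1}$ up to homotopy, and we fix chain-level lifts $\sigma_n : P_n \to P_{n+1}$.

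Now form the mapping telescope
\[ P \;:=\; \opn{cone}\Bigl( \boplus_{n \in \Z} P_n \xar{1 - \sigma} \boplus_{n \in \Z} P_n \Bigr), \]
where $\sigma$ sends the $n$-th summand into the $(n+1)$-st via $\sigma_n$. Since direct sums of projectives are projective, $P$ is degree-wise projective. A standard calculation shows that $1 - \sigma$ is a monomorphism of complexes with cokernel $\varinjlim_n P_n$, so the distinguished triangle defining $P$ yields $P \simeq \varinjlim_n P_n$ in $\dcat{D}(\cat{Mod}\, A)$; and since $\varinjlim$ is exact in $\cat{Mod}\, A$ and each $\pi_n$ is a quasi-isomorphism, we obtain a composite quasi-isomorphism $P \to \varinjlim_n P_n \to \varinjlim_n M_n = M$.

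Finally, to verify $P$ is K-projective, take any acyclic $N \in \dcat{K}(\cat{Mod}\, A)$. Applying $\opn{Hom}_A(-, N)$ to the distinguished triangle defining $P$ converts direct sums into products, giving a distinguished triangle in $\dcat{K}(\cat{Ab})$ with two corners equal to $\boprod_n \opn{Hom}_A(P_n, N)$. Each factor is acyclic by K-projectivity of $P_n$; since products are exact in $\cat{Ab}$ the product is acyclic; and the long exact cohomology sequence forces $\opn{Hom}_A(P, N)$ acyclic. The main obstacle in this argument is the telescope bookkeeping: the chain-level lifts $\sigma_n$ are only defined up to homotopy, so care is needed to ensure that the single complex $P$ simultaneously carries a quasi-isomorphism to $M$ and satisfies the K-projectivity criterion on $\opn{Hom}_A(P, N)$. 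Once the telescope is in place, both checks follow cleanly from the exactness properties peculiar to $\cat{Mod}\, A$ and $\cat{Ab}$.
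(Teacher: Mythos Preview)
The paper does not prove this theorem: it cites \cite[Theorem 14.4.3]{KS2}, remarks that it ``is not very hard to prove'', and then establishes only the weaker bounded-above statement (Theorems \ref{thm:133} and \ref{thm:124}). So there is nothing in the paper to compare your argument against.

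Your telescope argument is the standard Spaltenstein/B\"okstedt--Neeman construction and is correct. One comment on the ``bookkeeping obstacle'' you flag: it can be sidestepped by constructing the $P_n$ and $\sigma_n$ inductively so that $\pi_{n+1}\sigma_n = \iota_n\pi_n$ holds \emph{strictly} in $\dcat{C}(\cat{Mod}\,A)$. Having built $\pi_n : P_n \to M_n$, one extends the composite $P_n \to M_{n+1}$ to a resolution $\pi_{n+1} : P_{n+1} \to M_{n+1}$ with $\sigma_n : P_n \inj P_{n+1}$ a degreewise split inclusion of complexes of projectives (a relative version of the construction in Theorem \ref{thm:124}). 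With strict squares, $\varinjlim P_n \to M$ is an honest chain-level quasi-isomorphism, and the telescope $P$ maps to $\varinjlim P_n$ by another. Your homotopy-commutative version also works---the chosen homotopies $h_n$ assemble into the nullhomotopy needed to define a chain map $P \to M$, and comparing with the telescope of the $M_n$ shows it is a quasi-isomorphism---but the strict version is what most references write down, precisely because it dissolves the issue you identify.
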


This is not very hard to prove -- see \cite[Theorem 14.4.3]{KS2}. I will
prove something weaker below. 

\begin{thm}  \label{thm:133}
Let $P$ be a bounded above complex of projective objects of $\cat{M}$. Then 
$P$ is a K-projective complex. 
\end{thm}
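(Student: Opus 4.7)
The plan is to dualize the proof of Theorem \ref{thm:110} essentially line-by-line, swapping ``injective'' for ``projective'' and reversing the direction of induction. (Alternatively, one could invoke the $\opn{op}$-isomorphism lemma from the K-projective subsection: $\opn{op}(P)$ becomes a bounded below complex of injective objects in $\cat{M}^{\mrm{op}}$, hence K-injective by Theorem \ref{thm:110}, hence $P$ is K-projective. But the direct argument is worth writing out since it uses projectivity explicitly.)

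First I would reduce to the statement: for every acyclic $N \in \dcat{K}(\cat{M})$ and every $0$-cocycle $\phi = \{\phi^l\}$ in $\opn{Hom}_{\cat{M}}(P, N)$, i.e.\ a genuine morphism of complexes $\phi : P \to N$, there is a null-homotopy $\th = \{\th^l\}$ with $\phi^l = \d_N^{l-1}\o \th^l + \th^{l+1}\o \d_P^l$. (Shifting $P$ reduces cocycles of arbitrary degree to this case, and shifting preserves the hypothesis ``bounded above complex of projectives''.) After a further shift I would assume $P^l = 0$ for $l > 0$.

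Next I would construct the $\th^l$ by downward induction on $l$. For $l > 0$ take $\th^l := 0$. For the inductive step, assume $\th^l$ has been defined for all $l > k$ and satisfies the homotopy identity for all $l > k$. From the cocycle condition $\d_N^k \o \phi^k = \phi^{k+1} \o \d_P^k$, the inductive identity $\phi^{k+1} = \d_N^k \o \th^{k+1} + \th^{k+2}\o \d_P^{k+1}$, and $\d_P \o \d_P = 0$, I would derive
\[ \d_N^k \o (\phi^k - \th^{k+1} \o \d_P^k) = 0 . \]
Hence $\phi^k - \th^{k+1}\o \d_P^k : P^k \to N^k$ factors through the monomorphism $\opn{Z}^k(N) \hookrightarrow N^k$, giving a morphism $\psi^k : P^k \to \opn{Z}^k(N)$.

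The main (and only) point where anything category-theoretic is used is now: since $N$ is acyclic, $\opn{Z}^k(N) = \opn{B}^k(N)$, so the map $\d_N^{k-1} : N^{k-1} \to \opn{Z}^k(N)$ is an epimorphism. Because $P^k$ is a projective object, $\psi^k$ lifts to $\th^k : P^k \to N^{k-1}$ with $\d_N^{k-1}\o \th^k = \phi^k - \th^{k+1}\o \d_P^k$, which is exactly the required homotopy identity at degree $k$. The ``hard'' part is really only bookkeeping with the differential of the Hom complex (Definition \ref{dfn:3}) to make sure the signs in the cocycle and coboundary conditions match; once that is set up, boundedness above of $P$ makes the downward induction start trivially, and projectivity plus acyclicity does all the actual work.
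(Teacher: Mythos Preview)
Your proposal is correct and is precisely the dual of the proof of Theorem \ref{thm:110}, which is exactly what the paper intends: its entire proof of this statement is the sentence ``Like Theorem \ref{thm:110}.'' Your parenthetical alternative via the $\opn{op}$ isomorphism is also acknowledged in the paper as a valid route.
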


\begin{proof}
Like Theorem \ref{thm:110}.
\end{proof}

\begin{thm}  \label{thm:124}
Assume $\cat{M}$ has enough projectives. Then every $M \in \dcat{K}^-(\cat{M})$
admits a quasi-isomorphism $P \to M$, where $P$ is a bounded above
complex  of projectives in $\cat{M}$. 
\end{thm}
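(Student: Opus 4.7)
The plan is to deduce this theorem from Theorem \ref{thm:111} by passing to the opposite category, rather than redoing the inductive construction with all arrows reversed. The key ingredient is the lemma stated just before Theorem \ref{thm:124}, which gives a contravariant isomorphism of triangulated categories $\opn{op} : \dcat{K}(\cat{M}) \to \dcat{K}(\cat{M}^{\mrm{op}})$ sending $\dcat{K}^-(\cat{M})$ onto $\dcat{K}^+(\cat{M}^{\mrm{op}})$.

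First I would record the elementary observation that projective objects of $\cat{M}$ correspond exactly to injective objects of $\cat{M}^{\mrm{op}}$, because the defining universal property of a projective (completing a diagram with an epimorphism) becomes, after reversing arrows, the defining property of an injective. Consequently, the hypothesis that $\cat{M}$ has enough projectives is equivalent to the statement that $\cat{M}^{\mrm{op}}$ has enough injectives.

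Next, given $M \in \dcat{K}^-(\cat{M})$, I would form $\opn{op}(M) \in \dcat{K}^+(\cat{M}^{\mrm{op}})$. By Theorem \ref{thm:111}, applied in the abelian category $\cat{M}^{\mrm{op}}$ (which has enough injectives), there exists a quasi-isomorphism $\opn{op}(M) \to I$ in $\dcat{K}^+(\cat{M}^{\mrm{op}})$ with $I$ a bounded below complex of injective objects of $\cat{M}^{\mrm{op}}$. Applying the inverse $\opn{op}^{-1}$, which is again a contravariant isomorphism of triangulated categories and carries quasi-isomorphisms to quasi-isomorphisms, I set $P := \opn{op}^{-1}(I)$ and obtain a quasi-isomorphism $P \to M$ in $\dcat{K}^-(\cat{M})$. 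Since $\opn{op}^{-1}$ takes $\dcat{K}^+(\cat{M}^{\mrm{op}})$ to $\dcat{K}^-(\cat{M})$ and turns injectives of $\cat{M}^{\mrm{op}}$ into projectives of $\cat{M}$, the complex $P$ is a bounded above complex of projectives, as required.

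The main obstacle is not in the construction itself but in verifying that the lemma on $\opn{op}$ delivers exactly what is needed, namely that the degree-reversal convention it uses sends quasi-isomorphisms to quasi-isomorphisms and aligns boundedness directions correctly. Once that bookkeeping is checked (essentially Example \ref{exa:2} combined with the cohomological characterization of quasi-isomorphisms), the result drops out. If one instead prefers a direct proof, the same argument can be given by descending induction on the degree: reduce to $M$ concentrated in degrees $\leq 0$, set $P^p := 0$ for $p > 0$, and at each step $p \leq 0$ choose an epimorphism from a projective $P^p$ onto the pullback $M^p \times_{M^{p+1}} \opn{Z}^{p+1}(P)$ (the dual of the pushout used in the proof of Theorem \ref{thm:111}); this choice is possible by the enough-projectives hypothesis, and a dual cohomology-chasing argument (via the Freyd--Mitchell theorem or pointwise reasoning) shows the resulting map $P \to M$ is a quasi-isomorphism.
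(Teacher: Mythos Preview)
Your proposal is correct and aligns with the paper's own treatment: the paper's proof is simply ``Like Theorem \ref{thm:111}'', and the preceding discussion explicitly notes that one may either mimic the inductive construction with arrows reversed or pass through the contravariant isomorphism $\opn{op} : \dcat{K}(\cat{M}) \to \dcat{K}(\cat{M}^{\mrm{op}})$ --- you give both, and your direct construction via the pullback $M^p \times_{M^{p+1}} \opn{Z}^{p+1}(P)$ is exactly the dual of the pushout in Theorem \ref{thm:111} (and is spelled out in the proof of Theorem \ref{thm:125}).
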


\begin{proof}
Like Theorem \ref{thm:111}.
\end{proof}

\begin{cor}
If $\cat{M}$ has enough projectives, then $\dcat{K}^{-}(\cat{M})$ has enough
K-projectives. 
\end{cor}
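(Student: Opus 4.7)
The plan is to mimic the injective case handled in Corollary \ref{cor:130}, where the two ingredients were existence of bounded below injective resolutions (Theorem \ref{thm:111}) plus the fact that any such resolution is automatically K-injective (Theorem \ref{thm:110}). Here the two dual ingredients are already in hand: Theorem \ref{thm:124} produces, for every $M \in \dcat{K}^-(\cat{M})$, a quasi-isomorphism $P \to M$ with $P$ a bounded above complex of projectives, and Theorem \ref{thm:133} asserts that any such $P$ is K-projective.

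Concretely, I would proceed as follows. Given $M \in \dcat{K}^-(\cat{M})$, apply Theorem \ref{thm:124} to obtain a quasi-isomorphism $\phi : P \to M$ in $\dcat{K}^-(\cat{M})$ with $P$ a bounded above complex of projective objects of $\cat{M}$. By Theorem \ref{thm:133} the complex $P$ lies in $\dcat{K}^-(\cat{M})_{\tup{K-proj}}$. Thus $\phi$ is a K-projective resolution of $M$ in the sense of (the dual of) Definition \ref{dfn:101}. Since this works for every $M$, the category $\dcat{K}^-(\cat{M})$ has enough K-projectives.

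There is essentially no obstacle: both theorems were already stated above and do all the work. The only thing to keep in mind is the direction conventions: one must check that the resolution $P \to M$ produced in Theorem \ref{thm:124} goes in the correct direction for Definition \ref{dfn:101} (a K-projective resolution of $M$ is a quasi-isomorphism \emph{into} $M$ from a K-projective complex), which it does. Also, one should record that the resolution indeed lives in $\dcat{K}^-(\cat{M})$, so that we have found a K-projective resolution within the bounded above homotopy category, not merely within $\dcat{K}(\cat{M})$; this is immediate because $P$ is bounded above by construction.
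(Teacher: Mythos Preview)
Your proof is correct and is exactly the approach the paper takes: the paper's proof reads ``Like Corollary \ref{cor:130}'', which in turn says to combine Theorems \ref{thm:111} and \ref{thm:110}; the dual combination is precisely Theorems \ref{thm:124} and \ref{thm:133}, as you wrote.
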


\begin{proof}
Like Corollary \ref{cor:130}.
\end{proof}

When $A$ is a ring we can also find free resolutions. This is better than
Theorem \ref{thm:124}.

\begin{thm} \label{thm:125}
Let $A$ be a ring. Then every $M \in \dcat{K}^-(\cat{Mod}\, A)$
admits a quasi-isomorphism $P \to M$, where $P$ is a bounded above
complex  of free $A$-modules. 

If $A$ is left noetherian, and each module $\opn{H}^i(M)$ is finitely
generated, then we can choose $P$ s.t.\ each $P^i$ is a finitely
generated free module. 
\end{thm}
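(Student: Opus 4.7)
Plan: The strategy is a downward induction on degree, specializing the construction used in Theorem \ref{thm:124} to $\cat{Mod}\, A$ and exploiting that every $A$-module is a quotient of a free $A$-module. After shifting we may assume $M$ is concentrated in degrees $\leq 0$. We construct $P^p \in \cat{Mod}\, A$, differentials $\d_P^p : P^p \to P^{p+1}$, and morphisms $\ze^p : P^p \to M^p$ by downward induction on $p$, starting with $P^p := 0$ for $p > 0$. At the inductive step, having built everything in degrees $\geq p+1$, form the pullback
\[ N_p := M^p \times_{\opn{Z}^{p+1}(M)} \opn{Z}^{p+1}(P) , \]
along $\d_M^p : M^p \to \opn{Z}^{p+1}(M)$ and $\ze^{p+1}|_{\opn{Z}^{p+1}(P)} : \opn{Z}^{p+1}(P) \to \opn{Z}^{p+1}(M)$; the latter is well defined since $\d_M^{p+1} \o \ze^{p+1}|_{\opn{Z}^{p+1}(P)} = \ze^{p+2} \o \d_P^{p+1}|_{\opn{Z}^{p+1}(P)} = 0$. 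Pick any free module $P^p$ with an epimorphism $P^p \surj N_p$, and define $\ze^p$ and $\d_P^p$ via the two projections. A direct check shows that, after stage $p$, the map $\opn{H}^{p+1}(\ze)$ is an isomorphism (injectivity is precisely what the pullback arranges) and $\opn{H}^p(\ze)$ is surjective (from elements $(n,0) \in N_p$ with $n \in \opn{Z}^p(M)$). Since this holds at every stage, the resulting $\ze : P \to M$ is a quasi-isomorphism.

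For the second claim the issue is that $M^p$ need not be finitely generated, so $N_p$ need not be either. The fix is to surject $P^p$ onto a suitably chosen finitely generated submodule $\til{N}_p \subseteq N_p$ instead. Maintaining by induction that $P^q$ is finitely generated free for every $q > p$, the noetherian hypothesis on $A$ gives that $\opn{Z}^{p+1}(P) \subseteq P^{p+1}$ is finitely generated, hence so is the submodule $K := \{x \in \opn{Z}^{p+1}(P) \mid \ze^{p+1}(x) \in \opn{B}^{p+1}(M) \}$, which is exactly the kernel that must be killed in order to make $\opn{H}^{p+1}(\ze)$ injective. Fix a finite generating set $\{x_j\}$ of $K$ together with chosen $m_j \in M^p$ satisfying $\d_M^p(m_j) = \ze^{p+1}(x_j)$, and fix finitely many $n_k \in \opn{Z}^p(M)$ whose classes generate the (by hypothesis finitely generated) module $\opn{H}^p(M)$. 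Let $\til{N}_p \subseteq N_p$ be the submodule generated by the elements $(m_j, x_j)$ and $(n_k, 0)$. Then $\til{N}_p$ is finitely generated, one picks a finitely generated free $P^p$ surjecting onto $\til{N}_p$, and the verification of the cohomology conditions goes through: each $x_j$ equals $\d_P^p$ of some element, so $\opn{Im}(\d_P^p) \supseteq K$, while the reverse inclusion follows from the pullback definition of $\til{N}_p$; and the $(n_k, 0)$ force surjectivity of $\opn{H}^p(\ze)$.

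The main obstacle is the bookkeeping for this second part: one must verify simultaneously that the finitely generated data chosen at stage $p$ suffices to force the desired cohomology conditions and that the construction stays inside finitely generated modules, so that the induction can be continued at stage $p-1$. The noetherian hypothesis is essential here, since it guarantees that the kernels and subquotients $\opn{Z}^{p+1}(P)$, $K$, and (inductively) $\opn{H}^p(M)$ encountered in the argument are all finitely generated; without this closure property one could not control the size of $\til{N}_p$ and the induction would break down.
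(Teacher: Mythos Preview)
Your proposal is correct and follows essentially the same approach as the paper. Your pullback $N_p = M^p \times_{\opn{Z}^{p+1}(M)} \opn{Z}^{p+1}(P)$ coincides with the paper's object $N := \opn{Ker}\bigl((\d_M^{i}, \zeta^{i+1}) : M^i \times \opn{Z}^{i+1}(P) \to M^{i+1}\bigr)$, and in the noetherian case the paper likewise replaces $N$ by a finitely generated submodule containing enough elements to force $\opn{H}^{i+1}(\ze)$ to be injective and $\opn{H}^{i}(\ze)$ to be surjective; your treatment simply spells out these ``special elements'' more explicitly via the module $K$ and the chosen lifts $(m_j,x_j)$, $(n_k,0)$. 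One small remark: the ``reverse inclusion'' $\opn{Im}(\d_P^p) \subseteq K$ that you mention is true but not needed---only $K \subseteq \opn{Im}(\d_P^p)$ is required for injectivity of $\opn{H}^{p+1}(\ze)$.
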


\begin{proof}
Proceed as in the proof of Theorem \ref{thm:111}, but in the opposite
direction. To construct $P^i$, let
\[ N := \opn{Ker} \Bigl( (\d_M^{i}, \zeta^{i+1}) : 
M^i \times \opn{Z}^{i+1}(P) \to M^{i+1} \Bigr) . \]
Now choose any epimorphism $\phi : P^i \surj N$ where $P^i$ is a free module. 

\[ \UseTips \xymatrix @C=7ex @R=5ex {
P^{i}
\ar@{->>}[r]^{\phi}
\ar@{-->}[dr]_{\ze^{i}}
\ar@(ur,ul)@{-->}[rrr]^{\d_P^{i}}
&
N
\ar[r]^{}
\ar[d]
&
\opn{Z}^{i+1}(P)
\ar@{^{(}->}[r]^{}
\ar[d]
&
P^{i+1}
\ar[r]^{\d_P^{i+1}}
\ar[dl]^{\ze^{i+1}}
&
P^{i+2}
\\
&
M^{i}
\ar[r]^{\d_M^{i}}
& 
M^{i+1}
} \]

We obtain a morphism of complexes $\ze : P \to M$. 
The way we introduced $P^i$ guarantees that 
\[ \opn{H}^{i+1}(\ze) : \opn{H}^{i+1}(P) \to \opn{H}^{i+1}(M) \]
is a monomorphism, and 
\[ \opn{H}^{i}(\ze) : \opn{H}^{i}(P) \to \opn{H}^{i}(M) \]
is an epimorphism.  

In the finitely generated and noetherian case we also assume inductively that
$P^{i'}$ are finitely generated for all $i' > i$. Since $\opn{Z}^{i+1}(P)$ is a
finitely generated module, a calculation shows that
there are finitely many elements in $N$ that are
responsible for $\opn{H}^{i+1}(\ze)$ to be a monomorphism.
Similarly, since $\opn{H}^{i}(M)$ is finitely generated, there are finitely
many elements in $N$ that insure that $\opn{H}^{i}(\ze)$ is an epimorphism.
We can find a finitely generated free module $P^i$ whose
image in $N$ contains all these special elements. 
\end{proof}

\cleardoublepage
\section{Derived Functors}

\subsection{Right derived functors} \label{subsec:130}
Here is a refined version of Definition \ref{dfn:8}.

\begin{dfn}  \label{dfn:131}
Let $\cat{E}$ be a triangulated category, and  
$F : \dcat{K}^{\star}(\cat{M}) \to \cat{E}$ a triangulated functor. 
A {\em right derived functor} of $F$ 
is a triangulated functor 
\[ \mrm{R}^{\star} F : \dcat{D}(\cat{M}) \to \cat{E} , \]
together with a morphism 
\[ \eta : F \to \mrm{R}^{\star} F \circ Q \]
of triangulated functors $\dcat{K}^{\star}(\cat{M}) \to \cat{E}$. The pair 
$(\mrm{R}^{\star} F, \eta)$ must have this universal property:
\begin{itemize}
\item[($*$)] Given any triangulated functor 
$G : \dcat{D}^{\star}(\cat{M}) \to \cat{E}$,
and a morphism of triangulated functors $\eta' : F \to G \circ Q$, there is a
unique morphism of triangulated functors $\th : \mrm{R}^{\star} F \to G$
s.t.\ $\eta' = \th \circ \eta$. 
\end{itemize}
\end{dfn}

Just like Proposition \ref{prop:3}, we have:

\begin{prop} 
If a right derived functor $(\mrm{R}^{\star} F, \eta)$ exists, then it is
unique, up to a unique isomorphism of triangulated functors.
\end{prop}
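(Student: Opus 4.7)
The plan is to use the standard Yoneda-style argument for universal properties, exactly as one proves uniqueness of initial/terminal objects or of representing objects. The key observation is that the pair $(\mrm{R}^{\star} F, \eta)$ is an initial object in the category whose objects are pairs $(G, \eta')$ consisting of a triangulated functor $G : \dcat{D}^{\star}(\cat{M}) \to \cat{E}$ together with a morphism of triangulated functors $\eta' : F \to G \circ Q$, and whose morphisms $(G, \eta') \to (G', \eta'')$ are morphisms of triangulated functors $\th : G \to G'$ satisfying $\eta'' = \th \circ \eta'$. The proposition then reduces to the trivial fact that initial objects are unique up to unique isomorphism.

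Concretely, suppose $(\mrm{R}^{\star}_1 F, \eta_1)$ and $(\mrm{R}^{\star}_2 F, \eta_2)$ are both right derived functors of $F$. First I would apply the universal property ($*$) of $(\mrm{R}^{\star}_1 F, \eta_1)$ to the pair $(G, \eta') := (\mrm{R}^{\star}_2 F, \eta_2)$ to obtain a unique morphism of triangulated functors $\th_{12} : \mrm{R}^{\star}_1 F \to \mrm{R}^{\star}_2 F$ with $\eta_2 = \th_{12} \circ \eta_1$. Symmetrically, using the universal property of $(\mrm{R}^{\star}_2 F, \eta_2)$ applied to $(\mrm{R}^{\star}_1 F, \eta_1)$, I get a unique $\th_{21} : \mrm{R}^{\star}_2 F \to \mrm{R}^{\star}_1 F$ with $\eta_1 = \th_{21} \circ \eta_2$.

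Next I would show $\th_{21} \circ \th_{12} = \mrm{id}_{\mrm{R}^{\star}_1 F}$ and $\th_{12} \circ \th_{21} = \mrm{id}_{\mrm{R}^{\star}_2 F}$. For the first, observe that $(\th_{21} \circ \th_{12}) \circ \eta_1 = \th_{21} \circ \eta_2 = \eta_1$, so $\th_{21} \circ \th_{12}$ is a solution to the universal problem of extending $\eta_1$ through $(\mrm{R}^{\star}_1 F, \eta_1)$ itself; but the identity morphism $\mrm{id}_{\mrm{R}^{\star}_1 F}$ is also such a solution. The uniqueness clause of ($*$) forces $\th_{21} \circ \th_{12} = \mrm{id}_{\mrm{R}^{\star}_1 F}$. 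The reverse composition is handled identically. Hence $\th_{12}$ is an isomorphism of triangulated functors, and it is the unique one compatible with $\eta_1, \eta_2$ by the uniqueness clause of ($*$) applied once.

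This proof is essentially formal and should present no real obstacle; the only thing to verify is that the argument stays inside the category of triangulated functors (rather than merely additive or T-additive ones), which it does because the universal property ($*$) is formulated in that category from the outset. No properties of $\cat{M}$, $\cat{E}$, or $F$ beyond those in Definition \ref{dfn:131} are needed.
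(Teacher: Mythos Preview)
Your argument is correct and is exactly what the paper has in mind: it states this proposition as the analogue of Proposition~\ref{prop:3}, which is recorded there simply as ``an immediate consequence of the definition,'' i.e.\ the standard initial-object/universal-property argument you have written out in full.
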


The variant of Theorem \ref{thm:101} is:

\begin{thm}  \label{thm:121}
Let $\cat{E}$ be triangulated category, and 
$F : \dcat{K}^{\star}(\cat{M}) \to \cat{E}$ a triangulated functor. 
Assume that $\dcat{K}^{\star}(\cat{M})$ has enough K-injectives. 
Then the right derived functor 
$\mrm{R}^{\star} F : \dcat{D}(\cat{M}) \to \cat{E}$
exists. Moreover, for any $I \in \dcat{K}^{\star}(\cat{M})_{\tup{K-inj}}$ the
morphism  
\[ \eta_I : F(I) \to (\mrm{R}^{\star} F \circ Q)(I) \]
in $\cat{E}$ is an isomorphism.
\end{thm}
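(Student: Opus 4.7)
The plan is to imitate the sketch given earlier for Theorem \ref{thm:101}, but using the K-injective subcategory $\cat{J} := \dcat{K}^{\star}(\cat{M})_{\tup{K-inj}}$ in place of the abstract $\cat{J}$ there. The key inputs are: $\cat{J}$ is a full triangulated subcategory (Proposition \ref{prop:130}); the localization $Q : \cat{J} \to \dcat{D}^{\star}(\cat{M})$ is an equivalence (Corollary \ref{cor:120}, which rests on Lemma \ref{lem:122}); and every object admits a K-injective resolution.

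The main construction is a ``resolution functor''. Using the assumption, for every $M \in \dcat{K}^{\star}(\cat{M})$ I pick a quasi-isomorphism $\zeta_M : M \to I(M)$ with $I(M) \in \cat{J}$, arranged so that $I(M) = M$ and $\zeta_M = 1_M$ whenever $M$ is already K-injective, and so that $I$ commutes with the shift. To make $I$ into a functor, I must lift a morphism $\phi : M \to N$ to a morphism $I(\phi) : I(M) \to I(N)$ satisfying $I(\phi) \circ \zeta_M = \zeta_N \circ \phi$ in $\dcat{K}^{\star}(\cat{M})$. Existence and uniqueness of such a lift follow because precomposition with $\zeta_M$ gives a bijection
\[ \opn{Hom}_{\dcat{K}^{\star}(\cat{M})}(I(M), I(N)) \iso \opn{Hom}_{\dcat{K}^{\star}(\cat{M})}(M, I(N)) , \]
which is a consequence of Lemma \ref{lem:122}, since $Q(\zeta_M)$ is invertible and both Hom groups map bijectively onto $\opn{Hom}_{\dcat{D}^{\star}(\cat{M})}(M, I(N))$. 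By construction $I : \dcat{K}^{\star}(\cat{M}) \to \cat{J}$ sends quasi-isomorphisms to isomorphisms (again via Lemma \ref{lem:122} and the fact that a quasi-isomorphism between K-injectives is a homotopy equivalence), so by the universal property of Ore localization it factors through a unique triangulated functor $\bar{I} : \dcat{D}^{\star}(\cat{M}) \to \cat{J}$ with $\bar{I} \circ Q = I$.

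Then I define $\mrm{R}^{\star} F := F|_{\cat{J}} \circ \bar{I}$ and $\eta_M := F(\zeta_M) : F(M) \to F(I(M)) = (\mrm{R}^{\star} F \circ Q)(M)$, which is natural in $M$. For $I \in \cat{J}$ we have $\zeta_I = 1_I$, so $\eta_I$ is an isomorphism, giving the second assertion. For the universal property, suppose $G : \dcat{D}^{\star}(\cat{M}) \to \cat{E}$ is triangulated and $\eta' : F \to G \circ Q$ is a morphism of triangulated functors. For $N \in \dcat{D}^{\star}(\cat{M})$, define $\theta_N : \mrm{R}^{\star} F(N) \to G(N)$ as the composite
\[ F(\bar{I}(N)) \xar{\eta'_{\bar{I}(N)}} G(Q(\bar{I}(N))) \xar{G(Q(\zeta)^{-1})} G(N) , \]
where $Q(\zeta)$ is the natural isomorphism $\bsym{1}_{\dcat{D}^{\star}(\cat{M})} \iso Q \circ \bar{I}$ coming from the quasi-isomorphisms $\zeta_M$. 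Naturality, compatibility with shifts, and the identity $\eta' = \theta \circ \eta$ are all diagram chases, and uniqueness of $\theta$ follows because $\bar{I}$ is essentially surjective in its image $\cat{J}$ and $F|_{\cat{J}}$ determines $\mrm{R}^{\star} F$ on that image.

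The hard part will be checking that $\bar{I}$, and therefore $\mrm{R}^{\star} F$, is genuinely triangulated: given a distinguished triangle $M \to N \to L \to M[1]$ in $\dcat{K}^{\star}(\cat{M})$, one must produce an isomorphism between $I(M) \to I(N) \to I(L) \to I(M)[1]$ and a standard distinguished triangle in $\cat{J}$. This uses axiom (TR3) to build the required morphism between cones once K-injective resolutions of $M$ and $N$ are chosen, together with Proposition \ref{prop:130} to conclude that the cone in $\dcat{K}^{\star}(\cat{M})$ is itself K-injective. The rest of the verification—triangulatedness of $\theta$, compatibility with the translation isomorphisms $\xi$ packaged into each T-additive functor, and independence of all choices up to canonical isomorphism—is bookkeeping of the same flavor as in the sketch of Theorem \ref{thm:101}, so I would leave those details to the reader or collect them in a short lemma about resolution functors.
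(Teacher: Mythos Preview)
Your proposal is correct and follows essentially the same approach as the paper: choose K-injective resolutions $\zeta_M : M \to I(M)$ (with the same normalizations), use Lemma \ref{lem:122} / full faithfulness of $Q$ on K-injectives to make $I$ into a functor that factors through $\dcat{D}^{\star}(\cat{M})$, and then set $\mrm{R}^{\star} F := F|_{\cat{J}} \circ \bar{I}$ with $\eta_M := F(\zeta_M)$. The paper leaves the verification of the universal property and the triangulatedness of $I$ as an exercise, whereas you spell out the construction of $\theta$ and sketch the (TR3)-based argument for triangulatedness; this is additional detail rather than a different method.
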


\begin{proof}
This is similar to the proof of Theorem \ref{thm:101}, that we only sketched.
Actually here the proof is easier, since we don't have to localize 
$\dcat{K}^{\star}(\cat{M})_{\mrm{inj}}$ with respect to quasi-isomorphisms.

First, for every $M \in \dcat{K}^{\star}(\cat{M})$ we choose a 
quasi-isomorphism $\ze_M : M \to I(M)$, with 
$I(M) \in \dcat{K}^{\star}(\cat{M})_{\mrm{inj}}$. 
We choose $I(M)$ and $\ze_M$ s.t.\ $I(M[p]) = I(M)[p]$ for all $p \in \Z$, and
also $I(M) = M$ if $M \in \dcat{K}^{\star}(\cat{M})_{\mrm{inj}}$. 
Then to any morphism $\phi : M \to N$ in $\dcat{K}^{\star}(\cat{M})$
we get a morphism 
\[ I(\phi) := Q(\ze_N) \o Q(\phi) \o Q(\ze_M)^{-1} :
I(M) \to I(N) \]
in $\dcat{D}^{\star}(\cat{M})$.
But because 
$Q : \dcat{K}^{\star}(\cat{M})_{\mrm{inj}} \to
\dcat{K}^{\star}(\cat{M})$
is fully faithful, $I(\phi)$ lifts uniquely to a morphism in
$\dcat{K}^{\star}(\cat{M})_{\mrm{inj}}$. 
Thus we have a functor 
\[ I : \dcat{K}^{\star}(\cat{M}) \to \dcat{K}^{\star}(\cat{M})_{\mrm{inj}}  \]
that splits the inclusion 
$\dcat{K}^{\star}(\cat{M})_{\mrm{inj}} \to \dcat{K}^{\star}(\cat{M})$,
and a morphism 
\[ \ze : \bsym{1} \to I \]
of functors $\dcat{K}^{\star}(\cat{M}) \to \dcat{K}^{\star}(\cat{M})$.
It is not hard to check that $I$ is a triangulated functor, and $\ze$ is a
morphism of triangulated functors. 
Also $I$ sends quasi-isomorphism to isomorphisms, and therefore it extends to a
triangulated functor 
\[ I : \dcat{D}^{\star}(\cat{M}) \to \dcat{K}^{\star}(\cat{M})_{\mrm{inj}} . \]

Now we define 
$\mrm{R}^{\star} F : \dcat{D}(\cat{M}) \to \cat{E}$
to be
\[ \mrm{R}^{\star} F := F|_{\dcat{K}^{\star}(\cat{M})_{\mrm{inj}}} \o I . \]
And we define 
\[ \eta_M : F(M) \to (\mrm{R}^{\star} F \circ Q)(M) = (F \o I)(M) \]
 to be 
\[ \eta_M := F(\ze_M) . \]
I leave it as an exercise to verify that $(\mrm{R}^{\star} F, \eta)$ is indeed
a right derived functor of $F$. 
\footnote{More details for the proof can be found here:
\url{http://www.math.bgu.ac.il/~amyekut%
/publications/course-der-cats/explanation_14-1-3.pdf}.}
\end{proof}

Let $\dagger \in \{ \mrm{b}, -, +  \}$
be s.t.\ $\dcat{K}^{\dagger}(\cat{M}) \subset \dcat{K}^{\star}(\cat{M})$,
and s.t.\ 
the derived functors $\mrm{R}^{\star} F$ and 
$\mrm{R}^{\dagger} F := \mrm{R}^{\dagger} (F|_{\dcat{K}^{\dagger}(\cat{M})})$
both exist. The universal property gives rise to a canonical morphism 
$\mrm{R}^{\dagger} F \to (\mrm{R}^{\star} F)|_{\dcat{D}^{\dagger}(\cat{M})}$
of triangulated functors 
$\dcat{D}^{\dagger}(\cat{M}) \to \cat{E}$. 
In general this might not be an isomorphism (but I don't know an example!).
However:

\begin{prop} \label{prop:132}
In the situation of Theorem \tup{\ref{thm:121}}, assume that 
$\dcat{K}^{\dagger}(\cat{M})$ also has enough K-injectives. Then the morphism 
$\mrm{R}^{\dagger} F \to (\mrm{R}^{\star} F)|_{\dcat{D}^{\dagger}(\cat{M})}$
is an isomorphism.
\end{prop}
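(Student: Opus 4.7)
The plan is to characterize $\th := \th_F : \mrm{R}^{\dagger} F \to (\mrm{R}^{\star} F)|_{\dcat{D}^{\dagger}(\cat{M})}$ via the universal property of $\mrm{R}^{\dagger} F$ and then verify that $\th_M$ is an isomorphism pointwise, first on K-injective complexes and then on arbitrary $M \in \dcat{D}^{\dagger}(\cat{M})$ by means of a K-injective resolution. The crucial preliminary observation is that K-injectivity is intrinsic: by Definition \ref{dfn:100} the acyclic test complexes range over all of $\dcat{K}(\cat{M})$, so this property does not depend on the ambient subcategory. This gives the inclusion
\[ \dcat{K}^{\dagger}(\cat{M})_{\tup{K-inj}} \subset \dcat{K}^{\star}(\cat{M})_{\tup{K-inj}} . \]

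Next I would recall that, by the universal property of $(\mrm{R}^{\dagger} F, \eta^{\dagger})$ applied to the pair $\bigl( (\mrm{R}^{\star} F)|_{\dcat{D}^{\dagger}(\cat{M})}, \eta^{\star}|_{\dcat{K}^{\dagger}(\cat{M})} \bigr)$, the morphism $\th$ is the unique one of triangulated functors such that
$\eta^{\star}_M = \th_M \circ \eta^{\dagger}_M$
for every $M \in \dcat{K}^{\dagger}(\cat{M})$ (identifying $M$ with $Q(M)$, since $Q$ is the identity on objects). Taking $M = I \in \dcat{K}^{\dagger}(\cat{M})_{\tup{K-inj}}$, the preceding inclusion places $I$ also in $\dcat{K}^{\star}(\cat{M})_{\tup{K-inj}}$, so Theorem \ref{thm:121} applied to each of the two derived functors yields that both $\eta^{\dagger}_I$ and $\eta^{\star}_I$ are isomorphisms in $\cat{E}$. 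The characterizing relation then forces $\th_I$ to be an isomorphism.

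Finally, for an arbitrary $M \in \dcat{D}^{\dagger}(\cat{M})$ the plan is to use the assumption that $\dcat{K}^{\dagger}(\cat{M})$ has enough K-injectives to choose a quasi-isomorphism $\ze : M \to I$ with $I \in \dcat{K}^{\dagger}(\cat{M})_{\tup{K-inj}}$. Then $\ze$ becomes an isomorphism in $\dcat{D}^{\dagger}(\cat{M})$, and naturality of $\th$ at $\ze$ produces a commutative square
\[ \UseTips \xymatrix @C=10ex @R=6ex {
\mrm{R}^{\dagger} F(M) \ar[r]^{\th_M} \ar[d]_{\mrm{R}^{\dagger} F(\ze)}
& \mrm{R}^{\star} F(M) \ar[d]^{\mrm{R}^{\star} F(\ze)} \\
\mrm{R}^{\dagger} F(I) \ar[r]^{\th_I} & \mrm{R}^{\star} F(I)
} \]
in $\cat{E}$. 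The two vertical arrows are isomorphisms as images of an isomorphism under triangulated functors, and $\th_I$ is an isomorphism by the previous paragraph, so $\th_M$ is an isomorphism as well. The only step of any real content is the intrinsic character of K-injectivity; after that is in hand, the rest of the argument is a formal two-step diagram chase and I do not foresee any obstacle.
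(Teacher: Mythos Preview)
Your proposal is correct and follows essentially the same approach as the paper: the paper's proof is a terse one-liner noting that $\eta^{\dagger}_I$ and $\eta^{\star}_I$ are both isomorphisms for $I \in \dcat{K}^{\dagger}(\cat{M})_{\mrm{inj}} \subset \dcat{K}^{\star}(\cat{M})_{\mrm{inj}}$, which is precisely your step on K-injectives, with the passage to arbitrary $M$ via a K-injective resolution left implicit. Your write-up simply unpacks these implicit steps carefully.
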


\begin{proof}
This is clear since 
$\mrm{R}^{\dagger} F (I) \to F(I)$
and $\mrm{R}^{\star} F (I) \to F(I)$
are isomorphisms when 
$I \in \dcat{K}^{\dagger}(\cat{M})_{\mrm{inj}} \subset 
\dcat{K}^{\star}(\cat{M})_{\mrm{inj}}$.
\end{proof}

When the conclusion of this proposition holds, we will simply write 
$\mrm{R}^{} F$ for the right derived functor.

\subsection{Left derived functor}
This is like Subsection \ref{subsec:130}, with sides reversed. 
No need to elaborate.

\subsection{Derived bifunctors}
\mbox{}

\begin{dfn} \label{dfn:132}
Let $\cat{M}$ and $\cat{N}$ be abelian categories, let 
$\cat{E}$ be a triangulated category, and let 
$\star, \dagger \in \{ \mrm{b}, - , + , \tup{\textvisiblespace} \}$.
Suppose we are given a bitriangulated bifunctor 
\[ F : \dcat{K}^{\dagger}(\cat{M}) \times \dcat{K}^{\star}(\cat{N}) \to \cat{E}
. \]
A {\em right derived bifunctor} of $F$
is a bitriangulated bifunctor
\[ \mrm{R}^{\dagger, \star} F : \dcat{D}^{\dagger}(\cat{M}) \times
\dcat{D}^{\star}(\cat{N}) \to \cat{E} , \]
with a morphism of bitriangulated bifunctors
\[ \eta : F \to \mrm{R}^{\dagger, \star} F \circ (Q \times Q) , \]
that has the same universal property as in Definition \ref{dfn:131}. 
\end{dfn}

There is a (by now obvious) uniqueness of $(\mrm{R}^{\dagger, \star} F, \eta)$.
Here is a refinement of Theorem \ref{thm:131}.

\begin{thm}
In the situation of Definition \tup{\ref{dfn:132}}, suppose there is a full
triangulated subcategory 
$\cat{J} \subset \dcat{K}^{\star}(\cat{N})$ with these two properties:
\begin{enumerate}
\rmitem{i}  If $\psi : I \to I'$ is a quasi-isomorphism in $\cat{J}$, and 
$\phi : M \to M'$ is a quasi-isomorphism in $\dcat{K}^{\dagger}(\cat{M})$, then 
\[ F(\phi, \psi) : F(M, I) \to F(M', I') \]
is an isomorphism in $\cat{E}$.

\rmitem{ii} Every $N \in \dcat{K}^{\star}(\cat{N})$ admits a quasi-isomorphism 
$N \to I$ with $I \in \cat{J}$.
\end{enumerate}
Then the right derived functor $(\mrm{R}^{\dagger, \star} F, \eta)$ exists, and
moreover
\[ \eta_{M, I} :  F(M, I) \to \mrm{R}^{\dagger, \star} F(M, I) \]
is an isomorphism for any $I \in \cat{J}$.
\end{thm}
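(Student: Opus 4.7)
The plan is to mimic the proof of Theorem \ref{thm:121}, adapting it to the bifunctor setting by building a resolution in the second variable only. First, using condition (ii) together with Lemma \ref{lem:101} (with condition (l): given $s : I \to N$ a quasi-isomorphism with $I \in \cat{J}$, condition (ii) produces $t : N \to I'$ with $I' \in \cat{J}$), I conclude that the canonical triangulated functor $\cat{J}_{\cat{S} \cap \cat{J}} \to \dcat{D}^{\star}(\cat{N})$ is fully faithful; it is also essentially surjective by (ii), hence an equivalence.

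Next, for each $N \in \dcat{K}^{\star}(\cat{N})$ I would choose (using the axiom of choice) a quasi-isomorphism $\zeta_N : N \to I(N)$ with $I(N) \in \cat{J}$, arranging that $I(N) = N$ and $\zeta_N = 1_N$ whenever $N \in \cat{J}$, and that $I(N[p]) = I(N)[p]$ with $\zeta_{N[p]} = \zeta_N[p]$. Just as in the proof of Theorem \ref{thm:121}, this collection assembles into a triangulated functor $I : \dcat{D}^{\star}(\cat{N}) \to \cat{J}_{\cat{S} \cap \cat{J}}$ splitting the equivalence above, together with a natural isomorphism $\zeta : \bsym{1} \iso \mrm{inc} \circ I$. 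Now condition (i) says exactly that $F|_{\dcat{K}^{\dagger}(\cat{M}) \times \cat{J}}$ sends quasi-isomorphisms in each variable separately to isomorphisms in $\cat{E}$. By the universal property of Ore localization applied in each variable, this bifunctor descends uniquely to a bitriangulated bifunctor $\bar F : \dcat{D}^{\dagger}(\cat{M}) \times \cat{J}_{\cat{S} \cap \cat{J}} \to \cat{E}$.

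I then set $\mrm{R}^{\dagger, \star} F := \bar F \circ (\bsym{1} \times I)$ and define $\eta_{M, N} := F(1_M, \zeta_N) : F(M, N) \to F(M, I(N)) = (\mrm{R}^{\dagger, \star} F \circ (Q \times Q))(M, N)$. Since $\zeta_I = 1_I$ for $I \in \cat{J}$, the isomorphism claim $\eta_{M, I} : F(M, I) \iso \mrm{R}^{\dagger, \star} F(M, I)$ is tautological. For the universal property, given another candidate $(G, \eta')$, I would define the comparison $\theta : \mrm{R}^{\dagger, \star} F \to G$ by $\theta_{M, N} := \eta'_{M, I(N)} \circ G(1_M, Q(\zeta_N))^{-1}$, which forces $\eta' = \theta \circ \eta$ because $\eta'_{M, I(N)} \circ F(1_M, \zeta_N) = G(1_M, Q(\zeta_N)) \circ \eta'_{M, N}$ by naturality of $\eta'$.

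The hardest step, which I would defer to a routine but careful verification, is checking that $\theta$ so defined is a morphism of bitriangulated bifunctors (natural in both arguments and commuting with both translations) and that it is the unique such morphism. Naturality and uniqueness in the first variable are immediate, since $\mrm{R}^{\dagger, \star} F$ agrees with $F$ there and $\bar F$ was obtained by the universal property. In the second variable the argument reduces, via the equivalence $\cat{J}_{\cat{S} \cap \cat{J}} \iso \dcat{D}^{\star}(\cat{N})$ and the splitting $I$, to a diagram chase using the naturality of $\zeta$ and $\eta'$. This is essentially the bi-variable analogue of the verification left as an exercise at the end of the proof of Theorem \ref{thm:121}.
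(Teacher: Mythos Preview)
Your proposal is correct and follows exactly the route the paper indicates: its own proof is simply ``This is an exercise. Cf.\ proofs of Theorems \ref{thm:101} and \ref{thm:121}'', and you have carried out precisely that exercise, combining the $\cat{J}_{\cat{S}\cap\cat{J}}$ localization argument from Theorem \ref{thm:101} with the resolution-functor construction from Theorem \ref{thm:121}. One small slip: in your formula for $\theta_{M,N}$ the two factors are composed in the wrong order; it should read $\theta_{M,N} = G(1_M, Q(\zeta_N))^{-1} \circ \eta'_{M, I(N)}$ so that the domain is $F(M, I(N)) = \mrm{R}^{\dagger,\star}F(M,N)$ and the codomain is $G(M,N)$.
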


\begin{proof}
This is an exercise. Cf.\ proofs of Theorems \ref{thm:101} and \ref{thm:121}.
\end{proof}

For subcategories 
$\dcat{K}^{\til{\dagger}}(\cat{M}) \subset \dcat{K}^{\dagger}(\cat{M})$
and
$\dcat{K}^{\til{\star}}(\cat{N}) \subset \dcat{K}^{\star}(\cat{N})$
there is a result similar to Proposition \ref{prop:132}. 
Note also the symmetry between $\cat{M}$ and $\cat{N}$. 

There is a similar definition of left derived bifunctor 
$(\mrm{L}^{\dagger, \star} F, \eta)$, and a similar existence result.

%
%


\begin{rem}
Here is something I realized last week: there is an ambiguity in the
triangulated structure of $\dcat{K}^{\star}(\cat{M})$, for 
$\star \in \{ \mrm{b}, -, + \}$. Consider a triangle $T$ in 
$\dcat{K}^{\star}(\cat{M})$. One possibility is to declare $T$ to be
distinguished if it is a distinguished triangle in 
$\dcat{K}^{}(\cat{M})$; namely $T \cong T'$ for some standard
triangle $T'$ in $\dcat{K}^{}(\cat{M})$. See Definition \ref{dfn:140}. 
But there is another option: declare $T$ to be
distinguished if  $T \cong T'$ for some standard
triangle $T'$ in $\dcat{K}^{\star}(\cat{M})$.

It turns out that these conditions are equivalent. This is a nice exercise.
\end{rem}

Let $\dcat{D}(\cat{M})^{\mrm{b}}$ be the full subcategory of 
$\dcat{D}(\cat{M})$ consisting of complexes with bounded cohomology. 
We already saw that the inclusion 
$\dcat{D}^{\mrm{b}}(\cat{M}) \to \dcat{D}(\cat{M})^{\mrm{b}}$
is an equivalence of triangulated categories. We shall often want to identify
these categories, and to do this we choose (implicitly) a quasi-inverse 
$\dcat{D}(\cat{M})^{\mrm{b}} \to \dcat{D}^{\mrm{b}}(\cat{M})$, for instance 
\[ M \mapsto (\opn{smt}^{\geq i_0(M)} \circ \opn{smt}^{\leq i_1(M)})(M) , \]
where $i_0(M) \ll 0 \ll i_1(M)$ are integers depending on 
$M$. 

\subsection{The left derived tensor functor}
Consider a commutative ring $A$. (This also work for noncommutative
rings, but we have to tensor a left module with a right module, and the
results is a $\Z$-module.) The biadditive bifunctor 
\[ F := - \ot_A - : \cat{Mod}\, A \times \cat{Mod}\, A \to 
\cat{Mod}\, A \]
extends to a biadditive bifunctor 
\begin{equation} \label{eqn:142}
F := - \ot_A - : \dcat{C}^{\star}(\cat{Mod}\, A) \times 
\dcat{C}^{\star}(\cat{Mod}\, A) \to  \dcat{C}^{\star}(\cat{Mod}\, A) , 
\end{equation}
for $\star \in \{ \mrm{b}, - , + , \tup{\textvisiblespace} \}$,
using the direct sum totalization. Namely 
\[ F(M, N)^i = (M \ot_A N)^i = \bigoplus_{j \in \Z}\ (M^j \ot_A N^{i-j})
\in \cat{Mod}\, A . \]

\begin{prop} \label{prop:142}
The bifunctor \tup{(\ref{eqn:142})} induces a bitriangulated bifunctor
\[ F := - \ot_A - : \dcat{K}^{\star}(\cat{Mod}\, A) \times 
\dcat{K}^{\star}(\cat{Mod}\, A) \to  \dcat{K}^{\star}(\cat{Mod}\, A) . \]
\end{prop}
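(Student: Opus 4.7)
The plan is to verify the three requirements making up ``bitriangulated bifunctor'': (a) the bifunctor descends from $\dcat{C}^{\star} \times \dcat{C}^{\star}$ to $\dcat{K}^{\star} \times \dcat{K}^{\star}$, (b) it commutes with the translation $T$ in each argument (i.e.\ it is bi-$T$-additive), and (c) it sends distinguished triangles in either argument to distinguished triangles. Throughout I would fix one argument and use biadditivity, reducing everything to the one-variable case twice.

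First, for (a), fix $N \in \dcat{C}^{\star}(\cat{Mod}\, A)$ and look at the additive endofunctor $F(-, N) = - \ot_A N$ on $\dcat{C}^{\star}(\cat{Mod}\, A)$. Suppose $\phi : M \to M'$ is null-homotopic, say $\phi = \d(\sigma)$ with $\sigma \in \opn{Hom}(M, M')^{-1}$, so $\phi^i = \d_{M'}^{i-1} \circ \sigma^i + \sigma^{i+1} \circ \d_M^i$. Define $\tilde\sigma \in \opn{Hom}(M \ot_A N, M' \ot_A N)^{-1}$ on the summand $M^p \ot N^q$ of degree $p+q = i$ to be $\sigma^p \ot 1_{N^q}$. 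A direct computation using the sign rule \tup{(\ref{eqn:7})} for the tensor differential shows $\d(\tilde\sigma) = \phi \ot 1_N$, so it is null-homotopic. The argument for the second argument is symmetric, using a twist of sign $(-1)^p$ on the $M^p \ot N^q$ summand to accommodate the Koszul sign in \tup{(\ref{eqn:7})}. By biadditivity this gives a well-defined biadditive bifunctor on $\dcat{K}^{\star} \times \dcat{K}^{\star}$.

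Next, for (b), I would write down explicit natural isomorphisms of complexes
\[ \xi^{(1)}_{M,N} : F(M[1], N) \iso F(M, N)[1], \qquad
\xi^{(2)}_{M,N} : F(M, N[1]) \iso F(M, N)[1]. \]
On graded components both sides are $\bigoplus_{p+q = i+1} M^p \ot N^q$, so $\xi^{(1)}$ is the identity, while $\xi^{(2)}$ is multiplication by $(-1)^p$ on the $M^p \ot N^q$ summand. A short sign-check against \tup{(\ref{eqn:7})}, using $\d_{M[1]} = -\d_M$ in the first case, verifies these commute with the differentials. Naturality is immediate.

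For (c), by biadditivity and (b) it suffices to show that for a morphism $\alpha : L \to M$ in $\dcat{C}^{\star}(\cat{Mod}\, A)$ and any $N \in \dcat{C}^{\star}(\cat{Mod}\, A)$, the standard triangle based at $F(\alpha, 1_N)$ is isomorphic (in $\dcat{K}^{\star}$) to the image under $F(-, N)$ of the standard triangle based at $\alpha$. Concretely, I would exhibit a natural isomorphism of complexes
\[ F(\opn{cone}(\alpha), N) \;\cong\; \opn{cone}\bigl( F(\alpha, 1_N) \bigr) . \]
At the graded level both sides equal $(L[1] \ot_A N) \oplus (M \ot_A N)$; using the translation isomorphism $\xi^{(1)}$ of step (b), the two differentials coincide as $2\times 2$ matrices (the off-diagonal entries being $\alpha \ot 1_N$ and $0$, the diagonal entries being $-\d_{L \ot N}$ and $\d_{M \ot N}$). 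Compatibility with the morphisms $\beta$ and $\gamma$ of the standard triangle is then immediate from the definitions. The analogous argument for the second argument uses $\xi^{(2)}$ and the sign twist it carries.

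The main obstacle I anticipate is purely bookkeeping: tracking the Koszul signs in \tup{(\ref{eqn:7})} through the translation isomorphism in the second variable, and then again through the cone computation, so that every diagram truly commutes on the nose and not merely up to sign. Once the correct sign conventions for $\xi^{(1)}$ and $\xi^{(2)}$ are fixed as above, the computations are mechanical but genuinely need to be done carefully.
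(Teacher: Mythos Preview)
Your proposal is correct and follows essentially the same three-step approach as the paper: verify that $-\ot_A-$ respects homotopies, that it commutes with shifts, and that it carries the cone of $\alpha$ to the cone of $\alpha \ot 1$ (resp.\ $1 \ot \alpha$), hence standard triangles to standard triangles. The paper's proof is a brief sketch that glosses over exactly the sign bookkeeping you flag as the main obstacle; your explicit identification of $\xi^{(1)}$ as the identity and $\xi^{(2)}$ as the Koszul sign $(-1)^p$ is correct and more detailed than what the paper provides.
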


\begin{proof}
We first note that the tensor product of complexes respects homotopies, and
therefore there is an induced bifunctor between the homotopy categories. 
Clearly this bifunctor respects shifts. As for distinguished triangles,
consider a standard triangle 
\[ T = \bigl( L \xar{\al} M \to N \to L[1] \bigr) \]
in $\dcat{C}^{}(\cat{Mod}\, A)$, i.e.\ $N = \opn{cone}(\al)$. Then for any
complex $P$, the complex
$P \ot_A N$ is isomorphic to the cone on 
$1_P \ot \al$, and moreover $P \ot_A T$ is isomorphic, in  
$\dcat{C}^{}(\cat{Mod}\, A)$, to the corresponding standard triangle.
This implies that for any distinguished triangle $T$ in 
$\dcat{K}^{}(\cat{Mod}\, A)$, the triangle $P \ot_A T$ is distinguished.
\end{proof}

\begin{dfn}
A complex $P \in \dcat{K}^{\star}(\cat{Mod}\, A)$ is called a {\em K-flat
complex} if for any acyclic complex 
$M \in \dcat{K}^{\star}(\cat{Mod}\, A)$ the complex 
$M \ot_A P$ is also acyclic. 
\end{dfn}

We denote by $\dcat{K}^{\star}(\cat{Mod}\, A)_{\mrm{flat}}$ the full 
subcategory of $\dcat{K}^{\star}(\cat{Mod}\, A)$ consisting of K-flat
complexes. 

\begin{lem} \mbox{} \label{lem:140}
\begin{enumerate}
\item The subcategory $\dcat{K}^{\star}(\cat{Mod}\, A)_{\mrm{flat}}$ is
triangulated.

\item If $P$ is K-projective then it is K-flat. Thus 
$\dcat{K}^{}(\cat{Mod}\, A)$ and 
$\dcat{K}^{-}(\cat{Mod}\, A)$ have enough K-flats.

\item Suppose $\phi : M \to M'$ is a quasi-isomorphism in 
$\dcat{K}^{\star}(\cat{Mod}\, A)$, and 
$\psi : P \to P'$ is a quasi-isomorphism in 
$\dcat{K}^{\star}(\cat{Mod}\, A)_{\mrm{flat}}$. 
Then 
\[ \phi \ot \psi : M \ot_A P \to M' \ot_A P' \]
is a quasi-isomorphism. 
\end{enumerate}
\end{lem}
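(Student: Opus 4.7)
For part (1), I would mimic the proof of Proposition \ref{prop:130} with ``K-injective'' replaced by ``K-flat.'' Closure of $\dcat{K}^{\star}(\cat{Mod}\, A)_{\mrm{flat}}$ under translations is immediate from $(M \ot_A P)[p] = M \ot_A P[p]$. For closure under cones, take a distinguished triangle $P \to P' \to P'' \to P[1]$ with $P, P'$ K-flat and any acyclic $M$; then Proposition \ref{prop:142} gives a distinguished triangle upon applying $M \ot_A -$, and the long exact cohomology sequence forces $M \ot_A P''$ to be acyclic.

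For part (2), the key device is character duality. Given an acyclic $M$ and a K-projective $P$, the tensor-Hom adjunction yields a natural isomorphism of complexes of abelian groups
\[ \opn{Hom}_{\Z}(M \ot_A P, \mbb{Q}/\Z) \cong \opn{Hom}_A\bigl(P, \opn{Hom}_{\Z}(M, \mbb{Q}/\Z)\bigr) . \]
Since $\mbb{Q}/\Z$ is an injective $\Z$-module, $\opn{Hom}_{\Z}(-, \mbb{Q}/\Z)$ is exact, so $\opn{Hom}_{\Z}(M, \mbb{Q}/\Z)$ is acyclic; K-projectivity of $P$ then makes the right-hand side, hence the left, acyclic. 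Finally, since $\mbb{Q}/\Z$ is an injective cogenerator of $\cat{Ab}$, the formula $\opn{H}^i\bigl(\opn{Hom}_{\Z}(N, \mbb{Q}/\Z)\bigr) \cong \opn{Hom}_{\Z}\bigl(\opn{H}^{-i}(N), \mbb{Q}/\Z\bigr)$ combined with the fact that any nonzero abelian group admits a nonzero map to $\mbb{Q}/\Z$ forces $\opn{H}^{-i}(M \ot_A P) = 0$ for every $i$. The clause ``hence $\dcat{K}^{}(\cat{Mod}\, A)$ and $\dcat{K}^{-}(\cat{Mod}\, A)$ have enough K-flats'' then follows immediately by combining this with Theorems \ref{thm:125}, \ref{thm:132} and \ref{thm:133}.

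For part (3), I would factor $\phi \ot \psi = (\phi \ot 1_{P'}) \circ (1_M \ot \psi)$ and handle each factor. The mapping cone of $\phi \ot 1_{P'}$ is canonically $\opn{cone}(\phi) \ot_A P'$, which is acyclic because $\opn{cone}(\phi)$ is acyclic and $P'$ is K-flat; so $\phi \ot 1_{P'}$ is a quasi-isomorphism. For the other factor, the mapping cone is $M \ot_A \opn{cone}(\psi)$, where $\opn{cone}(\psi)$ is both acyclic (since $\psi$ is a qiso) and K-flat (by part (1), as the cone of a morphism between K-flats). Choose a K-flat resolution $\tilde{M} \to M$, whose existence comes from part (2) together with Theorem \ref{thm:132} in the unbounded case, or Theorems \ref{thm:125}, \ref{thm:133} in the bounded-above case. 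Then $\tilde{M} \ot_A \opn{cone}(\psi) \to M \ot_A \opn{cone}(\psi)$ is a quasi-isomorphism by the first-factor argument applied with $\opn{cone}(\psi)$ K-flat in the second slot, while $\tilde{M} \ot_A \opn{cone}(\psi)$ is acyclic because $\tilde{M}$ is K-flat and $\opn{cone}(\psi)$ acyclic (using commutativity of $A$). Hence $M \ot_A \opn{cone}(\psi)$ is acyclic and $1_M \ot \psi$ is a qiso. The main obstacle is precisely this final reduction: it forces an appeal to the existence of K-flat resolutions, which in the unbounded setting rests on the non-elementary Theorem \ref{thm:132}; the bounded-above cases are by contrast essentially formal.
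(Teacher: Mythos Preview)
Your proposal is correct and follows essentially the same strategy as the paper. The paper leaves parts (1) and (2) as exercises, so your detailed arguments there (closure under cones via the long exact sequence, and the character-module trick with $\mbb{Q}/\Z$ for K-projective $\Rightarrow$ K-flat) simply fill in what the paper omits. For part (3), both you and the paper factor $\phi \ot \psi$ and handle the ``easy'' factor via the cone; for the ``hard'' factor $1_M \ot \psi$, the paper chooses a K-flat resolution $Q \to M$ and runs a $2$-out-of-$3$ argument on the commutative square
\[
\xymatrix{
Q \ot_A P \ar[r] \ar[d] & Q \ot_A P' \ar[d] \\
M \ot_A P \ar[r] & M \ot_A P'
}
\]
whereas you work directly with $M \ot_A \opn{cone}(\psi)$ and its K-flat resolution $\tilde{M} \ot_A \opn{cone}(\psi)$. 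These are the same idea in different packaging: both hinge on the existence of a K-flat resolution of $M$, and you are right to flag that in the unbounded case this rests on Theorem~\ref{thm:132}, a point the paper passes over silently.
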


\begin{proof}
(1) - (2). This is an exercise. 

\medskip \noindent
(3) First consider $P = P'$ and $\psi = 1_P$. Let $N$ be the cone on $\phi$. So
$N$ is acyclic, and there is a distinguished triangle 
$M \xar{\phi} M' \to N \to M[1]$ in $\dcat{K}^{\star}(\cat{Mod}\, A)$.
We apply the triangulated functor $- \ot_A P$, and get a distinguished triangle 
\[ M \ot_A P \xar{\phi \ot 1_P} M'  \ot_A P \to N  \ot_A P \to M[1]  \ot_A P .
\]
Since $N  \ot_A P$ is acyclic, we see that $\phi \ot 1_P$ is a
quasi-isomorphism.

Next we consider $M = M'$ and $\phi = 1_M$. Choose a K-flat resolution 
$Q \to M$. We get a commutative diagram 
\[ \UseTips \xymatrix @C=10ex @R=5ex {
Q \ot_A P
\ar[r]
\ar[d]
&
Q \ot_A P'
\ar[d]
\\
M \ot_A P
\ar[r]^{1_M \ot \psi}
&
M \ot_A P' . 
} \]
By the previous paragraph the three unmarked arrows are quasi-isomorphisms; and
hence $1_M \ot \psi$ is also a quasi-isomorphism.
\end{proof}

\begin{thm}
Write $F := - \ot_A -$. 
The left derived functor $(\eta,\, \opn{L}^{\star, \star} F)$ exists for 
$\star \in \{ -, \tup{\textvisiblespace} \}$. 
We denote it by $- \ot^{\mrm{L}}_A -$.
If either $M$ or $N$ is K-flat, then the morphism 
\[ \eta : M \ot^{\mrm{L}}_A N \to M \ot_A N  \]
is an isomorphism.
\end{thm}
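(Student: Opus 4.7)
The plan is to apply the left-derived analog of the bifunctor existence theorem, using the distinguished subcategory
\[
\cat{J} := \dcat{K}^{\star}(\cat{Mod}\, A)_{\mrm{flat}} \subset \dcat{K}^{\star}(\cat{Mod}\, A)
\]
placed in (say) the second argument. By Lemma \ref{lem:140}(1), $\cat{J}$ is a full triangulated subcategory, so the hypotheses of the existence theorem are meaningful. The bifunctor $F = -\otimes_A -$ has already been shown to be bitriangulated on the homotopy level by Proposition \ref{prop:142}, so we are exactly in the setting of Definition \ref{dfn:132} (dualized for left derivation).

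Next I would verify the two required properties of $\cat{J}$. Condition (ii) demands that every $N \in \dcat{K}^{\star}(\cat{Mod}\, A)$ admit a quasi-isomorphism $P \to N$ with $P \in \cat{J}$. For $\star = -$ this follows from Theorem \ref{thm:124} (bounded-above K-projective resolutions exist) combined with Lemma \ref{lem:140}(2) (K-projective implies K-flat); for the unbounded case $\star = \tup{\textvisiblespace}$ one uses Theorem \ref{thm:132} in the same way. Condition (i) — that $\phi \otimes \psi : M \otimes_A P \to M' \otimes_A P'$ is a quasi-isomorphism whenever $\phi$ is a quasi-isomorphism in $\dcat{K}^{\star}(\cat{Mod}\, A)$ and $\psi$ is a quasi-isomorphism inside $\cat{J}$ — is precisely Lemma \ref{lem:140}(3). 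Invoking the (left) bifunctor existence theorem yields $(\mrm{L}^{\star,\star} F, \eta)$, written $-\otimes_A^{\mrm{L}} -$, with $\eta_{M,N}$ an isomorphism whenever $N \in \cat{J}$; this disposes of the ``moreover'' clause in the case $N$ is K-flat, since then we may take $P = N$.

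The case where $M$ is K-flat requires a short extra argument, since our subcategory was placed in the second slot. Fix any K-flat resolution $\zeta_N : P \to N$; by construction $\eta_{M,N}$ is represented by $1_M \otimes \zeta_N : M \otimes_A P \to M \otimes_A N$. The cone of $\zeta_N$ is acyclic, and since $M$ is K-flat the functor $M \otimes_A -$ preserves acyclicity, so the cone of $1_M \otimes \zeta_N$ is acyclic. Hence $\eta_{M,N}$ is a quasi-isomorphism, i.e.\ an isomorphism in $\dcat{D}^{\star}(\cat{Mod}\, A)$, as required.

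The main obstacle is the bifunctor existence theorem itself, whose proof (not fully given in the text) should proceed just as in Theorem \ref{thm:121}: one chooses a functorial K-flat resolution $N \mapsto P(N)$ in $\dcat{K}^{\star}(\cat{Mod}\, A)$ compatible with shifts and fixed on $\cat{J}$, defines
\[
\mrm{L}F(M,N) := M \otimes_A P(N) , \qquad \eta_{M,N} := 1_M \otimes \zeta_N ,
\]
and verifies bitriangulatedness and the universal property. Triangulatedness in each variable is automatic from Proposition \ref{prop:142} and the fact that $P(-)$ is a triangulated functor with values in $\cat{J}$; the universal property follows from Lemma \ref{lem:140}(3) exactly as the single-variable version does. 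Once the bifunctor existence theorem is granted, the present theorem is then a formal consequence together with the K-flat acyclicity argument of the previous paragraph.
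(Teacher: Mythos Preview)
Your proposal is correct and is exactly the argument the paper has in mind: the theorem is stated in the paper without proof, immediately after Lemma~\ref{lem:140}, because it is meant to be the direct consequence of that lemma combined with the (left version of the) bifunctor existence theorem following Definition~\ref{dfn:132}. Your extra paragraph handling the case where $M$ rather than $N$ is K-flat is a genuine and necessary addition, since the existence theorem as stated only gives the isomorphism for the variable in which the resolving subcategory sits; the cone argument you give is the standard way to get the other side.
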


\begin{dfn}
We write 
\[ \opn{Tor}_i^A(M, N) := \opn{H}^{-i} ( M \ot^{\mrm{L}}_A N) \in \cat{Mod}\, A
. \]
\end{dfn}

\begin{prop}
Assume $A$ is noetherian. If $M, N \in \dcat{D}^{-}_{\mrm{f}}(\cat{Mod}\, A)$,
then $M \ot^{\mrm{L}}_A N \in \dcat{D}^{-}_{\mrm{f}}(\cat{Mod}\, A)$.
\end{prop}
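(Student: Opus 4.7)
The plan is to compute $M \ot^{\mrm{L}}_A N$ by means of a suitably small resolution and then inspect its degree-wise structure.

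First I would invoke Theorem \ref{thm:125}: since $A$ is left noetherian and each $\opn{H}^i(M)$ is finitely generated (and vanishes for $i \gg 0$), there is a quasi-isomorphism $P \to M$ where $P$ is a bounded above complex of finitely generated free $A$-modules. In the same way, pick a quasi-isomorphism $Q \to N$ with $Q$ a bounded above complex of finitely generated free $A$-modules.

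By Theorem \ref{thm:133}, $P$ and $Q$ are K-projective, hence K-flat by Lemma \ref{lem:140}(2). Therefore the canonical morphism $P \ot^{\mrm{L}}_A Q \to P \ot_A Q$ in $\dcat{D}^{-}(\cat{Mod}\, A)$ is an isomorphism, and by Lemma \ref{lem:140}(3) applied to the quasi-isomorphisms $P \to M$ and $Q \to N$ we get isomorphisms
\[ M \ot^{\mrm{L}}_A N \;\cong\; P \ot^{\mrm{L}}_A Q \;\cong\; P \ot_A Q \]
in $\dcat{D}^{}(\cat{Mod}\, A)$. So it suffices to examine the complex $P \ot_A Q$ directly.

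Suppose $P^p = 0$ for $p > n_P$ and $Q^q = 0$ for $q > n_Q$. Then
\[ (P \ot_A Q)^i \;=\; \boplus_{p + q = i} P^p \ot_A Q^q , \]
which is a \emph{finite} direct sum (indexed by $p \leq n_P$ and $q \leq n_Q$, i.e.\ $p \geq i - n_Q$) of finitely generated free $A$-modules, hence is itself finitely generated and free. Moreover the complex vanishes in degrees $i > n_P + n_Q$, so it is bounded above. Since $A$ is noetherian, every subquotient of a finitely generated $A$-module is finitely generated; therefore each $\opn{H}^i(P \ot_A Q)$ is a finitely generated $A$-module, and vanishes for $i > n_P + n_Q$. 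Thus $M \ot^{\mrm{L}}_A N \in \dcat{D}^{-}_{\mrm{f}}(\cat{Mod}\, A)$.

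There is no real obstacle here beyond being entitled to use finitely generated free resolutions in both arguments simultaneously; the two essentials are Theorem \ref{thm:125} (to produce $P$ and $Q$) and Lemma \ref{lem:140}(3) (to replace both $M$ and $N$ by their resolutions inside the derived tensor product). Once these are in hand the conclusion is a direct degree count combined with noetherianity of $A$.
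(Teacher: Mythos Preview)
Your proof is correct and follows exactly the approach the paper indicates: the paper's own proof is left as an exercise with the hint ``choose free resolutions $P \to M$ and $Q \to N$, with $P, Q$ bounded above complexes of finitely generated free modules,'' which is precisely what you carry out. Your write-up supplies the details the hint omits (the invocation of Theorem~\ref{thm:125}, the K-flatness via Lemma~\ref{lem:140}, the finiteness of the direct sums in each degree, and the use of noetherianity for the cohomology).
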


\begin{proof}
Exercise. (Hint: choose free resolutions $P \to M$ and $Q \to N$,  with $P, Q$
bounded above complexes of finitely generated free module.)
\end{proof}

\begin{dfn} \label{dfn:143}
The ring $A$ is called {\em regular}, or {\em of finite global cohomological
dimension}, if there is a natural number $d$ such that 
\[ \opn{Ext}^i_{A}(M, N) = 0 \]
for all $M, N \in \cat{Mod}\, A$ and all $i > d$.

The smallest such number $d$ is called the {\em global cohomological dimension}
of $A$.
\end{dfn}

\begin{rem}
For a commutative noetherian ring $A$ of finite Krull dimension, being regular
in this sense is equivalent to the usual definition, namely that for every prime
ideal $\mfrak{p}$ the ring $A_{\mfrak{p}}$ is a regular local ring. 
Thus for instance $\Z$, or any field $\K$, are regular. If A is regular, then 
the polynomial ring $A[t_1, \ldots, t_n]$ is also regular. 
\end{rem}

\begin{rem}
The definition above makes sense, and is extremely useful, also for
noncommutative rings. 
\end{rem}

\begin{prop}
If $A$ is regular, then $\dcat{K}^{\mrm{b}}(\cat{Mod}\, A)$ has enough
K-projectives.
\end{prop}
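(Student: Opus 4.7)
The plan is to start with a possibly unbounded-below K-projective resolution produced by Theorem \ref{thm:125}, and use the finite global dimension of $A$ to truncate it from below into a bounded complex of projectives. Let $M \in \dcat{K}^{\mrm{b}}(\cat{Mod}\, A)$, and choose $i_0 \in \Z$ with $\opn{H}^i(M) = 0$ for all $i < i_0$. By Theorem \ref{thm:125} there is a quasi-isomorphism $Q \to M$ with $Q$ a bounded above complex of free $A$-modules; by Theorem \ref{thm:133} such a $Q$ is K-projective. Set $n := i_0 - d - 1$ and form $P := \opn{smt}^{\geq n}(Q)$. Then $P$ is a bounded complex, and the canonical projection $Q \twoheadrightarrow P$ is a quasi-isomorphism, because its kernel is concentrated in degrees $\leq n$ and is acyclic (using $\opn{H}^j(Q) = \opn{H}^j(M) = 0$ for all $j \leq n < i_0$).

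The only nontrivial step is to prove that $N := P^n = \opn{Y}^n(Q)$ is a projective $A$-module; granting this, $P$ becomes a bounded complex of projectives and hence is K-projective by Theorem \ref{thm:133}. Since $\opn{H}^j(Q) = 0$ for every $j$ with $n \leq j \leq i_0 - 1$, one has $\opn{Z}^j(Q) = \opn{B}^j(Q)$ at each such degree, and splicing these identities produces an exact sequence
\[
0 \to N \to Q^{n+1} \to Q^{n+2} \to \cdots \to Q^{i_0 - 1} \to \opn{B}^{i_0}(Q) \to 0
\]
with exactly $d$ projective middle terms $Q^{n+1}, \ldots, Q^{i_0-1}$. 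Because $A$ has global cohomological dimension $\leq d$, the module $\opn{B}^{i_0}(Q)$ has projective dimension $\leq d$, and dimension shifting along this resolution gives
\[
\opn{Ext}^1_A(N, X) \cong \opn{Ext}^{d+1}_A(\opn{B}^{i_0}(Q), X) = 0
\]
for every $X \in \cat{Mod}\, A$, forcing $N$ to be projective.

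Finally, the roof $P \xleftarrow{\sim} Q \to M$ exhibits $P$ and $M$ as isomorphic in $\dcat{D}(\cat{Mod}\, A)$; by the K-projective analog of Theorem \ref{thm:120}, the localization functor from $\dcat{K}(\cat{Mod}\, A)_{\tup{K-proj}}$ into $\dcat{D}(\cat{Mod}\, A)$ is fully faithful, so this derived-category isomorphism lifts to a morphism (necessarily a quasi-isomorphism) $P \to M$ in $\dcat{K}^{\mrm{b}}(\cat{Mod}\, A)$, which is the desired K-projective resolution. The main obstacle is the degree bookkeeping in the splicing argument: the shift $n = i_0 - d - 1$ is chosen precisely so that the spliced sequence contains exactly $d$ projective middle terms, which is the exact number needed for the global-dimension hypothesis to force the leftmost kernel $N$ to be projective.
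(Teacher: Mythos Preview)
Your proof is correct and follows essentially the same approach as the paper's sketch: take a bounded-above projective resolution, smart-truncate it below, and show the new bottom term $\opn{Y}^n(Q)$ is projective. You have filled in the ``hard part'' (projectivity of $\opn{Y}^n(Q)$) with an explicit dimension-shifting argument, whereas the paper only gestures at Proposition~\ref{prop:141}; you have also addressed the passage from the roof $P \xleftarrow{\sim} Q \to M$ to an actual homotopy-category map $P \to M$, which the paper's sketch glosses over.

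One small citation issue: the full faithfulness of $\dcat{K}(\cat{Mod}\,A)_{\tup{K-proj}} \to \dcat{D}(\cat{Mod}\,A)$ (the K-projective analog of Theorem~\ref{thm:120}) only controls maps between two K-projective complexes, and $M$ need not be K-projective. What you actually need is the K-projective analog of Lemma~\ref{lem:122}, namely that $\opn{Hom}_{\dcat{K}}(P,M) \to \opn{Hom}_{\dcat{D}}(P,M)$ is bijective for $P$ K-projective and $M$ arbitrary. Alternatively, and perhaps more cleanly, observe that $Q \to P$ is a quasi-isomorphism between K-projectives, hence a homotopy equivalence; composing a homotopy inverse $P \to Q$ with $Q \to M$ yields the desired quasi-isomorphism $P \to M$ in $\dcat{K}^{\mrm{b}}(\cat{Mod}\,A)$ directly.
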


\begin{proof}[Sketch of proof]
Let $M$ be a bounded complex. 
Choose a projective resolution $P \to M$ in 
$\dcat{K}^{-}(\cat{Mod}\, A)$. Then for sufficiently small $i$ the
smart truncation 
$P' := \opn{smt}^{\geq i}(P)$
still gives a quasi-isomorphism $P' \to M$ (this is easy), and also consists of
projectives (this is the hard part: to show that the module 
$\opn{Y}^{i}(P)$ is projective -- cf.\ proof of Proposition \ref{prop:141}).
\end{proof}

\begin{cor}
If $A$ is a regular ring, then the left derived functor 
$\opn{L}^{\mrm{b}, \mrm{b}} (- \ot_A -)$ exists.
Namely if $M$ and $N$ are bounded complexes, then 
$M \ot^{\mrm{L}}_A N$ is bounded.
\end{cor}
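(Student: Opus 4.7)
The plan is to invoke the left-derived-bifunctor analogue of Theorem \ref{thm:121} (as mentioned right after Definition \ref{dfn:132}), using K-flat resolutions inside $\dcat{K}^{\mrm{b}}(\cat{Mod}\, A)$. The preceding proposition supplies enough K-projectives in $\dcat{K}^{\mrm{b}}(\cat{Mod}\, A)$, and Lemma \ref{lem:140}(2) says K-projectives are K-flat, so $\dcat{K}^{\mrm{b}}(\cat{Mod}\, A)$ has enough K-flats. Let
\[ \cat{P} := \dcat{K}^{\mrm{b}}(\cat{Mod}\, A)_{\mrm{flat}} \subset
\dcat{K}^{\mrm{b}}(\cat{Mod}\, A) \]
be the full subcategory of K-flat bounded complexes; by Lemma \ref{lem:140}(1) it is triangulated.

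First, I would verify the two hypotheses of the existence theorem for left-derived bifunctors, applied to
\[ F := - \ot_A - : \dcat{K}^{\mrm{b}}(\cat{Mod}\, A) \times
\dcat{K}^{\mrm{b}}(\cat{Mod}\, A) \to \dcat{K}^{\mrm{b}}(\cat{Mod}\, A) , \]
using the subcategory $\cat{P}$ in (say) the left argument. Property (i), invariance under simultaneous quasi-isomorphisms when one side lies in $\cat{P}$, is exactly Lemma \ref{lem:140}(3), since K-flats in $\dcat{K}^{\mrm{b}}$ are in particular K-flats in $\dcat{K}^{}$. Property (ii), that every $M \in \dcat{K}^{\mrm{b}}(\cat{Mod}\, A)$ admits a quasi-isomorphism $P \to M$ with $P \in \cat{P}$, is the content of the previous proposition combined with Lemma \ref{lem:140}(2).

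Next I would observe that $F$ preserves boundedness on $\cat{P} \times \dcat{K}^{\mrm{b}}(\cat{Mod}\, A)$: if $P$ and $N$ are both bounded, then the direct sum totalization
\[ (P \ot_A N)^i = \bigoplus_{j} P^j \ot_A N^{i-j} \]
is a finite sum and vanishes for $|i|$ sufficiently large, so $P \ot_A N \in \dcat{K}^{\mrm{b}}(\cat{Mod}\, A)$. Hence the existence theorem produces a bitriangulated bifunctor
\[ \mrm{L}^{\mrm{b}, \mrm{b}} F : \dcat{D}^{\mrm{b}}(\cat{Mod}\, A) \times
\dcat{D}^{\mrm{b}}(\cat{Mod}\, A) \to \dcat{D}^{\mrm{b}}(\cat{Mod}\, A) \]
together with the universal natural transformation $\eta$. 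Concretely, on $(M,N)$ it is computed by choosing any K-flat bounded resolution $P \to M$ and taking $P \ot_A N$; this is well-defined up to canonical isomorphism by Lemma \ref{lem:140}(3).

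Finally, to see that this really agrees with the already-constructed unbounded $- \ot_A^{\mrm{L}} -$, I would use the universal property: the composition $\dcat{D}^{\mrm{b}} \times \dcat{D}^{\mrm{b}} \hookrightarrow \dcat{D} \times \dcat{D} \xrightarrow{- \ot_A^{\mrm{L}} -} \dcat{D}$ gives a candidate that coincides with $\mrm{L}^{\mrm{b}, \mrm{b}} F$ on K-flat objects of $\cat{P}$, hence (by uniqueness in the universal property) everywhere, and lands in $\dcat{D}^{\mrm{b}}$ by the computation above. The only delicate point, which I expect to be the main obstacle, is the bookkeeping to promote the ``enough K-flats'' in the \emph{bounded} homotopy category into the existence theorem: one must check that the K-flat resolution $P \to M$ produced by truncating a projective resolution indeed lies in $\dcat{K}^{\mrm{b}}$ (not merely in $\dcat{K}^{-}$), and this is exactly what the regularity hypothesis buys through the previous proposition.
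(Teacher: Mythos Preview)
Your argument is correct and is precisely the intended one: the paper states this corollary without proof, as an immediate consequence of the preceding proposition (enough K-projectives in $\dcat{K}^{\mrm{b}}(\cat{Mod}\, A)$) together with the left-derived-bifunctor existence theorem alluded to after Definition \ref{dfn:132}. One small remark: your aside that ``K-flats in $\dcat{K}^{\mrm{b}}$ are in particular K-flats in $\dcat{K}$'' is true but not needed here, since the resolutions produced by the preceding proposition are bounded complexes of projectives, hence K-projective in the unbounded sense by Theorem \ref{thm:133}, and therefore K-flat in the unbounded sense by Lemma \ref{lem:140}(2) directly.
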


\begin{dfn}
A complex $M \in \dcat{D}^{\mrm{b}}(\cat{Mod}\, A)$ has {\em finite flat
dimension} if 
there is a natural number $i_0$ such that 
$\opn{Tor}_i^A(M, N) = 0$
for all $N \in \cat{Mod}\, A$ and $i < i_0$.
\end{dfn}

\begin{prop} \label{prop:141}
The following are equivalent for $M \in \dcat{D}^{\mrm{b}}(\cat{Mod}\, A)$:
\begin{enumerate}
\rmitem{i} $M$ has finite flat dimension.
\rmitem{ii} There is a quasi-isomorphism $P \to M$ with $P$ a bounded complex of
flat $A$-modules. 
\end{enumerate}
\end{prop}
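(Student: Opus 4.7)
The plan is to prove the two implications separately, with the hard work concentrated in (i) $\Rightarrow$ (ii).

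For (ii) $\Rightarrow$ (i), I first observe that a bounded complex $P$ of flat modules is itself K-flat. A single flat module placed in one degree is trivially K-flat, since tensoring with it is exact; by induction on the number of nonzero degrees, using the brutal truncation short exact sequence $0 \to \sigma^{\geq a+1} P \to P \to P^a \to 0$ together with Lemma \ref{lem:140}(1), the general bounded case follows. Hence by the existence theorem for $\ot^{\mrm{L}}_A$ immediately above, $M \ot^{\mrm{L}}_A N \cong P \ot_A N$ for every module $N$. If $P$ is concentrated in degrees $[a,b]$, so is $P \ot_A N$, and therefore $\opn{Tor}^A_j(M,N) = \opn{H}^{-j}(P \ot_A N) = 0$ once $j > -a$, giving finite flat dimension.

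For (i) $\Rightarrow$ (ii), I would start from a K-projective resolution $\zeta : P \to M$ with $P$ a bounded above complex of projective (hence flat) $A$-modules, which exists by Theorem \ref{thm:124}. Say $P^i = 0$ for $i > b$, and $\opn{H}^i(P) = \opn{H}^i(M) = 0$ for $i < a$, and let $i_0$ be a bound witnessing finite flat dimension of $M$. Choose an integer $k < \min(a, 1 - i_0)$, and set $P' := \opn{smt}^{\geq k}(P)$. Since $\opn{H}^i(P) = 0$ for $i < k$, the canonical map $P' \to P$ is a quasi-isomorphism, so $P' \to M$ is too; and $P'$ is bounded in degrees $[k,b]$, with $(P')^i = P^i$ projective for $i > k$. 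The entire question reduces to showing that the degree $k$ term $\opn{Y}^k(P) = \opn{Coker}(d^{k-1} : P^{k-1} \to P^k)$ is flat.

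This flatness is the main obstacle, and it is precisely where the finite flat dimension hypothesis is consumed. Since $\opn{H}^i(P) = 0$ for $i < k$, the sequence $\cdots \to P^{k-2} \to P^{k-1} \to P^k \surj \opn{Y}^k(P) \to 0$ is exact; reindexed homologically, it is a projective resolution of $\opn{Y}^k(P)$, or equivalently $\sigma^{\leq k}(P)[k]$ is a K-projective resolution of $\opn{Y}^k(P)$. A direct Tor computation then gives, for every module $N$ and every $j \geq 1$,
$$\opn{Tor}^A_j(\opn{Y}^k(P), N) \;\cong\; \opn{H}^{k-j}\bigl(\sigma^{\leq k}(P \ot_A N)\bigr) \;=\; \opn{H}^{k-j}(P \ot_A N) \;=\; \opn{Tor}^A_{j-k}(M,N),$$
where the middle equality uses $k - j < k$ (valid since $j \geq 1$), and the last uses K-flatness of $P$ so that $P \ot_A N$ represents $M \ot^{\mrm{L}}_A N$. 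The choice $k < 1 - i_0$ forces $j - k > i_0$ for all $j \geq 1$, so these Tor groups all vanish, $\opn{Y}^k(P)$ is flat, and $P' \to M$ is the desired bounded flat resolution.
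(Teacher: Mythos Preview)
Your proof is correct, and the overall strategy --- truncate a resolution, then show the new bottom term is flat --- matches the paper's. One small slip: with the paper's convention $\opn{smt}^{\geq k}(P)$ is a \emph{quotient} of $P$, so the canonical map runs $P \to P'$, not $P' \to P$. This is harmless here, since $M^i = 0$ for $i < a$ and $k < a$, so $\zeta : P \to M$ factors through that quotient and yields the honest quasi-isomorphism $P' \to M$ you want (the paper's own proof contains the same directional slip).

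Where you diverge from the paper is in verifying that $\opn{Y}^k(P)$ is flat. The paper truncates exactly at the vanishing bound, forms the distinguished triangle $\opn{stt}^{> i_0}(P) \to P \to P^{i_0}[-i_0] \to \cdots$, applies $- \ot^{\mrm{L}}_A N$, and reads off Tor-vanishing from the resulting long exact cohomology sequence. You instead observe that the acyclic tail $\cdots \to P^{k-1} \to P^k$ is already a projective resolution of $\opn{Y}^k(P)$, so $\opn{Tor}^A_j(\opn{Y}^k(P), N) \cong \opn{H}^{k-j}(P \ot_A N) = \opn{Tor}^A_{j-k}(M,N)$ by direct inspection. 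Your route is classical dimension-shifting and never invokes the derived tensor product of complexes; the paper's route is the triangulated-category idiom for the same computation. Both are fine.
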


\begin{proof}
The implication (ii) $\Rightarrow$ (i) is trivial. For the opposite
implication, 
let $i_0$ be such that 
$\opn{H}^i (M \ot^{\mrm{L}}_A N) = 0$
for all $N \in \cat{Mod}\, A$ and $i < i_0$.
In particular $\opn{H}^i(M) = 0$ for $i < i_0$.
Let us choose a flat resolution $P' \to M$ ($P$ is a bounded above
complex of flat modules). Consider the smart truncation 
$P := \opn{smt}^{\geq i_0}(P')$. 
We get  quasi-isomorphisms $P \to P' \to M$, and $P$ is a bounded complex.
Now 
\[ P = \opn{smt}^{\geq i_0}(P') = 
\bigl( \cdots \to 0 \to \opn{Y}^{i_0}(P') \xar{\d} {P'}^{i_0+1} \xar{\d}
{P'}^{i_0+2} \to \cdots \bigr) . \]
We will prove that $P^{i_0} = \opn{Y}^{i_0}(P')$ is flat. 

Look at the stupid truncation of $P$:
\[ \opn{stt}^{> i_0}(P) = 
\bigl( \cdots \to 0 \xar{} P^{i_0+1} \xar{\d} P^{i_0+2}
\to \cdots \bigr) . \]
 we get an exact sequence of complexes,
and hence a distinguished triangle:
\[ \opn{stt}^{> i_0}(P) \to P \to P^{i_0}[-i_0] \to \opn{stt}^{> i_0}(P)[1] . \]
Take any $A$-module $N$. 
By assumption 
\[ \opn{H}^i (P \ot^{\mrm{L}}_A N) = \opn{H}^i (M \ot^{\mrm{L}}_A N)  = 0 \]
for all $i < i_0$. Because $\opn{stt}^{> i_0}(P)$ is a bounded complex of
flat modules concentrated in degrees $> i_0$, we also have 
\[ \opn{H}^i \bigl ( \opn{stt}^{> i_0}(P), \ot^{\mrm{L}}_A N \bigr) = 0 \]
for all $i \leq i_0$. There is an exact sequence 
\[ \opn{H}^i (P \ot^{\mrm{L}}_A N) \to 
\opn{H}^i \bigl( P^{i_0}[-i_0] \ot^{\mrm{L}}_A N \bigr) \to
\opn{H}^i \bigl( \opn{stt}^{> i_0}(P)[1] \bigr) . \]
Renumbering we get the exact sequence 
\[ \opn{H}^{i + i_0} (P \ot^{\mrm{L}}_A N) \to 
\opn{H}^i \bigl( P^{i_0} \ot^{\mrm{L}}_A N \bigr) \to
\opn{H}^{i + i_0 + 1} \bigl( \opn{stt}^{> i_0}(P) \bigr) . \]
The end terms vanish for $i < 0$; and hence also the middle term. 
We see that 
\[ \opn{Tor}_j^A( P^{i_0} , N) = \opn{H}^{-j} \bigl( P^{i_0} \ot^{\mrm{L}}_A N
\bigr) = 0 \]
for $j > 0$, so $P^{i_0}$ is flat. 
\end{proof}

\subsection{The right derived Hom functor}
Let $\cat{M}$ be a $\K$-linear abelian category. (E.g.\ 
$\cat{M} = \cat{Mod}\, A$ and $\K = A$ for a commutative ring $A$.)
As in Proposition \ref{prop:142}, the biadditive bifunctor 
\[ F := \opn{Hom}_{\cat{M}}(-, -) : \cat{M}^{\mrm{op}} \times \cat{M}
\to \cat{Mod}\, \K \]
extends to a bitriangulated bifunctor 
\[ F := \opn{Hom}_{\cat{M}}(-, -) : \dcat{K}^{}(\cat{M})^{\mrm{op}}
\times \dcat{K}^{}(\cat{M}) \to  
\dcat{K}^{}(\cat{Mod}\, \K) , \]
using the product totalization. Namely 
\[ F(M, N)^i = \opn{Hom}_{\cat{M}}(M, N)^i = \prod_{j \in \Z}\ 
\opn{Hom}_{\cat{M}}(M^j, N^{i+j}) \in \cat{Mod}\, \K . \]
The boundedness is a bit confusing; for instance, if 
$M \in \dcat{K}^{-}(\cat{M})$ and 
$N \in \dcat{K}^{+}(\cat{M})$, then 
$\opn{Hom}_{\cat{M}}(M, N) \in \dcat{K}^{+}(\cat{M})$.

\begin{lem} \mbox{}
\begin{enumerate}
\item Suppose $\phi : M \to M'$ is a quasi-isomorphism in 
$\dcat{K}^{}(\cat{Mod}\, A)$, and 
$\psi : I \to I'$ is a quasi-isomorphism in 
$\dcat{K}^{}(\cat{Mod}\, A)_{\mrm{inj}}$. 
Then 
\[ \opn{Hom}(\phi, \psi) : 
\opn{Hom}_A(M, I') \to \opn{Hom}_A(M', I)  \]
is a quasi-isomorphism. 

\item Suppose $\phi : P \to P'$ is a quasi-isomorphism in 
$\dcat{K}^{}(\cat{Mod}\, A)_{\mrm{proj}}$, and 
$\psi : N \to N'$ is a quasi-isomorphism in 
$\dcat{K}^{}(\cat{Mod}\, A)$. 
Then 
\[ \opn{Hom}(\phi, \psi) : 
\opn{Hom}_A(P, N') \to \opn{Hom}_A(P', N)  \]
is a quasi-isomorphism.
\end{enumerate}
\end{lem}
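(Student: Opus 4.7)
The plan is to factor $\opn{Hom}(\phi, \psi)$ as a composition of two simpler quasi-isomorphisms, one in each variable. Specifically, using the bifunctoriality identity $\opn{Hom}(\phi, \psi) = \opn{Hom}(1_{M'}, \psi) \circ \opn{Hom}(\phi, 1_I)$, I would reduce each part to two claims:
\begin{enumerate}
\item[(a)] If one argument is fixed at a K-injective (resp.\ K-projective) complex, then $\opn{Hom}_A$ sends quasi-isomorphisms in the other argument to quasi-isomorphisms.
\item[(b)] A quasi-isomorphism between K-injective (resp.\ K-projective) complexes induces, via $\opn{Hom}_A$, a quasi-isomorphism when the other argument is any complex.
\end{enumerate}

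For part (1) of the lemma, claim (b) is immediate from Proposition \ref{prop:101}: the quasi-isomorphism $\psi : I \to I'$ between K-injectives is a homotopy equivalence, so $\opn{Hom}(1, \psi)$ is a homotopy equivalence in $\dcat{K}(\cat{Mod}\,\K)$, hence a quasi-isomorphism. For claim (a), I would embed $\phi : M \to M'$ in a standard distinguished triangle $M \xar{\phi} M' \to N \to M[1]$ in $\dcat{K}(\cat{Mod}\,A)$; since $\opn{Hom}_A(-, I')$ is (bi)triangulated (contravariantly in the first slot), applying it produces a distinguished triangle
\[ \opn{Hom}_A(N, I') \to \opn{Hom}_A(M', I') \to \opn{Hom}_A(M, I') \to \opn{Hom}_A(N, I')[1] \]
in $\dcat{K}(\cat{Mod}\,\K)$. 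Because $\phi$ is a quasi-isomorphism, $N$ is acyclic; because $I'$ is K-injective, $\opn{Hom}_A(N, I')$ is acyclic. The long exact cohomology sequence then forces $\opn{Hom}(\phi, 1_{I'})$ to be a quasi-isomorphism. Composing the two gives the result.

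Part (2) is formally dual: factor $\opn{Hom}(\phi, \psi) = \opn{Hom}(\phi, 1_{N'}) \circ \opn{Hom}(1_{P'}, \psi)$, use the K-projective version of Proposition \ref{prop:101} to get that $\phi : P \to P'$ is a homotopy equivalence (so $\opn{Hom}(\phi, 1)$ is a homotopy equivalence), and embed $\psi$ in a distinguished triangle whose cone is acyclic, so that K-projectivity of $P$ makes $\opn{Hom}_A(P, \opn{cone}(\psi))$ acyclic. The long exact sequence argument then finishes the proof.

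The main technical point, and the one I would want to verify carefully, is that $\opn{Hom}_A(-, I)$ and $\opn{Hom}_A(P, -)$ are genuinely triangulated functors on $\dcat{K}(\cat{Mod}\, A)$ (in the appropriate variance). This reduces to checking that applying $\opn{Hom}_A(-, I)$ to the standard triangle $L \xar{\al} M \xar{\be} \opn{cone}(\al) \xar{\ga} L[1]$ gives a triangle isomorphic (in $\dcat{K}(\cat{Mod}\,\K)$) to a standard triangle, which is a direct matrix computation on the product totalization (the same computation that identified $\cat{C}_\Pi(\opn{Hom}_{\cat{M}})$ with the Hom-complex of Definition \ref{dfn:3}). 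Once this is in hand, the proof above goes through with no further obstacle.
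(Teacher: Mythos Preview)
Your proof is correct and follows essentially the same factor-and-cone strategy the paper points to (it refers back to Lemma~\ref{lem:140}(3)). One small difference worth noting: for claim~(b) you invoke Proposition~\ref{prop:101} to say that a quasi-isomorphism between K-injectives (resp.\ K-projectives) is already a homotopy equivalence, so $\opn{Hom}(1,\psi)$ (resp.\ $\opn{Hom}(\phi,1)$) is a homotopy equivalence on the nose. The paper's referenced argument instead handles that step by resolving the remaining argument and using a commutative square together with the first step. Your shortcut is cleaner here; the resolution-square argument has the advantage of working uniformly even in the K-flat situation of Lemma~\ref{lem:140}(3), where no analogue of Proposition~\ref{prop:101} is available. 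A minor slip: your factorization $\opn{Hom}(\phi,\psi) = \opn{Hom}(1_{M'},\psi)\circ\opn{Hom}(\phi,1_I)$ has mismatched subscripts (the two maps as written do not compose); the intended factorization through $\opn{Hom}_A(M',I')$ or $\opn{Hom}_A(M,I)$ is clear and the argument goes through once corrected.
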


\begin{proof}
This is like the proof of Lemma \ref{lem:140}(3).
\end{proof}

\begin{thm} \label{thm:141}
Assume 
$\dcat{K}^{\star}(\cat{M})$ has enough K-projectives, or
$\dcat{K}^{\dagger}(\cat{M})$ has enough K-injectives, for 
$\star \in \{ - , \tup{\textvisiblespace} \}$ and
$\dag \in \{ + , \tup{\textvisiblespace} \}$.
Let $F := \opn{Hom}_{\cat{M}}(-,-)$. 
Then the right derived functor 
$(\mrm{R}^{\star, \dagger} F,\, \eta)$ exists, and we denote it by 
$\opn{RHom}_{\cat{M}}(-, -)$.
If either $M$ and $M'$ are K-projective, or $N$ and $N'$ are
K-injective, then 
\[ \eta : \opn{Hom}_{\cat{M}}(M, N) \to \opn{RHom}_{\cat{M}}(M, N) \]
is an isomorphism.
\end{thm}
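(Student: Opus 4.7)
The plan is to reduce this to the bifunctor existence theorem stated just after Definition \ref{dfn:132}, applied to the bitriangulated bifunctor
\[ F = \opn{Hom}_{\cat{M}}(-,-) : \dcat{K}^{\star}(\cat{M})^{\mrm{op}} \times
\dcat{K}^{\dagger}(\cat{M}) \to \dcat{K}(\cat{Mod}\, \K) , \]
noting that $\dcat{K}^{\star}(\cat{M})^{\mrm{op}} \cong \dcat{K}^{\opn{op}(\star)}(\cat{M}^{\mrm{op}})$ via the $\opn{op}$ isomorphism recalled in the previous subsection.

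First I would handle the K-injective case. Assume $\dcat{K}^{\dagger}(\cat{M})$ has enough K-injectives, and take $\cat{J} := \dcat{K}^{\dagger}(\cat{M})_{\tup{K-inj}}$, which is a full triangulated subcategory of $\dcat{K}^{\dagger}(\cat{M})$ by Proposition \ref{prop:130}. Condition (ii) of the bifunctor existence theorem is then precisely the assumption that there are enough K-injectives. Condition (i), which asks that $F(\phi,\psi) = \opn{Hom}(\phi,\psi)$ be a quasi-isomorphism whenever $\phi$ is a quasi-isomorphism in $\dcat{K}^{\star}(\cat{M})^{\mrm{op}}$ (equivalently, in $\dcat{K}^{\star}(\cat{M})$) and $\psi$ is a quasi-isomorphism in $\cat{J}$, is exactly part (1) of the preceding Lemma. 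Thus the derived bifunctor $(\mrm{R}^{\star,\dagger}F, \eta)$ exists, and the ``moreover'' clause of that theorem gives that $\eta_{M,N}$ is an isomorphism whenever $N \in \cat{J}$, i.e.\ whenever $N$ is K-injective.

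For the K-projective case I would argue symmetrically. Assume $\dcat{K}^{\star}(\cat{M})$ has enough K-projectives. Using the isomorphism $\opn{op}$, this says that $\dcat{K}^{\opn{op}(\star)}(\cat{M}^{\mrm{op}})$ has enough K-injectives (as noted in the lemma on the $\opn{op}$ functor, K-projectives correspond to K-injectives under $\opn{op}$). Viewing $F$ now with its first argument in $\dcat{K}^{\opn{op}(\star)}(\cat{M}^{\mrm{op}})$, I would apply the bifunctor existence theorem with $\cat{J}$ being the image under $\opn{op}$ of the K-projectives in $\dcat{K}^{\star}(\cat{M})$; condition (ii) is immediate, and condition (i) is part (2) of the preceding Lemma. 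This yields the derived bifunctor together with the fact that $\eta_{M,N}$ is an isomorphism whenever $M$ is K-projective.

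In either case the universal property in Definition \ref{dfn:132} characterizes $\mrm{R}^{\star,\dagger}F$ up to unique isomorphism, so both constructions agree when both hypotheses are available, and we may write $\opn{RHom}_{\cat{M}}(-,-)$ unambiguously. The main step, and the only content beyond invoking the bifunctor existence theorem, is the verification of condition (i) --- but that has already been packaged into the Lemma preceding the theorem. So there is essentially nothing to do here besides arranging the bookkeeping for the two variances (covariant in $N$, contravariant in $M$) and checking that the ``either/or'' clause in the statement matches the ``either one of the arguments is replaced by a resolution'' formulation of the bifunctor theorem.
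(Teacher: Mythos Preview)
Your proposal is correct and matches the paper's approach: the paper's proof is simply the one-line remark ``The proof is like that of Theorems \ref{thm:101} and \ref{thm:121},'' and you have spelled out exactly what that means by invoking the bifunctor existence theorem with $\cat{J}$ the K-injectives (resp.\ the image of the K-projectives under $\opn{op}$), with condition (i) supplied by the preceding Lemma. Your use of the $\opn{op}$ isomorphism to handle the K-projective case is perhaps slightly more explicit than necessary---the paper's remark ``Note also the symmetry between $\cat{M}$ and $\cat{N}$'' after the bifunctor theorem indicates one may simply put $\cat{J}$ in the first factor directly---but the content is the same.
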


The proof is like that of  Theorems \ref{thm:101} and
\ref{thm:121}.


\begin{dfn}
For $M, N \in \dcat{D}^{}(\cat{M})$ we write
\[ \opn{Ext}^i_{\cat{M}}(M, N) := \opn{H}^i ( \opn{RHom}_{\cat{M}}(M, N) ) .
\]
\end{dfn}

\begin{thm} \label{thm:143}
In the situation of Theorem \tup{\ref{thm:141}}, there is a bifunctorial
isomorphism 
\[ \opn{Ext}^i_{\cat{M}}(M, N) \cong 
\opn{Hom}_{\dcat{D}^{}(\cat{M})}(M, N[i])  \]
for $M \in \dcat{D}^{\dagger}(\cat{M})$ and $N \in \dcat{D}^{\star}(\cat{M})$.
\end{thm}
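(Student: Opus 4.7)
My plan is to reduce the statement to a computation using either a K-injective resolution of $N$ or a K-projective resolution of $M$, and then identify the resulting Hom space in the homotopy category with a Hom space in the derived category via Theorem \ref{thm:120} (or its K-projective analogue).

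Assuming $\dcat{K}^{\dagger}(\cat{M})$ has enough K-injectives, I would first choose a K-injective resolution $\ze_N : N \to I$. By Theorem \ref{thm:141} the morphism
\[ \eta : \opn{Hom}_{\cat{M}}(M, I) \to \opn{RHom}_{\cat{M}}(M, N) \]
is an isomorphism in $\dcat{D}(\cat{Mod}\, \K)$, so applying $\opn{H}^i$ gives $\opn{Ext}^i_{\cat{M}}(M, N) \cong \opn{H}^i\bigl(\opn{Hom}_{\cat{M}}(M, I)\bigr)$. Next I would use the degree-wise identification $\opn{Hom}_{\cat{M}}(M, I)^{i+j} = \opn{Hom}_{\cat{M}}(M, I[i])^{j}$ (the differentials agreeing up to a global sign, which does not affect cohomology) to obtain
\[ \opn{H}^i\bigl(\opn{Hom}_{\cat{M}}(M, I)\bigr) \cong \opn{H}^0\bigl(\opn{Hom}_{\cat{M}}(M, I[i])\bigr) = \opn{Hom}_{\dcat{K}(\cat{M})}(M, I[i]), \]
where the last equality is the proposition (from the section on the homotopy category) that identifies $\opn{Hom}_{\dcat{K}(\cat{M})}$ with $\opn{H}^0$ of the Hom complex.

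Since the full subcategory of K-injectives is triangulated by Proposition \ref{prop:130}, it is in particular closed under shifts, so $I[i]$ is again K-injective. Lemma \ref{lem:122} then yields $\opn{Hom}_{\dcat{K}(\cat{M})}(M, I[i]) \cong \opn{Hom}_{\dcat{D}(\cat{M})}(M, I[i])$. Finally, since $\ze_N$ is a quasi-isomorphism, $\ze_N[i] : N[i] \to I[i]$ is an isomorphism in $\dcat{D}(\cat{M})$, giving the identification $\opn{Hom}_{\dcat{D}(\cat{M})}(M, I[i]) \cong \opn{Hom}_{\dcat{D}(\cat{M})}(M, N[i])$.

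Bifunctoriality is automatic: the assignments $N \mapsto I$ and $\ze_N$ can be chosen functorially, as in the proof of Theorem \ref{thm:121}, and each identification in the chain is visibly natural in $M$ and $N$. In the alternative case, where $\dcat{K}^{\star}(\cat{M})$ has enough K-projectives, I would run the same argument replacing the resolution $N \to I$ by a K-projective resolution $P \to M$ and invoking the K-projective analogue of Lemma \ref{lem:122}. I do not foresee any serious obstacle here; the only bookkeeping point is the Koszul sign appearing in the shift identification for the Hom complex, which does not affect the final cohomological isomorphism.
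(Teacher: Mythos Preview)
Your proposal is correct and follows essentially the same approach as the paper's own proof: choose a (functorial) K-injective resolution $\ze_N : N \to I$, pass from $\opn{Ext}^i$ to $\mrm{H}^i$ of the Hom complex, shift to $\mrm{H}^0$, identify with $\opn{Hom}_{\dcat{K}(\cat{M})}(M,I[i])$, apply $Q$ (Lemma~\ref{lem:122}) to land in $\dcat{D}(\cat{M})$, and then use $\ze_N[i]^{-1}$ to replace $I[i]$ by $N[i]$. The paper likewise dispatches the K-projective case as ``similar'' and appeals to the proof of Theorem~\ref{thm:121} for functoriality, exactly as you do.
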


\begin{proof}
We will prove the K-injective case -- the K-projective case is similar. Let
$\ze_N : N \to I(N)$ be a K-injective
resolution, which can be made functorial (see proof of Theorem \ref{thm:121}).
We get bifunctorial isomorphisms of abelian groups
\[
\begin{aligned}
& \opn{Ext}^i_{\cat{M}}(M, N) \xar{\opn{Ext}^i_{\cat{M}}(1_M, \ze_N)}
\opn{Ext}^i_{\cat{M}}(M, I) \cong 
\mrm{H}^i (\opn{Hom}_{\cat{M}}(M, I)) 
\\
& \quad \cong \mrm{H}^0 (\opn{Hom}_{\cat{M}}(M, I[i])) =
\opn{Hom}_{\dcat{K}^{}(\cat{M})}(M, I[i]) 
\\
& \quad \xar{Q}
\opn{Hom}_{\dcat{D}^{}(\cat{M})}(M, I[i]) 
\xar{\opn{Hom}_{\cat{M}}(1_M, \ze_{N[i]}^{-1})}
\opn{Hom}_{\dcat{D}^{}(\cat{M})}(M, N[i]) .
\end{aligned} \]
\end{proof}

\begin{exer}
Prove that there is a trifunctorial morphism 
\[ \opn{Ext}^{i_1}_{\cat{M}}(M_0, M_1) \times
\opn{Ext}^{i_2}_{\cat{M}}(M_1, M_2) \to
\opn{Ext}^{i_1 + i_2}_{\cat{M}}(M_0, M_2) . \]
This is the cup product.

On the other hand, composition of morphisms in $\dcat{D}^{}(\cat{M})$ give us 
\[ \opn{Hom}_{\dcat{D}^{}(\cat{M})}(M_0, M_1[i_1]) \times 
\opn{Hom}_{\dcat{D}^{}(\cat{M})}(M_1, M_2[i_2]) \to
\opn{Hom}_{\dcat{D}^{}(\cat{M})}(M_0, M_2[i_1 + i_2]) . \]
Prove that this composition agrees with the cup product via the isomorphisms of
Theorem \ref{thm:143}.
\end{exer}

\begin{prop} \label{prop:145}
Assume $A$ is a noetherian commutative ring. If 
$M \in \dcat{D}^{-}_{\mrm{f}}(\cat{Mod}\, A)$
and 
$N \in \dcat{D}^{+}_{\mrm{f}}(\cat{Mod}\, A)$,
then $\opn{RHom}_{\cat{M}}(M, N) \in \dcat{D}^{+}_{\mrm{f}}(\cat{Mod}\, A)$.
\end{prop}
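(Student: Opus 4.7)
The plan is to reduce to a concrete computation using a finite free resolution of $M$, and then reduce the shape of $N$ using smart truncations. First, by Theorem \ref{thm:125}, I would choose a quasi-isomorphism $P \to M$ with $P$ a bounded above complex of finitely generated free $A$-modules, say $P^j = 0$ for $j > p_0$. Since $P$ is K-projective by Theorem \ref{thm:133}, Theorem \ref{thm:141} gives $\opn{RHom}_A(M, N) \cong \opn{Hom}_A(P, N)$ in $\dcat{D}(\cat{Mod}\, A)$. Representing $N$ by a genuine bounded below complex with $N^k = 0$ for $k < n_0$, the degree $i$ component $\opn{Hom}_A(P, N)^i = \prod_{n_0 - i \leq j \leq p_0} \opn{Hom}_A(P^j, N^{i+j})$ vanishes for $i < n_0 - p_0$, so $\opn{RHom}_A(M, N) \in \dcat{D}^+(\cat{Mod}\, A)$.

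The heart of the proof is finite generation of each $\opn{Ext}^i_A(M, N)$. I would first reduce to $N \in \dcat{D}^{\mrm{b}}_{\mrm{f}}(\cat{Mod}\, A)$. Fix $i$ and, for $k$ large, take the short exact sequence $0 \to \opn{smt}^{\leq k}(N) \to N \to \opn{smt}^{\geq k+1}(N) \to 0$ of complexes (with the quotient understood via its natural quasi-isomorphism with the smart truncation). The associated distinguished triangle, after applying $\opn{RHom}_A(M, -)$, gives a long exact sequence of Ext modules. Rerunning the boundedness computation with $\opn{smt}^{\geq k+1}(N)$ in place of $N$ yields $\opn{Ext}^{\ell}_A(M, \opn{smt}^{\geq k+1}(N)) = 0$ for $\ell < k + 1 - p_0$, which kills both the $\ell = i$ and $\ell = i + 1$ terms when $k \geq i + p_0$. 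Hence $\opn{Ext}^i_A(M, N) \cong \opn{Ext}^i_A(M, \opn{smt}^{\leq k}(N))$, and the problem is reduced to the bounded case.

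Now I would induct on the cohomological amplitude of $N$ (the size of the range of degrees on which $\opn{H}^{\ast}(N)$ is nonzero). In the base case of amplitude one, Proposition \ref{prop:121} (applied after a shift) identifies $N$ in $\dcat{D}(\cat{Mod}\, A)$ with $N_0[-k]$ for a finitely generated $A$-module $N_0$; then $\opn{Hom}_A(P, N_0[-k])^i = \opn{Hom}_A(P^{k-i}, N_0)$ is a finite direct sum of copies of $N_0$, hence finitely generated, and its subquotient $\opn{H}^i$ is finitely generated by noetherianity of $A$. For amplitude greater than one, split $N$ by smart truncation at an intermediate degree into two complexes of strictly smaller amplitude, apply the inductive hypothesis to each, and conclude via the long exact sequence, using that $\dcat{D}^+_{\mrm{f}}(\cat{Mod}\, A)$ is a full triangulated subcategory of $\dcat{D}(\cat{Mod}\, A)$ (since $\cat{Mod}_{\mrm{f}}\, A$ is a thick abelian subcategory of $\cat{Mod}\, A$).

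The main obstacle is that $N$ is assumed only to have finitely generated cohomology, not finitely generated components, so one cannot directly read off finite generation of $\opn{Hom}_A(P, N)^i$, which a priori involves non-finitely-generated pieces $N^{i+j}$. The smart-truncation reduction bypasses this by computing each individual $\opn{Ext}^i$ from a bounded truncation of $N$, after which the induction on amplitude brings the argument down to the unambiguous single-module case where noetherianity of $A$ can be directly invoked.
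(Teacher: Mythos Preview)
The paper leaves this proposition as an exercise, so there is no detailed proof to compare against. Your argument is correct and complete in substance. One tiny slip: to deduce $\opn{Ext}^i_A(M, N) \cong \opn{Ext}^i_A(M, \opn{smt}^{\leq k}(N))$ from the long exact sequence you need the vanishing of $\opn{Ext}^{\ell}_A(M, \opn{smt}^{\geq k+1}(N))$ for $\ell = i-1$ and $\ell = i$, not $\ell = i$ and $\ell = i+1$; your bound $k \geq i + p_0$ already gives vanishing for all $\ell \leq i$, so the conclusion stands.

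For comparison, the technique the paper deploys in closely related arguments (see the proofs of Theorem~\ref{thm:150} and Lemma~\ref{lem:150}) truncates $P$ rather than $N$: for fixed $i$, the module $\opn{H}^i\bigl(\opn{Hom}_A(P,N)\bigr)$ depends only on finitely many $P^j$ (since $N$ is bounded below), so one may replace $P$ by a stupid truncation $\opn{stt}^{\geq l}(P)$, which is a \emph{bounded} complex of finitely generated free modules. An induction on its length, via stupid-truncation triangles, reduces to the case $P = A$, where $\opn{Hom}_A(A,N) = N$ has finitely generated cohomology by hypothesis. That route is a bit shorter because the base case is immediate and one never needs to touch $N$; your route, truncating $N$ and inducting on its amplitude, trades that simplicity for a reduction that stays entirely within the world of objects with finitely generated cohomology, which is conceptually clean in its own way. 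Both are perfectly good solutions to the exercise.
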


\begin{proof}
Exercise. 
\end{proof}

\begin{exer}
Find a counterexample for the boundedness conditions: 
$M, N \in \lb \dcat{D}^{}_{\mrm{f}}(\cat{Mod}\, A)$ s.t.\ 
$\opn{H}^i ( \opn{RHom}_{\cat{M}}(M, N))$ is not finitely generated for some
$i$. 
\end{exer}

We end this section with an adjunction formula. We state it for commutative
rings for convenience. 

\begin{prop} \label{prop:144}
Consider a homomorphism $A \to B$ of commutative rings, and complexes 
$L \in \dcat{D}^{}_{}(\cat{Mod}\, A)$ and 
$M, N \in \dcat{D}^{}_{}(\cat{Mod}\, B)$. 
There is an isomorphism 
\[ \opn{RHom}_{B}(N, \opn{RHom}_{A}(M, L)) \cong
\opn{RHom}_{A}( M \ot^{\mrm{L}}_B N, L) \]
in $\dcat{D}^{}_{}(\cat{Mod}\, B)$, functorial in $L, M , N$.
\end{prop}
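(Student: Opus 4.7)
The plan is to reduce the derived identity to the classical tensor-Hom adjunction applied at the level of complexes, after replacing $L$ by a K-injective resolution over $A$ and $N$ by a K-projective resolution over $B$.

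First I would establish the complex-level adjunction: for any $K \in \dcat{C}(\cat{Mod}\, A)$ and any $M, P \in \dcat{C}(\cat{Mod}\, B)$, there is a canonical isomorphism of complexes of $B$-modules
\[ \Phi_{K,M,P} : \opn{Hom}_A(M \ot_B P,\, K) \iso \opn{Hom}_B(P,\, \opn{Hom}_A(M,\, K)) , \]
functorial in all three arguments. The $B$-action on $\opn{Hom}_A(M, K)$ comes from the $B$-action on $M$, and the underlying bijection in each bidegree is the ordinary tensor-Hom adjunction; the content is checking that $\Phi$ commutes with the total differentials, a routine Koszul-sign bookkeeping with the signs prescribed in Definition \ref{dfn:3} and equation (\ref{eqn:7}).

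Second, I would choose a K-projective resolution $\pi : P \to N$ over $B$ (available by Theorem \ref{thm:132}; note that K-projective implies K-flat by Lemma \ref{lem:140}) and a K-injective resolution $\ze : L \to I$ over $A$. Then $M \ot^{\mrm{L}}_B N$ is represented by $M \ot_B P$, and $\opn{RHom}_A(-, L)$ is represented by $\opn{Hom}_A(-, I)$ (Theorem \ref{thm:141}), so the right-hand side of the proposition is represented by the complex $\opn{Hom}_A(M \ot_B P, I)$. On the left-hand side, the complex $\opn{Hom}_A(M, I)$ with its natural $B$-module structure represents $\opn{RHom}_A(M, L)$ in $\dcat{D}(\cat{Mod}\, B)$ (the $B$-linearity of the classical adjunction ensures this structure is the canonical one), and because $P$ is K-projective over $B$ the functor $\opn{Hom}_B(P, -)$ already computes $\opn{RHom}_B(N, -)$ in $\dcat{D}(\cat{Mod}\, B)$. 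Hence the left-hand side is represented by $\opn{Hom}_B(P, \opn{Hom}_A(M, I))$. Applying $\Phi_{I, M, P}$ produces the desired isomorphism in $\dcat{D}(\cat{Mod}\, B)$.

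Functoriality in $L$, $M$, and $N$ is inherited from the functoriality of $\Phi$ at the complex level, combined with the fact that the resolutions $\ze$ and $\pi$ can be chosen functorially as in the proof of Theorem \ref{thm:121}. The main obstacle is the sign check in Step 1: verifying that $\Phi_{K,M,P}$ is a morphism of complexes rather than just a bidegree-wise $B$-linear bijection, especially keeping track of the product totalization on both sides (there is no direct-sum totalization in $\opn{Hom}$, so no finiteness hypothesis intervenes). A minor subtlety is confirming that the $B$-module structure on $\opn{Hom}_A(M, I)$ coming from $M$ is compatible with the passage from $\dcat{K}(\cat{Mod}\, A)$ into $\dcat{D}(\cat{Mod}\, B)$; this is automatic since every step in the construction is $B$-linear.
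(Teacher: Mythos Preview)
Your proposal is correct and follows essentially the same approach as the paper: choose a K-injective resolution $L \to I$ over $A$ and a K-projective resolution $P \to N$ over $B$, then invoke the classical tensor-Hom adjunction at the level of complexes. You are in fact more careful than the paper, which simply asserts the complex-level adjunction without discussing the sign check or the $B$-linearity.
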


\begin{proof}
Choose a K-injective resolution $L \to I$ over $A$, and a K-projective
resolution $P \to N$ over $B$. 
Then 
\[ \opn{RHom}_{B}(N, \opn{RHom}_{A}(M, L)) \cong
\opn{Hom}_{B}(P, \opn{Hom}_{A}(M, I)) \]
and 
\[ \opn{RHom}_{A}( M \ot^{\mrm{L}}_B N, L) \cong 
\opn{Hom}_{A}( M \ot^{}_B P, I) . \]
These are functorial isomorphisms in $\dcat{D}^{}_{}(\cat{Mod}\, B)$.
But the usual adjunction formula for modules tells us that 
\[ \opn{Hom}_{B}(P, \opn{Hom}_{A}(M, I)) \cong 
\opn{Hom}_{A}( M \ot^{}_B P, I) \]
in $\dcat{C}^{}_{}(\cat{Mod}\, B)$.
\end{proof}

\cleardoublepage
\section{Dualizing Complexes}

\subsection{Definition, duality} \mbox{}
Here $A$ is a noetherian commutative ring. 

\begin{dfn}
A complex $N \in \dcat{D}^{\mrm{b}}(\cat{Mod}\, A)$ has {\em finite injective
dimension} if 
there is a natural number $i_0$ such that 
$\opn{Ext}^i_A(M, N) = 0 $
for all $M \in \cat{Mod}\, A$ and $i > i_0$.
\end{dfn}

\begin{prop} 
The following are equivalent for $N \in \dcat{D}^{\mrm{b}}(\cat{Mod}\, A)$:
\begin{enumerate}
\rmitem{i} $M$ has finite injective dimension.
\rmitem{ii} There is a quasi-isomorphism $N \to I$ with $I$ a bounded
complex of injective $A$-modules. 
\end{enumerate}
\end{prop}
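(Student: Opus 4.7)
The plan is to prove this as a direct dual of Proposition \ref{prop:141}. The implication (ii) $\Rightarrow$ (i) is immediate: if $N \cong I$ in $\dcat{D}^{\mrm{b}}(\cat{Mod}\, A)$ with $I$ a bounded complex of injectives concentrated in degrees $\leq i_1$, then $I$ is K-injective by Theorem \ref{thm:110}, so $\opn{RHom}_A(M, N) \cong \opn{Hom}_A(M, I)$, and the latter complex is concentrated in degrees $\leq i_1$. Hence $\opn{Ext}^i_A(M, N) = 0$ for $i > i_1$.

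For (i) $\Rightarrow$ (ii), first invoke Theorem \ref{thm:111} to find a quasi-isomorphism $N \to I'$, where $I'$ is a bounded below complex of injectives. Choose $i_1 \geq i_0$ large enough that $\opn{H}^i(N) = 0$ for $i > i_1$ (possible since $N$ is bounded), and consider the smart truncation $I := \opn{smt}^{\leq i_1}(I')$, a bounded complex whose components are $I'^i$ for $i < i_1$ (each injective) and $K := \opn{Z}^{i_1}(I')$ in degree $i_1$. The inclusion $I \hookrightarrow I'$ is a quasi-isomorphism by the choice of $i_1$, giving $N \cong I$ in $\dcat{D}^{\mrm{b}}(\cat{Mod}\, A)$; moreover, once we know $K$ is injective, $I$ will be K-injective by Theorem \ref{thm:110}, so that (by Lemma \ref{lem:121} applied to $I \hookrightarrow I'$) we can even produce a genuine quasi-isomorphism $N \to I$ of complexes.

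The main obstacle is showing $K$ is injective, for which it suffices to prove $\opn{Ext}^1_A(M, K) = 0$ for every $M \in \cat{Mod}\, A$. The idea, mirroring the flat case in Proposition \ref{prop:141}, is to use the short exact sequence of complexes
\[ 0 \to \opn{stt}^{< i_1}(I) \to I \to K[-i_1] \to 0 , \]
which induces a distinguished triangle in $\dcat{D}(\cat{Mod}\, A)$. Apply the cohomological functor $\opn{RHom}_A(M, -)$. Since $\opn{stt}^{< i_1}(I)$ is a bounded complex of injectives concentrated in degrees $< i_1$, it is K-injective and
$\opn{Ext}^j_A\!\bigl(M, \opn{stt}^{< i_1}(I)\bigr) = 0$ for $j \geq i_1$; meanwhile $\opn{Ext}^j_A(M, I) = \opn{Ext}^j_A(M, N) = 0$ for $j > i_0$. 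The long exact sequence in degree $i_1 + 1$ then reads
\[ 0 = \opn{Ext}^{i_1+1}_A(M, I) \to \opn{Ext}^1_A(M, K) \to \opn{Ext}^{i_1+2}_A\!\bigl(M, \opn{stt}^{< i_1}(I)\bigr) = 0 , \]
so $\opn{Ext}^1_A(M, K) = 0$, completing the proof.
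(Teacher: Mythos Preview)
Your proof is essentially correct and follows exactly the approach the paper intends: its own proof is the single line ``Like Proposition \ref{prop:141}'', and you have faithfully dualized that argument.

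One small slip worth fixing: your short exact sequence of complexes is written in the wrong order. The complex $\opn{stt}^{< i_1}(I)$ is a \emph{quotient} of $I$, not a subcomplex (the would-be inclusion fails to commute with $\d_I^{i_1-1} : I^{i_1-1} \to K$, which is generally nonzero). The correct sequence is
\[ 0 \to K[-i_1] \to I \to \opn{stt}^{< i_1}(I) \to 0 , \]
exactly dual to the sequence $\opn{stt}^{> i_0}(P) \to P \to P^{i_0}[-i_0]$ used in Proposition \ref{prop:141}. With this correction the relevant portion of the long exact sequence reads
\[ \opn{Ext}^{i_1}_A\bigl(M, \opn{stt}^{< i_1}(I)\bigr) \to \opn{Ext}^1_A(M, K) \to \opn{Ext}^{i_1+1}_A(M, I) , \]
and both outer terms vanish for the reasons you gave, so the conclusion stands.
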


\begin{proof}
Like Proposition \ref{prop:141}.
\end{proof}

\begin{dfn}
A {\em dualizing complex} over $A$ is a complex 
$R \in \dcat{D}^{\mrm{b}}_{\mrm{f}}(\cat{Mod}\, A)$ with  these properties:
\begin{enumerate}
\rmitem{i} $R$ has finite injective dimension.
\rmitem{ii} The canonical morphism 
$\th : A \to \opn{RHom}_{A}(R, R)$
is an isomorphism. 
\end{enumerate}
\end{dfn}

The canonical morphism 
$\th : A \to \opn{RHom}_{A}(R, R)$
can be realized as follows: choose a bounded injective resolution $R \to I$. 
Then 
\[ \opn{RHom}_{A}(R, R) \cong \opn{Hom}_{A}(I, I) = \opn{End}_{A}(I) . \]
Now $\opn{End}_{A}(I)$ is a ring (in fact a DG algebra), and the morphism 
$\th : A \to \opn{End}_{A}(I)$ is the ring homomorphism.


\begin{exa} \label{exa:141}
Suppose $A$ is a regular ring. Then the complex 
$R := A$ has finite injective dimension. Since $R$ is K-projective, the
morphism 
\[ \eta_R : \opn{Hom}_{A}(R, R) \to \opn{RHom}_{A}(R, R) \]
is an isomorphism. But 
\[ \opn{Hom}_{A}(R, R) = \opn{Hom}_{A}(A, A) = \opn{End}_{A}(A) = A , \]
and canonical morphism $\th$ is represented by the identity map of $A$, so it
is an isomorphism. We see that $R = A$ is a dualizing complex. 
\end{exa}


Given a dualizing complex $R$, let 
\begin{equation} \label{eqn:141}
D_R := \opn{RHom}_{A}(-, R) . 
\end{equation}
This is a triangulated functor 
\[ D_R : \dcat{D}^{\mrm{b}}(\cat{Mod}\, A)^{\mrm{op}} \to 
\dcat{D}^{\mrm{b}}(\cat{Mod}\, A) . \]
If we fix a bounded injective resolution $R \to I$, then we get an isomorphism
of triangulated functors  
\[  D_R \cong \opn{Hom}_{A}(-, I) . \]

As in every ``duality situation'', here there is a morphism
\begin{equation} \label{eqn:143}
\th : \bsym{1}_{\dcat{D}^{}(\cat{Mod}\, A)} \to D_R \o D_R 
\end{equation}
 of triangulated functors from $\dcat{D}^{}(\cat{Mod}\, A)$ to itself.  
In terms of the injective resolution $M \to I$, for any complex $M$ the
morphism 
\[ \th_M : M \to (D_R \o D_R)(M) \cong 
\opn{Hom}_A ( \opn{Hom}_A ( M, I ) , I ) \] 
is the obvious one; namely for $m \in M^k$ and 
$\phi \in \opn{Hom}_A ( M, I )^l$, 
\[ \th(m)(\phi) = (-1)^{k+l} \phi(m) \in I^{k+l} . \]

\begin{thm} \label{thm:150}
Let $R$ be a dualizing complex over $A$. 
\begin{enumerate}
\item Let $M \in \dcat{D}^{\mrm{b}}_{\mrm{f}}(\cat{Mod}\, A)$. Then the
complex $D_R(M)$ is in $\dcat{D}^{\mrm{b}}_{\mrm{f}}(\cat{Mod}\, A)$, and the
morphism 
\[ \th_M : M \to (D_R \o D_R)(M) \]
is an isomorphism. 

\item The functor 
\[ D_R : \dcat{D}^{\mrm{b}}_{\mrm{f}}(\cat{Mod}\, A)^{\mrm{op}} \to 
\dcat{D}^{\mrm{b}}_{\mrm{f}}(\cat{Mod}\, A) \]
is a duality \tup{(}i.e.\ a contravariant equivalence\tup{)} 
of triangulated categories.
\end{enumerate}
\end{thm}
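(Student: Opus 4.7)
The plan is to prove (1) first, then deduce (2) essentially formally. For part (1), I would separately establish that $D_R$ preserves $\dcat{D}^{\mrm{b}}_{\mrm{f}}(\cat{Mod}\, A)$, and then that the biduality map $\th_M$ is an isomorphism.

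First, for preservation: finite generation of cohomology of $D_R(M) = \opn{RHom}_A(M, R)$ is immediate from Proposition \ref{prop:145}, since $M$ and $R$ both lie in $\dcat{D}^-_{\mrm{f}} \cap \dcat{D}^+_{\mrm{f}}$. For boundedness I would fix a bounded injective resolution $R \to I$ with $I$ concentrated in degrees $[a_R, b_R]$ (possible since $R$ has finite injective dimension), so that $D_R(M) \cong \opn{Hom}_A(M, I)$. When $M$ is a single finitely generated module, this Hom complex is concentrated in $[a_R, b_R]$, hence bounded. For general $M \in \dcat{D}^{\mrm{b}}_{\mrm{f}}$ I would induct on the cohomological amplitude of $M$, using the smart truncation triangles of Proposition \ref{prop:121} and the triangulated five-lemma to inherit boundedness; the resulting bound on $\opn{H}^i(D_R(M))$ depends only on the amplitude of $M$ and on $b_R - a_R$.

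Next, the isomorphism $\th_M$. I would build the class of objects on which $\th$ is an iso. The case $M = A$ is literally condition (ii) in the definition of dualizing complex. Since $\th$ is a natural transformation between additive triangulated endofunctors, it passes to finite direct sums, so $\th_F$ is an iso for any finitely generated free module $F$. Now, a bounded complex $P$ of finitely generated free modules is built from shifts of such $F$ by iterated stupid truncation triangles $\opn{stt}^{<k}(P) \to P \to \opn{stt}^{\geq k}(P) \to \opn{stt}^{<k}(P)[1]$, which are degree-wise split and hence distinguished in $\dcat{K}(\cat{Mod}\, A)$; the triangulated five-lemma then gives $\th_P$ is an iso.

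The main obstacle is extending to arbitrary $M \in \dcat{D}^{\mrm{b}}_{\mrm{f}}$, where Theorem \ref{thm:125} provides a resolution $\zeta : P \to M$ with $P$ a bounded-above complex of finitely generated free modules, but $P$ is generally unbounded below. My approach is a way-out argument: for every integer $N$ consider the distinguished triangle
\[ \opn{stt}^{<-N}(P) \to P \to P_N \to \opn{stt}^{<-N}(P)[1], \qquad P_N := \opn{stt}^{\geq -N}(P), \]
and the induced morphism of distinguished triangles given by $\th$. Since $P_N$ is a bounded complex of finitely generated free modules, $\th_{P_N}$ is an iso by the previous step. Fix a target degree $i$. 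Then $\opn{stt}^{<-N}(P)$ is concentrated in degrees $\leq -N-1$, so $\opn{H}^i(\opn{stt}^{<-N}(P)) = 0$ for $i \geq -N$; and a direct estimate on the Hom complex $\opn{Hom}_A(\opn{stt}^{<-N}(P), I)$ shows $D_R(\opn{stt}^{<-N}(P))$ is concentrated in degrees $\geq N+1$, whence $D_R^2(\opn{stt}^{<-N}(P))$ is concentrated in degrees $\leq b_R - a_R - N - 1$. For $N$ chosen sufficiently large (depending on $i$ and $b_R - a_R$), both outer cohomologies at degree $i$ vanish, and the long exact sequences force $\opn{H}^i(\th_M)$ to be an iso. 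Since this holds for every $i$, $\th_M$ is a quasi-isomorphism, hence an iso in $\dcat{D}$.

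For part (2), the conclusion is now essentially formal. View $D_R$ as a triangulated functor $\dcat{D}^{\mrm{b}}_{\mrm{f}}(\cat{Mod}\, A)^{\mrm{op}} \to \dcat{D}^{\mrm{b}}_{\mrm{f}}(\cat{Mod}\, A)$; part (1) shows $D_R \circ D_R \iso \bsym{1}$ via $\th$. The symmetric isomorphism on $\dcat{D}^{\mrm{b}}_{\mrm{f}}(\cat{Mod}\, A)^{\mrm{op}}$ is literally the same natural transformation $\th$ (just read the other way), so $D_R$ is its own quasi-inverse, proving that it is a contravariant equivalence of triangulated categories.
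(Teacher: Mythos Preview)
Your proof is correct and follows essentially the same route as the paper: both fix a bounded injective resolution $R \to I$, resolve $M$ by a bounded-above complex $P$ of finitely generated free modules, and then use stupid truncations to reduce, one cohomology degree at a time, to bounded complexes of finitely generated free modules and ultimately to $M = A$. The only cosmetic differences are that you invoke Proposition~\ref{prop:145} as a shortcut for finite generation (the paper redoes that reduction in place), and your boundedness step is more elaborate than needed---since $M$ may be taken to be a bounded complex and $I$ is bounded, $\opn{Hom}_A(M,I)$ is visibly bounded; also note your reference to Proposition~\ref{prop:121} for truncation triangles is misplaced, though the intended argument is clear.
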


\begin{proof}
(1) We fix a bounded injective resolution $R \to I$. Say $I$ is concentrated in
degrees $\{ l_0, \ldots, l_1 \}$. Since $M$ and $R$ are bounded, so is 
$\opn{Hom}_A ( M, I ) \cong D_R(M)$. 

Next we will prove that $D_R(M)$ has finitely generated cohomology modules. 
Let us choose a resolution $P \to M$, where $P$ is a bounded above complex
of finitely generated free modules. So 
$D_R(M) \cong \opn{Hom}_A (P, I)$.  We will prove that 
$\opn{H}^k ( \opn{Hom}_A (P, I) )$ is a finitely generated module for every
$k$. Let us choose some $k$. We already saw that the module 
$\opn{H}^k ( \opn{Hom}_A (P, I) )$ only depends on the stupid truncation 
\[ (\opn{stt}^{\leq l_1 - k +2} \o \opn{stt}^{\geq l_0 - k -2})(P) \]
of $P$. Thus we can assume that $P$ is a bounded complex of finitely generated
free modules. 

It remains to prove that for any bounded complex $P$ 
of finitely generated free modules, the complex $\opn{Hom}_A (P, I)$ has
finitely generated cohomology modules. Say $P$ is concentrated in degrees 
$\{ k_0, \ldots, k_1 \}$. The proof is by induction on $k_1 - k_0$. 
If $k_1 = k_0$ then we get $P = P^{k_1}[-k_1]$, and $P^{k_1}$ is a
free module of finite rank, say rank $r$. But then 
\[ \opn{Hom}_A (P, I) \cong I[-k_1]^{\oplus r} \]
and 
\[ \opn{H}^k ( \opn{Hom}_A (P, I) ) \cong \opn{H}^{k - k_1}(I)^{\oplus r} , \]
which is finitely generated by assumption.

If $k_1 > k_0$, then using stupid truncation of $P$ we get
a short exact sequence of complexes of free modules, and hence a distinguished
triangle in  $\dcat{D}(\cat{Mod}\, A)$:
\[ P' \to P \to P'' \to P'[1] \]
where 
$P' := P^{k_1}[-k_1]$
and 
$P'' := \opn{stt}^{\leq k_1 - 1}(P)$.
{}From this we get a distinguished
triangle in  $\dcat{D}(\cat{Mod}\, A)$:
\[ D_I( P'' ) \to D_I(P) \to D_I( P' ) \to D_I(P'')[1] . \]
The induction hypothesis is that $D_I( P'' )$ and $D_I( P')$ have f.g.\ 
cohomologies; and therefore the same is true for $P$. 

Finally we have to prove that $\th_M$ is an isomorphism. As above we choose a
resolution $P \to M$, and we will prove that 
\[ \th_P : P \to (D_I \o D_I)(P) = 
\opn{Hom}_A ( \opn{Hom}_A ( P, I ) , I ) \]
is a quasi-isomorphism. In order to show that 
\[ \opn{H}^k(\th_P) : \opn{H}^k(P) \to 
\opn{H}^k( \opn{Hom}_A ( \opn{Hom}_A ( P, I ) , I )) \]
is an isomorphism, we can safely replace $P$ with a suitable stupid truncation
(depending on $k, k_0, k_1$). Thus, as above, we can assume that $P$ is a
bounded complex of f.g.\ free modules. By stupid truncation and induction, also
as above, we can assume that $P$ is a single f.g.\ free module, say 
$P \cong A^{\oplus r}$. Then, by additivity, we can assume that $r = 1$; but 
we know that  $\th_A$ is a quasi-isomorphism.

\medskip \noindent
(2) This is clear from part (1).
\end{proof}

\begin{exa}
Let's see what happens in the trivial case when $A = \K$ is a field.
We take the dualizing complex $R := \K$. 

Any complex $M \in \dcat{D}^{}(\cat{Mod}\, \K)$ is {\em formal}, namely it is
isomorphic to its cohomology $\opn{H}(M)$, that is viewed as a complex with
zero differential. As for morphisms: given $M, N$ with zero differentials,
morphisms in $\dcat{D}^{}(\cat{Mod}\, \K)$ are just homomorphisms of graded
$\K$-modules. 

A complex $M \in  \dcat{D}^{\mrm{b}}_{\mrm{f}}(\cat{Mod}\, K)$ is isomorphic to
its cohomology $N := \opn{H}(M)$, which is a bounded complex of finite rank
$\K$-modules, with zero differential. Thus 
\[ D_{\K}(M) = \opn{RHom}_{\K}(M, \K)  \cong D_{\K}(N) = \opn{Hom}_{\K}(N, \K) =
D(N) , \]
where $D(N)$ is our notation for the ``classical dual''. Now $D(N)$ is
also bounded complex of finite rank
$\K$-modules, with zero differential. And the canonical homomorphism 
$\th : N \to D(D(N))$ is an isomorphism.

Note that $\th : M \to D(D(M))$ is only a quasi-isomorphism, but doesn't have
to be an isomorphism. (Can you find an example?). 
\end{exa}

\begin{exa}
Take $A := \Z$ and $R := \Z$. This is a dualizing complex, but since $R$ is
not K-injective, we have to resolve it. The best think to do is to take the
minimal injective resolution $R \to I$, where 
\[ I := \bigl( \cdots \to 0 \to \mbb{Q} \to \mbb{Q} / \Z \to 0 \to \cdots \bigl)
 \]
is concentrated in degrees $0, 1$. 

Take any $M \in \cat{Mod}_{\mrm{f}}\, \Z$, i.e.\ a finitely generated abelian
group. Then 
\[ D_{\Z}(M) = \opn{RHom}_{\Z}(M, \Z)  \cong D_{I}(M) = 
\opn{Hom}_{\Z}(M, I) .  \]
The complex $D_I(M)$ is also concentrated in degrees $0, 1$. 
Then the complex $D_I(D_I(M))$ is  concentrated in degrees $-1, 0, 1$. 
It is not so easy to see that 
\[ \th : M \to D_I(D_I(M)) \]
is a quasi-isomorphism! We can either rely on Theorem \ref{thm:150}, or we can
use a trick. 

Choose a (noncanonical) decomposition
$M \cong G \oplus H$, with $G$ free and $H$ finite. 
For $G$ consider its classical dual $D(G) := \opn{Hom}_{\Z}(G, Z)$. 
The homomorphism $D(G) \to D_I(G)$ is a quasi-isomorphism. Hence 
$D_I(D_I(G)) \to D_I(D(G))$ and  
$D(D(G)) \to D_I(D(G))$ are quasi-isomorphisms; and they are compatible with the
$\th$'s from $G$. We know that $\th : G \to D(D(G))$ is an isomorphism.

As for the finite group $H$, we have 
$D_I(H) \cong D'(H)[-1]$, where $D'(H) := \lb \opn{Hom}_{\Z}(H, \mbb{Q} / \Z)$.
Thus $D_I(D_I(H)) \cong D'(D'(H)[-1])[-1] \cong H$. 
\end{exa}

\subsection{Existence} \mbox{}

Let $A \to B$ be a ring homomorphism. For any 
$M \in \cat{Mod}\, A$ we view $\opn{Hom}_A(B, M)$ as an object of 
$\cat{Mod}\, B$ in the obvious way. We get an additive functor 
\[ \opn{Hom}_A(B, -) : \cat{Mod}\, A \to \cat{Mod}\, B . \]
This has a right derived functor 
\[ \opn{RHom}_A(B, -) : \dcat{D}(\cat{Mod}\, A) \to 
\dcat{D}(\cat{Mod}\, B) ,  \]
which is calculated using K-injective resolutions. 
There is no ambiguity in this notation, since when we forget the $B$-module
structure we get the complex 
$\opn{RHom}_A(B, M) \in \dcat{D}(\cat{Mod}\, A)$ that we had before (Theorem
\ref{thm:141}).

A ring homomorphism $A \to B$ is called {\em finite} if it makes $B$ into a
finitely generated $A$-module. 

\begin{prop} \label{prop:147}
Let $A \to B$ be a finite ring homomorphism, and let 
$R_A \in \dcat{D}^{\mrm{b}}_{\mrm{f}}(\cat{Mod}\, A)$ be a dualizing complex.
Then the complex 
\[ R_B :=  \opn{RHom}_A(B, R_A) \in  \dcat{D}^{\mrm{b}}_{\mrm{f}}(\cat{Mod}\, A)
\]
is a dualizing complex over $B$. 
\end{prop}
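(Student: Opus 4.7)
The plan is to verify the three defining properties of a dualizing complex for $R_B$: boundedness with finitely generated cohomology over $B$, finite injective dimension over $B$, and the homothety isomorphism $B \iso \opn{RHom}_B(R_B, R_B)$.

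First I would fix a bounded resolution $R_A \to I$ where $I$ is a bounded complex of injective $A$-modules, so that $R_B$ is represented by $\opn{Hom}_A(B,I)$. Since $B$ is finitely generated as an $A$-module, $B \in \dcat{D}^{\mrm{b}}_{\mrm{f}}(\cat{Mod}\, A)$; by Proposition \ref{prop:145} applied to $M := B$ and $N := R_A$, we get $R_B \in \dcat{D}^{+}_{\mrm{f}}(\cat{Mod}\, A)$. Boundedness on the right also follows from boundedness of $I$. Because $B$ is a finite $A$-module, any $A$-module which is finitely generated over $A$ is automatically finitely generated over $B$, so the cohomology modules $\opn{H}^i(R_B)$ are finitely generated over $B$; thus $R_B \in \dcat{D}^{\mrm{b}}_{\mrm{f}}(\cat{Mod}\, B)$.

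For finite injective dimension, I would invoke the earlier proposition stating that $\opn{Hom}_A(B,-)$ sends injective $A$-modules to injective $B$-modules. Applied degreewise to the bounded injective resolution $I$, this shows that $\opn{Hom}_A(B,I)$ is a bounded complex of injective $B$-modules whose image in $\dcat{D}(\cat{Mod}\, B)$ is exactly $R_B$. Hence $R_B$ has finite injective dimension over $B$.

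The heart of the proof is property (ii), the homothety isomorphism. I would compute using the derived adjunction of Proposition \ref{prop:144}: taking $L := R_A$, $M := B$, $N := R_B$, the adjunction yields
\[ \opn{RHom}_B(R_B, R_B) = \opn{RHom}_B\bigl(R_B,\, \opn{RHom}_A(B, R_A)\bigr) \cong \opn{RHom}_A\bigl(B \ot^{\mrm{L}}_B R_B,\, R_A\bigr) \cong \opn{RHom}_A(R_B, R_A) = D_{R_A}(D_{R_A}(B)) \]
in $\dcat{D}(\cat{Mod}\, B)$. Under this chain of canonical isomorphisms, the homothety morphism $B \to \opn{RHom}_B(R_B, R_B)$ corresponds to the morphism $\th_B : B \to D_{R_A}(D_{R_A}(B))$ from (\ref{eqn:143}). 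Since $B \in \dcat{D}^{\mrm{b}}_{\mrm{f}}(\cat{Mod}\, A)$, Theorem \ref{thm:150}(1) guarantees $\th_B$ is an isomorphism, which finishes the proof.

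The main obstacle I expect is the bookkeeping in the last step: one must check that the adjunction isomorphism identifies the abstract homothety $B \to \opn{RHom}_B(R_B,R_B)$ (which is the ring-theoretic unit of the DG algebra $\opn{End}_A(\opn{Hom}_A(B,I))$ endowed with its $B$-module structure) with the double-duality morphism $\th_B$. This compatibility can be verified concretely by writing the representatives at the level of the resolution $\opn{Hom}_A(B,I)$ (which is simultaneously a K-injective resolution of $R_B$ over both $A$ and $B$, since injective $B$-modules arising via $\opn{Hom}_A(B,-)$ are also injective over $A$), and tracing the identification through the standard tensor-Hom adjunction of modules. Aside from that verification, all steps reduce to previously established results.
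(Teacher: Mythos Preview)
Your proof is correct and follows the same strategy as the paper: use the adjunction to identify $\opn{RHom}_B(R_B,R_B)$ with $D_{R_A}(D_{R_A}(B))$, then invoke Theorem~\ref{thm:150}. The paper carries this out concretely with the resolution $\opn{Hom}_A(B,I)$ and the ordinary tensor--Hom adjunction, while you cite Proposition~\ref{prop:144} at the derived level; the content is the same.

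One small correction to your final parenthetical: the claim that $\opn{Hom}_A(B,I^k)$ is injective over $A$ is false in general (it would require $B$ to be flat over $A$, via $\opn{Hom}_A(-,\opn{Hom}_A(B,I^k)) \cong \opn{Hom}_A(-\ot_A B, I^k)$). Fortunately you do not need this. The compatibility check only requires that $I$ itself is K-injective over $A$: the chain of isomorphisms
\[ \opn{Hom}_B\bigl(\opn{Hom}_A(B,I),\,\opn{Hom}_A(B,I)\bigr) \cong \opn{Hom}_A\bigl(\opn{Hom}_A(B,I),\, I\bigr) \]
already computes $\opn{RHom}_A(R_B,R_A)$ because the second argument $I$ is K-injective over $A$, regardless of what $\opn{Hom}_A(B,I)$ looks like over $A$. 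This is exactly how the paper handles it.
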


\begin{proof}
By Proposition \ref{prop:145} the complex $R_B$ has finitely generated
cohomology modules (over $A$ and hence over $B$).

We note that if $I$ is an injective $A$-module, then $\opn{Hom}_A(B, I)$ is an
injective $B$-module. 
Let us choose a bounded injective resolution 
$R_A \to I$ over $A$. Then 
$R_B \to \opn{Hom}_A(B, I)$
is a bounded injective resolution of $R_B$ over $B$. 
This proves that $R_B$ is bounded and has finite injective dimension. 


Next we calculate
$\th : B \to \opn{RHom}_B(R_B, R_B)$. We must show that 
\[ \th : B \to \opn{Hom}_B \bigl( \opn{Hom}_A(B, I), \opn{Hom}_A(B, I)
\bigr) \]
is a quasi-isomorphism. By adjunction we have isomorphisms
\[ \begin{aligned}
& \opn{Hom}_B \bigl( \opn{Hom}_A(B, I), \opn{Hom}_A(B, I) \bigr) \cong 
\opn{Hom}_A \bigl( B \ot_B \opn{Hom}_A(B, I), I \bigr) 
\\
& \quad \cong 
\opn{Hom}_A \bigl( \opn{Hom}_A(B, I), I \bigr) 
\end{aligned} \]
that are compatible with $\th$. Because $I \cong R_A$ is dualizing, and $B$ is
a finitely generated $A$-module, 
\[ \th : B \to \opn{Hom}_A \bigl( \opn{Hom}_A(B, I), I \bigr) \]
is a quasi-isomorphism.
\end{proof}

\begin{lem} \label{lem:145}
Let $L, M, N \in \dcat{D}^{\mrm{b}}_{}(\cat{Mod}\, A)$ s.t.\ $L$ has finitely
generated cohomologies and $N$ has finite flat dimension. Then 
\[  \opn{RHom}_A (L, M \ot^{\mrm{L}}_A N) \cong 
\opn{RHom}_A (L, M)  \ot^{\mrm{L}}_A N , \]
and this isomorphism is functorial.
\end{lem}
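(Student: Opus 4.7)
The plan is to build the natural comparison morphism
\[ \mu \,:\, \opn{RHom}_A(L, M) \ot^{\mrm{L}}_A N \to \opn{RHom}_A(L, M \ot^{\mrm{L}}_A N) \]
at the level of explicit resolutions, and then verify it is a quasi-isomorphism by a devissage reducing to the case of a finitely generated free $A$-module. For the construction I would choose a quasi-isomorphism $P \to L$ with $P$ a bounded above complex of finitely generated free modules (possible by Theorem \ref{thm:125}, since $L$ has bounded and finitely generated cohomology), and a quasi-isomorphism $F \to N$ with $F$ a bounded complex of flat modules (Proposition \ref{prop:141}, using finite flat dimension). A short induction on length using stupid truncation and the distinguished triangle it produces shows that any bounded complex of flat modules is K-flat. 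Combined with the K-projectivity of $P$ (Theorem \ref{thm:133}), this means both sides of the putative isomorphism are represented by the honest complexes $\opn{Hom}_A(P, M) \ot_A F$ and $\opn{Hom}_A(P, M \ot_A F)$, and $\mu$ is induced by the obvious termwise map $\phi \ot f \mapsto (p \mapsto \phi(p) \ot f)$, with the customary Koszul sign. Functoriality in $L, M, N$ will follow from standard functoriality of the resolutions up to homotopy.

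The heart of the proof is showing $\mu$ is a quasi-isomorphism. Fix a degree $i \in \Z$ and suppose $M$ and $F$ are concentrated in degrees $[k_0, k_1]$ and $[l_0, l_1]$ respectively. A direct calculation from the defining formulas shows that both $(\opn{Hom}_A(P, M) \ot_A F)^i$ and $\opn{Hom}_A(P, M \ot_A F)^i$ depend only on the terms $P^k$ with $k$ in the finite interval $[k_0 + l_0 - i,\, k_1 + l_1 - i]$; hence $\opn{H}^i$ of either complex depends only on $P^k$ for $k$ in this interval widened by one. Replacing $P$ by a stupid truncation $P'$ containing this window produces a bounded complex of finitely generated free modules for which the induced maps at degree $i$ coincide with those of the original $\mu$.

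With $P'$ bounded, the product and direct sum totalizations coincide, so it suffices to prove $\mu$ is an isomorphism termwise. This reduces to the elementary identity, for a finitely generated free module $Q = A^{\oplus r}$ and any $A$-modules $M', N'$:
\[ \opn{Hom}_A(Q, M') \ot_A N' \cong (M')^{\oplus r} \ot_A N' \cong (M' \ot_A N')^{\oplus r} \cong \opn{Hom}_A(Q, M' \ot_A N') . \]
The main obstacle I expect is the careful bookkeeping in the truncation step: one must check that the asymmetric interaction between the product totalization governing $\opn{Hom}$ and the direct sum totalization governing $\ot$ does not spoil the identification of the relevant finite window of $P$ on the two sides of $\mu$, and one must confirm that the stupid truncation really induces the same map on $\opn{H}^i$ for both the source and the target. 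Once this bookkeeping is in place, the rest of the argument is essentially formal.
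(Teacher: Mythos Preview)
Your proof is correct and follows the same approach as the paper: choose $P \to L$ bounded above with finitely generated free terms, $F \to N$ bounded flat, and verify the canonical map $\opn{Hom}_A(P,M) \ot_A F \to \opn{Hom}_A(P, M \ot_A F)$ is a quasi-isomorphism. Your truncation step is harmless but unnecessary: since $M$ is bounded, the product $\prod_k \opn{Hom}_A(P^k, M^{k+j})$ already has only finitely many nonzero factors for each $j$, so the comparison map is a termwise isomorphism of complexes (not merely a quasi-isomorphism after truncation), and the bookkeeping worry you flag never arises.
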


\begin{proof}
Choose resolutions $P \to L$ and $Q \to N$, where $P$ is a bounded above
complex of finitely generated free modules, and $Q$ is a bounded complex of
flat modules. 
Then there are functorial isomorphisms 
\[ \opn{RHom}_A (L, M \ot^{\mrm{L}}_A N) \cong 
\opn{Hom}_A (P, M \ot^{}_A Q) \]
and 
\[ \opn{RHom}_A (L, M)  \ot^{\mrm{L}}_A N \cong 
\opn{Hom}_A (P, M) \ot^{}_A Q . \]
Because of the finiteness of the free modules $P^i$, we see that the canonical
homomorphism 
\[ \opn{Hom}_A (P, M) \ot^{}_A Q \to 
\opn{Hom}_A (P, M \ot^{}_A Q) \]
is an isomorphism in $\dcat{D}^{}_{}(\cat{Mod}\, A)$.
\end{proof}

\begin{lem} \label{lem:146}
A complex $M \in \dcat{D}^{\mrm{b}}(\cat{Mod}\, A)$ has finite injective
dimension iff there is a natural number $i_0$ such that 
$\opn{Ext}^i_A(M, N) = 0$
for every {\em cyclic} $A$-module $M$ and every $i > i_0$.
\end{lem}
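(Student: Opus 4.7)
The plan is to derive the nontrivial implication from Baer's criterion (Theorem \ref{thm:151}) applied to a carefully chosen truncation of an injective resolution of $M$. The forward direction is immediate, since cyclic modules are a special case of modules. For the converse, suppose $\opn{Ext}^i_A(N, M) = 0$ for every cyclic $A$-module $N$ and every $i > i_0$. Applying this with $N = A$ and using $\opn{RHom}_A(A, M) \cong M$ first gives $\opn{H}^i(M) = 0$ for all $i > i_0$. Choose a bounded-below injective resolution $M \to I$ via Theorem \ref{thm:111}; then $I$ is acyclic in degrees $> i_0$.

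Set $K := \opn{Z}^{i_0+1}(I)$, which by the acyclicity equals $\opn{Im}(d^{i_0})$, and form the bounded complex
\[ \tilde I := \bigl( \cdots \to I^{i_0-1} \to I^{i_0} \xar{d^{i_0}} K \to 0 \to \cdots \bigr), \]
with $K$ placed in degree $i_0+1$. A direct diagram chase shows that the evident morphism $\tilde I \to I$ is a quasi-isomorphism. So once $K$ is shown to be an injective module, $\tilde I$ will be a bounded injective resolution of $M$, giving finite injective dimension via the proposition just before the lemma.

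To prove $K$ is injective I plan to verify Baer's criterion through the dimension-shift identity $\opn{Ext}^1_A(N, K) \cong \opn{Ext}^{i_0+2}_A(N, M)$, valid for \emph{every} $A$-module $N$. Apply $\opn{Hom}_A(N, -)$ to the termwise-split stupid-truncation short exact sequence
\[ 0 \to \opn{stt}^{\leq i_0}(I) \to I \to \opn{stt}^{\geq i_0+1}(I) \to 0, \]
and consider the resulting long exact sequence in cohomology. Since $\opn{Hom}_A(N, \opn{stt}^{\leq i_0}(I))$ is concentrated in degrees $\leq i_0$, its cohomology vanishes in degrees $\geq i_0+1$, producing an isomorphism $\opn{H}^j(\opn{Hom}_A(N, I)) \cong \opn{H}^j(\opn{Hom}_A(N, \opn{stt}^{\geq i_0+1}(I)))$ for all $j \geq i_0+1$. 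The left-hand side is $\opn{Ext}^j_A(N, M)$, and the right-hand side computes $\opn{Ext}^{j - i_0 - 1}_A(N, K)$, since $\opn{stt}^{\geq i_0+1}(I)$ is a bounded-below complex of injectives (hence K-injective by Theorem \ref{thm:110}) that is quasi-isomorphic to the shifted module $K[-(i_0+1)]$. Setting $j = i_0 + 2$ yields the identity; the hypothesis then gives $\opn{Ext}^1_A(N, K) = 0$ for every cyclic $N$, and Baer's criterion forces $K$ to be injective. The main obstacle is this dimension-shift computation, which relies on the K-injectivity of $\opn{stt}^{\geq i_0+1}(I)$ to promote the cyclic-module hypothesis into the injectivity of the single module $K$.
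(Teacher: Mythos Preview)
Your proof is correct and follows precisely the route the paper's hint indicates: use Baer's criterion, via the equivalent formulation that a module $K$ is injective iff $\opn{Ext}^1_A(A/\mathfrak{a}, K) = 0$ for every ideal $\mathfrak{a}$, and reduce to this by a dimension-shift on a truncated injective resolution. The paper leaves the argument as an exercise with exactly this hint, so your write-up is essentially what was intended; the only cosmetic point is that the lemma as printed has the variables $M$ and $N$ swapped, which you silently (and correctly) repaired.
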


This lemma does not require $A$ to be noetherian.

\begin{proof}
Exercise. (Hint: use the fact that a module $I$ is injective iff 
$\opn{Ext}^1_A(M, I) = 0$ for every cyclic $A$-module $M$.
Cf.\ the Baer criterion Theorem \ref{thm:151}.)
\end{proof}

An $A$-algebra $B$ is called a {\em localization} of $A$ if 
$B \cong A_S = A[S^{-1}]$ for some multiplicatively closed set $S \subset A$. 

\begin{prop} \label{prop:148}
Let $B$ be a localization of $A$, and let $R_A$ be a dualizing complex over
$A$. Then the complex $R_B := B \ot_A R_A$ is a dualizing complex over $B$.
\end{prop}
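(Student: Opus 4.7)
The plan is to verify the three parts of the definition of a dualizing complex for $R_B := B \otimes_A R_A$, exploiting throughout that a localization $B = A_S$ is flat over $A$. Because of flatness, $B \otimes_A R_A \cong B \otimes^{\mrm{L}}_A R_A$, and cohomology commutes with the tensor product:
\[ \opn{H}^i(R_B) \cong B \otimes_A \opn{H}^i(R_A). \]
Since $R_A$ is bounded with finitely generated cohomology over the noetherian ring $A$, each $B \otimes_A \opn{H}^i(R_A)$ is a finitely generated $B$-module, and only finitely many are nonzero. Thus $R_B \in \dcat{D}^{\mrm{b}}_{\mrm{f}}(\cat{Mod}\, B)$.

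Next I would verify that $R_B$ has finite injective dimension over $B$. Choose a bounded injective resolution $R_A \to I$ over $A$; then $R_B \cong B \otimes_A I$, which is a bounded complex of $B$-modules. The key claim is that each $B \otimes_A I^k$ is an injective $B$-module. By Lemma \ref{lem:146} (or the Baer criterion over $B$) it suffices to check that $\opn{Ext}^1_B(B/\mathfrak{b}, B \otimes_A I^k) = 0$ for every ideal $\mathfrak{b} \subset B$. Writing $\mathfrak{b} = \mathfrak{a} B$ for an ideal $\mathfrak{a} \subset A$ (possible since $B$ is a localization), we have $B/\mathfrak{b} \cong B \otimes_A (A/\mathfrak{a})$. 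Because $A/\mathfrak{a}$ is finitely presented and $B$ is flat, flat base change for Ext yields
\[ \opn{Ext}^i_B\bigl(B \otimes_A (A/\mathfrak{a}),\, B \otimes_A I^k\bigr) \cong B \otimes_A \opn{Ext}^i_A(A/\mathfrak{a}, I^k), \]
which vanishes for $i \geq 1$ since $I^k$ is $A$-injective. So $B \otimes_A I$ is a bounded complex of $B$-injectives, showing $R_B$ has finite injective dimension.

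For the reflexivity property $B \iso \opn{RHom}_B(R_B, R_B)$, I would chain together three isomorphisms. First, derived extension-restriction adjunction for $A \to B$ gives
\[ \opn{RHom}_B(B \otimes^{\mrm{L}}_A R_A,\, R_B) \cong \opn{RHom}_A(R_A,\, R_B). \]
Second, since $R_A$ has finitely generated cohomology and $B$ has flat dimension $0$ over $A$, Lemma \ref{lem:145} gives
\[ \opn{RHom}_A(R_A,\, R_A \otimes^{\mrm{L}}_A B) \cong \opn{RHom}_A(R_A,\, R_A) \otimes^{\mrm{L}}_A B. \]
Third, the dualizing property of $R_A$ gives $\opn{RHom}_A(R_A, R_A) \cong A$, so the composite is $A \otimes^{\mrm{L}}_A B = B$. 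Tracing identities through these canonical isomorphisms shows that the composite of the canonical morphism $\theta_B: B \to \opn{RHom}_B(R_B, R_B)$ with the chain above is the identity of $B$, so $\theta_B$ is an isomorphism.

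The main obstacle is the middle step of (2): verifying that the termwise localization $B \otimes_A I^k$ of an injective $A$-module is still injective over $B$. This is where both noetherianness (to apply Lemma \ref{lem:146}/Baer and to reduce to finitely presented test modules) and flatness of $B$ (to commute $B \otimes_A$ past Ext) enter essentially; the rest of the argument is then a formal chase through the adjunction and the already-established lemmas.
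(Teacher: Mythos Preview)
Your proof is correct and follows essentially the same route as the paper for parts (1) and (3). For the finite injective dimension, the paper offers two arguments: one via the Matlis structure theory of injectives over noetherian rings (each indecomposable $J(\mathfrak{p})$ either dies or survives intact as a $B$-injective under localization), and one via Lemma~\ref{lem:146} applied directly to the complex $R_B$ rather than termwise, reducing to $\opn{Ext}^i_A(M,R_A)\otimes_A B$ for a f.g.\ $A$-module $M$ generating the cyclic $B$-module. Your argument --- proving termwise that $B \otimes_A I^k$ is $B$-injective via Baer plus flat base change for Ext --- is a third variant, correct and arguably the most self-contained, since it avoids both Matlis theory and the passage from cyclic $B$-modules back to f.g.\ $A$-modules. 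The paper's second argument has the mild advantage of working uniformly at the derived level without ever asserting termwise injectivity.
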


\begin{proof}
Clearly $R_B$ is bounded, Since $B$ is a flat $A$-module, we have
$\opn{H}^k(R_B) \cong B \ot_A \opn{H}^k(R_A)$, so these are finitely generated
$B$-modules. 

I will give two proofs that $R_B$ has finite injective dimension. First, let's
choose a bounded injective resolution $R_A \to I$. By the structure theory for
injective modules over noetherian rings, each $I^k$ is a direct sum of
indecomposable injective $A$-modules. Each such indecomposable module is of the
form $J(\mfrak{p})$ for a prime ideal $\mfrak{p}$. Let $S$ be such that 
$B \cong A_S$. If 
$\mfrak{p} \cap S \neq \emptyset$ then $B \ot_A J(\mfrak{p}) = 0$; and if 
$\mfrak{p} \cap S = \emptyset$ then 
$B \ot_A J(\mfrak{p}) \cong J(\mfrak{p})$, which is an indecomposable injective
$B$-module. Thus $B \ot_A I$ is a bounded complex of injective
$B$-modules,  and $R_B \cong B \ot_A I$.

Here is another proof. By Lemma \ref{lem:146} it suffices to prove the vanishing
of \lb $\opn{Ext}^i_A(N, R_B)$ for cyclic $B$-modules $N$ and for $i \gg 0$. 
Choose a f.g.\ $A$-submodule $M \subset N$ s.t.\ $B M = N$; then, by flatness
of $B$, the homomorphism $B \ot_A M \to N$ is bijective. 
Now 
\begin{equation} \label{eqn:147}
\begin{aligned}
& \opn{RHom}_B (N, R_B) \cong 
\opn{RHom}_B (B \ot_A M, B \ot_A R_A) 
\\
& \quad \cong
\opn{RHom}_A (M, B \ot_A R_A) \cong 
\opn{RHom}_A (M, R_A) \ot_A B  .
\end{aligned} 
\end{equation}
So 
\[ \opn{Ext}^i_A(N, R_B) \cong \opn{Ext}^i_A (M, R_A) \ot_A B , \]
and the latter vanishes for $i \gg 0$ (independent of $M$). 

Finally we need to prove that 
\[ \th_B : B \to \opn{RHom}_B (R_B, R_B) \]
is an isomorphism. Taking $N := R_B$ in (\ref{eqn:147})
we get 
\[ \opn{RHom}_B (R_B, R_B) \cong \opn{RHom}_A (R_A, R_A) \ot_A B , \]
 and $\th_B$ corresponds in this isomorphism to $\th_A \ot 1_B$. 
Since $\th_A$ is an isomorphism, so is $\th_B$. 
\end{proof}

A ring homomorphism $A \to B$ is called {\em finite type} if $B$ is finitely
generated as $A$-algebra. A homomorphism $A \to B$ is called {\em essentially
finite type} if $B$ is a localization of a finite type $A$-algebra. 

\begin{cor}
Suppose $A$ is a regular ring, and $B$ is an essentially finite
type $A$-algebra. Then $B$ has a dualizing complex. 
\end{cor}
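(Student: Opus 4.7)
The plan is to unwind the definition of essentially finite type, and then combine Example \ref{exa:141} with Propositions \ref{prop:147} and \ref{prop:148} step by step through the chain of ring homomorphisms
\[ A \to A[t_1, \ldots, t_n] \to C \to C_S = B . \]

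First, by definition of essentially finite type, we can write $B \cong C_S$ where $C$ is a finite type $A$-algebra and $S \subset C$ is a multiplicatively closed set. Choosing generators of $C$ over $A$, we get a surjection $A[t_1, \ldots, t_n] \twoheadrightarrow C$. Note that all rings in sight are noetherian: $A[t_1, \ldots, t_n]$ by the Hilbert basis theorem, $C$ as a quotient of a noetherian ring, and $B$ as a localization of $C$.

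Next I would produce a dualizing complex at each stage. Since $A$ is regular, Example \ref{exa:141} gives that $R_A := A$ is a dualizing complex over $A$. By the remark following Definition \ref{dfn:143}, the polynomial ring $A' := A[t_1, \ldots, t_n]$ is again regular, so $R_{A'} := A'$ is a dualizing complex over $A'$. The surjection $A' \twoheadrightarrow C$ is a finite ring homomorphism (indeed $C$ is a cyclic $A'$-module), so Proposition \ref{prop:147} applies and yields that
\[ R_C := \opn{RHom}_{A'}(C, A') \in \dcat{D}^{\mrm{b}}_{\mrm{f}}(\cat{Mod}\, C) \]
is a dualizing complex over $C$.

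Finally, since $B = C_S$ is a localization of $C$, Proposition \ref{prop:148} applies: the complex
\[ R_B := B \ot_C R_C \]
is a dualizing complex over $B$. This completes the argument. The proof is essentially a three-step composition, and there is no real obstacle — the only thing to notice is that one must pass through the intermediate regular ring $A[t_1, \ldots, t_n]$ in order to invoke Proposition \ref{prop:147}, because $C$ itself need not be a finite $A$-module (only essentially finite type situations with $B$ a \emph{finite} $A$-algebra could be handled by Proposition \ref{prop:147} alone).
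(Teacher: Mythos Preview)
Your proof is correct and follows essentially the same approach as the paper: factor through the polynomial ring $A[t_1,\ldots,t_n]$, use that it is regular (Example \ref{exa:141}) to get a dualizing complex there, then apply Proposition \ref{prop:147} for the finite surjection onto $C$ and Proposition \ref{prop:148} for the localization $C \to B$. The extra remarks you include (noetherianity checks, why one must pass through the polynomial ring) are helpful elaborations but do not change the structure of the argument.
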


\begin{proof}
Let $C$ be a finite type $A$-algebra s.t.\ $B$ is a localization of $C$. 
Let $D := A[t_1, \ldots, t_n]$ be a polynomial ring s.t.\ $C$ is a quotient of
$C$. Now $D$ is a regular ring, so it has a dualizing complex
(see Example \ref{exa:141}). By Proposition
\ref{prop:147} the ring $C$ has a dualizing complex, and by Proposition
\ref{prop:148} the ring $B$ has a dualizing complex.
\end{proof}

\begin{rem}
Actually there is a more general existence result: if $A$ has a dualizing
complex, and $B$ is an essentially finite type $A$-algebra, then $B$ has a
dualizing complex. 
\end{rem}


\subsection{Uniqueness of dualizing complexes}
As before $A$ is a noetherian ring. 

Let $P$ be a finitely generated projective module. For a prime ideal 
$\mfrak{p}$ the module $A_{\mfrak{p}}$-module $P_{\mfrak{p}}$ is free, say of
rank $r(\mfrak{p})$. We get a function $r : \opn{Spec} A \to \N$, and this
function is locally constant. Thus, if $\opn{Spec} A$ is connected, then $r$ is
constant. (If $\opn{Spec} A$ is disconnected, i.e.\ 
$A \cong A_1 \times A_2$ with both factors nonzero, then the rank
can change between connected components.) 

\begin{dfn}
An $A$-module $L$ is called {\em invertible} if there is an $A$-module
$L^{\vee}$ such that $L \ot_A L^{\vee} \cong A$.
\end{dfn}

\begin{prop} \label{prop:150}
The following are equivalent for an $A$-module $L$:
\begin{enumerate}
\rmitem{i} $L$ is invertible. 

\rmitem{ii} $L$ is a finitely generated projective $A$-module of rank $1$. 
\end{enumerate}
\end{prop}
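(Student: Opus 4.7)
The direction (ii) $\Rightarrow$ (i) is the easy half. I would take $L^{\vee} := \operatorname{Hom}_A(L, A)$ and consider the evaluation map $\mu : L \otimes_A L^{\vee} \to A$, $\mu(\ell \otimes \phi) := \phi(\ell)$. Since $L$ is finitely generated over the noetherian ring $A$, it is finitely presented, so formation of $\operatorname{Hom}$ and of tensor products commutes with localization at any prime $\mathfrak{p}$. At $\mathfrak{p}$ the module $L_{\mathfrak{p}}$ is free of rank $1$, and then $\mu_{\mathfrak{p}}$ is visibly an isomorphism. Since isomorphism can be checked locally, $\mu$ is an isomorphism, so $L$ is invertible.

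For (i) $\Rightarrow$ (ii), suppose $\mu : L \otimes_A L^{\vee} \iso A$. My plan has three steps. \emph{Step 1 (flatness):} The functor $- \otimes_A L$ has a two-sided inverse $- \otimes_A L^{\vee}$ (up to natural isomorphism), because $- \otimes_A L \otimes_A L^{\vee} \cong - \otimes_A A = \bsym{1}$ by associativity of $\otimes$. An equivalence of abelian categories is exact, so $L$ (and similarly $L^{\vee}$) is flat over $A$.

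\emph{Step 2 (finite generation):} Write $\mu^{-1}(1) = \sum_{i=1}^{n} \ell_i \otimes \phi_i$ for finitely many $\ell_i \in L$ and $\phi_i \in L^{\vee}$. Let $L_0 \subset L$ be the submodule generated by $\ell_1, \ldots, \ell_n$. The inclusion $L_0 \hookrightarrow L$ induces $L_0 \otimes_A L^{\vee} \to L \otimes_A L^{\vee} \cong A$; since $1$ is plainly in the image of this composition, it is surjective. Applying the equivalence $- \otimes_A L$ (which preserves epimorphisms) to this surjection and using $L^{\vee} \otimes_A L \cong A$, I recover that $L_0 \to L$ is surjective, hence $L = L_0$ is finitely generated.

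\emph{Step 3 (projective of rank $1$):} Since $L$ is finitely generated over the noetherian ring $A$, it is finitely presented; and a finitely presented flat module is projective, so $L$ is projective. To see the rank is $1$, fix a prime $\mathfrak{p}$ with residue field $k(\mathfrak{p}) := A_{\mathfrak{p}}/\mathfrak{p}A_{\mathfrak{p}}$ and tensor the isomorphism $\mu$ with $k(\mathfrak{p})$; writing $L(\mathfrak{p}) := L \otimes_A k(\mathfrak{p})$ and $L^{\vee}(\mathfrak{p}) := L^{\vee} \otimes_A k(\mathfrak{p})$, I obtain $L(\mathfrak{p}) \otimes_{k(\mathfrak{p})} L^{\vee}(\mathfrak{p}) \cong k(\mathfrak{p})$, which forces $\dim_{k(\mathfrak{p})} L(\mathfrak{p}) = 1$. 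By the discussion preceding the statement, this means the locally constant rank function is identically $1$. The one step that needs a little care is Step 2, where one must track which tensor factor is being ``absorbed'' by $\mu$; this is the only real obstacle, and it disappears once one writes out the associativity carefully.
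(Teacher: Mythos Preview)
Your argument is correct. The paper itself does not prove this proposition; it leaves it as an exercise with the hint ``Morita Theorem and Nakayama Lemma.'' Your approach is in the spirit of that hint---Step~1 is essentially the Morita observation that $-\otimes_A L$ is an autoequivalence of $\cat{Mod}\, A$---but you have made the finite-generation step (Step~2) concrete via the dual-basis element $\mu^{-1}(1)$, rather than invoking that a Morita progenerator is finitely generated projective, and you have replaced the Nakayama argument for rank~$1$ by the cleaner dimension count over residue fields. One small remark on Step~2: to pass from surjectivity of $L_0 \otimes_A L^{\vee} \to A$ back to surjectivity of $\iota : L_0 \to L$, it is cleanest to first observe that $\iota \otimes 1_{L^{\vee}}$ itself is surjective (since $\mu$ is an isomorphism), and then tensor on the right with $L$, using naturality of the isomorphism $L^{\vee} \otimes_A L \cong A$ with respect to the map $\iota$; this is exactly the associativity issue you flagged, and it goes through without difficulty.
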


\begin{proof}
Exercise. (Hint: Morita Theorem and Nakayama Lemma. You will see that 
$L^{\vee} \cong \opn{Hom}_A(L, A)$; it is called the inverse of $L$.)
\end{proof}

The Picard group $\opn{Pic}(A)$ is the group of isomorphism classes
of invertible $A$-modules. The operation is tensor product.
 
\begin{thm} \label{thm:153}
Assume $\opn{Spec} A$ is connected. 
Let $R, R' \in \dcat{D}^{\mrm{b}}_{\mrm{f}}(\cat{Mod}\, A)$, with
$R$ a dualizing complex. The following are equivalent:
\begin{enumerate}
\rmitem{i} $R'$ is a dualizing complex. 
\rmitem{ii} $R' \cong R \ot_A L[n]$ for some invertible module $L$ and some
integer $n$.
\end{enumerate}
\end{thm}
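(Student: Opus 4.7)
Plan. For direction (ii) $\Rightarrow$ (i), assume $R' = R \otimes_A L[n]$ with $L$ invertible. Then $R'$ is bounded with finitely generated cohomology since $L$ is finitely generated projective; a bounded injective resolution $R \to I$ yields a bounded injective resolution $I \otimes_A L[n]$ of $R'$ (because $L$ is a direct summand of a finitely generated free module, so each $I^k \otimes_A L$ is a direct summand of the injective module $(I^k)^{\oplus r}$, hence injective); and the reflexivity $A \cong \opn{RHom}_A(R', R')$ reduces, by the hom-tensor adjunction (Proposition \ref{prop:144}) and Lemma \ref{lem:145} applied with the flat invertible module $L$, to the identity $\opn{Hom}_A(L, L) \cong A$, which holds by invertibility and Proposition \ref{prop:150}.

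For direction (i) $\Rightarrow$ (ii), I would set $K := \opn{RHom}_A(R, R') \in \dcat{D}^{\mrm{b}}_{\mrm{f}}(\cat{Mod}\, A)$, where the boundedness and finite generation come from Proposition \ref{prop:145}, and consider the canonical evaluation morphism $\mu : K \otimes^{\mrm{L}}_A R \to R'$. The plan is to prove two statements and combine them: (a) $K \cong L[n]$ in $\dcat{D}(\cat{Mod}\, A)$ for some invertible module $L$ and integer $n$, and (b) $\mu$ is a quasi-isomorphism. From these, $R' \cong K \otimes^{\mrm{L}}_A R \cong L[n] \otimes^{\mrm{L}}_A R \cong R \otimes_A L[n]$, which is (ii).

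Both (a) and (b) will be established by localizing at each prime $\mfrak{p} \subset A$. The crucial input, not proved in the excerpt but standard, is that over a noetherian local ring any two dualizing complexes differ by a single shift. Granting this, Proposition \ref{prop:148} gives that $R_{\mfrak{p}}$ and $R'_{\mfrak{p}}$ are dualizing over $A_{\mfrak{p}}$, so there is a unique $n(\mfrak{p}) \in \Z$ with $R'_{\mfrak{p}} \cong R_{\mfrak{p}}[n(\mfrak{p})]$. Because $\opn{RHom}$ and $\otimes^{\mrm{L}}$ commute with localization, $K_{\mfrak{p}} \cong \opn{RHom}_{A_{\mfrak{p}}}(R_{\mfrak{p}}, R_{\mfrak{p}}[n(\mfrak{p})]) \cong A_{\mfrak{p}}[n(\mfrak{p})]$, so $\opn{H}^i(K)_{\mfrak{p}} = 0$ for $i \neq -n(\mfrak{p})$. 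Each $\opn{H}^i(K)$ is finitely generated and hence has closed support, so the function $\mfrak{p} \mapsto n(\mfrak{p})$ is locally constant on $\opn{Spec} A$; by the connectedness hypothesis it is a single integer $n$. Then $\opn{H}^i(K) = 0$ for $i \neq -n$, and $L := \opn{H}^{-n}(K)$ is a finitely generated $A$-module that is free of rank one at every localization, hence invertible by Proposition \ref{prop:150} — proving (a). For (b), the localized map $\mu_{\mfrak{p}} : K_{\mfrak{p}} \otimes^{\mrm{L}}_{A_{\mfrak{p}}} R_{\mfrak{p}} \to R'_{\mfrak{p}}$, transported under any chosen isomorphism $R'_{\mfrak{p}} \cong R_{\mfrak{p}}[n]$, is identified with the canonical evaluation $A_{\mfrak{p}}[n] \otimes^{\mrm{L}}_{A_{\mfrak{p}}} R_{\mfrak{p}} \cong R_{\mfrak{p}}[n]$ and hence is an isomorphism; so $\opn{H}^i(\mu)_{\mfrak{p}}$ is an isomorphism for every $\mfrak{p}$ and every $i$, which forces $\opn{H}^i(\mu)$ to be an isomorphism and $\mu$ a quasi-isomorphism.

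The main obstacle is the local-uniqueness lemma for dualizing complexes. Its usual proof relies on the observation that for any dualizing complex $R$ over a noetherian local ring $(A, \mfrak{m}, k)$, the complex $\opn{RHom}_A(k, R)$ is concentrated in a single cohomological degree with one-dimensional $k$-cohomology; this pins down a canonical shift, and the reflexivity of Theorem \ref{thm:150} then forces uniqueness up to that shift. Developing this local lemma is the deepest step; the remainder of the argument is the assembly of localization, connectedness, and Proposition \ref{prop:150} outlined above.
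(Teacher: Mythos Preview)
Your proof of (ii) $\Rightarrow$ (i) is fine and essentially matches the paper's.

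For (i) $\Rightarrow$ (ii), your strategy is valid but, as you yourself flag, it rests on the local uniqueness lemma (any two dualizing complexes over a noetherian local ring differ by a shift), which you leave as a black box. The sketch you give --- via $\opn{RHom}_A(k, R)$ being a shifted copy of $k$ --- is the standard route, but filling it in is real work: one needs either Matlis duality and the structure of minimal injective resolutions, or an argument at least as involved as the theorem you are proving.

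The paper sidesteps this entirely with a symmetry trick. Set $M := \opn{RHom}_A(R, R')$ (your $K$) and also $M' := \opn{RHom}_A(R', R)$. Using the functorial isomorphism $M \ot^{\mrm{L}}_A N \cong (D' \o D)(N)$ of Lemma \ref{lem:150} (and its analogue with $R, R'$ swapped), together with the double dualities $D \o D \cong \bsym{1} \cong D' \o D'$ from Theorem \ref{thm:150}, one gets
\[
M' \ot^{\mrm{L}}_A M \;\cong\; (D \o D')(M) \;\cong\; (D \o D' \o D' \o D)(A) \;\cong\; A .
\]
The remaining task is then purely elementary: show that if $M, M' \in \dcat{D}^-_{\mrm{f}}(\cat{Mod}\, A)$ satisfy $M \ot^{\mrm{L}}_A M' \cong A$, then $M \cong L[n]$ for some invertible $L$ (Lemma \ref{lem:151}). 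That lemma does localize at primes, but only needs the K\"unneth trick (Lemma \ref{lem:152}) and Nakayama --- no input about dualizing complexes at all. Finally $R' \cong D'(A) \cong (D' \o D)(D(A)) \cong M \ot^{\mrm{L}}_A R$ delivers your evaluation isomorphism $\mu$ for free, without a separate local check.

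So the paper's route is self-contained and reduces the problem to a statement about ``derived-invertible'' complexes; your route is conceptually natural but defers the hard content to a local lemma of comparable depth to the theorem itself.
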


First some lemmas. In these lemmas we assume that $R'$ is a dualizing complex,
and that $\opn{Spec} A$ is connected. 
Define $D := \opn{RHom}_A(-, R)$, $D' := \opn{RHom}_A(-, R')$ and 
\begin{equation} \label{eqn:150}
 M := \opn{RHom}_A(R, R') \cong D'(D(A)) 
 \in \dcat{D}^{\mrm{b}}_{\mrm{f}}(\cat{Mod}\, A) .
\end{equation}

\begin{lem} \label{lem:150}
There is a functorial isomorphism 
\[ \al_N :  M \ot^{\mrm{L}}_A N \to (D' \o D)(N) \]
for $N \in  \dcat{D}^{-}_{\mrm{f}}(\cat{Mod}\, A)$.
\end{lem}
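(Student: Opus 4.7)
My strategy is to construct $\al_N$ as a natural transformation via the evaluation morphism, then propagate it from the tautological base case $N = A$ by triangulated devissage and a way-out argument in the cohomological degree.

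To define $\al_N$ intrinsically, I would start from the evaluation morphism $D(N) \ot^{\mrm{L}}_A N \to R$, which is the adjoint of the biduality $\th_N : N \to D(D(N))$ from (\ref{eqn:143}). Applying $\opn{RHom}_A(-, R')$ produces $M = \opn{RHom}_A(R, R') \to \opn{RHom}_A(D(N) \ot^{\mrm{L}}_A N, R')$; the tensor-Hom adjunction of Proposition \ref{prop:144} rewrites the target as $\opn{RHom}_A(N, \opn{RHom}_A(D(N), R'))$, and transposing across this adjunction yields the desired
\[ \al_N \,:\, M \ot^{\mrm{L}}_A N \to \opn{RHom}_A(D(N), R') = (D' \o D)(N) . \]
By construction $\al_N$ is functorial in $N$ and is a morphism of triangulated functors, since both composites are triangulated in $N$.

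For the base case $N = A$ both sides collapse to $M$: tautologically $M \ot^{\mrm{L}}_A A = M$, while $D(A) = \opn{RHom}_A(A, R) \cong R$ gives $(D' \o D)(A) = D'(R) = \opn{RHom}_A(R, R') = M$, and an inspection of the construction shows that $\al_A$ is the identity. Additivity and translation invariance then yield that $\al_N$ is an isomorphism whenever $N$ has the form $A^{\oplus r}[n]$ with $r$ finite. For a bounded $P \in \dcat{K}^{\mrm{b}}(\cat{Mod}\, A)$ whose components are finitely generated free modules, I would induct on the number of nonzero terms, using the stupid-truncation triangle $\opn{stt}^{> k}(P) \to P \to \opn{stt}^{\leq k}(P) \to \opn{stt}^{> k}(P)[1]$ and two-out-of-three for isomorphisms of distinguished triangles. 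By Theorem \ref{thm:125} this settles every $N \in \dcat{D}^{\mrm{b}}_{\mrm{f}}(\cat{Mod}\, A)$.

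For general $N \in \dcat{D}^{-}_{\mrm{f}}(\cat{Mod}\, A)$ I would deploy a way-out argument. Pick $P \to N$ a resolution by a bounded above complex of finitely generated free modules, and set $P_{\geq -n} := \opn{stt}^{\geq -n}(P) \in \dcat{K}^{\mrm{b}}(\cat{Mod}\, A)$. Fix a cohomological degree $k$. Using the bounded injective resolutions $R \to I$ and $R' \to I'$ together with the boundedness of $M$, a direct amplitude estimate shows that there is a constant $c$, depending only on $R$ and $R'$, such that for any complex $X$ whose components vanish in degrees $> -n$, both $M \ot^{\mrm{L}}_A X$ and $(D' \o D)(X)$ have vanishing cohomology in degrees $> -n + c$. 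Choosing $n$ large enough that $-n + c < k$, the cone of $P_{\geq -n} \to P$ contributes nothing in degree $k$ on either side, so $\opn{H}^k(\al_N) = \opn{H}^k(\al_{P_{\geq -n}})$, which is an isomorphism by the bounded case. Since $k$ was arbitrary, $\al_N$ is an isomorphism.

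The main obstacle is the amplitude bookkeeping in this final step: one must check carefully that the constant $c$ genuinely depends only on the injective and cohomological amplitudes of $R$ and $R'$, not on $N$ or $X$, so that a uniform truncation level $-n$ suffices for each target degree $k$. Everything else reduces to naturality checks and standard triangulated devissage.
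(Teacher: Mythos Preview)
Your proposal is correct and follows essentially the same route as the paper: reduce to the case $N = A$ by stupid truncation and induction on the length of a bounded complex of finitely generated free modules, and handle the unbounded case by a way-out argument in the cohomological degree. The only cosmetic difference is that the paper defines $\al$ concretely as the ``obvious'' map of complexes $\opn{Hom}_A(I, I') \ot_A P \to \opn{Hom}_A(\opn{Hom}_A(P, I), I')$ after fixing bounded injective resolutions $R \to I$, $R' \to I'$ and a free resolution $P \to N$, whereas you build it abstractly through the evaluation morphism and tensor--Hom adjunction; and the paper phrases the way-out step as ``replace $P$ by $\opn{stt}^{\geq l}(P)$ for $l$ small enough'' rather than bounding the amplitude of the cone. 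Your amplitude estimate is fine: since $M \cong \opn{Hom}_A(I, I')$ is itself a bounded complex and the cone of $P_{\geq -n} \to P$ is a bounded-above complex of free modules in degrees $< -n$, both functors applied to this cone land in degrees $\leq -n + c$ with $c$ depending only on the amplitudes of $I$ and $I'$.
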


\begin{proof}
Let's choose bounded injective resolutions $R \to I$ and $R' \to I'$. So 
$D \cong \lb \opn{Hom}_A(-, I)$, $D' \cong \opn{Hom}_A(-, I')$ and 
$M \cong \opn{Hom}_A(I, I')$. Given $N$, let's choose a resolution 
$P \to N$ where $P$ is a bounded above complex of f.g.\ free modules. 
Then there is an obvious homomorphism of complexes
\[ \al_P : \opn{Hom}_A(I, I') \ot_A P \to 
\opn{Hom}_A (\opn{Hom}_A (P, I), I') . \]
We want to show that $\al$ is a quasi-isomorphism.

To show that $\opn{H}^k(\al_P)$ is an isomorphism for any particular $k \in \Z$,
we can replace $P$ with a stupid truncation 
$\opn{stt}^{\geq l}(P)$
for sufficiently small $l$, depending on $k$ of course, as was done in previous
proofs. But $\opn{stt}^{\geq l}(P)$ is bounded. So we reduce to the case of a
bounded complex f.g.\ free modules $P$. Then, by the
distinguished triangles gotten from truncation of $P$, we reduce to the case of
a free module $P$. This reduces to the case $P = A$, in which $\al_P$ is
bijective. 
\end{proof}

\begin{lem}[Kunneth trick] \label{lem:152} 
Given $M, M' \in \dcat{D}^{-}_{}(\cat{Mod}\, A)$, let 
\[ k := \sup\, \{ l \mid \opn{H}^l(M) \neq 0 \} \]
and 
\[ k' := \sup\, \{ l \mid \opn{H}^l(M') \neq 0 \} , \]
which belong to $\Z \cup \{ -\infty\}$. Then 
$\opn{H}^{l}(M \ot^{\mrm{L}}_A M') = 0$ for $l > k + k'$, and 
\[ \opn{H}^k(M) \ot_A \opn{H}^{k'}(M') \cong 
\opn{H}^{k+k'}(M \ot^{\mrm{L}}_A M') . \]
\end{lem}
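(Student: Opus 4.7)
The plan is to reduce the statement to an explicit degree-wise calculation on bounded above flat resolutions with controlled top degree. First I would replace $M$ by the smart truncation $\opn{smt}^{\leq k}(M)$ and $M'$ by $\opn{smt}^{\leq k'}(M')$; since $\opn{H}^l(M) = 0$ for $l > k$ and likewise for $M'$, the natural inclusions are quasi-isomorphisms, so this substitution affects neither $M \ot^{\mrm{L}}_A M'$ nor the cohomology modules appearing in the statement. After this step we may assume $M^l = 0$ for $l > k$ and $M'^l = 0$ for $l > k'$.

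Next I would invoke the inductive construction of Theorem \ref{thm:125} to produce free resolutions $\ze : P \to M$ and $\ze' : P' \to M'$ with $P^l = 0$ for $l > k$ and $P'^l = 0$ for $l > k'$. The induction there builds $P^i$ starting from the top nonzero degree of the target and proceeding downward, so when $M^i = 0$ for $i > k$ one may simply set $P^i := 0$ there. Being a bounded above complex of free modules, $P$ is K-projective by Theorem \ref{thm:133}, and hence K-flat by Lemma \ref{lem:140}(2); likewise for $P'$. Consequently $M \ot^{\mrm{L}}_A M' \cong P \ot_A P'$ in $\dcat{D}^{-}(\cat{Mod}\, A)$.

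Now I would compute $P \ot_A P'$ near the top. The degree $l$ component is $\bigoplus_{p+q=l} P^p \ot_A P'^q$, and for $l > k + k'$ every summand has $p > k$ or $q > k'$ and so vanishes, which gives the first claim. In degree $k+k'$ only $P^k \ot_A P'^{k'}$ survives, and in degree $k+k'-1$ only $P^{k-1} \ot_A P'^{k'}$ and $P^k \ot_A P'^{k'-1}$ survive. Unwinding the Leibniz differential $\d_P \ot 1 + (-1)^p \, 1 \ot \d_{P'}$ (noting that on each of these two summands exactly one of the two contributions lands in a vanishing component), one obtains
\[ \opn{H}^{k+k'}(P \ot_A P') \ = \ \frac{P^k \ot_A P'^{k'}}{\opn{Im}(\d_P^{k-1}) \ot_A P'^{k'} \ + \ P^k \ot_A \opn{Im}(\d_{P'}^{k'-1})} . \]
By right exactness of the tensor product this quotient equals $(P^k / \opn{Im}(\d_P^{k-1})) \ot_A (P'^{k'} / \opn{Im}(\d_{P'}^{k'-1}))$, and since $P^{k+1} = 0 = P'^{k'+1}$ these factors are exactly $\opn{H}^k(P) \cong \opn{H}^k(M)$ and $\opn{H}^{k'}(P') \cong \opn{H}^{k'}(M')$ respectively.

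The only step calling for care is the existence of a free resolution vanishing strictly above the top cohomological degree, but this is immediate from the direction of the induction in Theorem \ref{thm:125}. Naturality of the isomorphism in $M$ and $M'$ is automatic from functoriality of cohomology and of the derived tensor product. If either $k$ or $k'$ equals $-\infty$ then the corresponding complex is acyclic and hence zero in $\dcat{D}^{-}(\cat{Mod}\, A)$, so $M \ot^{\mrm{L}}_A M'$ is acyclic and both assertions hold vacuously. After these reductions the argument is purely formal, so I anticipate no genuine obstacle.
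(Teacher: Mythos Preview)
Your proof is correct and follows essentially the same approach as the paper: both arguments choose free resolutions $P \to M$ and $P' \to M'$ concentrated in degrees $\leq k$ and $\leq k'$ respectively, then compute $\opn{H}^{k+k'}(P \ot_A P')$ directly using right exactness of the tensor product. Your version simply fills in the details the paper leaves implicit, namely justifying the existence of such resolutions via smart truncation and Theorem~\ref{thm:125}, and carrying out the ``easy calculation'' explicitly.
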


\begin{proof}
This can be seen from the Kunneth spectral sequence. Or, in a more elementary
way, this can be seen by by choosing free resolutions $P \to M$ and $P' \to M'$,
s.t.\ $P$ is concentrated in degrees $\leq k$, and $P'$ is concentrated in
degrees $\leq k'$. Then $P \ot_A P'$ is concentrated in
degrees $\leq k + k'$. An easy calculation, using right exactness of $\ot$,
shows that 
\[ \opn{H}^k(P) \ot_A \opn{H}^{k'}(P') \cong 
\opn{H}^{k+k'}(P \ot^{}_A P') . \]
\end{proof}

\begin{lem} \label{lem:151}
Suppose $M, M' \in \dcat{D}^{-}_{\mrm{f}}(\cat{Mod}\, A)$
satisfy $M \ot^{\mrm{L}}_A M' \cong A$. Then 
$M \cong L[-k]$ and $M' \cong L'[-k']$, for some invertible modules $L$ and $L'$
that satisfy $L \ot_A L' \cong A$, and some integers $k$ and $k'$ that satisfy 
$k + k' = 0$.
\end{lem}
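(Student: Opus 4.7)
The plan is to show that each of $M$ and $M'$ is concentrated in a single cohomological degree; once that is achieved the remaining conclusions follow by a short formal computation. First I will fix a prime $\mfrak{p} \subset A$, localize the hypothesis at $\mfrak{p}$, and then apply $- \ot^{\mrm{L}} k(\mfrak{p})$, where $k(\mfrak{p}) := A_{\mfrak{p}} / \mfrak{p} A_{\mfrak{p}}$, to obtain
\[ (M_{\mfrak{p}} \ot^{\mrm{L}}_{A_{\mfrak{p}}} k(\mfrak{p})) \ot^{\mrm{L}}_{k(\mfrak{p})} (M'_{\mfrak{p}} \ot^{\mrm{L}}_{A_{\mfrak{p}}} k(\mfrak{p})) \cong k(\mfrak{p}) \]
in $\dcat{D}(\cat{Mod}\, k(\mfrak{p}))$. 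Over the field $k(\mfrak{p})$ every complex is formal and the derived tensor collapses to the tensor of graded vector spaces, so the only way for this to equal $k(\mfrak{p})$ in degree $0$ is for each factor to be one-dimensional and concentrated in a single degree; call those degrees $d(\mfrak{p})$ and $-d(\mfrak{p})$ respectively.

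Next I will use minimal free resolutions over the noetherian local ring $A_{\mfrak{p}}$. Since $M_{\mfrak{p}} \in \dcat{D}^-_{\mrm{f}}(\cat{Mod}\, A_{\mfrak{p}})$ it admits a minimal free resolution $P \to M_{\mfrak{p}}$ with $\opn{rank}_{A_{\mfrak{p}}}(P^i) = \dim_{k(\mfrak{p})} \opn{H}^i(M_{\mfrak{p}} \ot^{\mrm{L}} k(\mfrak{p}))$. By the previous paragraph $P$ is just $A_{\mfrak{p}}$ placed in degree $d(\mfrak{p})$, so $M_{\mfrak{p}} \cong A_{\mfrak{p}}[-d(\mfrak{p})]$ in $\dcat{D}(\cat{Mod}\, A_{\mfrak{p}})$. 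In particular $\opn{H}^i(M)_{\mfrak{p}} = 0$ for $i \ne d(\mfrak{p})$ and $\opn{H}^{d(\mfrak{p})}(M)_{\mfrak{p}} \cong A_{\mfrak{p}}$, which shows that the closed subsets $V_i := \opn{Supp}(\opn{H}^i(M))$ of $\opn{Spec} A$ are pairwise disjoint and cover $\opn{Spec} A$.

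The main obstacle will be converting this disjoint cover into a single nonempty support, since a priori infinitely many $V_i$ could be nonempty. To handle this I will use that $A$ is noetherian, so $\opn{Spec} A$ has only finitely many irreducible components; each such component is irreducible, hence connected, and therefore must be contained in a single $V_{i_j}$. Thus only finitely many $V_i$ are nonempty, yielding a finite partition of $\opn{Spec} A$ by disjoint closed sets -- which is automatically a partition by clopen sets. By the standing hypothesis that $\opn{Spec} A$ is connected, exactly one $V_k$ then equals $\opn{Spec} A$, and hence $M \cong \opn{H}^k(M)[-k]$; the same argument applied to $M'$ gives $M' \cong \opn{H}^{k'}(M')[-k']$.

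To finish, I will set $L := \opn{H}^k(M)$ and $L' := \opn{H}^{k'}(M')$ and substitute these presentations into $M \ot^{\mrm{L}}_A M' \cong A$. This yields $(L \ot^{\mrm{L}}_A L')[-k-k'] \cong A$ in $\dcat{D}(\cat{Mod}\, A)$, which forces $k + k' = 0$, $\opn{Tor}^A_i(L, L') = 0$ for $i \ge 1$, and $L \ot_A L' \cong A$ as ordinary $A$-modules. The last identity exhibits $L$ and $L'$ as mutually inverse under tensor product, so Proposition~\ref{prop:150} concludes that both are invertible $A$-modules.
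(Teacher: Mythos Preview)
Your proof is correct and takes a genuinely different route from the paper's in the local part of the argument.

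The paper handles the local case by the K\"unneth trick (Lemma~\ref{lem:152}): over a local ring it identifies the top cohomologies $L = \opn{H}^k(M)$, $L' = \opn{H}^{k'}(M')$, uses Nakayama to see $L \ot_A L' \neq 0$, concludes $k+k'=0$ and $L \ot_A L' \cong A$, and then explicitly splits $L[-k]$ off a free resolution $P$ of $M$ and argues that the complementary summand is acyclic by inspecting the decomposition of $P \ot_A P'$. You instead push all the way down to the residue field $k(\mfrak{p})$, where the statement is trivial linear algebra, and then invoke minimal free resolutions to lift the conclusion back to $A_{\mfrak{p}}$. This is slicker and bypasses the splitting argument entirely; the cost is that minimal free resolutions are not constructed in these notes, so you are importing an outside (though standard) tool.

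For the globalization both arguments are close in spirit: the paper shows the function $\mfrak{p} \mapsto k(\mfrak{p})$ is locally constant via openness of the free-rank-$1$ locus, while you package the same information as a disjoint closed cover $\{V_i\}$, reduce to finitely many pieces via irreducible components, and use connectedness. One small remark on your final paragraph: the quickest way to see $k+k'=0$ is to note that you already proved $d(\mfrak{p}) + (-d(\mfrak{p})) = 0$ at every prime, so once $d \equiv k$ and $d' \equiv k'$ are constant you are done; deducing it instead from $(L \ot^{\mrm{L}}_A L')[-k-k'] \cong A$ tacitly uses $L \ot_A L' \neq 0$, which you know (since $L_{\mfrak{p}} \cong A_{\mfrak{p}}$) but did not state.
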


\begin{proof}
First assume $A$ is a local ring. Take $k, k'$ as in Lemma \ref{lem:152}, and
let $L := \opn{H}^k(M)$ and $L' := \opn{H}^{k'}(M')$. These are nonzero f.g.\
$A$-modules, so by the Nakayama Lemma 
$L \ot_A L' \neq 0$. Therefore by Lemma \ref{lem:152} we see that 
$\opn{H}^{k+k'}(M \ot^{\mrm{L}}_A M') \neq 0$. But 
$M \ot^{\mrm{L}}_A M' \cong A$. We conclude that $k + k' = 0$ and
$L \ot_A L' \cong A$. 

By Proposition \ref{prop:150} the modules $L$ and $L'$
are projective. Let $P \to M$ and $P' \to M'$ be resolutions as in the proof of 
Lemma \ref{lem:152}. Consider the surjection $P^k \to \opn{H}^k(P) \cong L$; 
it can be split, and we get an isomorphism of complexes 
$P \cong Q \oplus L[-k]$, where $Q$ has cohomology concentrated in degrees 
$< k$. Similarly $P' \cong Q' \oplus L'[-k']$, where $Q'$ has cohomology
concentrated in degrees $< k'$. Now the complex 
\[ (L[-k] \ot_A L'[-k']) \oplus
(Q \ot_A L'[-k']) \oplus (L[-k] \ot_A Q') \oplus (Q \ot_A Q') \cong 
P \ot_A P' \]
has no cohomology in degrees $< 0$, whereas 
$Q \ot_A L'[-k']$ and $L[-k] \ot_A Q'$ have cohomology only in degrees $< 0$.
Therefore both these complexes are acyclic. Since $L$ and $L'$ are invertible, 
it follows that $Q$ and $Q'$ are also acyclic. What we need to pick up from
this is: $\opn{H}^k(M)$ is a free $A$-module of rank $1$, and 
$\opn{H}^l(M) = 0$ for all $l \neq k$. 

Now for the general case. For any $\p \in \opn{Spec} A$ the localized complexes 
$M_{\p} := A_{\p} \ot_A M$ and $M'_{\p} := A_{\p} \ot_A M'$ 
in $\dcat{D}^{-}_{\mrm{f}}(\cat{Mod}\, A_{\p})$ satisfy 
\[ M_{\p} \ot^{\mrm{L}}_{A_{\p}} M'_{\p} \cong 
A_{\p} \ot_A (M \ot^{\mrm{L}}_A M') \cong A_{\p} . \]
So there is an integer $k(\p)$ s.t.\ 
$\opn{H}^{k(\p)}(M_{\p}) \cong \opn{H}^{k(\p)}(M)_{\p}$ is a free
$A_{\p}$-module of rank $1$, and 
$\opn{H}^{l}(M_{\p}) \cong \opn{H}^{l}(M)_{\p} = 0$ for all $l \neq k(\p)$.

Because the $A$-module $\opn{H}^{k(\p)}(M)$ is finitely presented, 
and $\opn{H}^{k(\p)}(M)_{\p}$ is free of rank $1$, it follows
that there is an open neighborhood $U$ of $\p$ (in $\opn{Spec} A$, for the
Zariski topology) s.t.\ $\opn{H}^{k(\p)}(M)_{\q}$ is free of rank $1$ for all
$\q \in U$. This means that 
$k(\q) = k(\p)$ for all $\q \in U$. We see that the function 
$k : \opn{Spec} A \to \Z$ is locally constant. Since $\opn{Spec} A$ is
connected, this must be a constant function, i.e.\ there is some 
$k \in \Z$ s.t.\ $k(\p) = k$ for all $\p$. It follows that 
$\opn{H}^{l}(M) = 0$ for all $l \neq k$, the f.g.\ module
$L := \opn{H}^{k}(M)$ is projective of rank $1$, and by truncation 
$M \cong P \cong L[-k]$ in  $\dcat{D}(\cat{Mod}\, A)$.

Likewise there is $k' \in \Z$ s.t.\ $\opn{H}^{l}(M') = 0$ for all $l \neq k'$,
the f.g.\ module $L' := \opn{H}^{k'}(M')$ is projective of rank $1$, and
$M' \cong P' \cong L'[-k']$ in  $\dcat{D}(\cat{Mod}\, A)$.
Since $L[-k] \ot_A L'[-k'] \cong A$ 
we get $k + k' = 0$ and $L \ot_A L' \cong A$. 
\end{proof}

\begin{proof}[Proof of Theorem \tup{\ref{thm:153}}]
(ii) $\Rightarrow$ (i): This is the easy part. Clearly $R'$ is bounded. 
Its cohomologies are 
$\opn{H}^k(R') \cong \opn{H}^{k+n}(R) \ot_A L$
are finitely generated. If $R \to I$ is a bounded injective resolutions, then 
$R' \to (I \ot_A L)[n]$ is a bounded injective resolution. And the homomorphism
of complexes 
\[ \th :  A \to \opn{Hom}_A \bigl( (I \ot_A L)[n] , (I \ot_A L)[n] \bigr) \]
is a quasi-isomorphism, since 
\[ \opn{Hom}_A \bigl( (I \ot_A L)[n] , (I \ot_A L)[n] \bigr) \cong 
\opn{Hom}_A (I, I) \]
as complexes. 

(i) $\Rightarrow$ (ii): Consider the complex $M = \opn{RHom}_A(R, R')$ from
(\ref{eqn:150}). Similarly define $M' := \opn{RHom}_A(R', R)$. 
By Lemma \ref{lem:150} there are isomorphisms 
\[ M' \ot^{\mrm{L}}_A M \cong (D \o D')(M) \cong (D \o D' \o D' \o D)(A) . \]
But by Theorem \ref{thm:150} we know that 
$D \o D \cong \bsym{1} \cong D' \o D'$. Thus 
\[ M' \ot^{\mrm{L}}_A M \cong A . \]
Now by Lemma \ref{lem:151}, $M \cong L[n]$ for some $L$ and $n$. And
\[ R' \cong D'(A) \cong (D' \o D \o D)(A) \cong 
(D' \o D) (D(A)) \cong M \ot^{\mrm{L}}_A D(A) \cong L[n] \ot_A R .  \]
\end{proof}

\begin{cor}
Assume $A$ has at least one dualizing complex, and $\opn{Spec} A$ is connected.
Then the group 
$\opn{Pic}(A) \times \Z$ acts simply transitively on the set of isomorphism
classes of dualizing complexes.
\end{cor}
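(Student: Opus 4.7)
The plan is to verify that the assignment
\[ (L, n) \cdot [R'] := [R' \otimes_A L[n]] \]
defines a simply transitive action of $\opn{Pic}(A) \times \Z$ on the set $\mcal{R}$ of isomorphism classes of dualizing complexes over $A$. Since $A$ has at least one dualizing complex by hypothesis, $\mcal{R}$ is nonempty; fix some $R \in \mcal{R}$ as a basepoint.

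First I would check that this is a well-defined group action. Well-definedness on isomorphism classes is automatic, and that $R' \otimes_A L[n]$ lies in $\mcal{R}$ is exactly the implication (ii) $\Rightarrow$ (i) of Theorem \ref{thm:153} (which does not need connectedness). The compatibility with the group law on $\opn{Pic}(A) \times \Z$,
\[ (L_1, n_1) \cdot \bigl( (L_2, n_2) \cdot [R'] \bigr) = (L_1 \otimes_A L_2,\, n_1 + n_2) \cdot [R'] , \]
follows from associativity and commutativity of $\otimes_A$. Transitivity is the nontrivial direction (i) $\Rightarrow$ (ii) of Theorem \ref{thm:153}, which is where the assumption that $\opn{Spec} A$ is connected enters (through the ``locally constant shift'' step in Lemma \ref{lem:151}).

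The remaining claim, and the main content, is freeness. Suppose $(L, n) \cdot [R] = [R]$, i.e.\ $R \otimes_A L[n] \cong R$ in $\dcat{D}^{\mrm{b}}_{\mrm{f}}(\cat{Mod}\, A)$. Applying $\opn{RHom}_A(R, -)$ to both sides yields
\[ A \cong \opn{RHom}_A(R, R) \cong \opn{RHom}_A(R,\, R \otimes_A L[n]) . \]
Since $R$ has finitely generated cohomology and $L[n]$ has finite flat dimension --- by Proposition \ref{prop:150} the invertible module $L$ is finitely generated projective, hence flat, and shifting does not affect this --- Lemma \ref{lem:145} identifies the right-hand side with
\[ \opn{RHom}_A(R, R) \otimes^{\mrm{L}}_A L[n] \cong A \otimes^{\mrm{L}}_A L[n] \cong L[n] . \]
Thus $A \cong L[n]$ in the derived category. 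Taking cohomology, $A$ is concentrated in degree $0$ while $L[n]$ is concentrated in degree $-n$ with $\opn{H}^{-n}(L[n]) = L$; therefore $n = 0$ and $L \cong A$, so the stabilizer of $[R]$ is trivial.

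I do not expect any serious obstacle: the only nonobvious ingredient is the applicability of Lemma \ref{lem:145} to $N = L[n]$, which follows immediately from Proposition \ref{prop:150}. In effect all the genuine work has already been carried out in Theorem \ref{thm:153}, and the corollary is essentially a repackaging of it together with the rigidity observation that a module-in-degree-zero isomorphic to a shifted invertible module must itself be the trivial one.
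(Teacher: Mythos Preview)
Your proof is correct and essentially the same as the paper's: transitivity comes from Theorem \ref{thm:153}, and freeness is obtained by applying $\opn{RHom}_A$ with $R$ in one slot to cancel it. The paper uses the contravariant $D = \opn{RHom}_A(-, R)$ rather than your covariant $\opn{RHom}_A(R, -)$ together with Lemma \ref{lem:145}, but this is a cosmetic difference.
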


\begin{proof}
The action of $(L, n) \in \opn{Pic}(A) \times \Z$ on dualizing complexes
is $R \mapsto R \ot_A L[n]$. We have shown that this is a transitive action. 
If 
$R \ot_A L[n] \cong R \ot_A L'[n']$, 
then applying the functor $D = \opn{RHom}_A(-, R)$ we get 
$L[n] \cong L'[n']$, so $n = n'$ and $L \cong L'$.
\end{proof}

\begin{rem}
The is a notion of dualizing complex over noncommutative rings. 
See \cite{Ye1, VdB, Ye4, YZ1}. 
\end{rem}


\subsection{An example}
Consider the ring 
\[ A = \mbb{R}[t_1, t_2, t_3] / (t_3 t_1, t_3 t_2) . \]
This is the coordinate ring of an affine algebraic variety 
$X \subset \mbf{A}^3_{\mbb{R}}$
which is the union of a plane and a line, meeting at a point. See figure
\ref{fig:2a}. 

\begin{figure}
\includegraphics[scale=0.5]{fig1.jpg}
\caption{} 
\label{fig:2a}
\end{figure}

We know that $A$ has a dualizing complex $R_A$, since it is a finite type
$\mbb{R}$-algebra. 
I will show that the dualizing complex $R_A$ must live in two adjacent
degrees; namely there is some $i$ s.t.\ 
$\mrm{H}^i(R_A)$ and $\mrm{H}^{i+1}(R_A)$ are nonzero. 

Let us denote by $Y_1$ the line in $X$; so 
$Y_1 = \opn{Spec} B_1$, where 
\[ B_1 = A / (t_1, t_2) \cong \mbb{R}[t_1, t_2, t_3] / (t_1 , t_2)
\cong \mbb{R}[t_3]  . \]
Let $Y_2$ the plane in $X$; so 
$Y_2 = \opn{Spec} B_2$, where 
\[ B_2 =  A / (t_3) \cong \mbb{R}[t_1, t_2, t_3] / (t_3) \cong  
\mbb{R}[t_1, t_2]  . \]
And let $Z$ be the point of intersection. So $Z =  \opn{Spec} C$, where 
\[ C = A / (t_1, t_2, t_3) \cong \mbb{R}[t_1, t_2, t_3] / (t_1 , t_2, t_3)
\cong \mbb{R}  . \]
Note that the rings $B_1, B_2, C$ are regular. 

{}From Proposition \ref{prop:147}  we know that 
\[ R_C := \opn{RHom}_A(C, R_A) \]
is a dualizing complex over $C$. Since $C$ is a field, we must have by 
Example \ref{exa:141} and Theorem \ref{thm:153} that 
$R_C \cong C[i]$ in $\dcat{D}^{}(\cat{Mod}\, C)$ for some $i$. Let us shift
$R_A$ by $-i$, so that now $R_C \cong C$. 

Next let's look at the complex 
\[ R_{B_1} := \opn{RHom}_A(B_1, R_A) \]
is a dualizing complex over $B_1$. Since $B_1$ is a regular ring, we must
have  that 
$R_{B_1} \cong L[i_1]$ in $\dcat{D}^{}(\cat{Mod}\, B_1)$ for some $i_1$ and some
invertible module $L$. But actually the Picard group of 
$B_1 \cong \mbb{R}[t_3]$
is trivial, so $R_{B_1} \cong B_1[i_1]$.

We know by one of the adjunction formulas that 
\[ \begin{aligned}
& C \cong R_C = \opn{RHom}_A(C, R_A) \cong 
 \opn{RHom}_{B_1}(C, \opn{RHom}_A(B_1, R_A)) 
\\
& \quad \cong 
\opn{RHom}_{B_1}(C, R_{B_1}) \cong \opn{RHom}_{B_1}(C, B_1[i_1]) .
\end{aligned} \]
But on the other hand we know that 
$\opn{Ext}^j_{B_1}(C, B_1) \cong C$ for $j = 1$, and 
$\opn{Ext}^j_{B_1}(C, B_1) = 0$ for $j \neq 1$; so
\[ \opn{RHom}_{B_1}(C, B_1) \cong C[-1] . \]
By combining these equations we obtain 
\[ C \cong \opn{RHom}_{B_1}(C, B_1[i_1])  
\cong \opn{RHom}_{B_1}(C, B_1)[i_1] \cong C[-1 + i_1] . \]
The conclusion is that $i_1 = 1$. 

Similarly we show that $i_2 = 2$.

The next step is to localize away from the singular locus, namely away from
$Z$. For the line $Y_1$ this means inverting $t_3$. On the level of rings we
have
\[ B_1[t_3^{-1}] \cong A[t_3^{-1}] \]
as $A$-algebras. The dualizing complexes satisfy 
\[  
\begin{aligned}
& B_1[t_3^{-1}] \ot_{B_1} R_{B_1} = 
B_1[t_3^{-1}] \ot_{B_1}  \opn{RHom}_A(B_1, R_A) 
\\
& \quad \cong A[t_3^{-1}] \ot_{A}  \opn{RHom}_A(B_1, R_A) 
\\ & \quad 
\cong  \opn{RHom}_A(B_1, A[t_3^{-1}] \ot_{A}   R_A) 
\\ & \quad 
\cong  \opn{RHom}_{A[t_3^{-1}]}( A[t_3^{-1}] \ot_{A}  B_1, A[t_3^{-1}] \ot_{A}  
R_A) 
\\ & \quad 
\cong   A[t_3^{-1}] \ot_{A}  R_A .
\end{aligned} \]
But $R_{B_1} \cong B_1[1]$. Therefore 
\[ A[t_3^{-1}][1] \cong A[t_3^{-1}] \ot_{A}  R_A . \]
We conclude that 
\[  A[t_3^{-1}] \ot_{A}  \opn{H}^{-1}(R_A) \cong A[t_3^{-1}] ; \]
and hence 
\[ \opn{H}^{-1}(R_A) \neq 0 . \]

Similarly, when we invert $t_1$ we get 
\[ B_2[t_1^{-1}] \cong A[t_1^{-1}] \]
as $A$-algebras. (We can invert $t_2$ and get the same result). 
A similar calculation gives
\[  A[t_1^{-1}] \ot_{A}  \opn{H}^{-2}(R_A) \cong A[t_1^{-1}] ; \]
and hence 
\[ \opn{H}^{-2}(R_A) \neq 0 . \]


\cleardoublepage
\section{Dualizing Complexes in Algebraic Geometry}

\subsection{Definition}
Let $X$ be a noetherian scheme (e.g.\ an algebraic variety over an
algebraically closed field $\K$). We denote by $\cat{Mod}\, \mcal{O}_X$ the 
category of $\mcal{O}_X$-modules. This is an abelian category with enough
injectives. Therefore the bifunctor 
\[ \mcal{H}om_{\OX}(-,-) : (\cat{Mod}\, \mcal{O}_X)^{\mrm{op}} \times
\cat{Mod}\, \mcal{O}_X \to \cat{Mod}\, \mcal{O}_X \]
has a right derived functor 
\[ \mrm{R} \mcal{H}om_{\OX}(-,-) :
\dcat{D}^{}(\cat{Mod}\, \OX)^{\mrm{op}}  \times 
\dcat{D}^{+}(\cat{Mod}\, \OX) \to
\dcat{D}^{}(\cat{Mod}\, \OX) , \]
calculated using K-injective resolutions of the second argument. 
(Actually we can use K-injective resolutions even for unbounded complexes, but
we won't need that.)

There are inclusions of abelian categories
\[ \cat{Coh}\, \mcal{O}_X \subset \cat{QCoh}\, \mcal{O}_X 
\subset \cat{Mod}\, \mcal{O}_X . \]
They are the coherent sheaves and the quasi-coherent sheaves. 
We denote by $\dcat{D}^{\mrm{b}}_{\mrm{c}}(\cat{Mod}\, \OX)$
the full subcategory of $\dcat{D}(\cat{Mod}\, \OX)$ consisting of bounded
complexes with coherent cohomology sheaves.  

Note that the inclusion 
$\dcat{D}^{\mrm{b}}(\cat{Coh}\, \OX) \to 
\dcat{D}^{\mrm{b}}_{\mrm{c}}(\cat{Mod}\, \OX)$ 
is an equivalence (by Remark \ref{rem:200}); but that's not too useful, since 
$\cat{Mod}\, \OX$ doesn't have enough injectives. 
What is useful is the fact that for an affine open set 
$\opn{Spec} A = U \subset X$ we have an
equivalence of triangulated categories
\[ \mrm{R} \Gamma(U, -) : 
\dcat{D}^{\mrm{b}}_{\mrm{c}}(\cat{Mod}\, \mc{O}_U) \to 
\dcat{D}^{\mrm{b}}_{\mrm{f}}(\cat{Mod}\, A) . \]
The adjoint equivalence is the functor 
sending an $A$-module $M$ to the associated quasi-coherent sheaf 
$\mc{M} = \mc{O}_U \ot_A M$. 

\begin{dfn}
A {\em dualizing complex} on $X$ is a complex 
$\mcal{R} \in \dcat{D}^{\mrm{b}}_{\mrm{c}}(\cat{Mod}\, \OX)$ with  these
properties:
\begin{enumerate}
\rmitem{i} $\mc{R}$ has finite injective dimension.
\rmitem{ii} The canonical morphism 
$\th : \OX \to \mrm{R} \mcal{H}om_{\OX}(\mc{R}, \mc{R})$
is an isomorphism. 
\end{enumerate}
\end{dfn}

Like Theorem \ref{thm:150}, the triangulated functor 
\[ \mrm{R} \mcal{H}om_{\OX}(-, \mc{R}) : 
\dcat{D}^{\mrm{b}}_{\mrm{c}}(\cat{Mod}\, \OX) \to 
\dcat{D}^{\mrm{b}}_{\mrm{c}}(\cat{Mod}\, \OX) \]
is a duality (a contravariant equivalence).  And like Theorem 
\ref{thm:153}, if $X$ is connected, then a complex 
$\mc{R}' \in \dcat{D}^{\mrm{b}}_{\mrm{c}}(\cat{Mod}\, \OX)$
is dualizing iff 
\[ \mc{R}' \cong \mc{R} \ot_{\OX} \mc{L}[n] \]
for some invertible sheaf $\mc{L}$ and some integer $n$. Thus the group 
$\opn{Pic}(X) \times \Z$ acts simply transitively on the set of isomorphism
classes of dualizing complexes (if it is nonempty). 

Being a dualizing complex is something that can be checked locally. 
Suppose $X = \bigcup_i U_i$ is a finite affine open covering, and 
$A_i := \Gamma(U_i, \OX)$. A complex 
$\mc{R} \in \dcat{D}^{\mrm{b}}_{\mrm{c}}(\cat{Mod}\, \OX)$ 
is dualizing iff for every $i$ the complex 
\begin{equation} \label{eqn:200}
 R_i := \mrm{R} \Gamma(U_i, \mc{R}) \in
\dcat{D}^{\mrm{b}}_{\mrm{f}}(\cat{Mod}\, A_i) 
\end{equation}
is dualizing.

\subsection{Existence}
What about existence? Let's assume for simplicity that $X$ is of finite type
over a finite dimensional regular noetherian ring  $\K$. 


Choose a finite affine open covering 
$X = \bigcup U_i$, with $U_i = \opn{Spec} A_i$. 
We know that  $A_i$ has a dualizing complex $R_{A_i}$. 
Then the complex of quasi-coherent sheaves 
\[ \mc{R}_{U_i} := \mc{O}_{U_i} \ot_{A_i} R_{A_i} \]
is a dualizing complex on $U_i$. 
We would like to glue these dualizing complexes into a
dualizing complex $\mc{R}_X$, satisfying 
\[ \mc{R}_X|_{U_i} \cong \mc{R}_{U_i} \]
in $\dcat{D}(\cat{Mod}\, \mc{O}_{U_i})$.
For this we need two things:
\begin{enumerate}
\item The dualizing complexes $\mc{R}_{U_i}$ should admit descent data.

\item Descent for complexes is effective. 
\end{enumerate}

Here is what these conditions mean. 

\begin{enumerate}
\item A descent datum is a collection 
$\bigl( \{ \mc{M}_{i} \} , \{ \phi_{i, j} \} \bigr)$
consisting of complexes $\mc{M}_i \in \dcat{D}(\cat{Mod}\, \mc{O}_{U_i})$,
and isomorphisms
\[ \phi_{i, j} : \mc{M}_{i}|_{U_i \cap U_j} \to  \mc{M}_{j}|_{U_i \cap U_j}
\]
in $\dcat{D}(\cat{Mod}\, \mc{O}_{U_i \cap U_j})$.
These isomorphisms must satisfy the cocycle condition 
\[ \phi_{j, k} \o \phi_{i, j}  = \phi_{i, k} \]
in  $\dcat{D}(\cat{Mod}\, \mc{O}_{U_i \cap U_j \cap U_k})$.

\item Descent for complexes if effective if for any descent data as in
(1) there should exist a complex
$\mc{M} \in \dcat{D}(\cat{Mod}\, \mc{O}_{X})$,
and isomorphisms 
\[ \phi_{i} :   \mc{M}|_{U_i} \to \mc{M}_{i}  \]
in $\dcat{D}(\cat{Mod}\, \mc{O}_{U_i})$, 
s.t.\ 
\[ \phi_{j} = \phi_{i, j} \o  \phi_{i}  \]
in $\dcat{D}(\cat{Mod}\, \mc{O}_{U_i \cap U_j})$.
\end{enumerate}

A big problem: even if we were able to somehow choose the dualizing complexes 
$\mc{R}_{U_i}$ cleverly such that they would be isomorphic on double
intersections; and even if we could arrange for descent data
$\{ \phi_{i, j} \}$; still descent for
complexes is not effective! The problem is that the isomorphisms $\phi_{i,j}$
are only homotopy classes of morphism of complexes (and inverses thereof). It
is much harder, and may be impossible in general, to find descent data in 
$\dcat{C}(\cat{Mod}\, \mc{O}_{U_i \cap U_j})$...

The solution by Grothendieck in \cite{RD} was this. In order to select
``correct'' dualizing complexes on affine schemes, and at the same time produce
descent data, he used global duality. This is a very difficult and cumbersome
undertaking.

In order to make descent effective,
Grothendieck noticed that there is a functor 
\[ \opn{E} :  \dcat{D}^{+}_{\mrm{qc}}(\cat{Mod}\, \OX) \to 
\dcat{C}^{+}(\cat{QCoh}\, \OX) \]
called the Cousin functor. If $\mc{R}$ is one of the correct dualizing
complexes, then $\opn{E}(\mc{R})$ is a functorial minimal injective resolution
of $\mc{R}$. An isomorphism 
\[ \phi_{i, j} : \mc{R}_{U_i}|_{U_i \cap U_j} \to  \mc{R}_{U_j}|_{U_i \cap U_j}
\]
in $\dcat{D}(\cat{Mod}\, \mc{O}_{U_i \cap U_j})$ 
will then induces an isomorphism 
\[ \opn{E}(\phi_{i, j}) : \opn{E}(\mc{R}_{U_i})|_{U_i \cap U_j} \to 
\opn{E}(\mc{R}_{U_j})|_{U_i \cap U_j} \]
in $\dcat{C}(\cat{Mod}\, \mc{O}_{U_i \cap U_j})$.
The isomorphisms 
$\opn{E}(\phi_{i, j})$ will satisfy the cocycle condition, so these Cousin
complexes can be glued!

If $\mc{R}$ is a ``correct'' dualizing complex, then its Cousin complex 
$\mc{K} := \opn{E}(\mc{R})$ is called a {\em residual complex}. 
The reason for the name is that the coboundary operator in $\mc{K}$ can be
described in terms of residue maps. This is quite easy on a smooth curve over a
field (Serre). For arbitrary integral schemes over perfect fields this was done
in \cite{Ye2}.

\subsection{Rigid dualizing complexes and perverse coherent sheaves}
Let me outline another method for proving existence of dualizing complexes on
schemes. 

Assume $\K$ is a field. (All I'll say next
works when $\K$ is just a finite dimensional regular noetherian ring,
such as $\Z$; but there technical complications that I want to avoid.)

In 1997 Michel Van den Bergh \cite{VdB} introduced the notion of {\em rigid
dualizing complex} over a noetherian $\K$-algebra $A$. A dualizing complex $R$
is rigid if there is an isomorphism 
\[ \rho : R \iso \opn{RHom}_{A \ot_{\K} A}(A, R \ot_{\K} R) \]
in $\dcat{D}^{}(\cat{Mod}\, A)$. Here $A$ is a module over $A \ot_{\K} A$ by
the ring homomorphism $A \ot_{\K} A \to A$, corresponding to the diagonal.
A {\em rigid dualizing complex} over $A$ relative to $\K$ is a pair $(R, \rho)$
as above. 

A rigid dualizing complex  $(R, \rho)$ is {\em unique up to a unique rigid
isomorphism}. Namely if $(R', \rho')$ is another rigid dualizing complex, there
there is a unique isomorphism $\phi : R \to R'$  in $\dcat{D}^{}(\cat{Mod}\, A)$
such that the diagram 
\[ \UseTips  \xymatrix @C=6ex @R=6ex {
R
\ar[r]^(0.2){\rho}
\ar[d]_{\phi}
&
\opn{RHom}_{A \ot_{\K} A}(A, R \ot_{\K} R)
\ar[d]^{\phi \ot \phi}
\\
R'
\ar[r]^(0.2){\rho'}
&
\opn{RHom}_{A \ot_{\K} A}(A, R' \ot_{\K} R') 
} \]
is commutative. 

Any essentially finite type $\K$-algebra admits a rigid dualizing complex 
$(R_A, \rho_A)$. 

If $A \to B$ is a localization, and $(R_A, \rho_A)$ is a rigid dualizing complex
over $A$, then the complex $B \ot_A R_A$ has an induced rigidifying isomorphism.
Thus if $(R_B, \rho_B)$ is a rigid dualizing complex over $B$, there is a
unique rigid isomorphism 
\[ (B \ot_A R_A, \rho_A) \cong (R_B, \rho_B)  \]
in $\dcat{D}^{}(\cat{Mod}\, B)$.
 
Now consider a finite type $\K$-scheme $X$. Choose an affine open covering 
$X = \bigcup U_i$, and let $A_i := \Gamma(U_i, \OX)$. Let  
$(R_{i}, \rho_{i})$ be the rigid dualizing complex of $A_i$, and let 
$\mc{R}_i$ be corresponding dualizing complex on $U_i$.  Rigidity provides
descent isomorphisms $\{ \phi_{i, j} \}$ that satisfy the cocycle condition.

Let's go back to the affine situation. Let 
$\dcat{D}^{\mrm{b}}_{\mrm{f}}(\cat{Mod}\, A)^0$ be the full subcategory of 
$\dcat{D}^{\mrm{b}}_{\mrm{c}}(\cat{Mod}\, A)$ consisting of complexes $M$ s.t.\
$\opn{H}^i(M) = 0$ for all $i \neq 0$. We know that 
\[ \cat{Mod}_{\mrm{f}}\, A \to \dcat{D}^{\mrm{b}}_{\mrm{f}}(\cat{Mod}\, A)^0 \]
is an equivalence.

The rigid dualizing complex $R_A$ of $A$ induces a duality 
\[ D = \opn{RHom}_A(-, R) : 
\dcat{D}^{\mrm{b}}_{\mrm{f}}(\cat{Mod}\, A) \to 
\dcat{D}^{\mrm{b}}_{\mrm{f}}(\cat{Mod}\, A) . \]
We define 
$^{\mrm{p}}\dcat{D}^{\mrm{b}}_{\mrm{f}}(\cat{Mod}\, A)^0$ to be the image under 
$D$ of the category $\dcat{D}^{\mrm{b}}_{\mrm{f}}(\cat{Mod}\, A)^0$.
A complex $M \in {}^{\mrm{p}}\dcat{D}^{\mrm{b}}_{\mrm{f}}(\cat{Mod}\, A)^0$
is called a {\em perverse finitely generated $A$-module}. Thus a
complex $M \in \dcat{D}^{\mrm{b}}_{\mrm{f}}(\cat{Mod}\, A)$ is a perverse
module iff $M \cong D(N)$ for some $N \in \cat{Mod}_{\mrm{f}}\, A$.

Since 
\[ D : \cat{Mod}_{\mrm{f}}\, A \to 
{}^{\mrm{p}}\dcat{D}^{\mrm{b}}_{\mrm{f}}(\cat{Mod}\, A)^0 \]
is a duality, the latter is an abelian category. 

Now take our f.t.\ $\K$-scheme $X$. For any affine open set 
$U = \opn{Spec} A \subset X$ we have the category of perverse $A$-modules 
${}^{\mrm{p}}\dcat{D}^{\mrm{b}}_{\mrm{f}}(\cat{Mod}\, A)^0$. 

We say that a complex 
$\mc{M} \in  \dcat{D}^{\mrm{b}}_{\mrm{c}}(\cat{Mod}\, \OX)$
is a {\em perverse coherent sheaf} if for every affine open set 
$U = \opn{Spec} A $ the complex 
$\mrm{R} \Gamma(U, \mc{M})$ belongs to 
${}^{\mrm{p}}\dcat{D}^{\mrm{b}}_{\mrm{f}}(\cat{Mod}\, A)^0$. 
The full subcategory of
$\dcat{D}^{\mrm{b}}_{\mrm{c}}(\cat{Mod}\, \OX)$
on the perverse coherent sheaves is denoted by 
${}^{\mrm{p}}\dcat{D}^{\mrm{b}}_{\mrm{c}}(\cat{Mod}\, \OX)^0$. 

It is easy to see that 
${}^{\mrm{p}}\dcat{D}^{\mrm{b}}_{\mrm{c}}(\cat{Mod}\, \OX)^0$ is an abelian
category (dual to $\cat{Coh}\, \OX$). What is more interesting is that the
assignment 
\[ V \mapsto 
{}^{\mrm{p}}\dcat{D}^{\mrm{b}}_{\mrm{c}}(\cat{Mod}\, \mc{O}_V)^0 , \]
for $V \subset X$ open, is a {\em stack of abelian categories}; cf.\
\cite[Definition 4.2]{YZ2} or \cite[Proposition 10.2.9]{KS2}. 
This means that descent is effective! 

Now let's pick some finite affine open covering 
$X = \bigcup U_i$. The rigid dualizing complexes $\mc{R}_i$ on $U_i$ that we got
above are perverse coherent sheaves, and hence they glue to a global perverse
coherent sheaf $\mc{R}$. This is the desired {\em rigid dualizing complex of
$X$}

\begin{rem}
The stack property of perverse sheaves is the reason they  are called
perverse {\em sheaves}. 
The original definition of perverse sheaves is in \cite{BBD}. 
See also \cite[Chapter X]{KS2}, which is where I learned about this subject.
\end{rem}

\begin{rem}
Dualizing complexes make sense for noncommutative $\K$-algebras. 
See \cite{Ye1}. For the noncommutative version of Theorem \ref{thm:153} see
\cite{Ye4}.

Rigid dualizing complexes also make sense for noncommutative
algebras -- see \cite{VdB, Ye4, YZ1}. They should also make sense for
noncommutative DG algebras. They are closely related to the concept of {\em
Calabi-Yau algebra}: a $\K$-algebra $A$ is CY of dimension $n$ if it is
regular, and  $A[n]$ is a rigid dualizing complex. This concept was popularized
by Kontsevich, in connection with homological mirror symmetry.

For rigid dualizing complexes on noncommutative schemes see \cite{YZ2}.
The results in this subsection are taken from there. 
(The commutative version of rigid dualizing complexes on schemes, relative to a
regular base ring, will hopefully be written up this summer).
\end{rem}


\cleardoublepage

\end{document}